\theoremstyle{plain}
\newtheorem{thm}{Theorem}
\newtheorem{lem}[thm]{Lemma}
\newtheorem{cor}[thm]{Corollary}
\newtheorem{prop}[thm]{Proposition}
\newtheorem{defn}[thm]{Definition}
\newtheorem{conj}[thm]{Conjecture}
\theoremstyle{definition}
\newtheorem{fact}[thm]{Fact}
\newtheorem{ex}[thm]{Example}
\newtheorem{nota}[thm]{Notations}
\newtheorem{claim}[thm]{Claim}
\newtheorem{rmk}[thm]{Remark}
\newtheorem{rmks}[thm]{Remarks}
\numberwithin{thm}{section}
\newcommand{\p}{\partial}
\newcommand{\ml}[2]{\begin{multline}\label{#1}#2 \end{multline}}
\newcommand{\ga}[2]{\begin{gather}\label{#1}#2 \end{gather}}
\newcommand{\surj}{\twoheadrightarrow}
\newcommand{\sA}{{\mathcal A}}
\newcommand{\sD}{{\mathcal D}}
\newcommand{\sE}{{\mathcal E}}
\newcommand{\sF}{{\mathcal F}}
\newcommand{\sG}{{\mathcal G}}
\newcommand{\sH}{{\mathcal H}}
\newcommand{\sL}{{\mathcal L}}
\newcommand{\sM}{{\mathcal M}}
\newcommand{\sO}{{\mathcal O}}
\newcommand{\sT}{{\mathcal T}}
\newcommand{\A}{{\mathbb A}}
\newcommand{\C}{{\mathbb C}}
\newcommand{\F}{{\mathbb F}}
\newcommand{\G}{{\mathbb G}}
\newcommand{\N}{{\mathbb N}}
\renewcommand{\P}{{\mathbb P}}
\newcommand{\Q}{{\mathbb Q}}
\newcommand{\R}{{\mathbb R}}
\newcommand{\Z}{{\mathbb Z}}
\DeclareMathOperator{\GL}{GL}
\begin{document}

\title[Local Systems]{ Lectures on Local Systems in Algebraic-Arithmetic Geometry}
\author{H\'el\`ene Esnault }
\address{Freie Universit\"at Berlin, Arnimallee 3, 14195, Berlin,  Germany}
%\address{The Institute for Advanced Study, Mathematics, 1 Einstein Dr., Princeton, NJ 08540, USA}
\email{esnault@math.fu-berlin.de}
%\email{esnault@ias.edu}
%\address{   Fakult\"at f\"ur Mathematik \\
%Universit\"at Regensburg \\
%93040 Regensburg, Germany}
%\email{moritz.kerz@mathematik.uni-regensburg.de}
%\thanks{ The first author is supported by the Institute for Advanced Study, Princeton, the second author by the SFB 1085 Higher Invariants, Universit\"at Regensburg}

\maketitle

%\tableofcontents

\begin{abstract}[BOOK BACK COVER ] The topological fundamental group of a smooth  complex algebraic variety is poorly understood. One way to approach it is to consider its complex linear representations modulo conjugation, that is  complex local systems. 
One fundamental problem is to recognize those coming from geometry, and more generally subloci of the moduli space of local systems with special arithmetic properties. This is the object of deep conjectures. We present in this Lecture Notes some properties of the topological fundamental group, and
some consequences of the conjectures,   notably  density, integrality and crystallinity properties of some special loci.
  \end{abstract}

\section{Lecture 1: General Introduction} \label{sec:intro}

The {\it topological fundamental group} $\pi_1(M,m)$ of a connected finite $CW$-complex $M$ based at a point $m$, as defined by Poincar\'e, is a  finitely presented group. In turn, any finitely presented group is the fundamental group of a connected  finite $CW$ complex.  The finite generation enables one to define a  ``moduli'' (parameter) space $M_B(\pi_1(M, m),r)$ of all its semi-simple complex linear representations $\rho: \pi_1(M,m)\to GL_r(\C)$ in a given rank $r$, modulo conjugation, or equivalently, of all its rank $r$ semi-simple complex local systems $\mathbb L$. It is called the character variety  of $\pi_1(M, m)$, also the {\it Betti moduli space} of $M$ in rank $r$, as conjugation washes out the choice of $m$, and is a scheme of finite type defined over the ring of  integers $\mathbb Z$. 

\medskip 

We are interested in the case when  $M$ consists of the complex points  $X(\C)$ of  a smooth connected algebraic quasi-projective variety $X$
 of finite type over the complex numbers $\mathbb C$, in which case $M$ has the homotopy type of a connected  finite $CW$ complex. 
We know extremely little  about  the restrictions the algebraic  origin of $M=X(\C)$   imposes on  the topological fundamental group $\pi_1(X(\C),x(\C))$. On the other hand, there are naturally defined local systems $\mathbb L$, namely those which upon restriction to some Zariski dense open $U\hookrightarrow X$ are subquotients (equivalently, summands by Deligne's semi-simplicity theorem)  of a local system on $U$ which comes from the variation of the cohomology of the fibres of a smooth projective morphism $g: Y\to U$.  Such $\mathbb L$ are called {\it  geometric}. 
One such example is when $U=X$ and $g$ is in addition  {\it finite} in the topological sense. 
Then the monodromy of $\mathbb L$, that is the group ${\rm Im}(\rho)$, defined up to conjugacy, is finite. 
 By the Riemann existence theorem this  is equivalent to $g$ being finite \'etale, 
 and thus relates $\pi_1(X(\C),x(\C))$ to its profinite completion $\pi_1(X_{\C}, x_\C)$. The latter is  the \'etale fundamental group, defined by Grothendieck, itself related to the Galois group of the field of functions of $X_\C$.

\medskip

 So it is  natural to try to single out  complex points of $M_B(X,r)$ which correspond to geometric  or even finite local systems. More generally, it is natural to try to define a notion of geometric sublocus of higher dimension.  It is clearly an  inaccessible task, which is reminiscent of the Hodge and the Tate conjectures: how can we construct $g$ out of $\mathbb L$?  
 There are several {\it conjectures} relying on various aspects of $M_B(X,r)$. 
 
 \medskip
 
 {\it Grothendieck's $p$-curvature conjecture}: It relies on the {\it Riemann-Hilbert correspondence} which equates the complex points  $\mathbb L$ of $M_B(X,r)$ with algebraic semi-simple integrable connections  $(E,\nabla)$ on $X$ (say $X$ projective for simplicity to avoid boundary growth conditions): we  consider $(E,\nabla)$ mod $p$ for all large primes $p$  and require
  this characteristic $p>0$ connection to be generated by flat sections.  This is the original formulation and should characterize finite local systems $\mathbb L$. More generally, to characterize geometric local systems $\mathbb L$,  we  request $(E,\nabla) $ mod $p$ for all large $p$ to be filtered so that the associated graded is spanned by flat sections. Since the work by Katz 
  %\cite{Kat72} 
  which roughly (a bit less) shows that on a geometric $\mathbb L$ we can characterize its finiteness  by the generation of $(E,\nabla)$ mod $p$ by its flat section for almost all $p$, 
  since the later work  by Chudnovsky-Chudnovsky,
  % \cite{Chu85}, 
  Bost 
  % \cite{Bos01} 
  and Andr\'e 
  % \cite{And04} 
   which handle the solvable case,  and some remarks of the type made with Kisin,
   % \cite{EK18}, 
  there has been   essentially  no major progress on this viewpoint.
  
  \medskip
  
   We discuss in Lecture~\ref{sec:pcurv} one possible origin of Grothendieck's $p$-curvature conjecture,  as we understand it, by relating it to the classical rationality criteria of Kronecker. Further in this Lecture,  we write a simplified form of Grothendieck's $p$-curvature conjecture which is equivalent to the general form. We also discuss Katz' proof,
   giving  a slightly different viewpoint.
  
  \medskip

   {\it Gieseker-de Jong conjecture}: It relies simply on the {\it finite generation} of  the topological fundamental group $\pi_1(X(\C),x(\C))$,  which implies the theorem of Mal\v{c}ev-Grothendieck 
   % \cite{Mal40}, \cite{Gro70} 
   saying that the \'etale fundamental group  $\pi_1(X_\C,x_\C)$ controls the size of $M_B(X,r)$: if $\pi_1(X_\C,x_\C)=\{1\}$ then $M_B(X,r)$ consists of one point, the trivial local system  of rank $r$  (in fact there are no extensions as well). Gieseker's conjecture, 
   %\cite{Gie75},  
   solved with Mehta, 
  % in \cite{EM10}, 
    asserts an analog in characteristic $p>0$ for infinitesimal crystals, while de Jong's conjecture, which is still unsolved  in its generality 
    %see \cite{ES18} for small steps) 
    (we understood  small steps with Shiho)
    predicts an analog for isocrystals. It is also related to the Langlands program: if the ground field is $\bar \F_p$ and the isocrystal is endowed with a Frobenius structure, then the existence of $\ell$-adic companions (as proven with Abe, see also Kedlaya)
    %  \cite{AE19}, \cite{Ked22},
     initially predicted by Deligne in Weil II
     % {\cite{Del80}) 
     proves the conjecture. 
       It would be of interest to understand a generalization of de Jong's conjecture on prismatic crystals which encompasses his initial formulation. 
       
       \medskip
       We mention in Lecture~\ref{sec:Malcev} the proof of the Mal\c{c}ev-Grothendieck theorem. It relies on the finite generation property of the topological fundamental group.  We sketch our proof with Mehta of the Gieseker conjecture when $X$ is smooth projective.  Although the geometric fundamental group of $X$ is topologically finitely generated, the proof does not use 
       directly this property. Instead it uses  the boundedness  of Frobenius divided sheaves.
      
       \medskip
       
     {\it Topological properties of the (tame) fundamental group of a smooth (quasi-) projective variety $X$ defined over an algebraically closed field  $k$ of characteristic $p>0$}:   As already mentioned, if $k$ was the field of complex numbers, then the topological fundamental group of $X$ would be finitely presented. By  Lefschetz theory and Grothendieck's specialization homomorphism from characteristic $0$ to characteristic  $p>0$, it is easy to see that the (tame)  fundamental group of $X$ is  finitely generated as a topological group. 
     But more is true, it is also finitely presented, at least if we assume that $X$ has a good normal crossings compactification.  This is a strong analogy with the classical topological situation.

        \medskip
        
        We present in Lecture~\ref{sec:similarity} our proof of it, joint with Shusterman and Srinivas.  It rests on Lubotzky's remarkable theorem which characterizes cohomologically this property. The characterization has a motivic flavor in that it  says that the $H^2$ of the  (tame) fundamental group with values in rank $r$ continuous representations with $\F_\ell$-coefficients growths linearly in $r$ but does not see $\ell$. Indeed, the proof of the independence of $\ell$ ultimately relies on Deligne's purity from the Weil conjectures. The proof of the  linearity in $r$ for $\ell=p$ is more geometric, and, in case $X$ is not proper, uses the existence of a good compactification to have a numerical characterization of tameness.

       \medskip
       By the Riemann existence theorem over $\C$ and the base change theorem,  the  fundamental group of a smooth (quasi-)projective variety 
       over an algebraically closed field  of characteristic $0$ is the profinite completion of a finitely presented abstract group. 
       We can adapt the notion of finite generation or presentation coming from a discrete group by removing the condition at $p$. This definition is shaped on the properties of Grothendieck's specialization homomorphism.
       It yields the concept of $p'$-finitely generated or presented group.  This property, even solely at the level of the $p'$-finite generation, is an obstruction for $X$ to lift to characteristic $0$.         
       \medskip
       
       We present in Lecture~\ref{sec:diff} our proof of this newly defined obstruction, joint with Srinivas and Stix.  Ultimately it relies again on a motivic property which this time is not always verified: 
       the representation  of the  automorphism group of $X$  on its (first) $\ell$-adic cohomology is  independent of $\ell$ but is not (always) defined over $\Q$.

       \medskip
       
       {\it Density of Special Loci}: For $X$ smooth quasi-projective over $\C$, Drinfeld analyzed some arithmetic properties of the Betti moduli space $M_B(X,r)$ viewed as a scheme over $\Z$. At good closed points of $M_B(X,r)$ of characteristic $\ell $, de Jong's conjecture applied to the mod $p$ reduction of $X$ for $p$ large predicts the existence of many deformations of the residual representation defined by the closed point to an $\ell$-adic sheaf  which is arithmetic, that is which is acted on by a power of the Frobenius.   Due to the arithmetic Langlands program, those sheaves then are pure in the sense of Deligne, so for example they obey the Hard Lefschetz property if they are semi-simple and $X$ is projective. De Jong's conjecture has been proven by  B\"ockle-Khare in special cases and by 
       Gaitsgory in general (for $\ell \ge 3$) using the geometric Langlands program.  It makes it then possible to derive density of certain subloci  of $M_B(X,r)$ using those (deep) arithmetic methods. 
       
       \medskip
       
       We present in Lecture~\ref{sec:qu} a proof, joint with Kerz,  that  the set of complex points of $M_B(X,r)$ corresponding to  semi-simple local systems with quasi-unipotent monodromy at infinity is Zariski dense. It uses Drinfeld's idea. This is an invitation to transpose the definition of arithmetic $\ell$-adic local systems on the mod $p$ reduction of $X$ to a notion of weakly arithmetic  complex local systems on $X$ over $\C$. We present the definition and the proof, joint with de Jong, that the set of weakly arithmetic local systems in $M_B(X,r)$ is Zariski dense.  All weakly arithmetic   local systems have quasi-unipotent monodromies at infinity, so this density is sharper than the previous one. The way to go from a complex local system to an $\ell$-adic \'etale local system in the definition prevents us to conclude that there only countably many weakly arithmetic local systems.

       \medskip
       
       On the other hand,  Biswas-Gupta-Mj-Whang in rank $2$, resp.  Landesman-Litt in any rank proved by topological,  resp. Hodge theoretical methods  that on a geometric generic curve of genus $\ge 2$, semi-simple  complex local systems  of low rank which descend to the universal curve are unitary. 
       
       \medskip
       
        Granted this, we present in Lecture~\ref{sec:qu}  the idea  of their proof in any rank to the effect that in fact not only  those local systems are unitary,  but they have finite monodromy. 
        The proof  ultimately relies on the integrality of cohomologically rigid local systems, a theorem joint with Groechenig, explained in  Lecture~\ref{sec:companions}.  It also shows that local systems on the geometric generic curve which come from the universal curve cannot be dense in the Betti moduli, contrary to a hope I had expressed with Kerz earlier on.

       \medskip

       {\it Companions and integrality}:  A complex local system can be twisted by an automorphism  $\sigma$ of the field of complex numbers: we post-compose the underlying linear representation of the topological local system by  $\sigma$ on the coefficients. If we now consider   representations of a topological group with values in a linear group with coefficients in a topological field, we lose continuity by post-composing. So we cannot define twisted continuous representations. Deligne in Weil II predicted that nonetheless it is possible in the following situation. We assume that the topological group is the geometric fundamental group of a smooth quasi-projective variety $X$ defined over a finite field $\F_q$, the field of coefficients is the algebraic closure  $\bar \Q_\ell$ of the $\ell$-adic numbers for some prime number $\ell$ prime to $p$,  the isomorphism $\sigma$  is  some abstract isomorphism between $\bar \Q_\ell$ and $\bar \Q_{\ell'}$.  If the local system is  stabilized by some power of  the Frobenius,  that is if it is arithmetic, then by the Langlands correspondence the
  characteristic polynomial of the action of the  Frobenius elements at closed points is a polynomial with coefficients being algebraic numbers. Thus $\sigma$ sends them to other algebraic numbers. In addition, a simple $\ell$-adic local system  is recognized by the \v{C}ebotarev density theorem  by those polynomials. Deligne then conjectured the existence of a simple $\ell'$-adic local system with those data, which he called ($\sigma$)-companion.      If we believe that  arithmetic local systems come from geometry, then we know
  at least on the whole Gau{\ss}-Manin system
   how to go from $\ell$ to $\ell'$.  If $X$ has dimension $1$, as a corollary of the Langlands correspondence, L. Lafforgue proved the existence of companions the way Drinfeld did after he proved the rank $2$ case of the Langlands program. In addition, arithmetic local systems come from geometry.   There is no Langlands program in higher dimension.  In absence of any geometric support, Drinfeld proved the existence of companions in higher dimension as well.    
       
       \medskip
       
       We present in Lecture~\ref{sec:companions}  some aspects of Drinfeld's proof. The starting point is the dimension $1$ case due to L. Lafforgue, see above. The problem is how to glue  those $\ell'$-adic sheaves defined on all curves, which agree on intersections. 
       Drinfeld produces a  non-commutative version of the construction performed by Wiesend. He uses a strong form of the \v{C}ebotarev density theorem:  an $\ell'$-adic local system on a curve is recognized on finitely many points.  He also uses Deligne's earlier work showing that for a given $\ell$-adic local system, the coefficients of the characteristic polynomials of the Frobenii at closed points stay in a finite type extension of $\Q$.

       \medskip

  {\it Simpson's motivicity conjecture: Rigid Local Systems}. These are the $0$-dimensional components of $M_B(X,r)$. Simpson   predicted that they  are all geometric.  It relies on the corresponding theorem by Katz 
   when $X$ has dimension $1$, in which case $X$ has to be an open in $\P^1$ (so in the definition of $M_B(X,r)$ one has to fix conjugacy classes of quasi-unipotent monodromy at infinity). It relies also on the  Simpson correspondence, which, when $X$ is projective, equates real analytically $M_B(X,r)$ with the moduli space of semi-stable Higgs bundles with vanishing Chern classes. Those are endowed with a $\C^\times$-flow, thus rigid local systems, viewed on the Higgs side, are fixed by it. This implies, according to   Simpson's  theorem, that they underly a polarizable complex variation of Hodge structures. From there it is one short step to dream of geometricity. 
   
   \medskip
   
    We present in Lecture~\ref{sec:companions}  our proof with Groechenig of a consequence of the motivicity conjecture, called the integrality conjecture, also formulated by Simpson: rigid local systems should be integral. To this aim we descend the $\ell$-adic local system mod $p$ for $p$ very large. The rigidity implies that this $\ell$-adic local system is in fact arithmetic as well, a fact already observed by Simpson.  We then take a companion, and interpret back this $\ell'$-local system topologically on the initial complex variety. To be able to conclude we need that this newly defined topological system is rigid as well. We can prove this only under the extra assumption that the rigid local system we started with was a smooth point in its moduli. This is a cohomological condition which we call cohomological rigidity.

        \medskip

   {\it Integrality of the Betti moduli space}:  An important point is that rigid local systems are arithmetic. On the other hand, as mentioned above as a consequence of  de Jong's conjecture, weakly arithmetic local systems are dense.

   \medskip 
   
     We present in Lecture~\ref{sec:companions}  our proof with de Jong of an integrality property of the whole  open of the Betti moduli corresponding to irreducible local systems. The proof   rests on this notion of weakly arithmetic local systems together with the same idea to prove integrality of (cohomologically) rigid local systems. 
      One consequence of the property is that it yields a new obstruction, visible on the Betti moduli, for a finitely presented group to be the topological fundamental group of a smooth complex quasi-projective variety.

   \medskip
   
    {\it Crystallinity property of Rigid Local Systems}: If we believe in Simpson's motivicity conjecture,  it should have two $p$-adic consequences, one on the local system  $\mathbb L$ itself, the other one on the flat connection $(E,\nabla)$  which is defined using the Riemann-Hilbert correspondence.  Indeed if those data ultimately  come from a splitting 
    of a Gau{\ss}-Manin system of a smooth projective morphism $g: Y\to U$ on some dense open $U\hookrightarrow X$, then for $p$ large all the data have good reduction. 
 So one expects that the $p$-completion of the local  system on  a $p$-adic model of $X$ with good reduction, for $p$ large, is crystalline. At the same time, if $g$ does not have good reduction but $X$ has, one expects the connection, in restriction to the $p$-adic model, to yield an isocrystal with a Frobenius structure. Those two properties, with less precision on the ``good'' $p$,  were proven with Groechenig.

  \medskip
  
    In Lecture~\ref{sec:Fiso} we present a proof of the $F$-isocrystal property which relies on a fact which is nearly documented in the Stack Project and which I learned from de Jong: if the integral  $p$-adic model of $X$ is unramified, and $p\ge 3$, then the Frobenius acts on connections defined on it. This enables one  to  prove a posteriori that the $p$-curvature of the mod $p$ reduction of the connection is nilpotent. Our original proof with Groechenig showed this property first, which was a bit more difficult. For the crystallinity of the $p$-adic local system however, in the present state of knowledge, we have to return to our proof with Groechenig which proceeds  along the completion along $p$ by showing that we have a periodic Higgs-de Rham flow in the sense of Zuo and his coauthors.

    \medskip
    
    It is to be noted that this crystallinity proof   is easily generalized 
 to non-proper smooth varieties under a strong cohomological     condition. We do not reproduce our proof  with Groechenig in those notes. We observe that precisely this property is the one used by 
 Pila-Shankar-Tsimerman to prove the Andr\'e-Oort conjecture on Shimura varieties of real rank $\ge 2$.

    \medskip
    
    \medskip
    
    There are manifold topics close to the ones discussed in the notes which are not addressed. One of them concerns specific subloci of the Betti moduli space which could be called arithmetic and which we defined with Kerz. It generalizes to higher dimensional  subvarieties of the Betti moduli the notion of arithmeticity of an  integral $\ell$-adic  point. In rank $1$ we proved with Kerz that those are translates by torsion points of subtori in the whole moduli  which itself  is a torus. We could not progress in higher rank in this direction, for lack of understanding the more precise geometry of the moduli.  Another direction is what to expect after the work of Petrov who showed that the generalized Fontaine-Mazur conjecture could simply be formulated by predicting that geometrically  irreducible $\ell$-adic local systems which are arithmetic on $X$ smooth quasi-projective in characteristic $0$ should in fact come from geometry. It would be of help to find one  small consequence of this vast conjecture, as a reality check.

    \medskip
   
   Finally in Lecture~\ref{sec:CQ} we formulate a few questions and problems related to the various Lectures.

\newpage

\subsection{Acknowledgements}  \label{ack}  Special heartfelt thanks go to Michael Groechenig, Moritz Kerz and more recently Johan de Jong with whom I developed part of the mathematics presented here. 
Those notes  hugely reflect  ideas  we developed together and further share.  

\medskip

 The influence of the ideas of Pierre Deligne and Vladimir Drinfeld is overwhelming and permeates the whole edifice of ideas exposed in the notes.  I thank in addition Pierre Deligne for a thorough list of comments and remarks on a first draft of those notes.
 
 \medskip 
 
I  thank
Tomoyuki Abe, Lars Kindler, Mark Kisin,  Adrian Langer,  Vikram Mehta, who unfortunately is no longer among us,   Atsushi Shiho,  Mark Shusterman, Vasudevan Srinivas,  Jakob Stix for the joint work reproduced  in those notes.

\medskip

More specifically,  I thank 
Piotr Achinger, Yves Andr\'e,  Benjamin Bakker, Alexander Beilinson, Barghav Bhatt,  Emmanuel Breuillard, Yohan Brunebarbe, Benjamin Church,  Dustin Clausen,  Marco D'Addezio, Pierre Deligne, Johan de Jong, Vladimir Drinfeld, Mikolaj Fraczyk, Michael Groechenig, Michael Harris,   Lars Hesselholt, Moritz Kerz,  
Mark Kisin, Bruno Klingler,  Raju Krishnamoorty, Aaron Landesman, Adrian Langer, Daniel Litt, Alexander Lubotzky,  Akhil Mathew, John Morgan, Alexander Petrov, Jonathan Pila,  Maxime Ramzi, 
Vasily Rogov, Will   Sawin,  Ananth Shankar, Carlos Simpson, Jacob Tsimerman, Vadim Vologodsky.  I had with them exchanges at  various stages, they all contributed in this way to the elaboration of the Lecture Notes. 

\medskip

I thank the mathematicians from Columbia University. They kindly invited me to give the Eilenberg Lectures  in the fall 2020 which are the support of the notes. Because of the pandemic,  I ``used up'' three chairs, Michael Thaddeus, Robert Friedman and Johan de Jong. I am sorry for the work it created for them. Finally I came during Johan's ``reign'' in the fall 2022. It has been   a wonderful time.

\medskip

I thank the mathematicians of the University of Copenhagen.  I gave there an echo of the Eilenberg Lectures in the fall 2021 and the winter 2023. 

\medskip 

I thank all the students, post-docs and Faculty who listened to the lectures and contributed to make them colorful and I hope enjoyable.
I also thank the administrative staff  of both institutions for warmly welcoming me.

\medskip

 I thank  Springer Verlag which allowed me to write the notes in a colloquial way, Lecture 1, Lecture 2 etc.... To understand the details of some proofs, we always have to go back to the original literature. This is all the more true  for those notes. The point to write them is to convey a philosophy and a  program which I developed partly with other mathematicians  within the last 10 or 15 years, and  to share some dreams.  Sharing dreams can only be done  in an intimate style.

\newpage

\tableofcontents
\newpage

\section{ Lecture 2:  Kronecker's Rationality Criteria and Grothendieck's $p$-Curvature Conjecture} \label{sec:pcurv}

 \begin{abstract}  We recall two criteria, say an analytic one and an algebraic one,  for an integral number to be  a root of unity and for an algebraic number to be a rational number. Both go back to Kronecker. We recall Grothendieck's $p$-curvature conjecture and its generalization, avoiding the general definition of the $p$-curvature of a connection in characteristic $p>0$. We show how the two are related and mention Katz's proof using (a generalization of ) the analytic criterion. 
  \end{abstract}

\subsection{Kronecker's criteria}
Let $a\in \C$ be a complex number, and write it as $a={\rm exp}(2\pi \sqrt{-1} b)$ for $b\in \C$ defined modulo the  integers $\Z$. We list Kronecker's criteria for $a\in \mu_{\infty}\subset \C$, that is for $a$  to be a root of unity, or equivalently for $b\in \Q$,  that is for $b$  to be a rational number. 
\subsubsection{Kronecker's analytic criterion, \cite{Kro57}}
Recall that the subring $\bar \Z\subset \C$ of algebraic integers of the complex numbers consists of those complex numbers $a\in \C$
 satisfying an equation $f(a)=a^d+c_1a^{d-1}+\ldots +c_d=0$ with $c_i\in \Z$, and the subfield $\bar \Q\subset \C$ of algebraic numbers consists of those $a$ as before but with $c_i\in \Q$. Any field automorphism $\sigma\in {\rm Aut}(\C)$  of $\C$ leaves $\Z\subset \Q$ invariant.

\begin{prop} \label{prop:Kronecker_analytic}
Assuming $a\in \bar \Z$, then $a\in \mu_\infty$ if and only if for any $\sigma \in {\rm Aut}(\C)$, the complex  absolute value of 
$\sigma(a)$ is equal to $1$. 

\end{prop}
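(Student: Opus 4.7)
The ``only if'' direction is immediate: if $a^N=1$ then for any $\sigma\in\mathrm{Aut}(\C)$ we have $\sigma(a)^N=\sigma(a^N)=1$, so $\sigma(a)$ is itself a root of unity and in particular $|\sigma(a)|=1$. So the plan focuses on the substantive ``if'' direction.

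The strategy is the classical Kronecker pigeonhole argument. First I would let $f(x)=x^d+c_1x^{d-1}+\cdots+c_d\in\Z[x]$ be the minimal polynomial of $a$, and let $a=a_1,\ldots,a_d\in\C$ denote its roots. Since any $\sigma\in\mathrm{Aut}(\C)$ permutes the roots of $f$, and since every $\mathrm{Gal}(\bar\Q/\Q)$-conjugate of $a$ extends to such a $\sigma$, the hypothesis implies $|a_i|=1$ for all $i$. For each integer $n\ge 1$ consider the monic polynomial
\eq{eq:fn}{f_n(x)=\prod_{i=1}^d (x-a_i^n)\in\C[x].}
Because $a^n\in\bar\Z$ and its Galois conjugates are contained in $\{a_1^n,\ldots,a_d^n\}$, the polynomial $f_n(x)$ is a power of the minimal polynomial of $a^n$, hence lies in $\Z[x]$.

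Next I would bound the coefficients of $f_n$ uniformly in $n$. The $k$-th coefficient is, up to sign, the $k$-th elementary symmetric function in the $a_i^n$, and since each $|a_i^n|=1$ this is bounded in absolute value by $\binom{d}{k}\le 2^d$. Thus $f_n$ ranges over a finite set of polynomials in $\Z[x]$ as $n$ varies. By the pigeonhole principle there exist $m>n\ge 1$ with $f_m=f_n$, so the multisets $\{a_i^m\}$ and $\{a_i^n\}$ coincide. In particular $a_1^m=a_{\pi(1)}^n$ for some permutation $\pi$, and iterating $\pi$ one obtains $a^{mN}=a^{nN}$ for some $N\ge 1$ (with $N$ dividing the order of $\pi$). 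Since $a\ne 0$ (its absolute value is $1$), this yields $a^{(m-n)N}=1$, so $a\in\mu_\infty$.

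The argument is entirely elementary once one has the finiteness step, and the only place requiring care is verifying that $f_n\in\Z[x]$: this follows from the fact that $f_n$ is a product of conjugates under $\mathrm{Gal}(\bar\Q/\Q)$ (hence has rational coefficients) together with the fact that each $a_i^n$ is an algebraic integer (hence the coefficients are in $\bar\Z\cap\Q=\Z$). I expect no real obstacle; the mild subtlety is the passage from $f_m=f_n$ (equality of unordered root multisets) to $a^M=1$ for a specific $M$, handled by the permutation-order trick above.
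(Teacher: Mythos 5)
Your argument is the classical Kronecker pigeonhole proof, which is exactly what the paper does: form $f_n(x)=\prod_i (x-a_i^n)$, show $f_n\in\Z[x]$ with uniformly bounded coefficients so that only finitely many $f_n$ occur, and deduce $a\in\mu_\infty$. One small correction in your final step: iterating the permutation $\pi$ gives $a^{m^N}=a^{n^N}$ (the exponents compose multiplicatively), not $a^{mN}=a^{nN}$; alternatively, and more simply as in the paper, finiteness of $\{f_n\}$ forces $\{a^n : n\ge 1\}$ to be finite outright (each $a^n$ is a root of some $f_k$ and there are finitely many roots altogether), whence $a^m=a^n$ for some $m>n$ and $a^{m-n}=1$.
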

\begin{proof} See {\tiny {\url{https://mathoverflow.net/questions/10911/english-reference-for-a-result-of-kronecker}}}
For $f=f_1$ as above, we write $f_n(X)=\prod_{i=1}^d (X-\alpha^n_i) \in \C[X]$  for all $n\in \N_{>0}$, with $\alpha_1=a$. The coefficients of $f_n$  are symmetric functions in the $\alpha_i$, thus are expressable as polynomials with rational coefficients in the $c_i$, and on the other hand they are in $\bar \Z$, thus they lie in $\Z$, and have bounded norms. Thus there are finitely many such $f_n$, thus the set $\{a^n, n\in \N_{\ge 1}\}$ is finite, thus lies in $\mu_\infty$. 
\end{proof}
\subsubsection{Kronecker's algebraic criterion, \cite{Bau04}}
Assume $b \in \bar \Q$, so $b$ lies in the number field $\Q(b)$, of rank $d$ say. So we can take its reduction modulo  almost all primes,  that is modulo all  except finitely many primes $\frak{p}$ of the number ring  $\sO_{\Q(b)}$.

\begin{prop} \label{prop:Kronecker_algebraic}
If for almost all primes $\frak{p}$ of  $ \sO_{\Q(b)}$, $(b \ {\rm mod} \ \frak{p)} \in \F_p\subset \sO_{\Q(b)}/\frak{p}$, then 
$b\in \Q$.  In words: if $b$ lies in the prime field in characteristic $p$ for almost all $p$, then it does in characteristic $0$.

\end{prop}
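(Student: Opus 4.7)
The plan is to convert the pointwise residue-field hypothesis into a Galois-theoretic invariance statement and then apply the \v{C}ebotarev density theorem. Let $K = \Q(b)$ and choose a finite Galois extension $L/\Q$ containing $K$, with Galois group $G = \Gal(L/\Q)$. After discarding finitely many primes---those at which $b$ fails to be integral, the finite exceptional set of the hypothesis, and the primes of $L$ over ramified rational primes---we obtain a cofinite set $S$ of primes $\mathfrak{P}$ of $\mathcal{O}_L$ on which everything is well-behaved. The hypothesis ``$b \bmod \mathfrak{p} \in \F_p$'' is equivalent to $b^p \equiv b \pmod{\mathfrak{p}}$, and lifting this congruence from $K$ to $L$ yields $b^p \equiv b \pmod{\mathfrak{P}}$ for every $\mathfrak{P} \in S$.

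Since the Frobenius $\mathrm{Frob}_\mathfrak{P} \in G$ is characterized by $\mathrm{Frob}_\mathfrak{P}(x) \equiv x^p \pmod{\mathfrak{P}}$ for all $x \in \mathcal{O}_L$, applying this to $x=b$ turns the previous congruence into
\[
  \mathrm{Frob}_\mathfrak{P}(b) \equiv b \pmod{\mathfrak{P}} \quad \text{for every } \mathfrak{P} \in S.
\]
Now fix an arbitrary $g \in G$. By \v{C}ebotarev, combined with the observation that varying $\mathfrak{P}$ above a fixed rational prime $p$ sweeps out the entire conjugacy class of one fixed Frobenius above $p$, there exist infinitely many $\mathfrak{P} \in S$ with $\mathrm{Frob}_\mathfrak{P} = g$. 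For all such $\mathfrak{P}$ we have $g(b) - b \in \mathfrak{P}$; but a nonzero element of $L$ is contained in only finitely many maximal ideals of $\mathcal{O}_L$, so $g(b) = b$. Since $g \in G$ was arbitrary, $b$ is fixed by $G$, hence $b \in L^G = \Q$.

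I do not foresee any serious obstacle. The only nontrivial input is \v{C}ebotarev, in the slightly strengthened form that each \emph{element} (not merely each conjugacy class) of $G$ appears as $\mathrm{Frob}_\mathfrak{P}$ for infinitely many $\mathfrak{P}$; everything else is routine bookkeeping with denominators and ramification. A purely elementary variant would replace \v{C}ebotarev by Frobenius's 1880 theorem (every conjugacy class is hit by some Frobenius) together with infinitude, but the density formulation is cleanest.
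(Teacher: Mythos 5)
Your proof is correct, but it takes a genuinely different route from the paper's. The paper works directly in $K = \Q(b)$: after clearing a denominator so that $b$ is integral, it observes that $\Z[b]$ has the same $\Z$-rank $d$ as $\sO_K$, hence for all but finitely many primes $\frak{p}$ the image of $b$ generates $\sO_K/\frak{p}$ over $\F_p$; the hypothesis then forces $\sO_K/\frak{p} = \F_p$, i.e.\ residue degree $1$, so almost every rational prime splits completely, and Kronecker's density theorem (the density of completely split primes equals $1/d$, or more precisely one over the degree of the Galois closure) forces $d=1$. Your argument instead passes to the Galois closure $L$, reinterprets the hypothesis as $\mathrm{Frob}_\frak{P}(b) \equiv b \pmod{\frak{P}}$, and uses \v{C}ebotarev (or, as you observe, already Frobenius's theorem) to realize each $g \in \Gal(L/\Q)$ as $\mathrm{Frob}_\frak{P}$ for infinitely many $\frak{P}$, concluding $g(b) = b$ for all $g$. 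Both routes need a density input; the paper's uses the historically earlier and logically weaker Kronecker density statement and avoids explicit mention of the Galois closure or Frobenius elements, while yours is more structural and makes the Galois-theoretic mechanism transparent. One small point worth making explicit in your write-up: the hypothesis concerns primes $\frak{p}$ of $K$, so you should note that a prime $\frak{P}$ of $L$ is ``bad'' only if it lies over one of the finitely many bad $\frak{p}$, and since each $\frak{p}$ has at most $[L:K]$ primes above it, the bad $\frak{P}$ are again finite in number; you gesture at this but it deserves a sentence.
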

\begin{proof}
After multiplying $b$ by an element  in  $\Z \setminus \{0\}$, we may assume that $b\in \sO_{\Q(b)}$, so generates the subring  $\Z[b]\subset \sO_{\Q(b)}$ which is free of  rank $d$ over $\Z$, which is also the rank of $\sO_{\Q(b)}$ over $\Z$. 
Hence for all but finitely many primes $\frak{p}$ of $\sO_{\Q(b)}$, $\sO_{\Q(b)}/\frak{p}$ is spanned by $(b \  {\rm mod} \  \frak{p})$ over $\F_p$. 
So the condition 
 $(b \ {\rm mod} \ \frak{p)} \in \F_p\subset \sO_{\Q(b)}/\frak{p}$ is equivalent to $(b  \ {\rm mod} \ \frak{p)}$ being completely split.  By  Kronecker's density theorem,  this implies $d=1$.

\end{proof}
\subsubsection{Translation of Kronecker's algebraic criterion in terms of differential equations}
We consider the complex algebraic variety $$X={\rm Spec} \  \sO(X), \  \sO(X)= \C [t, t^{-1}]$$ and on it the linear differential equation
\ga{}{(\star) \  \  \  \frac{df}{f}=b \frac{dt}{t} \notag }
for some $b\in \C$. It has  the analytic solution
\ga{}{f_\lambda(t)=\lambda t^b, \ \lambda\in \C.\notag}
Then 
\begin{lem} \label{lem:alg}
For $\lambda\neq 0$, $f_\lambda$ is algebraic over $\sO(X)$ if and only if $b\in \Q$ if and only if $f_\lambda$ is  integral over $\sO(X)$.
\end{lem}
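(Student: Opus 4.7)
The plan is to prove the chain of implications: integral $\Rightarrow$ algebraic (trivial), $b\in\Q \Rightarrow$ integral, and algebraic $\Rightarrow b\in\Q$, so that all three conditions are equivalent.

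First I would dispose of the easy directions. Integrality over $\sO(X)$ means $f_\lambda$ satisfies a monic polynomial with coefficients in $\sO(X)$, which is a fortiori an algebraic condition. Conversely, if $b=p/q$ with $p\in\Z$, $q\in\N_{>0}$, then $f_\lambda^q = \lambda^q t^p$, which makes sense as an element of $\sO(X) = \C[t, t^{-1}]$ (since negative powers of $t$ are invertible). Hence $f_\lambda$ is a root of the monic polynomial $Y^q - \lambda^q t^p \in \sO(X)[Y]$, so $f_\lambda$ is integral over $\sO(X)$.

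For the main direction, algebraic $\Rightarrow b\in\Q$, the strategy is to use monodromy. The analytic function $f_\lambda(t) = \lambda t^b$ is multivalued on $X(\C) = \C^\times$; it becomes single-valued on the universal cover $\tilde{X} = \C$, identified with $X(\C)$ via $z\mapsto e^z = t$. On this universal cover $f_\lambda$ pulls back to the entire function $\lambda e^{bz}$, and the deck transformation $\tau: z\mapsto z + 2\pi\sqrt{-1}$ acts on pulled-back meromorphic functions, fixing the image of $\sO(X) = \C[e^z, e^{-z}]$. Its effect on $f_\lambda$ is multiplication by $e^{2\pi\sqrt{-1}\, b}$, and more generally $\tau^n(f_\lambda) = e^{2\pi \sqrt{-1}\, nb}\, f_\lambda$.

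Now assume $f_\lambda$ is algebraic over $\sO(X)$, and let $P(Y)\in\sO(X)[Y]$ be a nonzero polynomial with $P(f_\lambda)=0$. Applying $\tau^n$ and using that $\tau$ fixes the coefficients of $P$ yields $P(\tau^n(f_\lambda)) = 0$ for every $n\in\Z$. Thus the infinite family $\{e^{2\pi\sqrt{-1}\, nb}\,\lambda\, e^{bz}\}_{n\in\Z}$ consists of roots of $P$ in the field of meromorphic functions on $\tilde{X}$. Since $\lambda\neq 0$, this forces the set $\{e^{2\pi\sqrt{-1}\, nb}\}_{n\in\Z}$ to be finite, so $e^{2\pi\sqrt{-1}\, b}$ is a root of unity, i.e.\ $b\in\Q$. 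The main obstacle here is conceptual rather than computational: one must be comfortable interpreting the multivalued $t^b$ on the universal cover and observing that algebraicity forces the Galois/monodromy orbit to be finite. Once that is clear, the three equivalences close up immediately.
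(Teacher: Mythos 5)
Your proof is correct, and the main implication (algebraic $\Rightarrow b\in\Q$) is handled by a genuinely different mechanism than the paper's. The paper works formally at the puncture: it observes that algebraicity of $f_\lambda$ over $\sO(X)$ gives algebraicity over the Laurent series field $\C((t))$, and then invokes the Puiseux description $\overline{\C((t))}=\bigcup_n\C((t^{1/n}))$ (the finite extensions of $\C((t))$ are exactly the fields $\C((t^{1/n}))$) to force $t^b\in\C((t^{1/n}))$, hence $b\in\frac{1}{n}\Z$. You instead argue globally and topologically: pulling back to the universal cover $\C\to\C^\times$, the deck group acts on the finite set of roots of the minimal polynomial, and since $\tau^n(f_\lambda)=e^{2\pi\sqrt{-1}nb}f_\lambda$ with $\lambda\neq 0$, this orbit is finite only if $e^{2\pi\sqrt{-1}b}$ is a root of unity. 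The two arguments are dual in spirit --- formal/local Galois theory of $\C((t))$ versus monodromy of the topological $\pi_1$ --- and both use the same dichotomy chain (integral $\Rightarrow$ algebraic $\Rightarrow b\in\Q \Rightarrow$ integral). Your version has the small advantage of directly exhibiting the finite-monodromy interpretation that the paper takes up immediately afterward in the remark following the lemma (where condition (iii) is finiteness of the monodromy of $(\star)$), and it is closer to the global viewpoint underlying Grothendieck's conjecture; the paper's version is more self-contained algebraically and avoids any discussion of the universal cover. Both are sound.
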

\begin{proof}
If $ \Q\ni b=\frac{m}{n}, \ m,0\neq n  \in \Z$ then $f_\lambda^n \in \sO(X)$ so $f_\lambda$ is integral over $\sO(X)$. Assume  now $f_\lambda$ is algebraic over $\sO(X)$, so in particular $f_\lambda$  is algebraic over the Laurent power series field $\C((t))$ containing  $\sO(X)$. So $f_\lambda$ defines a finite field extension  $$\C((t))\hookrightarrow \C((t))(f_\lambda).$$
As  the embedding $$\cup_{n\in \N} \C((t^{1/n})) \hookrightarrow\overline{\C((t))}$$ is an equality,  $C((t))(f_\lambda)$ must be one of the fields $\C((t^{1/n})) $ thus $b\in \Q$, which also implies that $f_\lambda$ is integral over $\sO(X)$.

\end{proof}
\begin{rmk} For $\lambda\neq 0$, the following conditions are equivalent:
\begin{itemize}
\item[(i)]  $f_\lambda$ is algebraic over the field of fractions ${\rm Frac}(\sO(X))=\C(t)$;
\item[(ii)] $f_\lambda$ is integral  over $\sO(X)$;
\item[(iii)] the monodromy of $(\star)$ is finite. 
\end{itemize}
\begin{proof}
(i) $\Longrightarrow$ (ii):  $f_\lambda$,  as a solution of $(\star)$ is analytic on $X$, and by the condition  (i), $f_\lambda$  lies in $\overline{\C(t)}$. Thus  $f_\lambda$ lies in the integral closure $\overline{\sO(X)} \subset \overline{ \C(t)}$.\\
(ii) $\Longrightarrow$ (iii):  Recall that by Lemma~\ref{lem:alg}, $f_\lambda=\lambda t^b$.  As $f_\lambda$ is integral over $\sO(X)$, it is integral over $\C((t))$. So by Kummer theory, $b\in \Q$. 
On the other hand,   the restriction $\gamma^* f_\lambda$ of $f_\lambda$ to  the path $\big( \gamma: [0  \ 1] \to , \tau \mapsto {\rm exp} (2\pi \sqrt{-1} \tau)\big)$ is the function $\big([0 \  1]\to \C, \ \tau \mapsto \lambda {\rm exp}(2\pi \sqrt{-1} b\tau)\big )$. So the monodromy $a\in \C^\times$ on $\gamma$   is computed as  the ratio $$a=\gamma^*f_\lambda(\tau=1)/\gamma^*f_\lambda(\tau=0)={\rm exp}(2\pi \sqrt{-1}b),$$ 
 which is a root of unity. \\
(iii) $\Longrightarrow$ (i): If $a$ is a root of unity, then $b$ is a rational number so (i) holds.

\end{proof}

\end{rmk}
We now assume $b\in \bar \Q$. For almost all  primes $\frak{p}$ of $\Q(b)$ where it makes sense,  that is for which $b$ is integral in the  corresponding $p$-adic field, we consider the differential equation 
\ga{}{ (\star)_{\frak{p}}  \   \  \  \frac{df}{f}=(b \ {\rm mod} \ \frak{p}) \frac{dt}{t} \notag  }
 which is simply $(\star)$ by viewed over $\sO_{\Q(b)}/\frak{p}$.  We write $\sO_\Z(X)=\Z[t, t^{-1}]$ so $\sO(X)=\sO_\Z\otimes_{\Z}\C$. The only way to make sense of the  solutions 
 $$\lambda t^b \ {\rm  for} \  \lambda \in \sO_{\Q(b)}/\frak{p}$$ is to request them to lie
 in $ \sO_\Z(X)\otimes_{\Z} ( \sO_{\Q(b)}/\frak{p})$. For $\lambda\neq 0$ in  $\sO_{\Q(b)}/\frak{p}$,
  $\lambda t^b$ lies in  $ \sO_\Z(X)\otimes_{\Z} ( \sO_{\Q(b)}/\frak{p})$
  if and only if $(b \ {\rm mod} \ \frak{p}) $ lies in $\F_p\subset \sO_{\Q(b)}/\frak{p}$.
  
  \medskip
  
 We  now  summarize the discussion.
 \begin{thm} \label{thm:gr_gm}
 If $b\in \bar \Q$, the differential equation $(\star)$ has  one non-trivial 
  algebraic solution over $\sO(X)$  if and only if  for almost all primes $\frak{p}\in \sO_{\Q(b)}$, 
 $(\star)_{\frak{p}}$  has one non-trivial  solution  in $\sO_\Z(X)\otimes_{\Z}  (\sO_{\Q(b)}/\frak{p})$.  
 
 \end{thm}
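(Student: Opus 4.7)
The plan is to assemble Theorem \ref{thm:gr_gm} directly from the three ingredients just proved: Lemma \ref{lem:alg} (which characterises algebraicity of $f_\lambda=\lambda t^b$ by $b\in\Q$), Proposition \ref{prop:Kronecker_algebraic} (Kronecker's algebraic criterion), together with the observation already made in the paragraph before the theorem, namely that for $\lambda\ne 0$ in $\sO_{\Q(b)}/\frak{p}$, the formal expression $\lambda t^b$ lies in $\sO_\Z(X)\otimes_\Z(\sO_{\Q(b)}/\frak{p})=(\sO_{\Q(b)}/\frak{p})[t,t^{-1}]$ precisely when $(b\ \mathrm{mod}\ \frak{p})\in\F_p\subset\sO_{\Q(b)}/\frak{p}$. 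So the theorem is essentially a matching of these three equivalences.

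For the forward direction, I would take a non-trivial algebraic solution of $(\star)$. By Lemma \ref{lem:alg} this forces $b\in\Q$, say $b=m/n$ with $m\in\Z$ and $0\ne n\in\Z$. Then for every prime $\frak{p}$ of $\sO_{\Q(b)}=\Z$ not dividing $n$, the class $(b\ \mathrm{mod}\ \frak{p})$ lies in $\F_p$, and if $N\in\Z$ is any integer representative of this class, then $t^N\in\sO_\Z(X)$ satisfies $(\star)_\frak{p}$ modulo $\frak{p}$ (this is a direct computation: applying $d$ and comparing coefficients shows $N\, t^{N-1}\, dt=(b\ \mathrm{mod}\ \frak{p})\, t^{N-1}\, dt$). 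This produces the required non-trivial solutions in $\sO_\Z(X)\otimes_\Z(\sO_{\Q(b)}/\frak{p})$ for almost all $\frak{p}$.

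For the reverse direction, suppose that for almost every prime $\frak{p}$ there exists a non-trivial $g\in(\sO_{\Q(b)}/\frak{p})[t,t^{-1}]$ solving $(\star)_\frak{p}$. Writing $g=\sum a_i t^i$ and plugging into $df=(b\ \mathrm{mod}\ \frak{p})\, f\, dt/t$, I would get $i\, a_i=(b\ \mathrm{mod}\ \frak{p})\, a_i$ for every $i\in\Z$; non-triviality forces some $a_i\ne 0$, hence $(b\ \mathrm{mod}\ \frak{p})\equiv i\pmod{p}$, i.e. $(b\ \mathrm{mod}\ \frak{p})\in\F_p\subset\sO_{\Q(b)}/\frak{p}$. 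Applying Proposition \ref{prop:Kronecker_algebraic} to this condition holding for almost all $\frak{p}$ yields $b\in\Q$, and Lemma \ref{lem:alg} then produces the non-trivial algebraic solution of $(\star)$ over $\sO(X)$.

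There is no genuinely hard step; the one thing to be slightly careful about is the bookkeeping in the reverse direction, namely verifying that a non-zero Laurent polynomial solution of $(\star)_\frak{p}$ forces $(b\ \mathrm{mod}\ \frak{p})$ to be the image of a rational integer in $\sO_{\Q(b)}/\frak{p}$, so that Kronecker's algebraic criterion applies. Once that elementary coefficient-matching is in place, the theorem follows by chaining the two equivalences.
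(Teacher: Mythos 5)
Your proof is correct and follows the same route as the paper: chain Lemma~\ref{lem:alg}, Proposition~\ref{prop:Kronecker_algebraic}, and the observation that a non-trivial Laurent-polynomial solution of $(\star)_{\frak{p}}$ exists precisely when $(b \bmod \frak{p}) \in \F_p$. The paper presents this as a one-line sequence of equivalences citing those three facts, whereas you usefully make the last one explicit via the coefficient-matching computation $i\,a_i = (b \bmod \frak{p})\,a_i$; that is a detail the paper leaves as a "previous discussion," not a genuine difference in method.
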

 \begin{rmk}
 The last condition, to have  one non-trivial solution, or equivalently, as the differential equation has rank $1$, to have a full set of solutions in  $\sO_\Z(X)\otimes_{\Z}  (\sO_{\Q(b)}/\frak{p})$, which again   by definition  is equivalent to saying that 
 the  $p$-curvature of the differential equation  $(\star)_{\frak{p}}$ vanishes.  We do not give  here the definition of the $p$-curvature itself, as  we do not need 
 it as such, we only need what it means for it to vanish  (or later in the Lecture  to be nilpotent). 
  
 \end{rmk}
  \begin{proof}[Proof of Theorem~\ref{thm:gr_gm}]  
   $(\star)$ has a full set of algebraic solutions over $\sO(X)$  
   \begin{center}  if and only if \end{center}
    $b\in \Q$  (Lemma~\ref{lem:alg})
    \begin{center}  if and only if  \end{center}
    $(b \ {\rm mod} \ \frak{p}) \in \F_p\subset (\sO_{\Q(b)}/\frak{p})$ for almost all $p$ (Proposition~\ref{prop:Kronecker_algebraic}) 
    \begin{center} if and only if  \end{center}
    $(\star)_{\frak{p}}$ has a full set of solutions in $\sO_\Z(X)\otimes_{\Z}  (\sO_{\Q(b)}/\frak{p})$ (previous discussion). 
  
  \end{proof}
The content of Grothendieck's $p$-curvature  conjecture is to predict that Theorem~\ref{thm:gr_gm} extends to any  smooth complex quasi-projective variety $X$ and any linear differential equation  $(\star)$. We shall explain the formulation and a generalization of it without ever mentioning the definition of the $p$-curvature.

\subsection{Grothendieck's $p$-curvature conjecture}
  Let us first formulate what {\it a posteriori} is equivalent to the $p$-curvature conjecture.
  
  \medskip
  
   We write $X={\rm Spec} \sO(X), \ X={\rm Spec} \C[t,t^{-1}, (t-1)^{-1}]$. Then we pose the linear differential 
equation
\ga{}{(\star) \  \ \  \frac{df}{f}=b \frac{dt}{t} + c \frac{dt}{t-1} \notag }
where now $$b=(b_{ij}), \ c=(c_{ij}) \in M_r(\bar \Q).$$ For almost all  primes $\frak{p}$ of $\Q(b_{ij}, c_{ij}), \  1\le i,j\le r$ where it makes sense, we consider the differential equation 
\ga{}{ (\star)_{\frak{p}}  \   \   \
\frac{df}{f}=(b \ {\rm mod} \ \frak{p}) \frac{dt}{t} + (c \ {\rm mod} \ \frak{p}) \frac{dt}{t-1} \notag  }
 which is simply $(\star)$ but viewed over $\sO_{\Q(b_{ij})}/\frak{p}$.  We write $\sO_\Z(X)=\Z[t, t^{-1}, (t-1)^{-1}]$ so $\sO(X)=\sO_\Z(X)\otimes_{\Z}\C$.
 
 \medskip
 
 We say that $(\star)_p$ has a full set of solutions if it has a set of $r$ solutions 
 $$(f_1,\ldots, f_r), \ f_i \in  \sO_\Z(X)\otimes_{\Z}  (\sO_{\Q(b_{ij})}/\frak{p}),$$ which are linearly independent over $ \sO_{\Q(b_{ij} )} /\frak{p}.$
 Similarly we say that 
  $(\star)$ has a full set of algebraic solutions if it has a set of $r$ solutions 
 $(f_1,\ldots, f_r)$ where $f_i$ is algebraic over $\sO(X)$, and  which are linearly independent over  $\C$.

 \begin{conj} \label{conj:pcurvP}
 The differential equation $(\star)$ has a full set of algebraic solutions over $\sO(X)$   if and only if $(\star)_{\frak{p}}$  has a full set of solutions in 
 $\sO_\Z(X)\otimes_{\Z}  (\sO_{\Q(b_{ij}})/\frak{p})$  (that is its $p$-curvature vanishes).
 \end{conj}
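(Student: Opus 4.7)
The plan is to prove the two implications separately. The forward direction (algebraic solutions imply vanishing $p$-curvature for almost all $\frak{p}$) is the classical easy half of Grothendieck's conjecture; the reverse implication is the genuinely conjectural content, and I would attempt it by reducing, as in the warm-up, to Theorem~\ref{thm:gr_gm} and thereby to Kronecker's algebraic criterion (Proposition~\ref{prop:Kronecker_algebraic}).

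\textbf{Forward direction.} Suppose $(\star)$ admits a fundamental solution matrix $F$ whose entries are algebraic over $\sO(X)$. Adjoining the entries of $F$ and normalising produces a finite \'etale cover $\pi \colon \tilde X \to X$ on which the connection associated to $(\star)$ becomes trivial. Spreading $\pi$, $F$, $b$, $c$ over a ring of $S$-integers $R$ in the number field $\Q(b_{ij}, c_{ij})$, one sees that for almost all primes $\frak{p}$ of $R$ the reduction $\tilde X_{\frak{p}} \to X_{\frak{p}}$ remains finite \'etale and $F \bmod \frak{p}$ still trivialises $\pi^{*}(\star)_{\frak{p}}$. Since the $p$-curvature is functorial under \'etale pullback, the $p$-curvature of $(\star)_{\frak{p}}$ vanishes, and on the affine variety $X_{\frak{p}}$ this translates into the desired full set of solutions in $\sO_{\Z}(X) \otimes_{\Z} (\sO_{\Q(b_{ij})}/\frak{p})$.

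\textbf{Reverse direction.} Granting vanishing $p$-curvature mod $\frak{p}$ for almost all $\frak{p}$, the strategy is a three-step reduction. First, by Jordan--H\"older one reduces to the case where $(b, c)$ defines an irreducible representation of $\pi_{1}(X(\C))$. Second, since the coefficients are algebraic, Deligne's regularity theorem produces quasi-unipotent local monodromies with algebraic eigenvalues; vanishing of the $p$-curvature at each puncture $0, 1, \infty$ should, via a Turrittin--Levelt analysis, force the exponents of the connection to reduce into $\F_{p} \subset \sO_{\Q(b_{ij})}/\frak{p}$ for almost all $\frak{p}$, so that Proposition~\ref{prop:Kronecker_algebraic} renders each exponent rational and every local monodromy eigenvalue a root of unity. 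Third, promote this local information to finiteness of the global monodromy representation $\rho \colon \pi_{1}(X(\C)) \to \GL_{r}(\C)$; once $\rho$ has finite image, the flat sections of $(\star)$ are algebraic on the associated finite \'etale cover and thus algebraic over $\sO(X)$ after descent.

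\textbf{Main obstacle.} The third step is the heart of the $p$-curvature conjecture and has no unconditional proof. In the solvable case the passage to rank one through Kummer theory and Theorem~\ref{thm:gr_gm} goes through by the theorems of Chudnovsky--Chudnovsky, Bost and Andr\'e alluded to in Lecture~\ref{sec:intro}; in the Gau{\ss}--Manin case Katz supplies the missing geometric input. But for a general irreducible representation I do not see how to exclude a Zariski-dense image in a semisimple subgroup of $\GL_{r}$ whose local eigenvalues are all roots of unity while its global image remains infinite, and it is precisely this gap that Conjecture~\ref{conj:pcurvP} asserts cannot occur.
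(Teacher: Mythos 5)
The statement you are asked to prove is labeled as a \emph{conjecture} in the paper, not a theorem, and the paper does not supply a proof: after stating Conjecture~\ref{conj:pcurvP}, the text devotes Claims~\ref{claim:res}--\ref{claim:yves} (together with the Lefschetz and Belyi reductions) to showing that this special case is \emph{equivalent} to the general Grothendieck $p$-curvature Conjecture~\ref{conj:pcurv}, which remains open. You correctly identify this: your forward direction is the classical easy half (indeed it is essentially the étale-cover spreading-out argument underlying Claim~\ref{claim:cover}, though one must be slightly careful that ``adjoining the entries of $F$'' yields a possibly ramified finite cover, and a monodromy argument as in Claim~\ref{claim:cover} is needed to pass to an unramified one), and you openly concede that your third step in the reverse direction is exactly the open problem. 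That candor is the right thing here.

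One technical comment on your step (2): the regularity and quasi-unipotence of local monodromies do not follow from algebraicity of the coefficients, but from the hypothesis of vanishing $p$-curvature for almost all $\frak{p}$, via Katz's Theorem~13.0 of \cite{Kat70} (recorded as Claim~\ref{claim:res} in the paper). Moreover the passage from ``exponents rational'' to ``local monodromy eigenvalues are roots of unity'' to ``global monodromy finite'' cannot, in general, be made: a Zariski-dense subgroup of a simple group may very well have every local generator of finite order. Since this is precisely what the conjecture asserts cannot happen in this differential-algebraic setting, no amount of local analysis substitutes for the missing global argument. In short: there is no gap to repair because there is no theorem to prove; the useful thing to extract from this lecture is the chain of reductions (tame compactification via Claim~\ref{claim:res}, descent to curves and number fields via Lefschetz and André, then to $\mathbb P^1\setminus\{0,1,\infty\}$ via Belyi) establishing that Conjecture~\ref{conj:pcurvP} is no special case but the full conjecture in disguise.
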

 The initial Grothendieck's conjecture is documented in Katz's article \cite[Introduction]{Kat72}: let $X$ be a smooth quasi-projective variety over $\C$, $(E,\nabla)$ be an integrable algebraic connection. Let $S$ be a scheme of finite type over $\Z$ so $(X,(E,\nabla))$ descends to $(X_S, (E_S,\nabla_S))$ and has good reduction at all closed points $s\in S$. Then the prediction is the following conjecture.
 \begin{conj}[Grothendieck's $p$-curvature conjecture] \label{conj:pcurv}
 $(E,\nabla)$ has a full set of algebraic solutions if and only there is a dense open $S^\circ \subset S$ such that for all $s\in S^\circ$ the reduction $(E_s,\nabla_s)$ on $X_s$   has a full set of solutions  (that is its $p$-curvature vanishes). 
 \end{conj}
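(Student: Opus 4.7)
The forward implication is immediate: a full set of algebraic solutions over $\sO(X)$ spreads, after shrinking $S$, to solutions of $(E_S,\nabla_S)$ on an \'etale cover, and reducing modulo any closed point $s \in S$ gives a full set of solutions for $(E_s,\nabla_s)$, i.e.\ vanishing $p$-curvature. So the content is the converse, which the excerpt itself flags as open. My plan is to describe a strategy that (i) reduces Conjecture~\ref{conj:pcurv} to its simplified form Conjecture~\ref{conj:pcurvP}, and (ii) indicates where the loop can actually be closed.

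First I would aim for $\dim X=1$. Full algebraicity of solutions is a statement about the monodromy representation, and for a sufficiently general smooth curve $C\hookrightarrow X$ a Lefschetz-type argument makes the restriction $\pi_1(C)\to \pi_1(X)$ carry enough information on monodromy groups; the $p$-curvature hypothesis is obviously preserved under pullback. After normalising and compactifying the curve, I would pull back along a Belyi-type tame cover to land on $X=\P^1\setminus\{0,1,\infty\}$; writing the connection in terms of the two logarithmic generators $dt/t$ and $dt/(t-1)$ puts us in the shape of Conjecture~\ref{conj:pcurvP}, and the simplified statement is then applied directly.

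To attack Conjecture~\ref{conj:pcurvP} in the cases where it is accessible, I would follow Katz's approach via the analytic Kronecker criterion. Assume $(\star)_{\frak p}$ has a full set of solutions for almost all $\frak p$. When $(E,\nabla)$ is geometric, the corresponding $\ell$-adic Gau{\ss}--Manin local system has Frobenius action at closed points; the vanishing of the $p$-curvature mod $\frak p$ forces the Frobenius eigenvalues on flat sections to be $p$-adic units, while Deligne's purity in Weil~II forces them to have absolute value $1$ under every complex embedding. Proposition~\ref{prop:Kronecker_analytic} then concludes that these eigenvalues are roots of unity, so a power of Frobenius acts trivially on a dense set of conjugacy classes, and \v{C}ebotarev yields finiteness of the monodromy, hence algebraicity of the solutions. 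The solvable case is handled analogously, with Chudnovsky--Bost--Andr\'e transcendence estimates replacing Weil~II; the rank one model is precisely Theorem~\ref{thm:gr_gm}.

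The main obstacle, and the reason Conjecture~\ref{conj:pcurv} is still open, is precisely the step where one upgrades the prime-by-prime mod-$p$ vanishing into a coherent arithmetic object (like an $\ell$-adic companion with Frobenius structure) on which Kronecker's criterion can be wielded. Without a geometric origin or a solvability assumption, no mechanism is presently available to manufacture the required compatibility across primes, and this is the point at which I expect any serious attempt to stall. My honest expectation is therefore to carry out the reductions cleanly and then import the known cases, rather than to settle the conjecture.
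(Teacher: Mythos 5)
You correctly recognize that this is a conjecture, that the forward implication is easy, and that the converse is open, so there is no ``proof in the paper'' to compare against; what the paper supplies instead is the chain of reductions to Conjecture~\ref{conj:pcurvP} and Katz's theorem in the Gau{\ss}--Manin case. Your outline of the reduction is broadly in the right spirit but skips two steps the paper makes essential. First, before one can compactify the curve and apply Belyi, one needs to know that $(E,\nabla)$ extends to a connection on the compactification at all: this is Claim~\ref{claim:res} (Katz, \cite[Theorem~13.0]{Kat70}), which derives regular singularity and finiteness of the local monodromies at infinity from the $p$-curvature hypothesis. Without it, ``normalise and compactify'' is not a licit move. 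Second, Belyi's theorem only applies to curves defined over $\overline{\Q}$, so one needs Andr\'e's descent result (Claim~\ref{claim:yves}, \cite[Th\'eor\`eme~0.6.1]{And04}) to reduce from $\C$ to a number field before invoking Belyi; you go directly from a curve over $\C$ to $\P^1\setminus\{0,1,\infty\}$, which is a genuine gap. The paper's order is: pass to a finite cover (Claim~\ref{claim:cover}), extend to a good compactification (Claim~\ref{claim:res}), cut down to a projective curve by Lefschetz, descend to a number field (Andr\'e), then apply Belyi.

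Your account of Katz's proof in the geometric case does not match the paper's (nor Katz's original argument). The mechanism is not Frobenius eigenvalues being $p$-adic units combined with Weil~II purity and \v{C}ebotarev. Katz's argument, as the paper presents it, first uses the mod-$p$ hypothesis to show that the Kodaira--Spencer map dies, hence that the Hodge filtration is horizontal for the Gau{\ss}--Manin connection; it then identifies the monodromy group with a subgroup of the automorphisms of the integral polarized Hodge structure of a fibre, i.e.\ with the intersection of a discrete group ${\rm Aut}(H^i(Y_x,\Z))$ with a compact group, which is finite by a variant of Kronecker's analytic criterion (Proposition~\ref{prop:Kronecker_analytic}). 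This is an archimedean finiteness argument on the monodromy itself, not a Frobenius-eigenvalue argument; Weil~II postdates Katz's 1972 paper and plays no role here. The transference you describe (``$p$-adic units plus all archimedean absolute values one implies root of unity'') is a reasonable heuristic but is not the route taken, and in particular it would require an $\ell$-adic companion structure that is unavailable in the general, non-geometric setting — a point you do flag, but it also undercuts your reading of Katz's proof.
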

We  explain briefly why  Conjecture~\ref{conj:pcurv} is equivalent to its special case Conjecture~\ref{conj:pcurvP}. 

\begin{claim}[\cite{Kat70}, Theorem~13.0] \label{claim:res}
 If there is a dense open $S^\circ \subset S$ such that for all $s\in S^\circ$ the reduction $(E_s,\nabla_s)$  has a full set of solutions, then $(E,\nabla)$ is regular singular and has finite monodromies at infinity.

\end{claim}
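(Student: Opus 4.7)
The plan is to reduce both assertions to dimension one, use the Turrittin--Levelt classification to rule out irregular singularities, and then apply Kronecker's algebraic criterion (Proposition~\ref{prop:Kronecker_algebraic}) to the eigenvalues of the residue to force the local monodromy to be of finite order.

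First I would choose a good normal crossings compactification $\bar X \supset X$. By Deligne's criterion, regularity and finiteness of local monodromy along a boundary component $D_i$ can both be tested after restriction along a smooth curve $C \hookrightarrow \bar X$ meeting $D_i$ transversally at a single smooth point. The restriction $(E_S,\nabla_S)|_{C_S}$ still has a full set of flat sections modulo $s$ for almost all $s \in S$, because restriction of a connection along a smooth subvariety commutes with reduction mod $p$ and preserves horizontal sections. So I may assume $X$ is a smooth curve and work on the formal punctured disk at a single puncture $p$.

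Next, to exclude irregularity, apply the Turrittin--Levelt decomposition: after a finite ramified base change $t \mapsto t^{1/n}$, the formal germ of $(E,\nabla)$ decomposes as a direct sum of pieces of the form $e^{q(t^{-1/n})} \otimes R_\alpha$, with $R_\alpha$ regular singular of rank one and exponent $\alpha$. A summand with $q \neq 0$ has strictly positive irregularity, and this irregularity is detected as a positive slope of the Newton polygon of the $p$-curvature of any integral model for almost all $p$. In particular the $p$-curvature is then nonvanishing mod $p$ for almost all $p$, contradicting the hypothesis; hence $q = 0$ everywhere and $(E,\nabla)$ is regular singular in the sense of Deligne.

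Finally, pick a rank-one subquotient of the associated graded with respect to the eigenfiltration of the residue. This subquotient has the shape $df/f = \alpha\, dt/t$ with $\alpha \in \bar\Q$ (after spreading out), and exactly as in the discussion preceding Theorem~\ref{thm:gr_gm}, vanishing of the $p$-curvature for almost all primes $\mathfrak p$ of $\Q(\alpha)$ forces $\alpha \bmod \mathfrak p \in \F_p$, whence $\alpha \in \Q$ by Proposition~\ref{prop:Kronecker_algebraic}. So the eigenvalues of the semisimple part of the local monodromy are of the form $\exp(-2\pi\sqrt{-1}\alpha)$ with $\alpha \in \Q$, which are roots of unity. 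A nontrivial unipotent part of the residue would produce a local horizontal section involving $t^\alpha \log t$, and since $\log t$ is not available in $\sO_\Z(X) \otimes_\Z (\sO_{\Q(\alpha)}/\mathfrak p)$, the $p$-curvature of such a piece cannot vanish mod $p$ for almost all $p$. The main obstacle I expect is the uniformity in $p$ of the Newton-polygon estimate used in the second step, which is the technical core of Katz's \cite{Kat70}; once that estimate is granted, the remainder is a combination of Deligne's regularity criteria with Kronecker.
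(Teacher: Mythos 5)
The paper does not prove Claim~\ref{claim:res}: it is cited as Theorem~13.0 of \cite{Kat70} and used as a black box, so there is no internal argument to compare against. Your outline — restrict to a smooth curve meeting the boundary divisor transversally, rule out irregularity, then feed the exponents into Kronecker — is the correct global shape, and your appeal to Proposition~\ref{prop:Kronecker_algebraic} is precisely the bridge the paper itself sets up in the rank-one discussion before Theorem~\ref{thm:gr_gm}. Where you diverge from Katz's own route is the local machinery: Katz chooses a cyclic vector and reads irregularity and the indicial polynomial off the Newton polygon of the resulting scalar operator, whereas you invoke Turrittin--Levelt and a Newton polygon of the $p$-curvature. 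Both are viable, but yours trades one uniform-in-$p$ estimate for another, and you correctly identify that estimate as the technical core you have not supplied.

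One step, however, is not merely left open but argued in a way that does not go through as written. You dispose of a nontrivial nilpotent part of the residue on the grounds that the analytic flat sections would involve $\log t$, which does not lie in $\sO_\Z(X)\otimes_\Z(\sO_{\Q(\alpha)}/\mathfrak{p})$. But the hypothesis is about flat sections of the mod-$\mathfrak{p}$ reduction of the connection, not about the nonexistence of a complex multivalued function in a ring of positive characteristic; the latter is not evidence about the former. The repair is the direct $p$-curvature computation you should be using anyway: in Fuchs normal form $\nabla = d - R\,dt/t$ the $p$-curvature along $t\partial_t$ equals $R^p-R$, and for $R=\alpha I+N$ with $N$ nilpotent and $p$ larger than the rank one has $N^p=0$ and hence $R^p-R=(\alpha^p-\alpha)I-N$. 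This vanishes if and only if $\alpha\in\F_p$ \emph{and} $N=0$, so a single formula gives you both the condition $\alpha\bmod\mathfrak{p}\in\F_p$ that feeds Kronecker and the semisimplicity of the residue, rendering the detour through rank-one subquotients of the eigenfiltration unnecessary. With that substitution, and granting the irregularity estimate you flag, the sketch is sound.
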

\begin{claim} \label{claim:cover}

$(E,\nabla)$ has a full set of algebraic solutions if and only if there is a finite  cover $h':  Y'\to X$ such that $h^{'*}(E,\nabla)$ has a full set of solutions, if and only if there is a  finite  unramified 
cover $h: Y\to X$ such that $h^{*}(E,\nabla)$ has a full set of solutions.

\begin{proof}
If  $h$ exists, the solutions of ($E,\nabla)$  in $\sO(Y)$ are algebraic solutions over $\sO(X)$. Vice-versa, assume given a basis over $\sO(X)$ of algebraic solutions $f_i$ say. The ring $\sO(X)\hookrightarrow \sO(X)[f_i]$ is then finite, thus defines a finite possibly ramified finite cover $h': Y'\to X$ so $h^{'*}(E,\nabla)$ is trivial. So the underlying monodromy representation $\pi_1(X)\to GL_r(\C)$ of $(E,\nabla)$ trivializes on $h'_*\pi_1(Y')$ in $\pi_1(X)$, which has finite index. This defines a finite unramified cover $h:Y\to X$ with $\pi_1(Y)=h'_*\pi_1(Y')$ such that $h^*(E,\nabla)$ is trivial.

\end{proof}

\end{claim}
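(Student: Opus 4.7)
The plan is to establish the chain of equivalences by treating the two easy directions first and then focusing on the main implication, which promotes a (possibly ramified) finite cover to an étale one.

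The implication from an étale cover to a finite cover is tautological. For the finite-to-algebraic direction: if $h' : Y' \to X$ is finite and $f_1, \ldots, f_r \in \sO(Y')$ form a full set of $\C$-linearly independent solutions of $h^{'*}(E,\nabla)$, then each $f_i$ is algebraic over $\sO(X)$ since $\sO(Y')$ is integral over $\sO(X)$; interpreting the $f_i$ as multivalued solutions of $(E,\nabla)$ on $X$ preserves their $\C$-linear independence, giving a full set of algebraic solutions.

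For the remaining direction I proceed in two steps. Given a full set of algebraic solutions $f_1, \ldots, f_r$ of $(E,\nabla)$, I form the $\sO(X)$-subalgebra $R := \sO(X)[f_1, \ldots, f_r]$ inside the algebraic closure of $\Frac(\sO(X))$. After replacing each $f_i$ by a suitable $\sO(X)$-multiple that is integral over $\sO(X)$ (by multiplying through by the leading coefficient of its polynomial relation, or equivalently by taking the normalization of $X$ in $\Frac(\sO(X))(f_1, \ldots, f_r)$), $R$ becomes a finite $\sO(X)$-algebra, and $Y' := \Spec R$ defines a finite, possibly ramified, cover $h' : Y' \to X$ on which $h^{'*}(E,\nabla)$ is trivialized. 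To upgrade to an étale cover, I pass to monodromy: under Riemann--Hilbert the connection corresponds to a representation $\rho : \pi_1(X(\C)) \to \GL_r(\C)$, and the trivialization over $Y'$ forces $\rho$ to vanish on the finite-index subgroup $h'_*\pi_1(Y') \subset \pi_1(X)$. Hence $\rho$ factors through a finite quotient, and the connected finite covering space $h : Y \to X$ with $\pi_1(Y) = h'_*\pi_1(Y')$ promotes via Riemann existence to a finite étale cover on which $h^*(E,\nabla)$ acquires a basis of horizontal sections, i.e.\ a full set of solutions.

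The main obstacle I anticipate is ensuring that the $\sO(X)$-subalgebra generated by the $f_i$ really yields a finite cover of $X$: algebraic elements over $\sO(X)$ need not be integral, so one must either clear denominators as above or pass to the normalization in the finite field extension $\Frac(\sO(X))(f_1, \ldots, f_r)$ to force integrality. A secondary technical point is verifying that $h'_*\pi_1(Y') \subset \pi_1(X)$ has finite index and that the resulting étale cover can be chosen connected; the first is immediate from finiteness of $h'$, and the second can be arranged by restricting to a connected component or, if needed, by passing to the Galois closure of $h'$.
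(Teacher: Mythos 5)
Your proposal follows essentially the same route as the paper: pass from algebraic solutions to a finite (possibly ramified) cover $Y'=\Spec \sO(X)[f_i]$ trivializing the pull-back, then use that the monodromy kills the finite-index subgroup $h'_*\pi_1(Y')$ to produce an \'etale cover $Y\to X$ via the Riemann existence theorem. Your extra care about integrality is a reasonable precaution, though the paper implicitly relies on the fact that horizontal sections are holomorphic on all of $X$, so an algebraic horizontal section is automatically integral over $\sO(X)$ (no poles to clear), which is why it can pass directly to $\sO(X)[f_i]$ being finite.
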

\begin{claim}
Conjecture~\ref{conj:pcurv} in general is equivalent to its special case when $X$ is a smooth projective curve.
\end{claim}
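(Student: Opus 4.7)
The forward implication is trivial since smooth projective curves form a special case of smooth quasi-projective varieties, so I focus on the reverse: assuming Conjecture~\ref{conj:pcurv} for all smooth projective curves, I want to derive it for arbitrary smooth quasi-projective $X/\C$. The ``algebraic solutions $\Rightarrow$ vanishing $p$-curvature'' direction is easy and follows, via Claim~\ref{claim:cover}, from the fact that the trivial connection on a finite étale cover $Y \to X$ has vanishing $p$-curvature mod $p$. The real work is the converse.

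First I would reduce to the case that $X$ is smooth projective. By Claim~\ref{claim:res}, the hypothesis on $p$-curvatures implies that $(E,\nabla)$ is regular singular with finite local monodromies at infinity on any good compactification $\bar X \supset X$ whose boundary $D = \bigcup_i D_i$ is a simple normal crossings divisor. Letting $n_i$ be the order of the local monodromy around $D_i$, a Kawamata--Viehweg style cyclic cover construction yields a smooth projective $\bar Y$ together with a finite cover $\bar h \colon \bar Y \to \bar X$, étale over $X$ and ramified to order divisible by $n_i$ along each $D_i$. Writing $h \colon Y \to X$ for the restriction of $\bar h$ over $X$ (finite étale), the pullback $h^*(E,\nabla)$ has trivial local monodromies around $\bar Y \setminus Y$, so by Deligne's canonical extension it prolongs to an algebraic integrable connection $(E',\nabla')$ on $\bar Y$. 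On a suitable finite type model over $\Z$, the $p$-curvature of $(E',\nabla')_s$ agrees with the pullback via $h$ of that of $(E_s,\nabla_s)$ on the dense open $Y_s \subset \bar Y_s$, and therefore vanishes on all of $\bar Y_s$ because $p$-curvature is an $\sO$-linear tensor with separated target. By Claim~\ref{claim:cover}, it then suffices to prove that $(E',\nabla')$ has a full set of algebraic solutions on the smooth projective variety $\bar Y$.

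Next I would cut down from $\dim \bar Y = d$ to dimension $1$ via Lefschetz. Choose a smooth projective curve $C \subset \bar Y$ obtained as the complete intersection of $d-1$ sufficiently general, sufficiently ample smooth hypersurfaces; iterated application of the Zariski--Lefschetz theorem on fundamental groups yields a surjection $\pi_1(C) \twoheadrightarrow \pi_1(\bar Y)$. The restriction $(E',\nabla')|_C$ has vanishing $p$-curvature mod $p$ for almost all $p$ (as the pullback of a vanishing tensor), so the assumed curve case of the conjecture gives it a full set of algebraic solutions on $C$; by Claim~\ref{claim:cover} its monodromy representation $\pi_1(C) \to \GL_r(\C)$ has finite image. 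Surjectivity of $\pi_1(C) \twoheadrightarrow \pi_1(\bar Y)$ then forces the monodromy of $(E',\nabla')$ on $\bar Y$ to have finite image as well, and a final application of Claim~\ref{claim:cover} produces the desired algebraic solutions.

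The hardest step is the reduction to the projective case: one must simultaneously arrange that $\bar Y$ is smooth (Kawamata--Viehweg), that the Deligne extension $(E',\nabla')$ is defined over the same finite type model over $\Z$ as $(E,\nabla)$, and that the mod-$p$ $p$-curvature hypothesis therefore transports across the boundary to $\bar Y$. By contrast, the Lefschetz cutdown to curves is essentially formal once one has the theorem at the level of $\pi_1$.
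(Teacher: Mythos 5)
Your proposal is correct and follows the same route as the paper's proof: use Claim~\ref{claim:res} (regular singular, finite local monodromy) together with Claim~\ref{claim:cover} (invariance under finite covers) to pass to a finite cover that extends to a connection on a smooth projective compactification, then cut down to a curve by the Lefschetz hyperplane theorem for $\pi_1$. The paper leaves the cyclic cover, the Deligne canonical extension, and the transport of the $p$-curvature hypothesis implicit; your write-up just fills in those details explicitly.
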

\begin{proof}
By Claim~\ref{claim:cover}, Conjecture~\ref{conj:pcurv} is true for $(E,\nabla)$  on $X$ if and only if it is for $h^{'*}(E,\nabla)$ for any $h': Y'\to X$ finite cover. So by Claim~\ref{claim:res} we may assume that $(E,\nabla)$ extends to a smooth projective compactification of $X$. Then the Lefschetz hyperplane theorem reduces it to the case of a  smooth projective curve.

\end{proof}
\begin{claim}[\cite{And04}, Th\'eor\`eme 0.6.1]
 \label{claim:yves}
Conjecture~\ref{conj:pcurv}  is equivalent to its special case when $X$ is a smooth projective curve
defined over a number field. 
\end{claim}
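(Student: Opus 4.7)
The plan is to combine the previous claim (reducing Conjecture~\ref{conj:pcurv} to $X$ a smooth projective curve over $\C$) with a specialization argument that descends the data to a smooth projective curve over a number field; one then applies the assumed case of the conjecture and transfers the resulting finite-monodromy property back to $X$ via a comparison of \'etale fundamental groups. Concretely, descend $(X,(E,\nabla))$ to a finitely generated $\Z$-subalgebra $A \subset \C$, obtaining a smooth projective relative curve $\sX_A \to \Spec A$ with relative integrable connection $(\sE_A, \nabla_A)$ whose base change along $A \hookrightarrow \C$ recovers $(X,(E,\nabla))$; after shrinking, we may assume $\Spec A_\Q := \Spec(A \otimes_\Z \Q)$ is smooth over $\Q$ and that $\Spec A$ serves as the good-reduction scheme $S$ of Conjecture~\ref{conj:pcurv}. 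The $p$-curvature hypothesis then supplies a dense open $S^\circ \subset \Spec A$ at whose every closed point the $p$-curvature of the reduction vanishes.

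Because $\Spec A_\Q$ is dense in $\Spec A$ and $A_\Q$ is Jacobson, the non-empty open $S^\circ \cap \Spec A_\Q$ contains a closed point $y$, whose residue field $k(y)$ is a number field. The Zariski closure $\sY := \overline{\{y\}} \subset \Spec A$ is a one-dimensional integral scheme isomorphic to $\Spec \sO_{k(y)}[1/N]$ for some $N \geq 1$, and $\sY \cap S^\circ$ is a non-empty open of $\sY$ omitting only finitely many closed points. Restricting $(\sX_A, \sE_A, \nabla_A)$ to $\sY$ therefore supplies a good-reduction model of the smooth projective curve $(\sX_{k(y)}, \sE_{k(y)}, \nabla_{k(y)})$ over $k(y)$ at almost all of whose closed points the $p$-curvature vanishes --- precisely the hypothesis of Conjecture~\ref{conj:pcurv} for this curve over the number field $k(y)$.

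The assumed case of the conjecture now gives a full set of algebraic solutions for $(\sE_{k(y)}, \nabla_{k(y)})$, so by Claim~\ref{claim:cover} its monodromy representation of $\pi_1^{\text{\'et}}(\sX_{k(y)})$ has finite image, and a fortiori so does its restriction to the geometric \'etale fundamental group $\pi_1^{\text{\'et}}(\sX_{\bar y})$. Smooth proper specialization over the smooth base $\Spec A_\Q$ identifies $\pi_1^{\text{\'et}}(\sX_{\bar y}) \cong \pi_1^{\text{\'et}}(\sX_{\bar\eta})$ compatibly with the restriction of the global monodromy of $(\sE_A, \nabla_A)$, while invariance of $\pi_1^{\text{\'et}}$ under algebraically closed extensions in characteristic zero further identifies $\pi_1^{\text{\'et}}(\sX_{\bar\eta}) \cong \pi_1^{\text{\'et}}(X)$. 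Hence the monodromy of $(E,\nabla)$ on $X$ has finite image, and Claim~\ref{claim:cover} yields a full set of algebraic solutions for $(E,\nabla)$.

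The main technical obstacle is ensuring the compatibility of these specialization and base-change isomorphisms of \'etale fundamental groups with the global monodromy attached to $(\sE_A, \nabla_A)$: this requires careful bookkeeping of geometric base points and paths in $\Spec A_\Q$. Phrasing everything in terms of containment of images of monodromy (a base-point-free condition) rather than in terms of conjugacy classes of representations is the cleanest way to bypass this technicality; the remaining ingredients --- Jacobson property of finitely generated $\Q$-algebras, smooth proper base change, and invariance of $\pi_1^{\text{\'et}}$ under algebraically closed extensions in characteristic zero --- are standard.
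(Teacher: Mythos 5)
Your reduction to a number field fiber via the Jacobson property is fine, and the propagation of the $p$-curvature hypothesis to the number-field curve $\sX_{k(y)}$ over $\sO_{k(y)}[1/N]$ is also correct. The gap is in the final step, and it is not a bookkeeping issue but the entire content of the cited theorem. A relative flat connection $(\sE_A,\nabla_A)$ on a family $\sX_{A_\Q}\to \Spec A_\Q$ does \emph{not} give rise to a ``global monodromy'' on the total space: there is in general no flat \emph{absolute} connection extending the relative one, and consequently the fiberwise monodromy representations $\rho_s\colon \pi_1(\sX_s)\to GL_r(\C)$ genuinely vary with $s$. A concrete example: on $\sX=\G_m\times\A^1_t\to\A^1_t$ with the rank one relative connection $\nabla = d + t\,\tfrac{dx}{x}$, the monodromy at $t=t_0$ is $e^{2\pi\sqrt{-1}\,t_0}$, which is a root of unity at rational points of the base and of infinite order at transcendental ones. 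So the specialization isomorphism $\pi_1^{\text{\'et}}(\sX_{\bar y})\cong \pi_1^{\text{\'et}}(\sX_{\bar\eta})$ does \emph{not} intertwine the two monodromies, and finiteness at one number-field fiber gives you nothing at the geometric generic fiber.

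What actually closes this gap is exactly Andr\'e's Th\'eor\`eme~0.6.1, which is a non-trivial statement: if for \emph{all} closed points $s$ of the base the monodromy of the restricted connection $(\sE,\nabla)|_{\sX_s}$ is finite, then the monodromy at a geometric generic fibre is finite. This uses the full set of closed points (whose closure is the whole base), not a single one, and its proof is by no means a $\pi_1$-comparison; Andr\'e uses transcendence/height methods and the theory of $G$-functions. Your proposal, by picking one closed point and appealing to an invariance that fails, would, if correct, trivialize Andr\'e's theorem. To repair it you should apply the assumed case to \emph{every} number-field fiber $\sX_s$ for $s$ a closed point of a smooth open of $\Spec A_\Q$ (they all inherit the $p$-curvature hypothesis by the same argument you gave for $y$), and then invoke Andr\'e's specialization theorem as a black box to conclude finiteness at the generic fiber, hence on $X$.
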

In other words,  let  $f: X\to S$ be a smooth morphism between smooth quasi-projective varieties defined over a number field, let $(E,\nabla)$ be a relative flat connection on $X$. If for all closed points $s\in S$, 
the monodromy of  the restriction $(E,\nabla)|_{X_s} $  of $(E,\nabla)$ to the fibre $X_s$ is finite, that is  if there is a finite \'etale cover $h_s: Y_s\to X_s$ such that $h_s^*(E,\nabla)|_{X_s}$ is trivial, then the monodromy of $(E,\nabla)$ restricted to a geometric generic fibre $X_{\bar \eta}$ is finite as well. We remark that the arguments can be adapted to prove a characteristic $p>0$ version of the same theorem (\cite[Theorem~5.1]{EL13}).

\medskip

Applying Belyi's theorem~\cite{Bel80}, we formulate the following claim. 
\begin{claim}
Conjecture~\ref{conj:pcurv}  is equivalent to its special case when $X$ is $\P^1\setminus \{0,1,\infty\}$, that is Conjecture~\ref{conj:pcurvP}.
defined over a number field. 
\end{claim}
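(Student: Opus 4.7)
The plan is to combine Claim~\ref{claim:yves} with Belyi's theorem to reduce Conjecture~\ref{conj:pcurv} to its distinguished instance Conjecture~\ref{conj:pcurvP}. One direction is essentially immediate: Conjecture~\ref{conj:pcurvP} is literally the case of Conjecture~\ref{conj:pcurv} where $X=\P^1\setminus\{0,1,\infty\}$ and the connection is given in logarithmic normal form with $\bar\Q$--coefficients, so the general conjecture implies the special one.

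For the converse, let $C$ be a smooth projective curve over a number field (as in Claim~\ref{claim:yves}) and $(E,\nabla)$ an integrable connection on $C$ whose $p$-curvatures vanish for almost all primes. By Belyi's theorem~\cite{Bel80}, there is a finite morphism $\beta\colon C\to\P^1$, defined over a number field, ramified only over $\{0,1,\infty\}$. Set $U=\P^1\setminus\{0,1,\infty\}$ and $V=\beta^{-1}(U)\subset C$, so that $\beta|_V\colon V\to U$ is finite \'etale. Form the pushforward $(F,\nabla'):=(\beta|_V)_*(E|_V,\nabla|_V)$, an integrable connection of rank $r\cdot\deg(\beta)$ on $U$. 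By Claim~\ref{claim:res}, $(E,\nabla)$ is regular singular, hence so is $(F,\nabla')$ at $\{0,1,\infty\}$. Trivializing $F$ on the affine open $U$ and using that $H^0(\P^1,\Omega^1_{\P^1}(\log\{0,1,\infty\}))$ is spanned by $dt/t$ and $dt/(t-1)$, we may place $(F,\nabla')$ in exactly the matrix form of Conjecture~\ref{conj:pcurvP}. Crucially, since the vanishing of the $p$-curvature is equivalent to the \'etale-local existence of a full basis of flat sections, and finite \'etale pushforward preserves this local property, $(F,\nabla')$ has vanishing $p$-curvatures for almost all primes as well.

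Applying Conjecture~\ref{conj:pcurvP} to $(F,\nabla')$, one concludes that its monodromy $\tilde\rho\colon\pi_1(U)\to\GL_{r\deg(\beta)}(\C)$ has finite image. But $\tilde\rho=\Ind_{\pi_1(V)}^{\pi_1(U)}(\rho|_V)$, where $\rho\colon\pi_1(C)\to\GL_r(\C)$ is the monodromy of $(E,\nabla)$; and an induced representation from a finite index subgroup has finite image if and only if the original does (indeed the restriction of $\tilde\rho$ to $\pi_1(V)$ contains $\rho|_V$ as a direct summand, and conversely a finite index normal subgroup of $\pi_1(U)$ inside the kernel of $\rho|_V$ produces one inside the kernel of $\tilde\rho$ by taking its core). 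Hence $\rho|_V$ has finite image. Since $V=C\setminus\beta^{-1}\{0,1,\infty\}$ is the complement in $C$ of finitely many points, the natural map $\pi_1(V)\twoheadrightarrow\pi_1(C)$ is surjective, so $\rho$ itself has finite image, and Claim~\ref{claim:cover} delivers a full set of algebraic solutions for $(E,\nabla)$ on $C$.

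The principal technical obstacle is the assertion that finite \'etale pushforward preserves vanishing of the $p$-curvature. The clean way to handle it, consistent with the lecture's policy of not expanding the definition of the $p$-curvature, is to take as a working definition the existence of a full set of flat sections (\'etale) locally on the base, which is manifestly preserved by $(\beta|_V)_*$ since $\beta|_V$ is finite \'etale. The subsidiary issue of matching the pushforward to the exact shape of Conjecture~\ref{conj:pcurvP} is then formal: regular singularity of $(F,\nabla')$ on $U$ plus triviality of vector bundles on an affine curve reduces the presentation of $\nabla'$ to a $\C$-linear combination of $dt/t$ and $dt/(t-1)$ with matrix coefficients, which lie in a number field since $\beta$ and $(E,\nabla)$ are so defined.
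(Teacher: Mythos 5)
The paper offers no proof of this claim at all; it simply invokes Belyi's theorem, so your blind reconstruction is genuinely new content rather than a comparison target. Your outline is the right one and matches what the paper must have had in mind: take a Belyi map $\beta\colon C\to\P^1$, restrict to the \'etale locus $V=\beta^{-1}(U)$, push forward to $(F,\nabla')$ on $U=\P^1\setminus\{0,1,\infty\}$, apply the special case, deduce finiteness of $\rho|_V$ from finiteness of the induced representation, and conclude via $\pi_1(V)\twoheadrightarrow\pi_1(C)$. The argument that vanishing $p$-curvature is preserved by finite \'etale pushforward (via Cartier descent and the fact that $\beta|_V$ is an \'etale base change of its own Frobenius twist, so $(\beta|_V)_*F_V^*\cong F_U^*(\beta'|_V)_*$) is correct, and the group-theoretic step about induced representations from finite-index subgroups is sound.

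However, the step where you place $(F,\nabla')$ in the exact Fuchsian form $d - b\,\frac{dt}{t}-c\,\frac{dt}{t-1}$ has a real gap, and it is not ``formal'' as you suggest in your final paragraph. Trivialising $F$ on the affine curve $U$ only tells you that the connection matrix is $A(t)\,dt$ with $A\in M_r\big(\C[t,t^{-1},(t-1)^{-1}]\big)$; there is no reason, in a random trivialisation, for $A(t)\,dt$ to extend to a global section of $M_r\big(\Omega^1_{\P^1}(\log\{0,1,\infty\})\big)$. That extension would require a trivialisation of $F$ that extends to a \emph{free} logarithmic lattice $\bar F\cong\sO_{\P^1}^{\oplus r\deg\beta}$ on $\P^1$, which is exactly the Riemann--Hilbert/Fuchs problem in Birkhoff normal form. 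It is a theorem (Plemelj, and its modern corrected versions) that such a trivial Fuchsian lattice exists provided at least one of the local monodromies is diagonalisable; this hypothesis holds here because the local monodromy of $(F,\nabla')$ at each of $0,1,\infty$ is a direct sum of cyclic factors coming from tame pushforward of the unramified connection $(E,\nabla)$ along $\beta$, so all three are finite-order, hence semisimple. One must also ensure that the resulting Fuchsian representative $(b,c)$ can be taken with entries in $\bar\Q$ and gauge-equivalent to $(F,\nabla')$ over $\bar\Q$; this follows since the scheme of pairs (gauge transformation, Fuchsian residue data) is of finite type over $\bar\Q$ and has a $\C$-point. With this Plemelj input supplied, your argument closes the gap; without it, the reduction to Conjecture~\ref{conj:pcurvP} in the paper's precise Fuchsian shape does not go through.
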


\subsection{Back to Kronecker's analytic criterion: Gau{\ss}-Manin connections}
Let $X$ be a quasi-projective smooth variety over $\C$, $g: Y\to X$ be a smooth projective morphism, $\mathbb L$ being the local system $R^ig_*\C$ for some $i$. Then
\begin{thm}[\cite{Kat72}] 
Conjecture~\ref{conj:pcurv} holds for $\mathbb L$. 
\end{thm}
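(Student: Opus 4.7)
The strategy, due to Katz \cite{Kat72}, is to reduce to a curve over a number field and then exploit the Hodge-theoretic structure of $\mathbb L=R^ig_*\C$. By Claim~\ref{claim:cover} it is enough to produce a finite \'etale cover on which the pullback of $(E,\nabla)$ is trivialized, and by Claim~\ref{claim:yves} we may assume that $X$ is a smooth projective curve defined over a number field $K$. The Gauss--Manin connection $(E,\nabla)=(R^ig_*\Omega^\bullet_{Y/X},\nabla_{GM})$ carries the algebraic Hodge filtration $F^\bullet E$ satisfying Griffiths transversality $\nabla_{GM}(F^p)\subset F^{p-1}\otimes\Omega^1_X$, and the full package $(X,g,(E,\nabla),F^\bullet)$ spreads out to an integral model over $\Spec\,\sO_K[1/N]$.

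The technical heart of the argument is Katz's formula for the $p$-curvature of a Gauss--Manin connection in characteristic $p$: up to a Frobenius twist, $\psi_p$ is identified with the $\sO$-linear Kodaira--Spencer map $\gr_F^\bullet\nabla_{GM}:\gr_F^pE\to\gr_F^{p-1}E\otimes\Omega^1_X$ reduced modulo $p$. The hypothesis that $\psi_p$ vanishes modulo $\frak p$ for almost all primes $\frak p$ therefore forces $\gr_F^\bullet\nabla_{GM}$ to vanish modulo $\frak p$ for infinitely many primes, hence by spreading out in characteristic $0$ as well. Consequently $\nabla_{GM}$ preserves each $F^pE$. Combined with Deligne's semisimplicity theorem for $R^ig_*\C$ (with $g$ smooth projective), this forces $\mathbb L$ to decompose as a direct sum of simple flat sub-local systems, each of which, being simple and carrying a horizontal Hodge filtration, is a pure polarized complex variation of Hodge structure concentrated in a single Hodge type $(p,i-p)$. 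Since a polarization restricts to a positive definite hermitian form on a pure piece, every summand of $\mathbb L$, and hence $\mathbb L$ itself, has unitary monodromy.

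To conclude, the sheaf $R^ig_*\Z$ supplies a $\Z$-structure on $\mathbb L$, so the monodromy image $\rho(\pi_1(X(\C)))$ lies in $GL_r(\Z)\cap U(r)$: a discrete subgroup of a compact group, hence finite. Equivalently, the algebraic-integer eigenvalues of $\rho(\gamma)$ together with all of their ${\rm Aut}(\C)$-conjugates (produced by the $\sigma$-twisted families $g^\sigma:Y^\sigma\to X^\sigma$, which satisfy the same hypotheses and therefore also have unitary monodromy) have complex absolute value $1$, and Proposition~\ref{prop:Kronecker_analytic} identifies them as roots of unity. Claim~\ref{claim:cover} then produces the desired trivializing finite \'etale cover. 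The main obstacle is Katz's identification of $\psi_p$ with Kodaira--Spencer via the conjugate spectral sequence of de Rham cohomology in characteristic $p$; everything else is a combination of standard Hodge theory, spreading out, and Kronecker's analytic criterion.
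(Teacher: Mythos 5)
Your proof takes essentially the same route as the paper: the core is the identification (due to Katz) of the $p$-curvature of the Gau{\ss}-Manin connection with the Kodaira--Spencer map modulo $p$, so that vanishing of the $p$-curvature for almost all primes forces the Hodge filtration to be horizontal, and one concludes finiteness of the monodromy from the fact that it then preserves the integral polarized Hodge structure of a fibre (a discrete group intersected with a compact one). The paper devotes most of its space to justifying the $p$-curvature/Kodaira--Spencer identification explicitly via the conjugate filtration and the Cartier isomorphism, which you invoke as a black box, and it dispenses with the Deligne semisimplicity detour by observing directly that horizontality of $F^\bullet$ makes the monodromy land in ${\rm Aut}(H^i(Y_x,\Z))\cap{\rm Aut}(H^i(Y_x,\R),(2\pi\sqrt{-1})^i(x,Cy))$, which is finite; these are presentational rather than substantive differences.
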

\begin{proof}
The proof consists of  two parts. First, the assumption on the full set of solutions for $(\star)_{\frak{p}}$ implies that the $F$-filtration
$R^ig_*\Omega^{\ge j}_{Y/X} \hookrightarrow   R^ig_*\Omega^{\bullet}_{Y/X}  $ is stabilized by the Gau{\ss}-Manin connection. Then this implies that the monodromy is an automorphism of the integral polarized  Hodge structure of the fibre, which is the intersection of the integral group  ${\rm Aut}(H^i(Y_x, \Z))$ with ${\rm Aut}(H^i(Y_x, \R), (2\pi \sqrt{-1}^i (x, Cy))$ inside of ${\rm Aut}(H^i(Y_x, \R))$, where $C$ is the Weil operator.  A variant of Kronecker's analytic criterion says that this intersection  is finite. 

\medskip

So the stabilization of the $F$-filtration by the connection is the main point. We give a presentation which slightly differs from {\it loc. cit.}
To this, over our model $X_S$, we take a closed point  $s$ of large characteristic   $p$  so the coherent sheaves $R^{i-j}g_*\Omega^j_{Y/X}$ remain locally free, and the  map 
\ga{}{ (*) \  \  \ R^ig_*\Omega^{\ge j}_{Y/X}\to R^ig_*\Omega^\bullet_{Y/X},\notag}  which by Hodge theory is injective, remains injective by base change.  We want to prove that the condition on the existence of a full set of solutions implies that the $F$-filtration is stabilized by the connection, that is that the $\sO_X$-linear Kodaira-Spencer class 
\ga{}{ KS: R^{i-j}g_*\Omega^j_{Y/X}\xrightarrow{\nabla}  \Omega^1_X \otimes R^{i+1-j}g_*(\Omega^{j-1}_{Y/X}\to \Omega^j_{Y/X})\to
 \Omega^1_X\otimes R^{i+1-j}g_*\Omega^{j-1}_{Y/X}\notag}
dies.
The important diagram is then
 \ga{}{\xymatrix{ 
\ar[dr]_g Y_s\ar[r]^F & \ar[d]^{g'} Y_s' \ar[r]^\sigma& Y_s \ar[d]^g\\
 & X_s \ar[r]_{F_{X_s}} & X_s
   } \notag}
where $F: Y_s\to Y'_s$  is the  Frobenius on $Y_s$ relative to $X_s$,  that is on local  functions $\sO(Y_s)=\sO(X_s)[y_1,\ldots, y_n]/I$ it sends the class of 
 $f(y_1,\ldots, y_n)$ to the class of $f(y^p_1,\ldots, y^p_n)$, and $F_{X_s}: X_s\to X_s$ is the absolute Frobenius, so it sends $f\in \sO(X_s)$ to $f^p\in \sO(X_s)$.  
The injectivity of  $(*) $ implies that 
\ga{}{ (**) \ \ \ R^ig_*\tau_{\le j}  \Omega^\bullet_{Y_s/X_s}=R^ig'_*\tau_{\le j} F_* \Omega^\bullet_{Y_s/X_s}  \to R^ig'_*F_* \Omega^\bullet_{Y_s/X_s}
\notag}
is injective as well. By the Cartier isomorphism and base change for $F_{X_s}$, it holds
\ga{}{ R^{i-j} g_*\sH^j =R^{i-j} g'_* \Omega^{j}_{Y'_s/X_s}=F_{X_s}^*  R^{i-j} g_* \Omega^{j}_{Y_s/X_s},     \notag}
where $\sH^j$ is the $j$-th cohomology sheaf of  $F_* \Omega^\bullet_{Y_s/X_s}  $. 
Computing the dimension of de Rham  cohomology $H^i_{dR}(Y_x)$  of the fibres of $g$
yields  that the following  sequence of connections
\ml{}{ (\epsilon) \ \ \ 0\to F_{X_s}^* R^{i-j}g_*  \Omega^{j }_{Y_s/X_s} \to R^ig_*(\tau_{\le j+1}F_*  \Omega^\bullet_{Y_s/X_s}/\tau_{\le j-1}  F_*\Omega^\bullet_{Y_s/X_s}) \\
\to  F_{X_s}^* R^{i-j-1} g_* \Omega^{j+1}_{Y_s/X_s}\to 0 \notag}
is exact. It defines a de Rham extension 
\ga{}{\epsilon \in H^1_{dR}(X_s,  F_{X_s}^* R^{i-j-1} g_*(\Omega^{j+1}_{Y_s/X_s})^\vee \otimes  F_{X_s}^* R^{i-j}g_*  \Omega^{j }_{Y_s/X_s}).\notag}
Now we use the assumption which is equivalent to saying that  the exact sequence  $(\epsilon)$  of connections  is  equal to $F^*_{X_s}$ of an exact sequence
\ga{}{ 0\to R^{i-j}g_*  \Omega^{j }_{Y_s/X_s} \to 
V\to   R^{i-j-1} g_*\Omega^{j+1}_{Y_s/X_s}\to 0 \notag}
of vector bundles on $X_s$, endowed with the canonical connection.  In particular this sequence splits on a dense affine open of $X_s$.  Thus 
$\epsilon$ dies as well on a dense affine open. On the other hand, the cohomology
\ml{}{ H^1_{dR}(X_s, F_{X_s}^*  R^{i-j-1}g_* (\Omega^{j+1}_{Y_s/X_s})^\vee \otimes F_{X_s}^*R^{i-j}g_*  \Omega^{j }_{Y_s/X_s}) =\\
H^1_{dR}(X_s,  ( R^{i-j-1}g_* (\Omega^{j+1}_{Y_s/X_s})^\vee \otimes  R^{i-j}g_*  \Omega^{j }_{Y_s/X_s})\otimes F_{X_s *}(\Omega^\bullet_{X_s})\notag)}
maps, again via the Cartier isomorphism, to 
\ga{}{ H^0(X_s,  ( R^{i-j-1} g_*(\Omega^{j+1}_{Y_s/X_s})^\vee \otimes R^{i-j}g_*  \Omega^{j }_{Y_s/X_s}) \otimes \Omega^1_{X_s}).\notag}
Katz \cite[Theorem~3.2]{Kat72} (in a slightly different presentation) computes that the image of $\epsilon$, viewed as a $\sO_{X_s}$-linear map 
\ga{}{  R^{i-j-1} g_*(\Omega^{j+1}_{Y_s/X_s})\to \Omega^1_{X_s} \otimes R^{i-j}g_*  \Omega^{j }_{Y_s/X_s} \notag}
is precsiely the Kodaira-Spencer map $KS_s$  for $g$.  Thus $KS_s$ dies on a dense open of $X_s$.  As  $KS_s$   is a  $\sO_{X_s}$-linear morphism between vector bundles, we conclude that it dies on the whole of $X_s$. 
Thus $KS$ dies in restriction to all closed points in of a dense open in
 $S$. Thus it dies. This finishes the proof.

\end{proof}
The works by Chudnovsky-Chudnovsky \cite{Chu85}, Bost  \cite{Bos01} and Andr\'e  \cite{And04} handle the solvable case. 
There are also some remarks like \cite{EK18} under a stronger condition than just generation of the differential equation on characteristic $p$.
Else, there is a different  idea in Ananth Shankar's work \cite{Sha18} who showed finiteness of the monodromy on simple loops, which enabled him and Patel-Whang to finish the proof in rank $2$ in \cite{PSW21}. 
Beyond this,   there is essentially  no big progress on this viewpoint. 

\medskip

However, 
 there are two more points. The proof above using the stabilization of the $\tau$- filtration (also called {\it conjugate filtration}) suggests the generalization: 
 \begin{conj}[Generalized Grothendieck's $p$-curvature conjecture, \cite{And89}, Appendix to Ch. V] To characterize geometric local systems $\mathbb L$,  we  request that the underlying vector bundle with a connection $(E,\nabla) $ has the property that  mod $p$ for all large $p$ it is filtered so that the associated graded is spanned by flat sections 
  (that is its $p$-curvature is nilpotent)  on a dense open of $X$ mod $p$.
\end{conj}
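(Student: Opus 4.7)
The conjecture generalizes the theorem above and is, to my knowledge, still open in full, so what follows is a strategy rather than a proof.

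The forward direction---geometric $\mathbb L$ implies nilpotent $p$-curvature of $(E,\nabla)$ on a dense open mod $p$ for almost all $p$---should follow by adapting the preceding Katz-style argument. By assumption, after restriction to a dense open $U \hookrightarrow X$, $\mathbb L$ is a summand of $R^ig_*\C$ for a smooth projective $g: Y \to U$. At a closed point $s$ of large residue characteristic of the integral model, the conjugate filtration $\tau_{\le j}F_*\Omega^\bullet_{Y_s/X_s}$ is stabilized by the Gauss--Manin connection, and by the Cartier isomorphism its graded pieces identify with $F_{X_s}^* R^{i-j}g_*\Omega^j_{Y_s/X_s}$ carrying the canonical Frobenius-pulled-back connection, hence are spanned by flat sections. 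A direct summand inherits the analogous filtration, yielding the claim for $\mathbb L$.

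The reverse direction is the main substance. A natural plan is: (i) reduce, via Claims~\ref{claim:cover} and~\ref{claim:yves} together with Belyi's theorem, to the case $X = \P^1 \setminus \{0,1,\infty\}$ defined over a number field, possibly after passing to a finite \'etale cover; (ii) on each $X_s$, apply the Ogus--Vologodsky correspondence to repackage $(E_s,\nabla_s)$ with nilpotent $p$-curvature as a graded Higgs bundle, with grading coming from the putative filtration; (iii) organize these mod $p$ Higgs data via the Higgs--de Rham flow of Lan--Sheng--Zuo and argue that eventual periodicity of the flow, combined with lifting to characteristic zero via Simpson's correspondence, produces a polarizable complex variation of Hodge structure on $X$ whose underlying flat connection is $(E,\nabla)$; (iv) upgrade this polarizable $\Z$-VHS to an actual family $g: Y \to U$.

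The main obstacle is step (iv), where one must produce algebraic geometry from purely Hodge-theoretic data---a Hodge-conjecture-type statement with no known general mechanism. This is essentially why the conjecture remains open beyond Katz's Gau{\ss}--Manin case, the solvable case of Chudnovsky--Chudnovsky, Bost and Andr\'e, and the low-rank results of Shankar and Patel--Shankar--Whang. Step (iii) is also delicate: periodicity of the Higgs--de Rham flow requires compatibility of local data at infinitely many primes with no a priori reason to hold; and step (ii) requires that one first extract the filtration itself from nilpotence of the $p$-curvature, which is precisely where the hypothesis of the conjecture does its work.
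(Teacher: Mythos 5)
The statement is a conjecture, not a theorem; the paper offers no proof, only the remark that Katz's argument for Gau{\ss}--Manin connections suggests it via the conjugate filtration, together with the attribution to Andr\'e. You correctly identify this, and a strategy discussion is the right register.

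Your forward direction is accurate and is exactly what the paper's preceding Katz-style argument delivers: for a Gau{\ss}--Manin system mod $p$, the conjugate filtration $\tau_{\le j}F_*\Omega^\bullet_{Y_s/X_s}$ is always horizontal and its graded pieces are Frobenius pull-backs $F_{X_s}^*R^{i-j}g_*\Omega^j_{Y_s/X_s}$ with the canonical connection, hence spanned by flat sections. (Note this part of the argument needs no hypothesis; the Hodge-filtration stabilization that Katz extracts from the vanishing $p$-curvature assumption is a separate and harder point.) The observation that a direct summand inherits the property is fine because nilpotence of $p$-curvature passes to subquotients, so the induced filtration on the summand still has graded pieces with vanishing $p$-curvature; Deligne semisimplicity lets you replace ``subquotient'' by ``summand'' as the paper does. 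Your reverse-direction outline via Ogus--Vologodsky and Higgs--de Rham flow is plausible and consonant with the machinery the paper deploys elsewhere (Lectures~\ref{sec:Fiso} and~\ref{sec:crysLS} for rigid connections), and you correctly isolate the two genuine walls: (a) there is no mechanism to force periodicity of the flow across infinitely many primes without already knowing rigidity or some comparable finiteness, and (b) converting a putative polarizable $\Z$-VHS into an actual smooth projective family is a Hodge-conjecture-type statement with no known general method. Nothing in the paper claims to overcome either obstacle, so there is no gap between your proposal and the paper beyond what the open status of the conjecture imposes.
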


Said differently, Katz proves:  if mod $p$ for all large $p$, $(E,\nabla)$ is  filtered so that the associated graded is spanned by flat sections, and in addition $(E,\nabla)$ comes from geometry, and furthermore   this filtration is trivial, then $(E,\nabla)$ has finite monodromy.

\medskip 

There is one other instance 
of a similar situation.  Let $X$ be a smooth projective variety defined over $\C$ and let us assume that $H^0(X, {\rm Sym}^\bullet \Omega^1_X)=0$. I had conjectured that then all local systems have finite monodromy. This has been proved in \cite{BKT13} by {\it analytic methods} as a corollary of Zuo's theorem \cite{Zuo00} to the effect that if the derivative of the period map associated to  an integral  variation of Hodge structure is injective, then $\Omega^1_X$ is big. In \cite{BKT13} one finds a large list of examples of such varieties. 

\medskip

Algebraically, the vanishing $H^0(X, {\rm Sym}^\bullet \Omega^1_X)=0$ implies that the underlying bundles with connection $(E,\nabla)$  have the property that mod $p$ for all large $p$ they are filtered so that the associated graded is spanned by flat sections. 
So according to the generalized Grothendieck  $p$-curvature  conjecture they should be geometric. But here unlike for Katz, we do not know a priori the geometricity, it comes ultimately  as a consequence of the more difficult finiteness result which the authors prove  in {\it loc. cit.}

\newpage

\section{ Lecture 3:   Mal\v{c}ev-Grothendieck's Theorem,  its Variants in Characteristic $p>0$, Gieseker's Conjecture, de Jong's Conjecture, and the One to Come } \label{sec:Malcev}
\begin{abstract}  We recall Mal\v{c}ev's  theorem to the effect that the triviality of the profinite completion of a finitely generated group implies the triviality its algebraic completion. We recall Grothendieck's version of it formulated with $\sD$-modules using the Riemann-Hilbert correspondence, then the Gieseker conjecture, its pendant in characteristic  $p>0$, its solution and generalizations.  \end{abstract}

\subsection{Over $\C$}
Let $\Gamma$ be a finitely generated group, $\hat \Gamma $ be its profinite completion, $\Gamma^{\rm alg}$ be its proalgebraic completion. So $\hat \Gamma= \varprojlim_{\{G\to Q\}} Q$ where $Q$ runs through all finite quotients, $\Gamma^{\rm alg}= \varprojlim_{\{G\to Q\}} Q$ where $Q$ runs through all  algebraic subgroups of $GL_r(\C)$ for some $r\in \N_{>0}$.
\begin{thm}[Mal\v{c}ev \cite{Mal40}]
$\hat \Gamma=\{1\} \Longrightarrow \Gamma^{\rm alg} =\{1\}$.
\end{thm}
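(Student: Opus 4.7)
The plan is to reduce the assertion to the triviality of every complex linear representation of $\Gamma$, and then to prove that via Mal\v{c}ev's residual finiteness theorem.

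First I would unpack the definition: $\Gamma^{\rm alg}$ is the inverse limit over all homomorphisms $\rho : \Gamma \to GL_r(\C)$ (as $r$ and $\rho$ vary) of the Zariski closure of $\rho(\Gamma)$. Therefore $\Gamma^{\rm alg} = \{1\}$ is equivalent to the assertion that for every $r \ge 1$ and every $\rho : \Gamma \to GL_r(\C)$ the image $\rho(\Gamma)$ is trivial. The theorem thus reduces to showing: under the hypothesis $\hat\Gamma = \{1\}$, every finite-dimensional complex linear representation of $\Gamma$ has trivial image.

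The key input is Mal\v{c}ev's theorem that every finitely generated linear group is residually finite, which I would prove in the following concrete form by the standard Nullstellensatz argument. Fix $\rho$ and set $H := \rho(\Gamma)$, which is finitely generated because $\Gamma$ is. Let $A \subset \C$ be the $\Z$-subalgebra generated by the matrix entries of a finite set of generators of $H$ together with the entries of their inverses, so that $H \subset GL_r(A)$ and $A$ is of finite type over $\Z$. Given any $h \in H$ with $h \neq 1$, choose a nonzero coordinate $a \in A$ of the matrix $h - I$. The localization $A[a^{-1}]$ is still a finitely generated $\Z$-algebra, and by the arithmetic form of the Nullstellensatz (Zariski's lemma applied to $\Z$-algebras) every maximal ideal $\mathfrak{m} \subset A[a^{-1}]$ has finite residue field $\F_q := A[a^{-1}]/\mathfrak{m}$. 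Reducing entries modulo $\mathfrak{m}$ yields a group homomorphism $H \to GL_r(\F_q)$ with finite image, and because $a$ is a unit in $A[a^{-1}]$ its class in $\F_q$ is nonzero, so $h$ has nontrivial image.

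Now I invoke the hypothesis: the composite $\Gamma \twoheadrightarrow H \to GL_r(\F_q)$ is a homomorphism from $\Gamma$ to a finite group, so by the universal property of profinite completion it factors through $\hat\Gamma = \{1\}$ and is therefore trivial. This contradicts the fact that $h$ has nontrivial image in $GL_r(\F_q)$. Hence $H = \{1\}$, and since $\rho$ was arbitrary this yields $\Gamma^{\rm alg} = \{1\}$. The only real technical step is the arithmetic Nullstellensatz ensuring that maximal ideals of finitely generated $\Z$-algebras have finite residue fields; this is the true content of Mal\v{c}ev's insight and the one place where the argument is not mere unpacking of definitions.
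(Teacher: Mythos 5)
Your proof is correct and rests on the same two pillars as the paper's argument: the image of a representation lands in $GL_r(A)$ for a finite-type $\Z$-algebra $A$ whose maximal ideals have finite residue fields (arithmetic Nullstellensatz), and any homomorphism from $\Gamma$ into a profinite group factors through $\hat\Gamma$. The only cosmetic difference is that the paper packages the residual-finiteness step into a single $\mathfrak{m}$-adic completion $A\hookrightarrow\hat A=\varprojlim A/\mathfrak{m}^n$ (using Krull intersection for injectivity, and that $GL_r(\hat A)$ is profinite), whereas you run it element-by-element with a localization $A[a^{-1}]$; both routes are standard and equivalent in substance.
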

\begin{proof}
Indeed, if $\rho: \Gamma \to GL_r(\C)$ is a representation, then $Q={\rm Im}(\rho)$ lies in $GL_r(A)$ where $A$ is a ring of finite type over $\Z$.  Let $\frak{m}\subset A$ be a maximal ideal, so with finite residue field $A/\frak{m}$, and define $\iota: A\hookrightarrow \hat A=\varprojlim_{n\in \N} A/\frak{m}^n$ to be the completion of $A$ at $\frak{m}$.  Then $\rho$ is non-trivial if and only if $\iota\circ \rho: \Gamma\to GL_r(\hat A)$ is non-trivial. As $\iota\circ \rho$ factors through $\hat \rho: \hat \Gamma\to GL_r(\hat A)$, then  $\hat \Gamma=\{1\} \Longrightarrow \rho$ is trivial, i.e. has image equal to the identity matrix  $\mathbb I_r \in GL_r(A)$.

\end{proof}
\begin{rmk}
If $\Gamma$ is not finitely generated,  the implication of the theorem might be  wrong.  For example, let $\Gamma=\Q$, then $\hat \Gamma=\{1\}$, but e.g. the representations
$\chi: \Q\to GL_1(\C), \ n \mapsto {\rm exp}(2\pi \sqrt{-1} a n)$ for some $a\in \C$ or 
 $\rho: \Q\to GL_2(\C),   \   n\mapsto {  \begin{pmatrix}  1 & n\\
0 & 1 \end{pmatrix}}$  are non-trivial.

\end{rmk}
We now assume that $X$ is a smooth quasi-projective variety defined over $\C$. Then the Riemann-Hilbert correspondence \cite{Del70}  is an equivalence between complex local systems  and algebraic integrable connections on $X$ which are regular singular at infinity (\cite{Del70}), or equivalently $\sO_X$-coherent regular singular  $\sD_X$-modules.   On the other hand, the topological fundamental group $\pi_1(X(\C), x)$ based at some complex point $x$  is finitely generated, even finitely presented but this is not used here in the discussion, and by the Riemann Existence Theorem, its profinite completion is Grothendieck's \'etale fundamental group $\pi_1(X_\C,x_\C)$. 
So Mal\v{c}ev's theorem can be rephrased in the context of complex algebraic geometry  by saying
\begin{thm}[Grothendieck \cite{Gro70}] 
 $\pi_1(X_\C,x_\C)=\{1\} \Longrightarrow$ there are no non-trivial $\sO_X$-coherent regular singular $\sD_X$-modules on $X$. 
\end{thm}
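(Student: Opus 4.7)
The plan is to reduce the statement to Malčev's theorem, which has already been proven above, via the Riemann-Hilbert correspondence and the Riemann existence theorem. Everything is in place, so the argument will essentially be a concatenation of three known inputs.

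First I would recall the setup. An $\sO_X$-coherent $\sD_X$-module is the same as a vector bundle on $X$ equipped with an integrable algebraic connection $(E,\nabla)$. By Deligne's Riemann-Hilbert correspondence, the category of such $(E,\nabla)$ which are moreover regular singular at infinity is equivalent to the category of finite rank complex local systems on $X(\C)$, equivalently to the category of finite-dimensional complex representations $\rho: \pi_1(X(\C),x) \to GL_r(\C)$. Under this equivalence, a trivial object corresponds to the trivial representation. So it suffices to show that every such $\rho$ has $\mathrm{Im}(\rho) = \{\mathbb I_r\}$.

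Next I would invoke the two topological ingredients. Since $X(\C)$ has the homotopy type of a finite $CW$-complex (as mentioned in Lecture~\ref{sec:intro}), the topological fundamental group $\Gamma := \pi_1(X(\C),x)$ is finitely generated (indeed finitely presented). By the Riemann existence theorem, its profinite completion is precisely Grothendieck's étale fundamental group: $\widehat{\Gamma} \cong \pi_1(X_\C, x_\C)$. Therefore the hypothesis $\pi_1(X_\C, x_\C) = \{1\}$ translates into $\widehat{\Gamma} = \{1\}$.

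Finally, since $\Gamma$ is finitely generated and $\widehat{\Gamma} = \{1\}$, Malčev's theorem (applied just above) gives $\Gamma^{\mathrm{alg}} = \{1\}$. In particular any representation $\rho: \Gamma \to GL_r(\C)$ is trivial, which via Riemann-Hilbert forces the corresponding $(E,\nabla)$ to be trivial. The argument is essentially a three-line chain; the only thing to watch is that regular singularity at infinity is not an extra assumption to be verified but rather exactly the hypothesis that makes the Riemann-Hilbert equivalence work. There is no serious obstacle: all the real content has been developed in the preceding items of the section.
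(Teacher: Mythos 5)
Your proposal is correct and follows exactly the same route the paper takes: the paper's statement is explicitly introduced as a "rephrasing" of Mal\v{c}ev's theorem obtained by combining the Riemann--Hilbert equivalence, the Riemann existence theorem identifying $\widehat{\pi_1(X(\C),x)}$ with $\pi_1(X_\C,x_\C)$, and the finite generation of the topological fundamental group. Your three-step chain is precisely that argument.
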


\subsection{Over an algebraically closed field of characteristic $p>0$}
I am not aware of a direct proof of Grothendieck's theorem without passing through the Riemann-Hilbert correspondence.  Gieseker \cite[p.8]{Gie75} conjectured the '`same'' theorem over an algebraically closed field of characteristic $p>0$. 
\begin{thm}[\cite{EM10}, Theorem~1.1] \label{thm:EM10}
Let $X$ be a smooth projective variety defined over an algebraically closed field $k$ of characteristic $p>0$. If $\pi_1(X)=\{1\}$, there are no non-trivial $\sO_X$-coherent $\sD_X$-modules. 
\end{thm}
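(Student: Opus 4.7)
My plan is to translate the $\sD_X$-module into a tower of Frobenius pullbacks of vector bundles, prove that this tower is bounded in the moduli of semistable sheaves, and then exploit $\pi_1(X)=\{1\}$ after spreading out to a finite field.

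First, by iterated Cartier descent on a smooth variety of characteristic $p>0$, an $\sO_X$-coherent $\sD_X$-module is equivalent to a stratified (Frobenius-divided) sheaf: a sequence $(E_n,\sigma_n)_{n\ge 0}$ of locally free $\sO_X$-modules of common rank $r$ equipped with isomorphisms $\sigma_n\colon F^{\ast}E_{n+1}\xrightarrow{\sim} E_n$, where $F$ is the absolute Frobenius and $E_0$ is the underlying sheaf. Triviality of the $\sD_X$-module amounts to finding a single large $n$ with $E_n\cong \sO_X^r$, since the isomorphisms $\sigma_i$ then propagate the trivialization downward to $E_0$.

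The heart of the proof is a boundedness statement for $\{E_n\}_{n\ge 0}$. The relations $F^\ast E_{n+1}\cong E_n$ force $c_i(E_n)=p^i c_i(E_{n+1})$ in a suitable cohomology theory, so iterating, combined with finite generation of the Chern-class target, makes every Chern class of every $E_n$ torsion. Together with Langer's quantitative bounds on the instability of Frobenius pullbacks, one then shows that each $E_n$ is strongly semistable with respect to a fixed polarization: if any $E_n$ had a destabilizing subsheaf, iterating $F^\ast$ would produce arbitrarily unstable members of the tower. Langer's boundedness theorem for semistable sheaves of fixed rank with numerically trivial Chern classes then places the whole family $\{E_n\}$ in a bounded subset of the moduli space of semistable sheaves on $X$.

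Next I spread out: descend $X$, the polarization, and the stratified sheaf to a finitely generated $\F_p$-subalgebra $R\subset k$, and specialize at a closed point of $\Spec R$ with finite residue field $\F_q$. By Grothendieck's specialization for the geometric fundamental group (and invariance under algebraically closed base change), $\pi_1(X_0\otimes \bar\F_q)$ is a quotient of $\pi_1(X)=\{1\}$, hence trivial. Boundedness implies that, after enlarging $q$, every $E_n$ is an $\F_q$-point of one fixed scheme of finite type over $\F_q$; such a scheme has only finitely many $\F_q$-points, so there exist $m>n$ with $E_m\cong E_n$. Composing the $\sigma_i$ along the tower yields an isomorphism $F^{(m-n)\ast}E_m\xrightarrow{\sim} E_m$. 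By the Lang--Katz equivalence, such an $F^{d}$-equivariant bundle is the datum of a continuous representation of the arithmetic $\pi_1$ of $X_0$ on a free $W(\F_{q^d})$-module; restriction to the geometric $\pi_1=\{1\}$ trivializes it, so $E_m$ becomes trivial after base change back to $k$. The $\sigma_i$ then trivialize $E_0$, and the original $\sD_X$-module is trivial.

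The main obstacle is the boundedness step: establishing strong semistability of every $E_n$ along the tower together with the genuine vanishing of their Chern classes, without which Langer's boundedness theorem cannot be invoked. This is precisely where Langer's delicate instability estimates in characteristic $p$ are indispensable, and where care with $p$-divisible classes in the Chern-class target is needed. The descent-to-finite-field portion and the Lang--Katz invocation are comparatively formal once boundedness is in hand.
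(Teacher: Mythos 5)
Your reduction to Frobenius-divided sheaves and the boundedness step (torsion Chern classes, strong semistability via Langer's instability estimates, Langer's boundedness theorem) track the outline of \cite{EM10}. The gap is in replacing Hrushovski's theorem with a naive pigeonhole over a fixed finite field, and more concretely in the step asserting that ``$E_m$ becomes trivial after base change back to $k$.'' The specialization argument passes from the generic fibre to a special fibre $X_s$ over $\F_q$; there is no base change from $\bar\F_q$ back to the arbitrary algebraically closed field $k$, and triviality of the specialization $E_{m,s}$ over $\bar\F_q$ simply does not force triviality of $E_m$ over $k$ --- a non-trivial bundle can perfectly well degenerate to the trivial one.

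One might instead try to run the argument as a contradiction at the special fibre, as \cite{EM10} does: if the Frobenius-periodic bundle were a priori stable of rank $r>1$, its triviality via the Lang torsor and $\pi_1(X_{\bar s})=\{1\}$ would be absurd. But at the single closed point $s$ fixed by the pigeonhole there is no such control: the locus of $\Spec R$ over which $E_n$ is stable is a dense open depending on $n$, while the index $m$ is produced only after $s$ is chosen, so one cannot arrange a single $s$ that is good for all $n$ simultaneously. This circularity is precisely what Hrushovski's theorem (Zariski density of preperiodic points of a dominant rational self-map of a variety over $\bar\F_p$) resolves in \cite{EM10}: periodic points are found inside the Zariski closure of the $[E_n]$ in the moduli of stable sheaves over $\bar\F_p$, so the resulting Frobenius-periodic bundle is automatically stable, hence non-trivial, and the Lang torsor then yields the contradiction. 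The naive pigeonhole over a fixed $\F_q$ is no substitute. A minor slip elsewhere: an $(F^d)^\ast$-periodic vector bundle corresponds under the Lang torsor to a representation into $GL_r(\F_{q^d})$, not into a Witt-vector group; the latter would be the story for unit-root $F$-crystals.
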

\begin{proof}
Following Katz \cite[Theorem~1.3]{Gie75}, we first identify   $\sO_X$-coherent $\sD_X$-modules with Frobenius divided sheaves.
Then the strategy consists, starting from a non-trivial  Frobenius divided sheaf $E=E_0={\rm Frob}^*E_1, E_1={\rm Frob}^*E_2,\ldots$  of rank $r$, to find a non-trivial vector bundle $M$ of rank $r$  with the property that
 $({\rm Frob}^n)^*(M)\cong M$ for some $n\in \N_{>0}$. Indeed the underlying Lang torsor  under $GL_r(\F_{p^n})$  is finite \'etale over $X$, thus 
has to be trivial by assumption, so $M$ has to be trivial, a contradiction.  To this aim, we observe that the Frobenius divisibility forces all kinds of Chern classes of $E$  to be trivial and $E_i$ itself to be stable for $i$ large if the object $(E_0,E_1,E_2,\ldots)$ itself is simple. So we consider in the moduli of vector bundles of rank $r$ with vanishing Chern classes the Zariski closure of the locus of the $E_n$ for $n$ large, on which Frobenius is a rational dominant map. We can then specialize the situation to $X$ over $\bar \F_p$ and apply Hrushovski's theorem  to the effect that there are (many) preperiodic points. By the above discussion, those yield non-trivial Lang torsors of $X_{\bar \F_p}$, thus by Grothendieck's specialization homomorphism, also a non-trivial  finite \'etale cover of $X$, a contradiction. 
\end{proof}
\begin{rmks}
1) Hrushovski's proof \cite{Hru04}, which relies on model theory, has been meanwhile proven purely in the framework of arithmetic geometry by Varshavski, see \cite{Var18}.\\ \ \\
2) The core of the proof is to find such an $M$ with $({\rm Frob}^n)^*(M)\cong M$ for some $n\in \N_{>0}$.
This unfortunately does not seem to be deductible from Katz' 'Riemann-Hilbert' theorem in characteristic $p>0$ or its (so far) ultimate generalization \cite{BL17}. So in the present state of understanding, we cannot draw a parallel with the complex proof. \\ \ \\

\end{rmks}
The category of  $\sO_X$-coherent $\sD_X$-modules is  equivalent to the one of  crystals in the infinitesimal site, as developed  by Grothendieck. This led J. de Jong to pose the following vast strengthening of Gieseker conjecture (Theorem~\ref{thm:EM10}):
\begin{conj}[de Jong 2010, see \cite{ES18}]
Let $X$ be a smooth projective variety defined over an algebraically closed field $k$ of characteristic $p>0$. If $\pi_1(X)=\{1\}$, there are no non-trivial isocrystals.

\end{conj}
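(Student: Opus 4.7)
The plan is to split the conjecture by the presence of a Frobenius structure: treat the case of $F$-isocrystals by the machinery of $\ell$-adic companions, and then attempt to reduce a general convergent isocrystal to the $F$-isocrystal case by a Frobenius-periodicity argument modelled on the proof of Theorem~\ref{thm:EM10}.

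First, reduce to $k=\bar\F_p$. A non-trivial isocrystal on $X/k$ descends to a model over some finitely generated $\F_q$-subalgebra, and specializing to a closed point of the base yields a non-trivial isocrystal on a smooth projective variety over $\bar\F_p$; Grothendieck's specialization homomorphism preserves the vanishing of the \'etale fundamental group, so $\pi_1(X_{\bar\F_p})=\{1\}$. Second, dispose of the $F$-isocrystal case. If the isocrystal $E$ carries a Frobenius structure, then by Abe's theorem on the existence of $\ell$-adic companions (also Kedlaya) — resting on L.~Lafforgue for curves and Drinfeld's gluing in higher dimension — for any $\ell\ne p$ there is a non-trivial lisse $\bar\Q_\ell$-sheaf on $X$ whose Frobenii at closed points have matching characteristic polynomials. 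This provides a non-trivial continuous representation of $\pi_1(X)$, contradicting simple connectedness; so no non-trivial $F$-isocrystal exists.

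The main obstacle is the third step: from an arbitrary convergent isocrystal $E$ on $X$, produce an $F$-isocrystal of positive rank. Following the strategy of Theorem~\ref{thm:EM10}, one seeks a Frobenius-periodic object, that is, a non-trivial isocrystal $M$ (built functorially from $E$, possibly on an open of $X$) with $\phi^{n\,*}M\cong M$ for some $n\ge 1$, which automatically acquires an $F^n$-isocrystal structure. Concretely one would (i) set up a moduli space of convergent isocrystals on $X$ with fixed numerical invariants, (ii) prove its boundedness so that the Frobenius pullback acts as a dominant rational self-map on an irreducible component containing $E$, and (iii) apply a Hrushovski--Varshavski preperiodic-point theorem to produce $M$, then (iv) check that $M$ remains non-trivial and invoke the second step to derive a contradiction.

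Every ingredient of step (iii) is problematic: the moduli theory of convergent isocrystals is much less developed than that of vector bundles, boundedness and stability are delicate in the absence of a Frobenius structure, and the analogue of the Chern-class rigidity used in the Esnault--Mehta argument is unclear. This is where the conjecture sticks, and why it is open in general. Two more modest attacks seem reasonable as preparation: restrict to isocrystals of tame ramification or of constant Newton polygon, where Esnault--Shiho-type results already give partial control; or reformulate the conjecture on the prismatic site, as alluded to in the introduction, where a Frobenius operator is built into the category from the start, and where an Esnault--Mehta-style boundedness plus Hrushovski argument has a genuine chance of going through without first having to manufacture a Frobenius structure.
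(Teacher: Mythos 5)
This statement is a conjecture; the paper offers no proof, only a survey of partial approaches after Theorem~\ref{thm:EM10}, so the question is how your sketch compares to that survey. Your first two steps are consistent with it: the reduction to $\bar\F_p$ is harmless, and the $F$-isocrystal case is indeed settled by the existence of $\ell$-adic companions (Abe, Kedlaya, after L.~Lafforgue and Drinfeld), exactly as the paper states.

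Your step (iii), however, is not merely ``problematic'' in the sense that the moduli theory of convergent isocrystals is underdeveloped; as written it is structurally inapplicable, and the paper says so. Hrushovski's theorem (and Varshavski's geometric reproof) is a statement about correspondences on varieties of finite type over $\bar\F_p$. Any plausible moduli of convergent isocrystals is intrinsically a $p$-adic object (formal or rigid-analytic over $W(\bar\F_p)$), not a finite-type $\bar\F_p$-scheme, so there is no ``Frobenius as dominant rational self-map of a $\bar\F_p$-variety'' to which the preperiodic-point theorem could be applied. This is exactly what the paper means by ``One would wish to find a Frobenius invariant isocrystal on the scratch, without passing through randomly chosen underlying crystals. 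However Hrushovski's theorem is no longer of help.'' The documented partial results (Esnault--Shiho, \cite{ES18}, \cite{ES19}) take the complementary route: choose a crystal $\sE$ with $\sE_{\Q}\cong E$, pass to its mod $p$ reduction (an $\sO_X$-coherent $\sD_X$-module on $X_{\bar\F_p}$), and run the Esnault--Mehta/Hrushovski argument on a moduli of those $\bar\F_p$-objects, using Chern-class vanishing for mod $p$ reductions of crystals as input. What that buys is at best a Frobenius-periodic \emph{crystal}; the genuine difficulty, and the reason the conjecture is open, is that in rank $\ge 2$ the conclusion depends on the choice of $\sE$ and one trivializes the crystal rather than the isocrystal. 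You should replace your step (iii) by this crystal-level version and locate the gap there; your closing remarks about tameness, constant Newton polygon, and a prismatic reformulation are reasonable and in line with the directions the paper indicates.
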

There are (very) partial results on the conjecture, see notably \cite[Theorem~1.2]{ES18}. The main obstruction to our understanding lies in the  prefix ``iso''. All approaches I know start with the mod $p$ reduction of a crystal associated to an isocrystal and try
\begin{itemize}
\item[$\bullet$] to use again an argument as in the proof of Theorem~\ref{thm:EM10} on the existence of $M$, with some sharpening (see  \cite[Proposition~3.2]{ES18}), so ultimately it relies on Hrushovski's theorem; this also requests  
vanishing results for Chern classes of the mod $p$ reduction of crystals, see \cite{ES19}, which have been generalized by Bhatt-Lurie (unpublished);
\\
\item[$\bullet$] so to trivialize the crystal itself  (see e.g. \cite[Corollary~3.8]{ES18}), not only  the isocrystal. 
\end{itemize}
In rank $1$ it is irrelevant  (see  \cite[Lemma~2.12]{ES18}), not however in higher rank. One would wish to find a Frobenius invariant isocrystal on the scratch, without passing through randomly chosen underlying crystals. However Hrushovski's theorem is no longer of help.

\subsection{Over a $p$-adic field} \label{sec:padic}
The wish would be to have a version of de Jong's conjecture for prismatic ``iso''-crystals. Here one problem is that 
the notion of prismatic crystals is documented (\cite{BS21}), not however the one of prismatic isocrystals. One question is what one has to invert to make the problem meaningful.  The formulation should also be compatible with a complex embedding of the $p$-adic field and the initial Grothendieck formulation over $\C$. 

\newpage

\section{Lecture 4: Interlude on some Similarity  between the Fundamental Groups in Characteristic $0$ and $p>0$} \label{sec:similarity}
 \begin{abstract}  We describe two theorems concerning the structure of the (tame) fundamental group of a smooth (quasi)-projective variety in characteristic $p>0$. The first  one \cite{ESS22}  shows a similarity with the complex situation concerning the finite presentation and ultimately relies on a theorem of Lubotzky \cite{Lub01}, the second one \cite{ESS22b} yields a new obstruction for a smooth projective variety in characteristic $p>0$ to lift to characteristic $0$
 and ultimately relies on Grothendieck's specialization homomorphism. In particular, and interestingly, the second one shows that we cannot explain the finite presentation of the first one by a lifting argument.  In Lecture~\ref{sec:similarity}  we treat the finite presentation, in Lecture~\ref{sec:diff}  the obstruction. 
  \end{abstract}
  
  \subsection{Lubotzky's theorem} 
 \begin{defn}[\cite{Lub01}, p.1 of the Introduction]
  A profinite group $\pi$ is said to be 
  \begin{itemize}
  \item[1)] finitely generated if there is a  free profinite group $F$ on finitely many generators and a continuous surjection $F\xrightarrow{\epsilon} \pi$;
  \item[2)] finitely presented if it is finitely generated and ${\rm Ker}(\epsilon)$ is finitely generated as a closed normal subgroup of $F$. 
  \end{itemize}
  \end{defn}
  Concretely, in 1) there are finitely many elements $f_i, i=1,\ldots, N$ in $F$ such that the abstract subgroup $\langle f_1,\ldots, f_N\rangle \subset F$ surjects to 
 any finite quotient  $\pi \to G$ in the profinite structure of $\pi$. For 2) there are finitely many elements $r_j,  j=1, \ldots, M$ in $F$ such that ${\rm Ker} (F\to \pi)$ is spanned by $\langle fr_jf^{-1}, \ j=1,\ldots, M, f\in F\rangle$.  Finite generation and presentation are well defined, they do not depend on the chosen presentation. 
 For the finite generation,  it is very classical, for the finite presentation, a slick way  is simply to apply the following theorem. 
 \begin{thm}[Lubotzky \cite{Lub01}, Theorem~0.3] \label{thm:Lubotzky}
 $\pi$ is finitely presented if and only if it is finitely generated and in addition there is a constant $C\in \R_{>0}$, such that   for any  $r\in \N_{>0}$, for  any  prime number $\ell$, for any continuous linear representation $\rho: \pi\to GL_r(\F_\ell)$, it holds
 $${\rm dim}_{\F_\ell} H^2(\pi, \rho)\le C\cdot r.$$
 \end{thm}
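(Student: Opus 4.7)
The plan is to identify $H^2(\pi,\rho)$ with a continuous $\Hom$-group measuring generators of $R$ as a normal subgroup, and then translate the growth condition into topological finite generation. Fix a continuous surjection $\epsilon \colon F \twoheadrightarrow \pi$ from a free profinite group on $N$ generators (possible since $\pi$ is finitely generated), with kernel $R$, and observe that for any finite $\F_\ell$-module $\rho$ we have $H^i(F,\rho) = 0$ for $i \geq 2$ (free profinite groups have cohomological dimension $\leq 1$ for finite coefficients), so the Lyndon--Hochschild--Serre spectral sequence collapses to a five-term exact sequence
\[ 0 \to H^1(\pi,\rho) \to H^1(F,\rho) \to H^1(R,\rho)^\pi \to H^2(\pi,\rho) \to 0. \]
Since $\dim_{\F_\ell} H^1(F,\rho) = N r$, one reads off the two-sided estimate $\dim H^2(\pi,\rho) \leq \dim H^1(R,\rho)^\pi \leq \dim H^2(\pi,\rho) + Nr$, and identifies $H^1(R,\rho)^\pi$ with $\Hom_{\F_\ell[[\pi]]}(R^{ab} \otimes_{\hat\Z} \F_\ell,\; \rho)$ where $R^{ab}$ carries the $\pi$-action induced by $F$-conjugation.

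For the forward direction, suppose $r_1, \dots, r_M$ normally generate $R$ as a closed subgroup of $F$. Their images generate $R^{ab} \otimes \F_\ell$ as a topological $\F_\ell[[\pi]]$-module for every $\ell$, so evaluation at the $r_j$ defines an injection $\Hom_{\F_\ell[[\pi]]}(R^{ab} \otimes \F_\ell,\rho) \hookrightarrow \rho^M$, yielding $\dim H^1(R,\rho)^\pi \leq Mr$, hence $\dim H^2(\pi,\rho) \leq Mr$. Taking $C := M$ works uniformly in $\ell$ and $\rho$.

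The converse is the content of the theorem. Assuming the bound $\dim H^2(\pi,\rho) \leq Cr$, the five-term sequence gives $\dim \Hom_{\F_\ell[[\pi]]}(R^{ab} \otimes \F_\ell, \rho) \leq (C+N) r$ for every prime $\ell$ and every finite $\F_\ell[[\pi]]$-module $\rho$. I would deduce from this a \emph{uniform} bound on the minimal number of topological generators of $R^{ab} \otimes \F_\ell$ as an $\F_\ell[[\pi]]$-module: if this module required $> d$ generators, one could produce a finite quotient $V$ of $R^{ab} \otimes \F_\ell$ that itself requires $> d$ generators as a $\pi$-module, and then show via a Nakayama/Frattini argument applied to $V$ (comparing the identity endomorphism to maps factoring through $V^d$) that $\dim_{\F_\ell}\Hom_\pi(R^{ab}\otimes\F_\ell, V) > d\cdot \dim V$, violating linear growth. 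Hence each $R^{ab} \otimes \F_\ell$ is generated by at most $C+N$ elements, uniformly in $\ell$. Passing from $\F_\ell$-generators back to normal generators of $R$ inside $F$ then uses a profinite Frattini-type argument: lift $C+N$ generators of $R^{ab} \otimes \F_\ell$ to $R$ for a single prime and check, using finite generation of $\pi$ and topological Nakayama on the profinite $\hat\Z[[\pi]]$-module $R^{ab}$, that these elements normally generate $R$ in $F$.

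The main obstacle is the uniformity in $\ell$ in the middle step: the constant $C$ must control the minimal number of $\F_\ell[[\pi]]$-generators of $R^{ab}\otimes\F_\ell$ independently of $\ell$, because a single integer $M$ has to bound the number of relations. This is the heart of Lubotzky's insight, and is what forces the hypothesis to include both the $\ell$-independence and the linearity in $r$ of the cohomological growth.
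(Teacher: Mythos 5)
The paper does not actually prove this statement: it is quoted from Lubotzky \cite{Lub01} and used as a black box, so there is no in-paper argument to compare against. Your framework is the standard one for this circle of ideas --- the five-term exact sequence from the Lyndon--Hochschild--Serre spectral sequence for a free profinite presentation $1 \to R \to F \to \pi \to 1$ (using ${\rm cd}(F)\le 1$), the identification $H^1(R,\rho)^\pi \cong \Hom_{\F_\ell[[\pi]]}(R^{ab}\otimes\F_\ell,\rho)$, and the translation of finite presentation into finite normal generation of the closed subgroup $R$ --- and the forward implication is correct as written. (Your equality $\dim H^1(F,\rho)=Nr$ is really an inequality: for $F$ free profinite of rank $N$ one has $\dim H^1(F,\rho)=(N-1)r+\dim\rho^F \le Nr$; only the upper bound is used.)

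The converse, which you rightly flag as the hard direction, has two genuine gaps. First, the inequality you claim, $\dim_{\F_\ell}\Hom_\pi(R^{ab}\otimes\F_\ell, V) > d\cdot\dim V$ for a finite quotient $V$ needing more than $d$ generators, is not the correct statement and need not hold; the ``identity versus $V^d$'' heuristic does not produce it. To convert a $\Hom$-bound into a generator bound one must test against \emph{simple} modules: passing to the semisimple Frattini quotient $\bigoplus_j S_j^{n_j}$ of each finite quotient of $R^{ab}\otimes\F_\ell$, one has $d\bigl(\bigoplus_j S_j^{n_j}\bigr)=\max_j\lceil n_j/m_j\rceil$ with $m_j=\dim_{{\rm End}(S_j)}S_j$, while $\dim_{\F_\ell}\Hom(R^{ab}\otimes\F_\ell,S_j)\ge n_j\dim_{\F_\ell}{\rm End}(S_j)$; comparing against the hypothesis $\dim\Hom\le D\dim S_j=Dm_j\dim_{\F_\ell}{\rm End}(S_j)$ gives $n_j\le Dm_j$ and hence $d\le D$. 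The division algebras ${\rm End}(S_j)$ have to be tracked for the inequalities to close, and your sketch does not do this. Second, and more seriously, the final passage from ``$R^{ab}\otimes\F_\ell$ is $(C+N)$-generated over $\F_\ell[[\pi]]$ for every prime $\ell$'' to ``$R$ is finitely normally generated in $F$'' is the actual substance of Lubotzky's theorem and is left at the level of an appeal to topological Nakayama. It requires a profinite Gasch\"utz-type theorem identifying normal generation of $R$ inside the free profinite group $F$ with generation of the relation module $R^{ab}$ over $\hat\Z[[\pi]]$, together with a device for synthesizing the prime-by-prime bounds into a single finite normally generating set. As written, your outline names the right objects but does not supply either of these ingredients, so the converse direction remains unproved.
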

 \begin{ex} \label{ex:fp}
 The standard example is given by $\pi$ being the profinite completion of an abstract finitely presented group, in the classical sense, so with the same definition without topology. 
 \end{ex}
 \subsection{Tame fundamental group} \label{sec:tame}
 Let $X$ be a regular connected scheme.
 For $x$ a closed point of the normal compactification $\bar X$ of $X$, we denote by $k(X)_x$ the
 field of fractions of the completed local ring of $\bar X$ at $x$. 
  For a  connected finite \'etale cover $\pi: Y\to X$, we denote by $\bar Y$ the normalization of $\bar X$ in the field of functions on $Y$. 
  Any closed point $x\in \bar X$  defines for any closed point $y\in \bar Y$ above $x$  a finite field extension $ k(X)_x\hookrightarrow k(Y)_y$.
  
   \begin{defn}[\cite{KS10}, Introduction and Theorem~1.1]
 1) If $X$ has dimension $1$, 
 a finite \'etale cover $\pi: Y\to X$ is tame if and only if all  the field extensions $k(X)_x \hookrightarrow k(Y)_y$ are tame, that is the indexes of ramification are prime to $p$ and the residual extension $k(x)\hookrightarrow k(y)$ are separable. \\
 2) In higher dimension, $\pi$ is tame if and only if for any morphism of a normal dimension $1$ scheme  $C\to X$, the  induced morphism $C\times_XY\to C$ is tame on all the irreducible components. 
  \end{defn}
  \subsection{Grothendieck's specialization homomorphism, \cite{SGA1}, Expos\'e X, Expos\'e XIII} \label{sec:sp}
  Let $X_S\to S$ be a smooth  morphism, where $S$ is any scheme. We consider two field value points ${\rm Spec}(F)\to S$ and ${\rm Spec} (k)\xrightarrow{s} S$ with the property that ${\rm Spec}(k)$ lies in the Zariski closure of ${\rm Spec}(F)$. So there is an irreducible subscheme $Z\subset S$ such that $s\in Z$ is a point and $\sO(Z) \to F$  
is injective.   Let  $\widehat{Z_s}$ is the completion of $Z$ at $s$, and $F\hookrightarrow \hat F_s$ a field extension such that $\hat F_s$  contains $\sO(\widehat{Z_s})$.

  \medskip
  
  If $X$ is not proper, we assume in addition that it admits a compactification $X_S\hookrightarrow \bar X_S$ so $\bar X_S\to S$ is smooth proper, and $\bar X_S\setminus X_S \to S$ is a relative normal crossings divisor with smooth components. We call it a {\it good} compactification (over $S$). 
  So   we have a diagram
\ga{}{  \xymatrix{ {\rm Spec}( \hat F_s) \ar[r] &  \widehat{Z_s} & \ar[l] s
}\notag}
   together with the scheme over it
  \ga{}{  \xymatrix{ \ar[d] X_{\hat F_s} \ar[r] & \ar[d] X_{ \widehat{Z_s}} & \ar[l] \ar[d]X_s\\
   {\rm Spec}(\hat F_s) \ar[r] &  \widehat{Z_s} & \ar[l] s
}\notag}
  We denote by $\overline{\hat F}_s \supset  \hat F_s$ and $\bar k\supset k$  algebraic closures, the latter  defining $\bar s\to s$. 
Then, upon choosing an $S$-point $x_S: S\to X_S$,  one defines \cite[Expos\'e XIII 2.10] {SGA1}   a specialization homomorphism
 \ga{}{sp_{\hat F_s, s}:  \pi_1^t(X_{ {\hat F}_s}, x_{ {\hat F}_s})\to \pi_1^t(X_{ s}, x_{s}) \notag}
 which is the composite of the functoriality homomorphism
 \ga{}{  \pi_1^t(X_{ {\hat F}_s}, x_{ {\hat F}_s})\to 
  \pi_1^t(X_{{ \widehat{Z}_{s}}}, x_{{ s}}) \notag}
   with the inverse of the base change isomorphism
  \ga{}{\pi_1^t(X_{ s})\xrightarrow{\cong}  \pi_1^t(X_{\widehat{Z_{ s}}}). \notag}
  Finally one has the  functoriality homomorphism
  \ga{}{ \pi_1^t(X_{\hat F_s}, x_{\hat F_s}) \to  \pi_1^t(X_F, x_F) \notag}
  which is an isomorphism in restriction to the geometric fundamental groups
    \ga{}{ \pi_1^t(X_{ \overline{\hat F}_s}, x_{ \overline{\hat F}_s}) \xrightarrow{\cong} \pi_1^t(X_{ \bar F}, x_{ \bar F}).\notag}
  Taken together this defines the specialization homomorphism
  \ga{}{sp_{F,s}: \pi_1^t(X_F, x_F)\to \pi_1^t(X_s,x_s)\notag}
  which, while restricted to the geometric fundamental groups, defines the specialization homomorphism
  \ga{}{sp_{\bar F,\bar s}: \pi_1^t(X_{ \bar F}, x_{ \bar F}) \to \pi_1^t(X_{ \bar s}, x_{ \bar s}).\notag}
  The specialization homomorphisms $sp_{F,s}$  and $sp_{\bar F,\bar s}$  are surjective, and 
  $sp_{\bar F,\bar s}$ induces an isomorphism on the pro-$p'$-completion 
  \cite[Expos\'e XIII 2.10,
Corollaire~2.12]{SGA1}.
  
  \subsection{Finite generation}
We apply Grothendieck's specialization homomorphism in the following situation.  
   Let $X$   be smooth projective defined over a characteristic $0$ field together with a good compactification $X\hookrightarrow \bar X$, let $X_S\hookrightarrow \bar X_S$ be a good compactification over $S$ as in Section~\ref{sec:sp}. 
 Upon choosing a $S$-point $x_S: S\to X_S$,  we have the  specialization homomorphism
 \ga{}{sp_{F,s}: \pi_1(X_F, x_F)\to \pi_1^t(X_s, x_s) \notag}
 where $F$ is a geometric point of $X_{k(S)}$ which specializes to a $\bar \F_p$-point  $s$ of $S$. (Beware we slightly change the notation as compared to Section~\ref{sec:sp}
 as we consider only the geometric fundamental groups). 
   Taking $F=\C$ then $\pi_1(X_F, x_F)$ is by Riemann existence theorem 
\cite[Expos\'e XII, Th\'eor\`eme~5.1]{SGA1} the profinite completion of $\pi_1(X(\C), x(\C))$, the latter being itself is an {\it abstract finitely presented group}, see \cite[Theorem~1.1]{Esn17}. Thus $\pi_1^t(X_s,x_s)$ is {\it finitely generated}.
 
 \begin{thm}[\cite{ESS22},  Theorem~1.1] \label{thm:ESS22} Let $X$ be a smooth quasi-projective variety defined over an algebraically closed field  $k$ of characteristic $p>0$, with a good compactification $X\hookrightarrow \bar X$. Then $\pi_1^t(X)$ (based at any geometric point $x$) is finitely presented. 
 
 \end{thm}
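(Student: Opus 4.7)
The plan is to apply Lubotzky's cohomological criterion, Theorem~\ref{thm:Lubotzky}, to the profinite group $\pi=\pi_1^t(X,x)$. This requires verifying two properties: (i) that $\pi$ is topologically finitely generated, and (ii) a uniform linear bound $\dim_{\F_\ell}H^2(\pi,\rho)\le C\cdot r$ with a constant $C$ independent of the prime $\ell$, the rank $r$, and the continuous representation $\rho:\pi\to GL_r(\F_\ell)$.

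For (i), I would invoke Grothendieck's specialization homomorphism as described in Section~\ref{sec:sp}. Smooth projective curves always lift to characteristic $0$, and over $\C$ their topological fundamental groups are finitely generated, so the surjectivity of specialization forces $\pi_1^t$ of a characteristic-$p$ curve to be finitely generated as well. For higher-dimensional $X$ one then reduces to the curve case by a Lefschetz-style argument performed inside $\bar X$, using Bertini to produce a smooth curve section whose intersection with $\bar X\setminus X$ is transverse, so that tameness is preserved and the induced map on tame fundamental groups is surjective.

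The heart of the proof is the bound (ii), which I would attack by splitting according to whether $\ell\ne p$ or $\ell=p$. After interpreting $H^2(\pi,\rho)$ in terms of the tame \'etale cohomology of the local system $\sF_\rho$ on $X$ attached to $\rho$, the strategy for $\ell\ne p$ is to prove an $\ell$-independence statement for the dimension, so that a bound at a single auxiliary $\ell$ suffices. Deligne's purity from Weil~II is the key input here: after spreading $X$ out to a model over a finite field, weights of Frobenius are pure and eventually force the dimensions to agree across $\ell\ne p$. The bound at a fixed $\ell$ can then be obtained by a Lefschetz reduction to a curve section combined with the tame Grothendieck--Ogg--Shafarevich Euler--Poincar\'e formula, where tameness kills the Swan contributions and makes the Euler characteristic manifestly linear in $r$.

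The case $\ell=p$ is what I expect to be the principal obstacle, since no Weil~II style purity is available to bridge from $\ell\ne p$ and no Euler--Poincar\'e formula as clean as the $\ell\ne p$ case exists in mod-$p$ \'etale cohomology. Here the good compactification $X\hookrightarrow\bar X$ must be exploited to obtain a numerical characterization of tameness: tameness along a boundary component should translate into quantitative vanishing of a conductor invariant, and one needs an ad hoc mod-$p$ cohomological argument, most plausibly through a logarithmic de Rham--Witt or crystalline avatar of $\sF_\rho$, combined again with a Lefschetz reduction to a tame curve section, to produce a bound depending only on the geometry of $(\bar X,\bar X\setminus X)$ and linear in $r$. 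Once both cases yield such linear bounds, Lubotzky's theorem assembles them into the desired finite presentation of $\pi_1^t(X,x)$.
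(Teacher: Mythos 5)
Your high-level plan (apply Lubotzky's criterion, get finite generation from specialization, split the cohomological bound into $\ell\neq p$ and $\ell=p$, use Deligne's purity to make the $\ell\neq p$ bound independent of $\ell$, and treat $\ell=p$ as the hard case) matches the paper's skeleton, and your $\ell\neq p$ strategy via the tame Grothendieck--Ogg--Shafarevich formula and $\ell$-independence of the $\Q_\ell$-Euler characteristic is essentially what the paper does. But two points are wrong or missing in a way that matters.

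First, you slide past the passage from group cohomology to sheaf cohomology. The group $H^2(\pi_1^t(X),M)$ is \emph{not} a priori equal to the \'etale cohomology $H^2(X,M)$ of the associated local system (unlike the $K(\pi,1)$ situation, one does not have this on the nose here); what the paper actually proves, via two Hochschild--Serre arguments using that the relevant kernel groups have no $\F_\ell$-abelian quotients and that $H^1(\tilde X,\F_\ell)=0$, is an \emph{injection} $H^2(\pi_1^t,M)\hookrightarrow H^2(\pi_1,M)\hookrightarrow H^2(X,M)$, and then the linear bound is proved on the larger group. Analogously, for $\ell=p$ the paper proves an injection $H^2(\pi_1^t(X),M)\hookrightarrow H^2(\bar X, j_*\underline{M})$ (Theorem~\ref{thm:groupj}), whose proof is precisely where the numerical tameness coming from the good compactification enters, via an equivariant-cohomology vanishing for the normalizations $\bar X_U$ in the tame tower. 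This injection is the key step, and your sketch does not supply a substitute for it.

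Second, for the $\ell=p$ bound itself, the tool you propose (a logarithmic de Rham--Witt or crystalline avatar of $\sF_\rho$) is not what is needed and would be quite awkward: crystalline/de Rham--Witt theory is not set up to compute $\F_p$-coefficient \'etale cohomology of general local systems. The paper instead uses the Artin--Schreier sequence $0\to j_*\underline{M}\to\sM\xrightarrow{1-F}\sM\to 0$ with $\sM$ a locally free extension of $\underline{M}\otimes\sO_X$ to $\bar X$, reducing everything to bounding $\dim H^0(\bar X,\sM^\vee\otimes\omega_{\bar X})$. The linear-in-$r$ bound for this coherent group comes from the boundedness $\sL\otimes\bar h^*\sO_{\bar X}(-D)\subset\bar h^*\sM\subset\sL$, a consequence of Abhyankar's lemma and tameness of the Galois cover trivializing $\underline{M}$, followed by intersecting with two generic curves in a fixed very ample linear system. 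This is a considerably more elementary and concrete mechanism than what you gesture at, and is the genuine content of the $\ell=p$ case you rightly flag as the obstacle.
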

 \begin{rmk}
 1) The tameness assumption is necessary. If $X$ is not proper, ``in the rule'' $\pi_1(X)$ is not finitely generated.  For example,
  for each natural number $s$, and a choice of $s$ natural numbers $m_i$ prime to $p$ and pairwise different, 
  the Artin-Schreier covers $y^p - y = z^{m_i}$ of the affine line $\mathbb{A}^1_k =\mathrm{Spec}(k[z])$  yield a surjection
$\pi_1(\mathbb{A}^1_k )\surj  \oplus_{i=1}^s \Z/p\Z$.
\\ \ \\
2)  We need the good compactification assumption in the argument but it should not be necessary as the result does not see the good compactification. In addition  we (perhaps ?)  believe in resolution of singularities. We'll mention in the proof which replacement for resolution of singularities we need, which might be called a ``numerical resolution''. H\"ubner and Temkin announced a solution to the problem. 
 \end{rmk}

 \subsection{Proof of Theorem~\ref{thm:ESS22}, the $\ell\neq p$ part.}
 We could call this part of the proof a ``SGA type proof''. All the ingredients are contained there.  We have to prove the linear growth in $r$ of $H^2(\pi_1^t, \rho)$ for a representation $$\rho: \pi_1^t  \to GL_r(\F_\ell)$$ (which is necessarily continuous). Here we simplify the notation $\pi^t_1(X,x)$ to $\pi_1^t$ and
 $\pi_1(X,x)$ to $\pi_1$. We denote by $X^\rho\to X$ the cover defined by ${\rm Ker}(\rho)$ which by assumption factors the universal tame cover $X^t\to X$. 
  We split the universal cover based at $x$
 \ga{}{\xymatrix{ \ar@/^1pc/[rr]^{\pi_1} \tilde X  \ar[r]_K & X^t  \ar[dr] \ar[r]_{\pi_1^t} & X \\
 & & X^\rho \ar[u] }\notag }
 We denote by $M$ the underlying vector space $\oplus_1^r \F_\ell$ of the $\rho$-representation. 
 The Hochshild-Serre spectral sequence yields the exact sequence
 \ga{}{ (H^1(K, \F_\ell)\otimes M)^{\pi^t_1} \to  H^2(\pi^t_1, M)\to H^2(\pi_1, M). \notag}
 By definition there is no $\F_\ell$-abelian quotient of $K$, thus 
 \ga{}{ (\star) \ \ \ 0=H^1(K, \F_\ell)=H^1(K^{ab}, \F_\ell),\notag}
 from which we derive $H^2(\pi_1^t, M)\hookrightarrow H^1(\pi_1, M)$. On the other hand, 
the Hochshild-Serre spectral sequence induces the exact sequence
\ga{}{ (H^1(\tilde X, \F_\ell)\otimes M)^{\pi_1} \to  H^2(\pi_1, M)\to H^2(X, M)\notag}
 and $H^1(\tilde X, \F_\ell)=0$. So $H^2(\pi_1, M)\hookrightarrow H^1(X, M)$ is injective as well. Taken together this yields that the composed linear map
 \ga{}{ H^2(\pi_1^t, M)\hookrightarrow H^2(\pi_1, M)\hookrightarrow H^2(X, M)\notag}
 is injective. 
 We show the existence of the linear bound on the a priori larger group $H^2(X, M)$. 
 
 \medskip
 
 By the Lefschetz theorem \cite[ Theorem 1.1 b)]{EK16}
 we may assume that $X$ is a surface. Using  an ``ample''  curve $C\rightarrow X$  in good position so the compactification of $C$ and the boundary of $X$ form a strict normal crossings divisor and $X\setminus C$ is affine,  purity yields the exact sequence $$H^0(C, M)\to H^2(X, M)\to H^2(X\setminus C, M).$$   As ${\rm dim}_{\F_\ell} H^0(C, M)\le r$,  we  thus may assume that $X$ is affine.  On the other hand, by Deligne's theorem~\cite[Corollaire~2.7]{Ill81}, tameness implies $$\chi(X, \mathbb M)=r \chi(X, \F_\ell)$$  and we always have $$ \chi(X, \F_\ell)=\chi(X, \Q_\ell).$$
 The latter group is independent of $\ell\neq p$ by Deligne's purity  \cite[Th\'eor\`eme~3.3.1]{Del80}. 
 
 \medskip
 
 This   reduces the problem to bounding linearly the growth of $H^1(X, M)$.  From the exact sequence $$0 \to H^1(\pi_1^t, M)\to H^1(\pi, M)\to (H^1(K, \F_\ell)\otimes M)^{\pi_1^t},$$
 again  using $ (\star)$,  we see that $H^1(X, M)$,  which is equal to $H^1(\pi_1, M)$,  is equal to $H^1(\pi_1^t, M)$. As a $1$-cocycle  $\varphi: \pi_1^t\to M$ fulfils 
 $\varphi(ab)= a \varphi(b)+\varphi(a)$,  it holds $${\rm dim}_{\F_\ell} H^1(\pi_1^t, M)\le \delta r$$ where $\delta$ is the number of topological generators of $\pi_1^t$.  This finishes this computation, which in fact can be performed even if we do not have a good model, using alterations \`a  la de Jong-Gabber-Temkin.
 
  \subsection{Proof of Theorem~\ref{thm:ESS22}, the $p$ part.}
  We denote by $j: X\hookrightarrow \bar X$ the good compactification. We denote by   $\underline{M}$  the local system associated to $M$. 
The key point is to relate $H^2(\pi_1^t, M)$ to a cohomology group which comes from cohomology of some extension on $\bar X$  of a local system $\underline{M}$ on $X$. Being able to use local systems rather than abstract representations yields more flexibility as we have at disposal  the cohomology of all kinds of constructible extensions of the local systems as opposed to just group cohomology. 
 \begin{thm} \label{thm:groupj}
There is an injective $\F_p$-linear map $$H^2(\pi_1^t(X), M)\to H^2(\bar X, j_*\underline{M}).$$

\end{thm}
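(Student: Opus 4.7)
The plan is to realize $H^2(\pi_1^t(X), M)$ as an edge contribution in the Cartan--Leray spectral sequence attached to the universal tame pro-\'etale cover of $X$ extended to $\bar X$, and to deduce injectivity from the vanishing of an $H^1$ on that pro-cover.

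Let $\tilde X^t \to X$ denote the universal tame pro-\'etale cover, which is pro-Galois with group $\pi_1^t(X)$, and let $f : \tilde{\bar X}^t \to \bar X$ denote the pro-object obtained by (pro-)normalising $\bar X$ in the function field of $\tilde X^t$, with $\tilde{\jmath} : \tilde X^t \hookrightarrow \tilde{\bar X}^t$ the open immersion. Since $\rho$ factors through $\pi_1^t(X)$, the \'etale local system $\underline{M}$ on $X$ is tame relative to $\bar X$, so $j_*\underline{M}$ is defined and the Cartan--Leray (Hochschild--Serre) spectral sequence for $f$ takes the form
\begin{equation*}
E_2^{p,q} = H^p\!\left(\pi_1^t(X),\ H^q(\tilde{\bar X}^t,\ f^{*}j_*\underline{M})\right) \Longrightarrow H^{p+q}(\bar X,\ j_*\underline{M}).
\end{equation*}
The edge map in total degree $2$ furnishes the candidate homomorphism $H^2(\pi_1^t(X), M) \to H^2(\bar X, j_*\underline{M})$, and it will be injective as soon as $H^1(\tilde{\bar X}^t, f^{*}j_*\underline{M})=0$.

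The first key ingredient is the identification of $f^{*}j_*\underline{M}$ with the constant sheaf $\underline{M}$ on $\tilde{\bar X}^t$ (in the pro-sense). On $\tilde X^t$ this is immediate because $\rho$ trivialises by construction. At a geometric point of the boundary $\tilde D := \tilde{\bar X}^t \setminus \tilde X^t$ lying over $x \in D$, the stalk of $j_*\underline{M}$ is the tame-inertia invariants $M^{I_{x}^t}$, and in the pro-limit defining $\tilde{\bar X}^t$ the tame inertia at $x$ is, by construction of the universal tame cover, killed. So it suffices to establish $H^1(\tilde{\bar X}^t, \underline{\F_p}) = 0$. Any finite \'etale cover of $\tilde{\bar X}^t$ restricts over $\tilde X^t$ to a cover that extends unramifiedly, and hence tamely with respect to $\tilde{\bar X}^t$, across $\tilde D$; since tameness with respect to a good compactification is preserved under composition with $\tilde X^t \to X$, such a cover descends to a tame cover of $X$ and must therefore be trivial by the universal property of $\tilde X^t \to X$. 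Hence $\pi_1^{\mathrm{\acute{e}t}}(\tilde{\bar X}^t) = 1$, the required $H^1$ vanishes, and the edge map is the desired injection.

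The principal obstacle I expect is the identification $f^{*}j_*\underline{M} \cong \underline{M}$: it amounts to the commutation of the formation of $j_*$ with the pro-\'etale pullback $f$, which rests on the tame ramification of $\underline{M}$ along $D$ and on the good normal-crossings hypothesis on $\bar X$. A secondary delicate point, essential for the vanishing of $H^1(\tilde{\bar X}^t, \F_p)$, is the stability of tameness under the composition with $\tilde X^t \to X$, which once more leverages the good compactification and the numerical characterisation of tameness invoked elsewhere in these Lectures.
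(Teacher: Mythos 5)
Your plan is close in spirit to the paper's, which works level by level with the finite Galois groups $G_U=\pi_1^t/U$, uses the equivariant cohomology spectral sequence
\[
E_2^{ab}=H^a(G_U, H^b(\bar X_U, j_{U*}\underline{M}^U))\Rightarrow H^{a+b}(\bar X_U, G_U, j_{U*}\underline{M}^U),
\]
identifies the abutment with $H^{a+b}(\bar X, j_*\underline{M})$ by a descent theorem from Grothendieck's T\^ohoku paper, and passes to the colimit over $U$.  Your final step, $H^1(\tilde{\bar X}^t,\F_p)=0$, is essentially the paper's $\lim_U H^1(\bar X_U,\F_p)=0$ and is fine.  However, there are two genuine gaps in the way you have set up the spectral sequence, and one of them is precisely the crux of the theorem.

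First, the sheaf identity $f^*j_*\underline{M}\cong\underline{M}$ is false.  Since $f^*$ preserves stalks, the stalk of $f^*j_*\underline{M}$ at a boundary point $\tilde x$ of $\tilde{\bar X}^t$ lying over $x\in D$ is the stalk of $j_*\underline{M}$ at $x$, namely $M^{I_x^t}$; nothing in the pro-limit "kills" this, because you have pulled back a sheaf whose stalk was already computed downstairs.  The correct constant sheaf is $\tilde\jmath_*\underline{M}$ on $\tilde{\bar X}^t$: there the inertia at $\tilde x$ is the wild part, which acts trivially on $M$ because $\underline M$ is tame, so $\tilde\jmath_*\underline{M}=\underline M$.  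The map $f^*j_*\underline{M}\to\tilde\jmath_*\underline{M}$ is the inclusion $M^{I_x^t}\hookrightarrow M$ on boundary stalks, so the two sheaves genuinely differ.  The paper uses the $\tilde\jmath_*$-type sheaf $j_{U*}\underline{M}^U$ at each finite level.

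Second, and more seriously, the Cartan--Leray/Hochschild--Serre spectral sequence you write down with abutment $H^{p+q}(\bar X, j_*\underline{M})$ does not hold for $f$ as stated: $f$ is a pro-Galois cover of $X$ but it is \emph{ramified} along $D$, so $\bar X$ is not the classifying quotient of $\tilde{\bar X}^t$ by $\pi_1^t$ for sheaf cohomology.  What the group cohomology spectral sequence naturally computes is the equivariant cohomology $H^{*}_{\pi_1^t}(\tilde{\bar X}^t,-)$, and identifying this with $H^*(\bar X,j_*\underline{M})$ is exactly the nontrivial point.  It requires that the derived invariants $R^{>0}\bar h^{G_U}_{U*}j_{U*}\underline{M}^U$ vanish, which the paper obtains from the T\^ohoku result and from the numerical tameness of the tower: since $j$ is a good normal crossings compactification, the stabilizers of boundary points in $G_U$ are cyclic of order prime to $p$, hence have no higher cohomology with $\F_p$-coefficients.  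You flag the boundary stalk computation as the principal obstacle, but the real missing ingredient is this descent/vanishing, without which the edge map you construct does not land in $H^2(\bar X,j_*\underline{M})$ at all.

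Fixing the argument would mean replacing $f^*j_*\underline{M}$ by $\tilde\jmath_*\underline{M}$ and then justifying
$H^{*}_{\pi_1^t}(\tilde{\bar X}^t,\tilde\jmath_*\underline{M})\cong H^*(\bar X,j_*\underline{M})$ via the prime-to-$p$ stabilizer argument; at that point you are doing what the paper does, just in the pro-limit rather than level by level.
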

\begin{proof} First let us fix what we have to prove: it holds
\ga{}{H^2(\pi^t_1, M)=\varinjlim_{U \subset \pi^t {\rm open \  normal}} H^2(G_U, M^U), \ G_U=\pi_1^t/U. \notag } 
For each such $U$ 
we define the fibre square
\ga{}{\xymatrix{ \ar[d]_{h_U} X_U \ar[r]^{j_U} & \bar X_U \ar[d]^{\bar h_U}\\
X\ar[r]^j& \bar X
} \notag}
where $h_U$ is the tame Galois cover defined by $ G_U$  and $\bar X_U$ is the normalization of $\bar X$ in the field of functions $k(X_U)$ of $X_U$. 
As $j$ is a normal crossings compactification, $\bar h_U$ is {\it numerically tame} in the sense of \cite[Section~5]{KS10}, see \cite[Theorem~5.4 (a)]{KS10}.  Thus by \cite[Corollaire p.204]{Gro57} one has 
\ga{b}{ R^{>0}   \bar h^{G_U}_{U*} j_{U*} \underline{M}^U=0 , \notag \\
H^{n}(\bar X_U, G_U, j_{U*} \underline{M}^U)=H^n(\bar X, j_*\underline{M}). \notag
}
We conclude that the spectral sequence  converging to  equivariant cohomology 
\ga{}{E_2^{ab}=H^a(G_U, H^b(\bar X_U,j_{U*} \underline{M}^U)) \Rightarrow 
H^{a+b}(\bar X_U, G_U, j_{U*} \underline{M}^U),\notag
 }
 for each $U\subset {\rm Ker} (\rho)$ yields a short exact sequence 
\ga{seq}{ H^0(G_U, H^1(\bar X_U, j_{U*} \underline{M}^U)) \to H^2(G_U, H^0(\bar X_U,j_{U*} \underline{M}^U)) \to H^2(\bar X, j_*\underline{M})\notag}
where 
\ga{}{  H^1(\bar X_U, j_{U*}  \underline{M}^U)^{G_U}= (H^1(\bar X_U, \F_p)\otimes_{\F_p} M)^{G_U}. \notag}
On the other hand, $H^1(\bar X_U, \F_p)\to H^1(X_U, \F_p)$ is injective and 
again by $(\star)$ it holds ${\rm lim}_U H^1(\bar X_U, \F_p) =0$. 

\end{proof}
\begin{rmk}
We see that the main point is that a good normal crossings compactification implies that the tower $h_U$ is numerically tame. The problem is then how, out of a random normal compactification, to construct one which in the tame tower is numerically tame. 
\end{rmk}
Once there, we can cut down again by a  Lefschetz type argument (\cite[Theorem 1.1 b)]{EK16})  to surfaces and on them, using  the Artin-Schreier exact sequence 
\ga{}{0\to j_*\underline{M}\to \sM\xrightarrow{1-F} \sM\to 0\notag}
where $\sM$ is a locally free sheaf on $\bar X$ with restriction to $X$ equal to $\underline{M}\otimes_{\F_p}\sO_X$,
and the classical fact that this sequence remains exact on cohomology, we are  reduced to bounding above ${\rm dim}_kH^2(\bar X, \sM)$ linearly in $r$, which is the same as ${\rm dim}_k H^0(\bar X, \sM^\vee\otimes \omega_{\bar X})$.  We are back to cohomology of coherent sheaves. 

\medskip

 Now the main point is the following. If $h: Y\to X$ is the Galois cover defined by  the monodromy representation of $\underline{M}$,  with Galois group $G$, thus $h^* \underline{M}$ is trivial. Let  $\bar h: \bar Y\to \bar X$   be its compactification as above, then
$\sM=(\bar h_*\sL)^G$, 
 so
$\bar h^*\sM\subset \sL:=H^0( Y, h^* \underline{M})\otimes_{\F_p} \sO_{\bar Y}$ and as a consequence of Abhyankar's lemma,  tameness  implies 
$\sL\otimes \bar h^*\sO_{\bar X}(-D)\subset \bar h^*\sM$ for $D=(\bar X\setminus X)_{\rm red}.$ Those two informations together yield
\ga{}{ \sL\otimes \bar h^*\sO_{\bar X}(-D)\subset \bar h^*\sM \subset \sL\notag}
which is a boundedness statement. 
 This enables us
to conclude  that the restriction map  $$H^0(\bar X, \sM^\vee\otimes \omega_{\bar X}) \to 
H^0(C\cap C', \sM^\vee\otimes \omega_{\bar X})$$ is injective for $C, C'$ generic curves in the linear system of $\sH'=\sH\otimes \omega_{\bar X}(D)$,  where $\sA$ is chosen to be  very ample and we request $\sH'$ to be very ample as well.  Note $\sH$ and $\sH'$ depend only on $X$ and $r$. On the other hand, $C\cap C'$ is the union of $c_2(\sH')$-points so the right hand side of the inequality is equal to $c_1(\sH')^2\cdot r$.   This finishes the proof.

\newpage
 
  \section{Lecture 5: Interlude on some Difference between the Fundamental Groups in Characteristic $0$ and $p>0$} \label{sec:diff}
  \label{sec:ob}
 \begin{abstract} See the Abstract of Lecture~\ref{sec:similarity}: we show  here the existence of an obstruction to lift a smooth (quasi-)projective variety defined over an algebraically closed field $k$ of characteristic $p>0$ to characteristic $0$ which relies purely on the shape of its (tame)  fundamental group.

  \end{abstract}

\subsection{Serre's construction} 
This problem has been addressed for the first time by Serre  \cite{Ser61}. We use the notation of Section~\ref{sec:tame} but assume more concretely that  $k$ is an algebraically closed field of characteristic $p>0$,  $ X_k=:X $ is smooth projective and obtained as follows. There is  a finite Galois \'etale cover $Y \to X$ such that $Y\hookrightarrow \P^n$ is a smooth complete intersection of dimension $\ge 3$. So in particular $\pi_1(Y)=\{1\}$. In addition, the Galois group $G$ 
is the restriction of a linear action $\rho: G\to GL_{n+1}(k)$. Then  \cite[Lemma]{Ser61} shows that if $X$ lifts to $X_R$ with $S={\rm Spec}(R)$, $R$ a noetherian local complete ring, then $\rho$ lifts to $\rho_R: G\to PGL_{n+1}(R)$, which is not possible if the cardinality of the $p$-Sylow subgroups of $G$ is large. 

\subsection{Various obstructions} We mention three major directions of obstructions which have been settled since Serre's work. Clearly this list is not exhaustive. 

\medskip

 Deligne-Illusie in \cite{DI87} proved that  $X$ smooth proper, lifting to  $W_2(k)$, $k$ perfect 
of characteristic $p>0$, has the property that its Hodge to de Rham spectral sequence degenerates in $E_1$. 
This yields an obstruction to lift to $W_2(k)$ as examples for which the spectral sequence does not degenerate were previously known \cite{Ray78}. This  has been the basis of vast  further developments.

\medskip 

 Achinger-Zdanowicz construct in \cite{AZ17}  
 specific   varieties  which are non-liftable to characteristic $0$ by blowing up the graph of Frobenius which is assumed to be non-liftable in a rigid variety with no corner piece of the $F$-filtration.  It is remarkable that their example has cohomology of Tate type. 

\medskip

 van Dobben de Bruyn proved that  if $X\subset C^3$ is a smooth ample divisor where $C$ is a supersingular genus $\ge 2$ curve over $\bar \F_p$, then $X$ does not lift to characteristic $0$, nor does any smooth proper variety which is rationally dominant over $X$, \cite[Theorem.~1]{vDdB21}. The main property  \cite[Theorem.~2]{vDdB21} is that if $X_S\to S$ lifts $X$, and $X$ admits  a  morphism to a smooth projective genus $\ge 2$ curve $C$, then after possibly base changing with  an inseparable cover of $C$, 
  the morphism lifts to characteristic $0$.

 \subsection{ An abstract obstruction to lift to characteristic $0$, based on the structure of the fundamental group}
 
 The Hodge-de Rham obstruction singled out by Deligne-Illusie  is of theoretical nature,  that is  it does not depend on a concrete way to construct the variety. However it  is  {\it not} an obstruction to lift to characteristic $0$; there are schemes which lift to characteristic $0$, yet in a ramified way,  not over $W_2$. A classical example is a supersingular Enriques surface over $k$ algebraically closed of characteristic  $2$, see \cite[Proposition II.7.3.8]{Ill79}. The other obstructions to lift to characteristtic $0$ rely on the construction of the variety.

    \medskip 
    
 The aim of the remaining part of Chapter~\ref{sec:ob}  is to show that there is an essential  difference between the prime to $ p$ 
quotient of the  fundamental group  of varieties in characteristic  $p > 0$  and the one  in characteristic $0$. It 
 provides a {\it conceptual} obstruction.  It is in {\it contrast}  with the similarity we explained in Lecture~\ref{sec:similarity}, where we showed that the  (tame) fundamental group of a smooth (quasi-)projective variety defined over an algebraically closed field of characteristic $p>0$ (admitting a good compactification) is finitely presented, as it is in characteristic $0$. 
 It is also in  contrast  with the foundational theorem by Achinger \cite[Theorem~1.1.1]{Ach17}  after which every connected affine scheme of positive characteristic is a  $K(\pi, 1)$ space for the \'etale topology.  His theorem notably says that affine varieties  over an algebraically closed field in characteristic $p>0$  are  analog to  Artin neighborhoods in characteristic $0$, see  \cite[Expos\'e XI]{SGA4}.
The theorem means precisely  that for any 
  locally constant \'etale sheaf of finite abelian groups $\sF$ on $X$, the homomorphisms
$$ H^i(\pi_1(X, x), \sF_x) \to  H^i(X, \sF ) $$ coming from the Hochshild-serre spectral sequence are  isomorphisms for all $i$. Here  $x\to X$ is a geometric point. It has not  really been documented in the literature, but we could think of Achinger's theorem as the building block of the theory of \'etale cohomology, reducing it to continuous group cohomology. 
\subsection{Main definition}
\begin{defn}[See \cite{ESS22b}, Definition~A]\label{defnABC:pprime}
A profinite group $\pi$ is said to be  $p'$-discretely finitely generated (resp.\ $p'$-discretely finitely presented)   if there is a finitely generated (resp.\ presented)  discrete group $\Gamma$ together with a group homomorphism 
$
\gamma: \Gamma \to \pi
$
such that
\begin{enumerate}
\item the profinite completion 
$\hat \gamma: \hat \Gamma \to \pi$ is surjective;
\item
for any open subgroup $U\subset \pi$ 
with $\Gamma_U := \gamma^{-1}(U)$ 
the restriction 
$\gamma_U: \Gamma_U \to U$ induces a continuous group isomorphism on pro-$p'$-completions 
\[
\gamma_U^{(p')}: \Gamma_U^{(p')}\to U^{(p')}.
\]
\end{enumerate}
\end{defn}
Grothendieck's specialization homomorphism \ref{sec:sp} together with the lifting property
 \cite[Th\'eor\`eme~18.1.2]{EGAIV(4)}  imply that if $\pi=\pi_1(X,x)$ is the fundamental group of  a smooth proper variety $X$, then it is $p'$-discretely finitely presented (in particular it is $p'$-discretely finitely generated), see \cite[Proposition~2.7]{ESS22b}. Property (1) is also true for $\pi_1^t(X,x)$ when there is a good compactification which comes from characteristic $0$. But to check it in the tower as requested in (2)  is more subtle. We thank the referee of \cite{ESS22b} for kindly noticing  this. Nonetheless the property is true, see \cite[Example~2.8]{ESS22b}. 
 
 \begin{ex} \label{ex:fgp}
 Of course if as in Example~\ref{ex:fp}, $\pi$ is the profinite completion of a finitely presented (resp. generated) group, then $\pi$  is $p'$-discretely finitely presented (resp. generated). 
 
 \end{ex}
  
 \begin{thm}[\cite{ESS22b}, Theorem~C]  \label{thm:C}There are smooth projective varieties  $X$ defined over an algebraically closed field $k$ of characteristic $p>0$ such that $\pi_1(X,x)$ is not $p'$-discretely finitely generated. In particular this notion is an obstruction to liftability to characteristic $0$.

 \end{thm}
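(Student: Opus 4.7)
The plan is to produce a smooth projective $X$ over an algebraically closed field $k$ of characteristic $p>0$ equipped with a finite subgroup $G \subset \mathrm{Aut}(X)$ such that the induced $G$-representation on $H^1_{\mathrm{et}}(X,\Q_\ell)$ for some $\ell \neq p$ is not defined over $\Q$, and then to derive from this that $\pi_1(X,x)$ cannot be $p'$-discretely finitely generated. The non-liftability statement then follows by contraposition of the earlier Proposition~2.7: for any lift $X_R$ over a complete local Noetherian ring $R$ with residue field $k$ and a characteristic $0$ fraction field, Grothendieck's specialization isomorphism on pro-$p'$ completions, combined with the classical finite presentation of the topological fundamental group of the geometric generic fibre and the lifting property for étale covers, produces a witness $\gamma:\Gamma \to \pi_1(X,x)$ satisfying the two conditions of Definition~\ref{defnABC:pprime}.

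The key link to establish is the following: if $\pi = \pi_1(X,x)$ is $p'$-discretely finitely generated via some $\gamma: \Gamma \to \pi$, then for every finite $G \subset \mathrm{Aut}(X)$ fixing $x$, the induced action on $H^1_{\mathrm{et}}(X,\Q_\ell)$ is defined over $\Q$ and independent of $\ell \neq p$ in a compatible way. Indeed, condition~(1) of the definition identifies, for each $\ell \neq p$, $H^1_{\mathrm{et}}(X,\Z_\ell) = \mathrm{Hom}(\pi^{ab} \otimes \Z_\ell, \Z_\ell)$ with $\mathrm{Hom}(\Gamma^{ab} \otimes \Z_\ell, \Z_\ell)$. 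Then condition~(2), applied to the tower of $G$-stable open subgroups of $\pi$ cut out by intersecting $G$-orbits of open neighborhoods of the identity, transports the $G$-action from the profinite side back to the discrete tower $\{\Gamma_U\}$ compatibly at each level. The resulting $G$-module structure on $\Gamma^{ab} \otimes \Q$, a finite-dimensional $\Q$-vector space, recovers the $G$-representation on $H^1_{\mathrm{et}}(X,\Q_\ell)$ as its $\Q_\ell$-base change, and therefore is defined over $\Q$ and the same $\Q$-representation for all $\ell \neq p$.

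The hard part is the construction itself. I would start from a finite group $G$ carrying an irreducible $\Q_\ell$-representation $\rho$ whose Schur index over $\Q$ is strictly greater than $1$, the simplest example being the $2$-dimensional irreducible representation of the quaternion group $Q_8$, realized over $\Q_\ell$ but not over $\Q$. The task is then to realize $\rho$, or a summand of it, inside $H^1_{\mathrm{et}}(X,\Q_\ell)$ for a smooth projective $X/\bar\F_p$ carrying a $G$-action. A natural route is to take a supersingular abelian variety $A$ over $\bar\F_p$ with a $G$-action coming from its rich endomorphism algebra, engineering the induced $G$-representation on $V_\ell A$ to contain $\rho$, and then passing, if needed, to a smooth projective $G$-equivariant modification of $A$, or to a $G$-cover of $\P^n$ ramified in a suitable $G$-invariant arrangement, so that the character field of the $G$-action on $H^1_{\mathrm{et}}$ remains a non-trivial extension of $\Q$. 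The main obstacle is arranging the geometry so that the $G$-representation on $H^1$ genuinely contains $\rho$ rather than decomposing into $\Q$-rational summands; this is where one must exploit features particular to characteristic $p$, such as the larger endomorphism algebras of supersingular abelian varieties and the existence of $G$-actions whose isotypic decomposition does not lift to characteristic $0$.
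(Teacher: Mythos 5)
Your strategy — a Schur-rationality obstruction coming from a finite group acting on $H^1_{\mathrm{et}}$ with non-rational character field — is exactly the paper's (the Schur-rationality of the $G$-representation on $(\ker\varphi)^{ab}\otimes\Q_\ell$ is Propositions~\ref{prop:ind} and 3.7 of \cite{ESS22b}), and your instinct to source the non-rationality from a quaternion algebra ramified at $p$ and $\infty$ via supersingularity is the right one. But your "key link" is not correct as stated, and the gap is structural. You take $G\subset\mathrm{Aut}(X)$ and claim that $p'$-discrete finite generation of $\pi_1(X,x)$ forces the $G$-action on $H^1_{\mathrm{et}}(X,\Q_\ell)$ to be rational, by "transporting the $G$-action from the profinite side back to the discrete tower." This cannot work: Definition~\ref{defnABC:pprime} provides some abstract discrete group $\Gamma$ and a map $\gamma:\Gamma\to\pi$ inducing isomorphisms on pro-$p'$ completions, but it says nothing about $G$-equivariance of $\gamma$. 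The $G$-action on $\pi$ coming from $\mathrm{Aut}(X)$ has no reason to lift to $\Gamma$ or to the preimages $\Gamma_U$, so there is no $\Q$-structure to compare against. The definition only yields rationality for the conjugation action of a \emph{finite continuous quotient} $\varphi:\pi\surj G$ on $U_\varphi^{ab}$: there both $\Gamma$ and $\pi$ carry canonical $G$-actions (conjugation inside $\Gamma$ and inside $\pi$, respectively), and $\gamma$ automatically intertwines them.

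Geometrically this means the variety whose $\pi_1$ is not $p'$-discretely finitely generated is not the $X$ on which $G$ acts, but a quotient $X/G$ with $\pi_1(X/G)\surj G$ and kernel $\pi_1(X)$; the relevant representation is the outer $G$-action on $H^1(X,\Q_\ell)\cong ((\ker\varphi)^{ab}\otimes\Q_\ell)^\vee$. For $X/G$ to be smooth projective you need the $G$-action to be free, which fails in your setup (a linear $G\subset\mathrm{End}(A)^\times$ acting on an abelian variety $A$ fixes the origin). This is exactly what Serre's trick is for, and the paper uses it: take $P/\F_p$ smooth projective of dimension $\ge 3$, simply connected over $\bar\F_p$, with a free $G$-action, and set $X=(C\times P)/G$ where $C$ is the curve carrying the interesting $G$-representation (the Roquette curve $y^2=x^p-x$ with $\mathrm{End}^0(C)$ a quaternion algebra ramified at $p,\infty$, the rationality failure being detected via the Tate conjecture). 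Your alternative of a supersingular elliptic curve with a $Q_8$-action realizing the quaternionic irreducible on $V_\ell$ should also work for suitable $p$, but you would still need to re-route the argument through a quotient $\varphi:\pi\surj G$ rather than through $\mathrm{Aut}(X)$, impose freeness via a product with a simply connected $P$, and verify that the resulting $2$-dimensional representation retains Schur index $2$ over $\Q$ after the construction.
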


\begin{rmk} We remark that 
in view of Example~\ref{ex:fgp} and of 
 the finite presentation of Theorem~\ref{thm:ESS22}, Theorem~\ref{thm:C} implies in particular that there  are smooth projective varieties  $X$ defined over an algebraically closed field $k$ of characteristic $p>0$ such that 
 $\pi_1(X,x)$ is not the profinite completion of a finitely presented group. In fact, this property is easier to see than Theorem~\ref{thm:C} itself. 
\end{rmk}

  \subsection{Independence of  $\ell$ and Schur rationality}

Let $\pi$ be a profinite group, and let $\varphi : \pi \surj G$ be a continuous finite quotient with kernel $U_\varphi = {\rm Ker}(\varphi)$. We denote by $U_\varphi^{ab}$ its abelianization. Then conjugation induces a commutative diagram
\ga{}{
\xymatrix@M+1ex{\pi \ar[r] \ar@{->>}[d]^{\varphi} & {\rm Aut}(U_\varphi) \ar@{->>}[d]  \ar[r] & {\rm Aut}(U_\varphi^{ab})
 \\
G \ar[r] & {\rm Out}(U_\varphi) \ar[ru] & } \notag}
If $\pi$ is finitely generated, then $U_\varphi$ is finitely generated so
 $U_\varphi^{ab}$ is a finitely generated $\hat \Z$-module. We set
 \ga{}{
\rho_{\varphi,\ell} : G \to \GL(U_\varphi^{ab} \otimes \Q_\ell), \notag}
for the induced representation, 
with character 
\ga{}{ 
\chi_{\varphi,\ell} = {\rm Tr}(\rho_{\varphi,\ell}) : G \to \Q_\ell. \notag }
The first main point is the following proposition.
\begin{prop}[See \cite{ESS22b}, Propositions~3.4,~3.5] \label{prop:ind}
1) If $\pi$ is $p'$-discretely finitely generated, then for all $\ell\neq p$, $ \chi_{\varphi,\ell}$ has values in $\Z$ and is independent of $\ell$. \\
2) If $X$ is a smooth projective variety defined over an algebraically closed  characteristic $p>0$ field, and $\varphi: \pi\surj G$ is a finite quotient (thus in particular 
$\varphi$ is 
continuous), then for all $\ell\neq p$,  $ \chi_{\varphi,\ell}$ has values in $\Z$ and is independent of $\ell$.
\end{prop}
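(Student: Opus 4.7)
For part (1), the plan is to exploit the discrete group $\Gamma$ to produce an integral $G$-lattice whose base change to $\Q_\ell$ realises $\rho_{\varphi,\ell}$. Set $\Gamma_\varphi := \gamma^{-1}(U_\varphi)\subset \Gamma$. Because $\hat\gamma:\hat\Gamma\surj \pi$ is surjective, the composite $\varphi\circ\gamma:\Gamma\to G$ is still surjective, so $\Gamma_\varphi$ is a normal subgroup of $\Gamma$ of finite index with quotient $G$. Since $\Gamma$ is finitely generated, so is $\Gamma_\varphi$, hence $\Gamma^{ab}_\varphi$ is a finitely generated $\Z[G]$-module. Modulo torsion it becomes a $\Z[G]$-lattice $V_\Z$ of some rank $r$, on which each $g\in G$ acts by a $\Z$-linear automorphism, so $\Tr(g\mid V_\Z)\in\Z$.

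The next step is to transport this $\Z[G]$-structure to $U_\varphi^{ab}$ using condition~(2) of Definition~\ref{defnABC:pprime}. The map $\gamma_{U_\varphi}:\Gamma_\varphi\to U_\varphi$ is $G$-equivariant, because conjugation by $\Gamma$ descends to $G$ on both sides, and by hypothesis it induces a continuous (hence $G$-equivariant) isomorphism on pro-$p'$ completions. Abelianisation and maximal pro-$p'$ quotient commute, both giving the maximal abelian pro-$p'$ quotient of the group in question, so one obtains a $G$-equivariant isomorphism $(\Gamma_\varphi^{ab})_{(p')}\cong (U_\varphi^{ab})_{(p')}$. Tensoring with $\Q_\ell$ for $\ell\neq p$ kills all torsion and kills the pro-$p$ part on the right, leaving the $G$-equivariant identification $V_\Z\otimes\Q_\ell\cong U_\varphi^{ab}\otimes\Q_\ell$. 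This yields $\chi_{\varphi,\ell}(g)=\Tr(g\mid V_\Z)\in\Z$, independently of $\ell\neq p$.

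For part (2) one cannot just invoke~(1), because by Theorem~\ref{thm:C} the group $\pi_1(X,x)$ need not be $p'$-discretely finitely generated; instead the plan is to argue geometrically. Let $f:Y\to X$ be the Galois \'etale cover with group $G$ cut out by $\varphi$; it is again smooth projective, and $U_\varphi=\pi_1(Y,y)$. Kummer theory, together with a choice of compatible $\ell^n$-th roots of unity in $k$ trivialising the Tate twist, identifies $H^1_{et}(Y,\Z_\ell)$ with $T_\ell(\Pic^0(Y))^\vee$, and hence gives a $G$-equivariant isomorphism
\[
U_\varphi^{ab}\otimes\Q_\ell \;\cong\; V_\ell(A),\qquad A:=\Pic^0(Y),
\]
the $G$-action on the right coming from functoriality of $\Pic^0$. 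Each $g\in G$ then acts as an element of $\End(A)$ on an abelian variety, and the classical theorem of Weil--Mumford asserts that the characteristic polynomial of such an endomorphism on $V_\ell(A)$ has coefficients in $\Z$ and is independent of $\ell\neq p$. Thus $\chi_{\varphi,\ell}(g)\in\Z$ is independent of $\ell$.

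The obstacles I expect are bookkeeping rather than serious: in (1) one must check carefully that the pro-$p'$ isomorphism of Definition~\ref{defnABC:pprime}(2) is $G$-equivariant and interacts cleanly with abelianisation, and that the pro-$p$ piece of $U_\varphi^{ab}$ is harmlessly killed by $\otimes\Q_\ell$ for $\ell\neq p$; in (2) the subtlety is that $\pi_1^{ab}(Y)$ may differ from $T_\ell(\Pic^0(Y))$ by finite factors coming from N\'eron--Severi torsion and a Tate twist, but after $\otimes\Q_\ell$ these disappear. The real content of the proposition is simply the extraction of integrality and $\ell$-independence from the two underlying structures — a $\Z[G]$-lattice in case (1), an abelian variety with $G$-action in case (2) — that lie behind the $\ell$-adic representation $\rho_{\varphi,\ell}$.
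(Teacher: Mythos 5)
Your argument for part (1) matches the paper's: both set $\Gamma_\varphi=\gamma^{-1}(U_\varphi)=\ker(\Gamma\to G)$, use Definition~\ref{defnABC:pprime}(2) to identify $\Gamma_\varphi^{ab}\otimes\Q_\ell$ with $U_\varphi^{ab}\otimes\Q_\ell$ for $\ell\neq p$, and then observe that the left-hand side carries the $\Z[G]$-lattice $\Gamma_\varphi^{ab}/\text{(torsion)}$, finite generation of $\Gamma$ being what makes $\Gamma_\varphi$ finitely generated and the lattice of finite rank. The $G$-equivariance and ``abelianisation commutes with pro-$p'$-completion'' bookkeeping you flag is exactly what the paper leaves implicit.

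For part (2) you and the paper arrive at the same object $U_\varphi^{ab}\otimes\Q_\ell$ via the Galois cover $Y\to X$ but invoke different classical inputs for integrality and $\ell$-independence. The paper identifies this $G$-module with $H^1(Y,\Q_\ell)^\vee$ and cites Katz--Messing \cite{KM74}, which deduces the $\ell$-independence of characteristic polynomials of correspondences on $\ell$-adic cohomology from the Riemann Hypothesis over finite fields, so implicitly one spreads $(Y,g)$ out over a finitely generated $\F_p$-algebra and specializes. You instead identify it (up to a Tate twist on which $G$ acts trivially) with the rational Tate module of $\Pic^0(Y)$, or equivalently of the Albanese of $Y$, and invoke the older Weil--Mumford theorem that the characteristic polynomial of an endomorphism of an abelian variety over any algebraically closed field of characteristic $p$ lies in $\Z[T]$ and is independent of $\ell\neq p$. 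This route is more elementary and needs no spreading out, but it is special to degree one; Katz--Messing is what one would need for the analogous statement on $H^i$, $i\geq 2$. Both proofs are correct.
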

\begin{proof}
The property 1) comes essentially from the definition:  setting $$\Gamma_\varphi={\rm Ker} (\Gamma (\to \pi)\to G)$$ we have for $\ell \neq p$ the relation 
$$\Gamma^{ab}_\varphi\otimes_{\Z}\Q_\ell=U^{ab}_\varphi\otimes_{\Z} \Q_\ell.$$ The property 2) is more interesting in view of the final result, see Theorem~\ref{thm:obst}. Let $Y\to X$ be the Galois cover with group $G$.  Then $$U^{ab}_\varphi\otimes_{\Z}\Q_\ell= H^1(Y, \Q_\ell)^\vee.$$  As a consequence of the Weil conjectures, see \cite{KM74}, 
 the characteristic polynomial of  $g\in G$ acting on $H^1(Y, \Q_\ell)$ lies in $\Z[T]$ and does not depend on $\ell$.

\end{proof}
  \begin{rmk}
  In particular 2) tells us that this independence of $\ell\neq p$ property cannot be our sought obstruction. On the contrary, we shall use it now in order to define a rationality obstruction. 
  \end{rmk}
  
  The second main point is the following Proposition. 
  \begin{prop}[See \cite{ESS22b}, Proposition~3.7]
  If $\pi$ is $p'$-discretely finitely generated, then for any continuous finite quotient $\varphi: \pi\surj G$, there is a $\Q$-vector space  $V_\varphi$ and a representation $\rho_\varphi: G\to GL(V_\varphi)$ such that for every $\ell\neq p$, the relation $$\rho_{\varphi, \ell}=\rho_\varphi \otimes_{\Q} \Q_\ell$$ holds.
  \end{prop}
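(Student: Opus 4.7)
Set $U_\varphi = \ker(\varphi)$ and $\Gamma_\varphi = \gamma^{-1}(U_\varphi)$. Since $\hat\gamma : \hat\Gamma \to \pi$ is surjective and the finite quotient $G$ factors through $\hat\Gamma$, the composite $\Gamma \to \pi \to G$ is itself surjective, and its kernel is $\Gamma_\varphi$, so $\Gamma/\Gamma_\varphi \cong G$ canonically. As $\Gamma$ is finitely generated and $\Gamma_\varphi$ has finite index in $\Gamma$, the subgroup $\Gamma_\varphi$ is also finitely generated, hence $\Gamma_\varphi^{ab}$ is a finitely generated abelian group.

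Now define
\[
V_\varphi := \Gamma_\varphi^{ab} \otimes_\Z \Q,
\]
a finite-dimensional $\Q$-vector space. The conjugation action of $\Gamma$ on $\Gamma_\varphi$ descends to a linear action of $G = \Gamma/\Gamma_\varphi$ on $\Gamma_\varphi^{ab}$ (since inner automorphisms act trivially on the abelianization), giving the desired representation $\rho_\varphi : G \to \GL(V_\varphi)$.

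It remains to identify $\rho_\varphi \otimes_\Q \Q_\ell$ with $\rho_{\varphi,\ell}$ for every $\ell \neq p$. Apply condition (2) of Definition~\ref{defnABC:pprime} to the open subgroup $U_\varphi \subset \pi$: the map $\gamma_{U_\varphi} : \Gamma_\varphi \to U_\varphi$ induces an isomorphism $\Gamma_\varphi^{(p')} \xrightarrow{\sim} U_\varphi^{(p')}$ of pro-$p'$ completions. Passing to abelianizations yields an isomorphism of pro-$p'$ abelian groups
\[
\Gamma_\varphi^{ab} \otimes_\Z \hat{\Z}^{(p')} \;\xrightarrow{\sim}\; \bigl(U_\varphi^{ab}\bigr)^{(p')},
\]
and projecting onto each $\ell$-adic factor for $\ell \neq p$ gives $\Gamma_\varphi^{ab} \otimes_\Z \Z_\ell \xrightarrow{\sim} U_\varphi^{ab} \otimes_{\hat\Z} \Z_\ell$. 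Tensoring with $\Q_\ell$ gives the required linear isomorphism $V_\varphi \otimes_\Q \Q_\ell \xrightarrow{\sim} U_\varphi^{ab} \otimes_{\hat\Z} \Q_\ell$.

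The one point to check carefully—and the only place where something could go wrong—is the $G$-equivariance of this isomorphism. It follows from naturality: the action of $G$ on both sides is induced by conjugation by any lift to $\Gamma$ (resp.\ $\pi$), and $\gamma$ intertwines these conjugation actions because it is a group homomorphism satisfying $\gamma(g\Gamma_\varphi g^{-1}) = \gamma(g) U_\varphi \gamma(g)^{-1}$, so all functorial constructions (pro-$p'$ completion, abelianization, $\ell$-adic projection, tensoring with $\Q_\ell$) preserve the $G$-action. Hence $\rho_\varphi \otimes_\Q \Q_\ell \cong \rho_{\varphi,\ell}$, as required.
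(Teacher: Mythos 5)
Your proof is correct and follows essentially the same route as the paper's. The paper's own proof is terse precisely because the key isomorphism $\Gamma_\varphi^{ab}\otimes_\Z\Q_\ell \cong U_\varphi^{ab}\otimes_\Z\Q_\ell$ was already established in its proof of the preceding Proposition (the independence-of-$\ell$ statement); you re-derive it directly from condition~(2) of Definition~\ref{defnABC:pprime} via the pro-$p'$ completion isomorphism $\Gamma_\varphi^{(p')}\xrightarrow{\sim}U_\varphi^{(p')}$, and you make the $G$-equivariance explicit, which the paper leaves implicit. Both are legitimate elaborations, not a different argument.
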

  \begin{proof}
  Indeed,  $V_\varphi=\Gamma_\varphi^{ab}$ and $\rho_\varphi: G\to GL( \Gamma_\varphi^{ab} \otimes \Q)$. 
  
  \end{proof}
  \begin{defn}
  We say that for $\ell\neq p$, $\rho_{\varphi, \ell}$ is Schur rational.
  \end{defn}
  \begin{rmk}
  In fact, as we see in the proof, $(V_\varphi, \rho_\varphi)$ even has an integral structure, so is {\it Schur integral}, but our obstruction shall disregard this integrality property. 
  \end{rmk} 
  \begin{thm} \label{thm:obst}
  The Schur rationality is an obstruction for  a smooth projective variety defined over an algebraically closed field of characteristic $p>0$ to lift to characteristic $0$.
  \end{thm}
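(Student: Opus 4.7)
The plan is to chain together the two propositions proved just above with the basic specialization property of the fundamental group under lifting, arguing by contrapositive. Suppose $X$ is smooth projective over an algebraically closed field $k$ of characteristic $p>0$, and assume $X$ lifts to characteristic $0$: there is a noetherian local complete ring $R$ with residue field $k$ and fraction field of characteristic $0$, together with a smooth proper $X_R/R$ whose special fibre is $X$. My first step is to record that under this hypothesis, $\pi_1(X,x)$ is $p'$-discretely finitely generated in the sense of Definition~\ref{defnABC:pprime}.

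For this I would invoke the discussion in Section~\ref{sec:sp}. A geometric generic fibre $X_{\bar F}$ of $X_R \to \Spec R$ has topological fundamental group a finitely presented discrete group $\Gamma$ (via a complex embedding and Riemann existence). Grothendieck's specialization homomorphism $sp_{\bar F,\bar s} : \pi_1(X_{\bar F}) \to \pi_1(X,x)$ is surjective and, crucially, restricts to an isomorphism on pro-$p'$-completions. Applying this to any open subgroup $U \subset \pi_1(X,x)$ — which corresponds to a finite \'etale cover $X' \to X$ that itself lifts to a finite \'etale cover of $X_R$ (smoothness and the lifting criterion for \'etale morphisms) — gives the compatibility in the tower required by condition (2) of Definition~\ref{defnABC:pprime}. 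This is precisely the content of \cite[Proposition~2.7]{ESS22b} alluded to after the definition.

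The second step is then a direct application of Proposition~3.7 of ESS22b (the second main point proved just above): since $\pi_1(X,x)$ is $p'$-discretely finitely generated, for every continuous finite quotient $\varphi : \pi_1(X,x) \twoheadrightarrow G$ and every $\ell \neq p$, the representation $\rho_{\varphi,\ell}$ is Schur rational, i.e.\ descends to a $\Q$-representation $\rho_\varphi : G \to \GL(V_\varphi)$. Taking the contrapositive yields the theorem: if there exists some $\varphi$ for which $\rho_{\varphi,\ell}$ is not definable over $\Q$, then $X$ cannot lift to characteristic $0$. Note that Proposition~\ref{prop:ind}(2) guarantees via the Weil conjectures that the character $\chi_{\varphi,\ell}$ is always $\Z$-valued and $\ell$-independent, so the obstruction lies genuinely at the level of Schur indices rather than of traces; this is what makes the obstruction non-vacuous.

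The main obstacle is really just the bookkeeping in the first step: one must check that the tower condition (2) of Definition~\ref{defnABC:pprime} — isomorphism on pro-$p'$-completions for every open $U$ — is inherited from the specialization isomorphism, and not merely the surjectivity (1). The subtlety flagged by the referee of \cite{ESS22b} (noted in the paragraph after the definition) is that one has to propagate the good lifting along the whole tower of finite \'etale covers; once $X_R$ is smooth proper this is automatic because each finite \'etale cover of $X$ uniquely lifts to a finite \'etale cover of $X_R$, and the specialization homomorphism for that cover is again a pro-$p'$ isomorphism. Everything else is a formal consequence of the two propositions already established in this subsection.
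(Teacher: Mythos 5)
Your argument correctly chains the two propositions: if $X$ lifts to characteristic $0$ then, by the specialization theory of Section~\ref{sec:sp} together with the \'etale lifting criterion, $\pi_1(X,x)$ is $p'$-discretely finitely generated (this is exactly \cite[Proposition~2.7]{ESS22b}), whence by the proposition immediately above the theorem every $\rho_{\varphi,\ell}$ is Schur rational. Taking the contrapositive gives the abstract implication that failure of Schur rationality forbids lifting. This much is sound, and the remark you make about Proposition~\ref{prop:ind}(2) -- that traces are always integral and $\ell$-independent, so the obstruction lives at the level of Schur indices, not characters -- is precisely the subtle point the paper is emphasizing.

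However, this chain of implications is essentially a restatement of the two propositions already established in the subsection; it is what the paper calls the ``first'' and ``second main point.'' The text preceding the proof makes explicit what is left to prove: ``So we have to exhibit an example of a smooth projective variety $X$ defined over an algebraically closed field $k$ of characteristic $p>0$ and a quotient $\varphi: \pi\surj G$ such that $\rho_{\varphi, \ell}$ is not Schur rational.'' The entire content of the paper's proof of Theorem~\ref{thm:obst} is the construction of such an example. One first analyzes the Roquette curve $C$: $y^2 = x^p - x$ over $\F_p$, with automorphism group $G$ of order $2p(p^2-1)$ and a normal $p$-Sylow $N\cong\Z/p$, showing that $\rho_\ell = H^1(C,\Q_\ell)$ is absolutely irreducible, that $\Q_\ell[G]$ surjects onto ${\rm End}^0(C)\otimes\Q_\ell$, and that ${\rm End}^0(C)$ is the quaternion algebra ramified at $p$ and $\infty$, which forces $\rho_\ell$ to be non-rational. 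One then performs Serre's construction $X=(C\times_{\F_p}P)/G$ for a suitable simply connected $P$ of dimension $\ge 3$ on which $G$ acts freely, so that $\varphi:\pi_1(X)\surj G$ has $\rho_{\varphi,\ell} = H^1(C_{\bar\F_p},\Q_\ell)^\vee$, which is not Schur rational. Without producing such an example, the implication you prove could be vacuous, and ``Schur rationality is an obstruction'' would have no content. Your proposal therefore has a genuine gap: it supplies the contrapositive framing but omits the construction that the paper's proof is actually about.
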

  So we have to exhibit an example of a smooth projective variety  $X$ defined over an algebraically closed field $k$ of characteristic $p>0$ and a quotient $\varphi: \pi\surj G$ such that $\rho_{\varphi, \ell}$ is not Schur rational. 

\subsection{The Roquette curve, combined with Serre's construction}
The Roquette curve is  the smooth projective curve $C$ over $\F_p$ for $ \ p\ge 3$ which is the normal
compactification of the affine curve with equation
\ga{}{\ \  y^2=x^p-x.\notag}
It is defined in \cite{Roq70}, has genus $g=(p-1)/2$, is supersingular,  and is the only curve for $p\ge 5$, (so $g\ge 2$ and  $\rho_\ell$ is faithful),  and $p> g+1$  with the property that  the cardinality of its group of automorphisms $G$ is larger than the Hurwitz bound $84(g-1)$. Precisely it is $2p(p^2-1)$ and all automorphisms are defined over $\F_{p^2}$.   The equation of $C$ presents it as an Artin-Schreier cover of $\P^1\setminus \P^1(\F_p)$. This realizes $\Z/p=:N$ as a subgroup of $G$, which  thus is a $p$-Sylow which is in fact normal.   It is not difficult to compute (see \cite[Appendix~A]{ESS22b})) that 
$\rho_\ell|_N$ is non-trivial, thus $\rho_\ell$ is absolutely irreducible.  The action of $\Q_\ell[G] \to {\rm End}(H^1(C,\Q_\ell))$ has values in $\Q_\ell[G] \to {\rm End}_{\rm Frob}(H^1(C,\Q_\ell))$,  and is surjective by the absolute irreducibility.   The Tate conjecture  identifies it with the   action
 $\Q_\ell[G] \to  {\rm End}^0(C)\otimes \Q_\ell$. The quaternion algebra  ${\rm End}^0(C)$ is ramified at $p$ and $\infty$, which prevents $\rho_{\varphi, \ell}$ to be rational, see \cite[Proposition~4.6]{ESS22b}. 
 
\begin{proof}[Proof of Theorem~\ref{thm:obst}]
 
 We now take a smooth connected projective variety $P$ defined over $\F_p$ of dimension at least $3$,  so $P$  is simply connected over $\bar \F_p$,  and on which $G$ acts without fixpoints. We do Serre's construction setting $X=(C\times_{\F_p}P)/G$ where $G$ acts diagonally. It yields an exact sequence 
 $$1\to \pi_1(C)\to \pi_1(X)\xrightarrow{\varphi} G\to 1$$ 
 which can be understood as the Galois sequence for  the Galois \'etale cover $C\times_{\F_p} P\to X$ or equivalently as the homotopy exact sequence for $X\to P/G$.  The action of $G$ on $H^1(C\times_{\bar \F_p} P, \Q_\ell)= H^1(C_{\bar \F_q}, \Q_\ell)$ is the same as the outer action studied above. So it is not Schur rational. Consequently  $X$ does not lift to characteristic $0$. This finishes the proof. 

\end{proof}

\newpage
\section{Lecture 6:  Density of Special Loci} \label{sec:qu}
\begin{abstract}
By a theorem of  Clemens and Landman,  see  \cite[Theorem.~3.1]{Gri70} in complex geometry and Grothendieck \cite[XIV~1.1.10]{SGA7.2} in arithmetic geometry, geometric (complex or $\ell$-adic) local systems have quasi-unipotent monodromies at infinity. We explain in this section why  this property is good for going  from complex models to models over finite fields, and why in the Betti moduli space the complex local systems with quasi-unipotent monodromy at infinity are Zariski dense.  Further, we report on ~\cite{BGMW22}, ~\cite{LL22a}, ~\cite{LL22b}  showing that the arithmetic local systems  on geometric generic curves in low rank cannot be Zariski dense in their Betti moduli, contrary to what was expected in  \cite[Question~9.1 (1)]{EK20} and \cite[Conjecture~1.1]{EK23}.  Finally we report on the concept of {\it weakly arithmetic local systems} defined in \cite[Section~3]{dJE22}, 
which in particular have quasi-unipotent monodromies at infinity,
and show that they are Zariski dense in their Betti moduli.  So for certain problems  (to be defined) one would then wish to follow Drinfeld's method \cite{Dri01}  to conclude that it is enough to check them on weakly arithmetic local systems.
 
\end{abstract}

\subsection{Definitions}
For the definition we only need $X$ to be a normal variety and $X\hookrightarrow \bar X$ to be a normal compactification. This defines the codimension $1$ components $D_i$ in $\bar X\setminus X$ and small loops $T_i$ around there. A representation $$\rho: \pi_1(X,x)\to GL_r(\C)$$ has {\it quasi-unipotent monodromies at infinity} if $\rho(T_i)$ is quasi-unipotent. This property does not depend on the choice of the compactification~\cite[Thm.3.1]{Kas81}. See \cite[Section~3]{EG21} where the concept is used also for $GL_r$ replaced by any linear algebraic group as well. 

\medskip
Set $\pi=\pi_1(X(\C), x(\C))$ for the topological fundamental group based at some complex point.
The {\it framed character variety} $ Ch(\pi, r)^\square$ is defined to be the affine variety defined over $\Z$ by the moduli functor which takes affine rings $R$ to the set ${\rm Hom}(\pi, GL_r(R))$. It is a fine moduli functor and the resulting scheme is also called the {\it framed Betti moduli} $M_B(X,r)^\square$, also defined over $\Z$. The group $GL_r$ acts by conjugation (gauge transformations) on $Ch(\pi, r)^\square$. Its categorial quotient $$Ch(\pi, r)=Ch(\pi, r)\sslash  GL_{r,\C}$$  defined by $$\sO(Ch(\pi, r))=\sO(Ch(\pi,r)^\square)^{GL_r}$$ is the {\it character variety}, also called the {\it Betti moduli space}  $M_B(X,r)$. Its complex points are  isomorphism classes of semi-simple  local systems  of rank $r$. The fibres of $M_B(X,r)^\square\to M_B(X,r)$ are the closures of $GL_r$-orbits. Such an orbit is closed over an irreducible complex local system.

\medskip

 \subsection{Why quasi-unipotent  monodromies at infinity} \label{sec:why}
The first reason is that {\it geometric} local systems   have  quasi-unipotent monodromies at infinity. 
We indicate Brieskorn's  complex proof  \cite[p.125]{Del70}: by base change   we may assume that 
 $Y\xrightarrow{g}  U\hookrightarrow X$ is defined over a number field. So the eigenvalues  $\lambda_i$ of the residues of the Gau{\ss}-Manin connections lie in $\bar \Q$. On the other hand, the Gau{\ss}-Manin local system is defined over $\Z$, as this is the variation of the Betti cohomology of $g$, so the eigenvalues of the monodromy at infinity lie in $\bar \Q$.
  By \cite[Corollaire~5.6, p.96]{Del70}  $\mu_i={\rm exp}(2\pi \sqrt{-1}\lambda_i)$. We conclude by Gelfond's theorem that $\lambda_i\in \Q$.

 A second reason is as follows. 
  Let $S$ be an affine  scheme of finite type over $\Z$ with $\sO(S)\subset \C$, such that $X\hookrightarrow \bar X$ and a given complex point $x\in X$  have a model $X_S\hookrightarrow \bar X_S$  as a relative good compactification and $x_S$ as an $S$-point of $X_S$, and the orders  the eigenvalues of the $T_i$ are invertible  on 
$S$.  For any closed point $s\in |S|$ of residue field $\F_q$ of characteristic $p>0$, with a $\bar \F_p$-point $\bar s$ 
above it, we denote by 
\ga{}{sp_{\C,\bar s}: \pi_1(X_\C, x_\C)\to \pi_1^t(X_{\bar s}, x_{\bar s}) \notag}
the continuous  surjective specialization  homomorphism to the tame fundamental group 
\cite[Expos\'e XIII 2.10,
Corollaire~2.12]{SGA1} and Section~\ref{sec:sp}.  Precomposing with the profinite completion homomorphism
\ga{}{  \pi_1(X(\C), x(\C))\to \pi_1(X_\C, x_\C) \notag}
yields
\ga{}{ sp_{\C, \bar s}^{\rm top}:   \pi_1(X(\C), x(\C))\to \pi_1^t(X_{\bar s}, x_{\bar s}) \notag}
which  is compatible with the local fundamental groups, see \cite[Section~1.1.10]{Del73}. This enables one to transpose the quasi-unipotent monodromy condition to $X_{\bar s}$. 

\medskip

Finally we understand well how the Galois group $\Gamma$ of $F={\rm Frac}(\sO(S))$ acts on the image $T_i^{\rm \acute{e}t}$  of the $T_i$ in $\pi_1(X_\C, x_\C)$, see \cite[XIV.1.1.10]{SGA7.2},  \cite[Lemma~2.1]{EK23}.

\begin{lem}\label{lem.cyclchar}
For each $1\le i\le s$ the action of $\gamma\in \Gamma $ on $\pi$ maps $T^{\rm \acute{e}t}_i$ to
$(T_i^{\rm \acute{e}t})^{\chi(\gamma)}$, where $\chi\colon \Gamma\to \widehat{\mathbb Z}^\times$ is the cyclotomic character.
\end{lem}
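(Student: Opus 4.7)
The plan is to reduce the statement to the standard computation of the Galois action on the tame inertia at a codimension-one point, which is governed by Kummer theory and hence by the cyclotomic character. First, I would localize near the component $D_i$ of the boundary divisor. Fix a geometric generic point of $D_i$ (defined over $\bar F$) and take the strict henselization of $\bar X_S$ there; its generic fibre is the punctured spectrum of a strictly henselian discrete valuation ring. By the construction of $T_i$ as a small loop around $D_i$, its image in $\pi_1(X_\C, x_\C)$ factors (up to conjugation, which is all that is meant in the statement) through the local tame inertia at this geometric point, i.e.\ through $\pi_1^{\mathrm{tame}}$ of the punctured trait.

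Next I would invoke the canonical Kummer isomorphism
\[
\pi_1^{\mathrm{tame}}\bigl(\Spec(K^{sh})^\circ\bigr)\ \xrightarrow{\ \sim\ }\ \widehat{\Z}(1)\ =\ \varprojlim_{(n,p)=1}\mu_n,\qquad \sigma\ \longmapsto\ \bigl(\sigma(t^{1/n})/t^{1/n}\bigr)_n,
\]
where $t$ is any local equation for $D_i$. This isomorphism is independent of the choice of $t$ (a unit has a canonical $n$-th root in the strict henselization for $n$ invertible in the residue field) and, crucially, is $\Gamma$-equivariant by construction, since $\gamma\in\Gamma$ acts on the right-hand side through its action on the $\mu_n$, which is by definition the cyclotomic character $\chi$.

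The remaining step is to check that $T_i^{\mathrm{\acute et}}$ corresponds under this identification to the canonical topological generator $(\zeta_n)_n=(\exp(2\pi\sqrt{-1}/n))_n$ of $\widehat{\Z}(1)$ determined by the complex embedding $\sO(S)\subset\C$. This is a direct comparison between the topological monodromy of the small loop and the analytic Kummer cover $t\mapsto t^n$: the loop $\tau\mapsto t_0\,\exp(2\pi\sqrt{-1}\tau)$ lifts to a path whose endpoints on the $n$-th root cover differ by multiplication by $\zeta_n$. Granting this matching, $\gamma$ sends $T_i^{\mathrm{\acute et}}\leftrightarrow (\zeta_n)_n$ to $(\gamma\cdot\zeta_n)_n=(\zeta_n^{\chi(\gamma)})_n\leftrightarrow (T_i^{\mathrm{\acute et}})^{\chi(\gamma)}$, which is the claim.

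The main obstacle is bookkeeping rather than substance: one must carefully track that the various identifications (choice of geometric base point, passage from the topological loop to the local tame inertia, choice of compatible system of $n$-th roots of unity coming from the complex embedding) are made consistently, so that the final equality is well-defined up to the conjugation ambiguity inherent in the statement. Once the local picture is set up, the cyclotomic character emerges automatically from the Galois action on roots of unity, and no further arithmetic input is needed.
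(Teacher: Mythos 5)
Your argument is correct and is precisely the one underlying the references the paper cites in lieu of a proof (\cite[XIV.1.1.10]{SGA7.2}, \cite[Lemma~2.1]{EK23}): reduce to the local tame inertia at the generic point of $D_i$, identify it $\Gamma$-equivariantly with $\widehat{\Z}(1)$ via Kummer theory, and match the topological generator coming from the complex embedding with $(\exp(2\pi\sqrt{-1}/n))_n$. No gap, and nothing genuinely different from the intended route.
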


\subsection{Density Theorem for quasi-unipotent local systems}
\begin{thm}[\cite{EK23},  Theorem~1.3] \label{thm:dens}
The set of  $\rho\in Ch(\pi,r)^\square (\C) $ with quasi-unipotent monodromy at infinity
is Zariski dense in $ Ch(\pi,r)^\square(\C)$.
\end{thm}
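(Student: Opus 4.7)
The plan is to apply Drinfeld's arithmetic strategy, as developed in \cite{EK23}: produce complex representations with quasi-unipotent monodromy at infinity by lifting $\bar\F_\ell$-valued points of $Ch(\pi, r)^\square$ to arithmetic $\bar\Q_\ell$-local systems on a mod-$p$ reduction $X_{\bar s}$, and then transferring them to $\C$ via an abstract isomorphism $\iota: \bar\Q_\ell \cong \C$. Zariski density will be established by showing that no proper closed subscheme $V \subsetneq Ch(\pi, r)^\square \otimes \C$ contains all the representations so constructed.

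Concretely, after spreading the good compactification to $X_S \hookrightarrow \bar X_S$ over an affine $\Z$-scheme of finite type $S$ with $\sO(S) \subset \C$, a given proper closed subscheme $V$ spreads to $V_S \subsetneq Ch(\pi, r)^\square_S$. Choose a closed point $s \in S$ of residue characteristic $p \gg 0$ and a prime $\ell \ne p$ also large. Then $V_S \otimes \bar\F_\ell$ is properly contained in some irreducible component of $Ch(\pi, r)^\square_{\bar\F_\ell}$, so one can pick $\bar y \in Ch(\pi, r)^\square(\bar\F_\ell) \setminus V_S(\bar\F_\ell)$, with corresponding residual representation $\bar\rho: \pi \to \GL_r(\bar\F_\ell)$. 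Since the image of $\bar\rho$ is finite of order prime to $p$, Grothendieck's specialization theorem (Section~\ref{sec:sp}) lets us regard $\bar\rho$ as pulled back along $sp^{\rm top}_{\C, \bar s}: \pi \twoheadrightarrow \pi_1^t(X_{\bar s}, x_{\bar s})$ from a representation of $\pi_1^t(X_{\bar s})$, and thereby of $\pi_1(X_{\bar s})$.

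Next, apply Gaitsgory's theorem on de Jong's conjecture (for $\ell \ge 3$) to lift this residual representation to an $\ell$-adic local system $\rho_\ell: \pi_1(X_{\bar s}) \to \GL_r(\bar\Q_\ell)$ which is \emph{arithmetic}: stabilized by some power of the geometric Frobenius of a finite field over which $X_s$ descends. By Deligne's Weil~II \cite{Del80} arithmetic local systems are pure, hence by Grothendieck's local monodromy theorem they have quasi-unipotent local inertia at each boundary divisor. Fix $\iota: \bar\Q_\ell \cong \C$. The composite $\iota \circ \rho_\ell \circ sp^{\rm top}_{\C, \bar s}$ then defines a complex representation in $Ch(\pi, r)^\square(\C)$ with quasi-unipotent monodromy at infinity, using compatibility of $sp^{\rm top}$ with the local loops $T_i$ recalled in Section~\ref{sec:why}. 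The integrality of $\rho_\ell$ identifies its mod-$\ell$ reduction with $\bar\rho$, so the resulting complex point lies outside $V(\C)$, which yields the desired density.

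The main obstacle is twofold. First, one must arrange that the lift $\rho_\ell$ produced by Gaitsgory's theorem really factors through $\pi_1^t(X_{\bar s})$ so that the composition with $sp^{\rm top}$ makes sense; this is a tameness requirement that must be traced through Gaitsgory's construction. Second, one must carefully track integrality across the three realizations — modulo $\ell$, $\ell$-adic, and complex — to ensure that the final complex representation genuinely avoids $V$. These technical points are the heart of \cite{EK23} and rely essentially on Gaitsgory's deep input from the geometric Langlands program.
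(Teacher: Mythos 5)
Your strategy is indeed the same as the one in the paper — apply de Jong's conjecture (Gaitsgory) to lift a residual $\bar\F_\ell$-representation to an arithmetic $\ell$-adic local system on a mod-$p$ fibre, observe that arithmeticity forces quasi-unipotence at infinity, and transfer back to $\C$ by an abstract isomorphism — but the execution glosses over several points that are precisely the technical heart of the paper's proof. First, the order of the quantifiers is off: you fix $p$ before picking $\bar y$, but the claim that the image of $\bar\rho$ has order prime to $p$ (needed for $\bar\rho$ to factor through $\pi_1^t(X_{\bar s})$ via $sp^{\rm top}_{\C,\bar s}$) only holds if $p$ is chosen \emph{after} $\bar y$. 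Second, Gaitsgory's theorem requires the residual representation to be (geometrically absolutely) irreducible; you never arrange this, whereas the paper carefully selects a closed point $z$ with irreducible monodromy and then works in the formal neighbourhood $(\widehat{Ch(\pi,r)^\square_B})_z$. Third, you argue against an \emph{arbitrary} proper closed $V$, but the transfer by $\iota:\bar\Q_\ell\cong\C$ does not respect the base ring $\sO(S)\subset\C$ over which $V_S$ is defined: the composite $\sO(S)\to\bar\Z_\ell\hookrightarrow\bar\Q_\ell\xrightarrow{\iota}\C$ need not agree with the given inclusion $\sO(S)\subset\C$, so ``$\rho_\ell\notin V_S(\bar\Q_\ell)$'' does not by itself give ``$\iota(\rho_\ell)\notin V(\C)$''. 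The paper circumvents this by taking $Q$ to be the Zariski closure of the quasi-unipotent locus, which is ${\rm Aut}(\C)$-invariant (quasi-unipotence is preserved by any field automorphism) and hence descends to $\Q$; that descent is what makes the $\iota$-transfer legitimate. Your ``arbitrary $V$'' formulation omits this essential step.

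Two further remarks. The detour through Weil II purity is unnecessary and not quite what the paper does: once the lift is arithmetic, one invokes Lemma~\ref{lem.cyclchar} to see that $Frob_s$ acts on the local parameters $T_i$ by raising to the $q$-th power, so a $Frob_s$-invariant point has eigenvalues that are $(q-1)$-th roots of unity — more elementary and more precise than purity plus the local monodromy theorem. Finally, the paper's auxiliary diagram $(\psi^\square,\varphi)$, the scheme $S_B=\varphi_B^{-1}(T_B)$, and the flatness of $\psi^\square$ at $z$ are what let one lift $Frob_s$-fixed eigenvalue data from $(\widehat{S_B})_x$ back into $(\widehat{Ch(\pi,r)^\square_B\setminus Q_B})_z$ flat over $(\widehat{B})_b$; this careful bookkeeping between the formal deformation ring (where de Jong's conjecture applies) and the global Betti moduli (where density lives) is precisely what your proposal leaves implicit.
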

\begin{proof}
Let $Q\subset Ch(\pi,r)^\square(\pi)(\C)  $ be the Zariski closure of the set of quasi-unipotent
representations. Assume $Q\neq Ch(\pi, r)^\square(\C)$.

\medskip

For each local monodromy at infinity $T_i\subset \pi$, choose $g_i\in T_i$. 
  We have a morphism
   \ga{}{\xymatrix{ & & M=\G_m^s \ar[d]^\varphi\\
   Ch(\pi,r)^\square  \ar[rr]^{\psi^\square }& & N= \prod_{i=1}^s  (\A^{r-1}\times \G_m) 
    }\notag}
 of affine schemes of finite type over $\C$
defined for each $i=1,\ldots, s$ by the coefficients
$(\sigma_1(\rho(g_i)),\ldots,\sigma_r(\rho(g_i)))\in N$ of the characteristic polynomials
  \ga{}{     {\rm det}( T\cdot \mathbb I_r -\rho(g_i)) = T^r-\sigma_1(\rho(g_i)) T^{r-1} +
    \ldots + (-1)^r \sigma_r( \rho(g_i)) \notag}
  of a  representation $\rho\colon \pi \to GL_r (\C)$. (The morphism $\psi^\square$ factors through $Ch(\pi, r)$, but we do not use $Ch(\pi,r)$ in this lecture). There is a scheme $B$ of finite type over $\Z$ with factorization $\Z\to \sO(B)  \to  \C$ over which the diagram $(\psi^\square,\varphi)$ and the inclusion $Q\hookrightarrow Ch(\pi,r)^\square$ are defined.  We write $Q_B\hookrightarrow Ch(\pi,r)^\square_B$ and 
   \ga{}{\xymatrix{ & & M_B=\G_{m,B}^s \ar[d]^{\varphi_B}\\
   Ch(\pi,r)_B^\square  \ar[rr]^{\psi^\square_B} & & N_B= \prod_{i=1}^s  (\A_B^{r-1}\times \G_{m,B})
    }\notag}
  
\medskip

Using the section  $\Gamma \to \pi_1(X_F, x_\C)$ given by $x_F$, the Galois group $\Gamma$ acts  by conjugacy  on $\pi_1(X_\C, x_\C)$, thus on the set of closed points $|Ch(\pi, r)^{\square}|$. Since a closed point $z\in |Ch(\pi, r)^{\square}|$ has finite monodromy, its stabilizer $\Gamma_z\subset \Gamma$ is an open subgroup. It acts on the completion $(\widehat{Ch(\pi,r)^\square})_z$ at $z$. Said differently, the action $\Gamma\to  {\rm Aut} (|Ch(\pi, r)^\square|)$ is continuous.  Furthermore, Lemma~\ref{lem.cyclchar} enables one to extend the action of $\Gamma$ on the diagram $(\psi^\square_B, \varphi_B)$ in a compatible way with the action on 
$|Ch(\pi, r)^\square|$.

\medskip
Set $T_B$ to be the reduced Zariski closure of ${\rm Im}(\psi_B)$ and $S_B=\varphi^{-1}_B(T_B)$.  As $Ch(\pi, r)^\square (\C)
\setminus Q \neq \emptyset$, $Ch(\pi,r)^\square_B\setminus Q_B$ dominates $B$.  So in particular, $T_B$ and thus  $S_B$ dominate $B$ as well. By generic smoothness, the smooth locus $S_B^{\rm sm}$ over $B$ dominates $B$. By generic flatness for $\psi_B$ restricted to $Ch(\pi,r)^\square_B\setminus Q_B$, 
 its image meets $\varphi_B(S_B^{\rm sm})$ (recall $\varphi_B$ is finite).  So there is a closed point 
 $z\in |Ch(\pi,r)_B^\square\setminus  Q_B|$ such that
\begin{itemize}
\item $\psi^\square$ is flat at $z$,
  \item $y=\psi^\square_B(z)\in \varphi_B(S_B^{\rm sm})$.
\end{itemize}
We also fix a closed point $x\in S^{\rm sm}_B\cap \varphi_B^{-1}(y)$. Let $\Gamma'$ be the
intersection of stabilizers $\Gamma_x\cap\Gamma_z,$  which is thus open in $\Gamma$, and let $b\in B $ be the image of
the points $x,y,z$.
So 
the closed subscheme $(\widehat{S_B})_x\hookrightarrow (\widehat{M_B})_x$ is $\Gamma'$-stable. We abuse notation and set $\Gamma'=\Gamma$. 

\medskip
We now take $s\in |S| $ a non-ramified closed point with residue field $\F_q$ of  characteristic $p>0$ different from the residual characteristic $\ell$ of $x$.
Then the Frobenius $Frob_s$ lies in $\Gamma$ and by Lemma~\ref{lem.cyclchar} it acts on $M_B$ and $(\widehat{M_B})_x$ by multiplication with $q$.  By  (as we are over $ (\widehat{B})_b$, a variant of)
de Jong's conjecture~\cite[Conjecture~1.1]{dJ01} solved by  B\"ockle-Khare ~\cite{BK06}   in specific cases and by Gaitsgory~\cite{Gai07}  in general for $\ell \ge 3$, there are $Frob_s$-invariant points in $(\widehat{S_B})_x$,  flat over $(\widehat{B})_b$.  So there are  points for which the coordinates in the group scheme $M_B$  are $(q-1)$ roots of $1$ which are  flat over $(\widehat{B})_b$.  By flatness  of $\psi_B$ restricted to  $(\widehat{Ch(\pi,r)_B^\square\setminus  Q_B})_z$, there is a point in $(\widehat{Ch(\pi,r)_B^\square\setminus  Q_B})_z$, flat over $(\widehat{B})_b$.  This yields a  complex topological local system outside of $Q$ with eigenvalues of the $T_i$ being $(q-1)$ roots of $1$, a contradiction.  See \cite{EK23} for more details.

\end{proof}

\subsection{Remarks}  \label{rmk:LL} 1) When I lectured on zoom  in December 2020, at the pic of corona, on our density  theorem~\ref{thm:dens} with Moritz Kerz,  Ben Bakker and Yohan Brunebarbe listened to the talk. They later on explained to us that they had a Hodge theoretical proof of the result. This would be nice, as it would add one 
stone to  the line of similarities between complex  and arithmetic methods. 

\medskip

2) It would also be nice to single out subloci of the one consisting of complex local systems with quasi-unipotent monodromies at infinity for which density is preserved. One 
element of answer is provided by \cite[Section~3]{dJE22}, see  Section~\ref{sec:wa},  in which we define the notion of weakly arithmetic local systems and prove their density. Weakly arithmetic local systems have in particular quasi-unipotent monodromies at infinity.

\medskip

3)
Of course, we should remark that if $X$ was projective to start with, our Theorem~\ref{thm:dens} would be void. Still if we think of varieties defined over number fields and Belyi's theorem, $X=\P^1\setminus \{0,1,\infty\}$ is the key scheme for many problems and so clearly the monodromies at infinity span the whole fundamental group. In addition if we think of analogies with the number theory case, where we look at Galois representations, we have ramification at bad primes.  Unlike what was hoped for in \cite[Question~9.1 (1)]{EK20} and  \cite[Conjecture~1.1]{EK23}, the sublocus of arithmetic points is not dense, thanks to the work of Biswas-Gupta-Mj-Whang~\cite{BGMW22} in rank $2$ and 
 Landesman-Litt~\cite{LL22a}, ~\cite{LL22b} in any rank, see also \cite{Lam22}. Some points of their construction is discussed  in Section~\ref{sec:LL}. On the other hand, on the Mazur-Chenevier deformation formal schemes for smooth quasi-projective varieties over finite fields,  it would truly be bad if the sublocus of arithmetic points  was not dense. In rank one it is, see \cite[Theorem~1.3]{EK21}.   Over $\C$, as already mentioned in the previous paragraph, weakly arithmetic complex local systems are dense. 
 
% a notion of {\it weakly arithmetic complex local systems} is defined in \cite[Section~3]{dJE22}. It is shown that the locus of these is dense in the Betti moduli space. 

\subsection{Some other dense or not dense loci}

We use the notation of Theorem~\ref{thm:dens} and fix some finite order  conjugacy classes $T_i$ of monodromies at infinity. 
Let $\sT\subset  M_B(X,r)(\C)$ be the set of complex points with finite monodromy and 
$\sT(T_i)\subset 
M_B(X,r,T_i)(\C)$ be its intersection with $M_B(X,r,T_i)$. If $\sT(T_i)$ is not empty, all the $T_i$ must have finite oder.   We denote by $ \bar \sT$, resp. $\overline{\sT(T_i)}$ the Zariski closure of $\sT$ in $M_B(X,r)$, resp. $\sT(T_i)$ in $M_B(X,r, T_i)$ and similarly by    $ \bar\sT^{an}$  resp.   $\overline{\sT(T_i)}^{an}$ the analytic closures. 
  For any smooth  complex variety $Y$, we denote by $j: Y\hookrightarrow \bar Y$  a smooth projective compactification such that $\bar Y\setminus Y$ is a normal crossings divisor.

\begin{prop} \label{prop:yves}
 \begin{itemize}
\item[1)]  If all the $T_i$ have finite order, there is a finite Galois \'etale cover $h_1: Y_1\to X$  such that for any $\mathbb L\in M_B(X,r, T_i)(\C)$,  $h^* \mathbb L$ extends to a local system on $\bar Y_1$.
\item[2)] There is a finite Galois \'etale cover $h_2: Y_2\to X$ such that \\
a)  for any $\mathbb L\in \sT$, $h^* \mathbb L$  is  a sum of $r$ torsion  rank $1$  local systems; \\
b) for any $\mathbb L\in \bar \sT$, $h^* \mathbb L$  is  a sum of $r$  rank $1$  local systems.
\item[3)]   If all the $T_i$ have finite order, there is a finite Galois \'etale cover $h_3: Y_3\to X$ such that \\
a)  for  any $\mathbb L\in \overline{ \sT(T_i)}$, $h^* \mathbb L$ extends to a sum of $r$ rank $1$  local systems on $\bar Y_3$;\\
b) for  any $\mathbb L\in \overline{ \sT(T_i)}^{an}$, $h^* \mathbb L$ extends to a sum of $r$ rank $1$ unitary local systems on $\bar Y_3$.

\end{itemize}
\end{prop}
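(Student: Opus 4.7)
For (1), let $n = \text{lcm}_i\,\text{ord}(T_i)$. By Kawamata's covering lemma applied to the simple normal crossings divisor $\bar X\setminus X = \bigcup_i D_i$ (on a good projective compactification), there exists a finite Galois cover $\bar h_1\colon \bar Y_1\to\bar X$, étale over $X$, with ramification index exactly $n$ along each component of $\bar Y_1$ lying above every $D_i$. Set $Y_1 = \bar h_1^{-1}(X)$ and $h_1 = \bar h_1|_{Y_1}$. For any $\mb L\in M_B(X,r,T_i)(\C)$, the local monodromy of $h_1^*\mb L$ around any boundary component of $\bar Y_1$ above $D_i$ is $T_i^n = I_r$, so $h_1^*\mb L$ has trivial local monodromies at infinity and extends (by filling in with the trivial local system at each boundary component) to a local system on $\bar Y_1$.

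For (2), let $f(r)$ be Jordan's constant, so every finite subgroup of $GL_r(\C)$ contains an abelian normal subgroup of index $\le f(r)$. Because $\pi = \pi_1(X(\C),x)$ is finitely generated, it has only finitely many subgroups of index $\le f(r)$; let $K$ be their intersection, a normal subgroup of finite index, and let $h_2\colon Y_2\to X$ be the corresponding finite Galois étale cover, so $\pi_1(Y_2) = K$. For $\mb L\in\sT$ with monodromy $\rho\colon\pi\to GL_r(\C)$ of finite image $G$, Jordan produces an abelian $A\subset G$ with $[G:A]\le f(r)$, whence $K\subset\rho^{-1}(A)$ and $\rho(K)\subset A$ is abelian and finite. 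Thus $h_2^*\mb L$ is a sum of $r$ torsion rank-one characters of $\pi_1(Y_2)$, proving 2a). For 2b), fix a finite generating set $\gamma_1,\dots,\gamma_s$ of $\pi_1(Y_2)$. The condition $[\rho(\gamma_i),\rho(\gamma_j)] = 0$ for all $i,j$ is algebraic and $GL_r$-conjugation-invariant on $M_B(X,r)^\square$, hence defines a Zariski closed subscheme $Z\subset M_B(X,r)$ containing $\sT$, and therefore containing $\bar{\sT}$. For $\mb L\in\bar{\sT}$ semisimple, the restriction $\rho|_K$ is semisimple (restriction of a complex semisimple representation to a finite-index subgroup remains semisimple) with commuting image, so is simultaneously diagonalizable into a sum of $r$ rank-one characters of $\pi_1(Y_2)$.

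For (3), take $h_3\colon Y_3\to X$ to be the finite Galois étale cover corresponding to $\pi_1(Y_1)\cap\pi_1(Y_2)\subset\pi$; it factors through both $h_1$ and $h_2$. For $\mb L\in\overline{\sT(T_i)}\subset\bar{\sT}$, (2b) applied via $h_2$ gives a decomposition $h_3^*\mb L = \bigoplus_{j=1}^r \chi_j$ into rank-one characters of $\pi_1(Y_3)$, while (1) applied via $h_1$ ensures $h_3^*\mb L$ has trivial local monodromies at infinity; each $\chi_j$ therefore extends to a rank-one local system on $\bar Y_3$, proving 3a). For 3b), consider the map
\[
\Phi\colon\bar{\sT}\longrightarrow \sym^r M_B(Y_3,1)(\C),\qquad \mb L\mapsto\{\chi_1,\dots,\chi_r\},
\]
well-defined by (2b) and continuous in the analytic topology because the multiset of characters is recovered from the characteristic polynomials of $\rho(\gamma)$ for $\gamma$ in a finite generating set of $\pi_1(Y_3)$. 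On $\sT(T_i)$ the $\chi_j$ are torsion, hence unitary. Since the torsion subgroup of the commutative algebraic group $M_B(Y_3,1)(\C)$ has analytic closure equal to its maximal compact (unitary) subgroup, $\Phi\bigl(\overline{\sT(T_i)}^{an}\bigr)$ lies in $\sym^r$ of the unitary locus, and every $\chi_j$ is unitary.

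The main non-formal input is the construction of the tame cover $h_1$ with prescribed ramification at the boundary, which rests on Kawamata's covering lemma (or an equivalent abelian-cover-at-infinity trick); this is the only step demanding a nontrivial geometric construction. The remainder assembles Jordan's theorem, semisimplicity of restrictions of complex semisimple representations to finite-index subgroups, and the classical analytic density of torsion points in the maximal compact subgroup of a commutative algebraic group.
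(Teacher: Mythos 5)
Your items (2) and (3) track the paper's argument closely: Jordan's theorem for (2a), closedness of the locus where the restriction to the cover decomposes into characters for (2b), and composing the covers plus the compactness of the unitary locus for (3). In (2b) you argue closedness via the commutator condition on generators (and Clifford's theorem for semisimplicity of the restriction), while the paper argues via Lemma~\ref{lem:sum}, i.e.\ the locus of representations stabilizing a complete flag; both routes are fine and roughly equally short.

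Item (1), however, has a genuine gap. Kawamata's covering lemma does produce a Galois cover $\bar h_1\colon \bar Y_1\to \bar X$ with prescribed ramification index $n$ along each $D_i$, but the construction necessarily introduces additional branching: one takes cyclic covers branched along $D_i$ \emph{together with} an auxiliary general hypersurface section in a linear system such as $|m_iA-D_i|$ for $A$ very ample, precisely because $\sO_{\bar X}(D_i)$ is typically not $m_i$-divisible in $\Pic(\bar X)$ (already for $\bar X=\P^2$ and $D$ a smooth cubic there is no degree-$2$ cyclic cover branched exactly along $D$). These auxiliary branch divisors meet $X$, so $\bar h_1^{-1}(X)\to X$ is ramified over their traces and is therefore \emph{not} a finite \'etale cover of $X$. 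Thus your $h_1$ does not satisfy the requirement. The paper avoids this problem by not attempting to control the ramification indices of $\bar Y_1\to\bar X$ at all: applying Selberg's theorem to the universal representation $\rho_{univ}\colon\pi_1(X(\C),x)\to GL_r(\sO(M_B(X,r,T_i)^\square))$ produces a finite-index normal subgroup $K$ with $\rho_{univ}(K)$ torsion-free; the boundary loops of the corresponding cover $Y_1$ map under $\rho_{univ}$ to powers of the finite-order element $\rho_{univ}(T_i)$, hence to torsion elements of a torsion-free group, hence to the identity, whatever the actual ramification indices happen to be. You should replace the Kawamata step by this Selberg argument (or supply a separate reason why a cover \'etale over $X$ with the required boundary ramification exists, which in general it does not).
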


\begin{proof}

We prove 1). We consider  the  affine closed subschemes
\ga{}{ \xymatrix{\ar[d]^qM_B(X,r,T_i)^\square \ar[r] & \ar[d]_q M_B(X,r)^\square\\
M_B(X,r, T_i)  \ar[r] & M_B(X,r)} \notag}
As $M_B(X,r,T_i)^\square$ is  a fine moduli space, we have the universal representation
\ga{}{\rho_{univ}: \pi_1(X(\C), x(\C))\to GL_r( \sO(M_B(X,r)^\square)).\notag}
As $\pi_1(X(\C), x(\C))$ is finitely generated, Selberg's theorem (as used in \cite[Sections~8]{And04}) implies that there is a finite \'etale  Galois cover $h_1: Y_1\to X$ such that 
$\rho_{univ}({ \pi_1(Y_1(\C), y_1(\C))})$  is torsion-free. Here $y_1 \in Y_1$ lies above $x$. As the $T_i$ where assumed to have finite order, this implies that 
$\rho_{univ}|_{({ \pi_1(Y_1(\C), y_1(\C))})}$ factors through $\pi_1(\bar Y_1(\C), y_1(\C))$. Any complex point $\rho\in M_B(X,r,T_i)^\square(\C)$ is defined as the local system to  $\iota\circ \rho_{univ}$, where $\iota: \sO(M_B(X,r,T_i)^\square)\to \C$ is a complex point. 
We conclude that all  complex points of $M_B(X,r,T_i)$, restricted to $Y_1$, extends to $\bar Y_1$, that is they have no ramification.  This proves 1).

\medskip

We prove 2)a).  We use Jordan's theorem as in \cite[Section~10]{And04}: there is a finite \'etale Galois cover $h_2: Y_2\to X$ such that for all $\mathbb L\in \sT$, $h^*\mathbb L$ is a sum of $r$ torsion rank $1$ local systems. This proves 2)a).

\medskip

We prove 2)b).   The morphism $h_2^*: M_B(X,r)\to M_B(Y,r)$ is finite onto its image, thus $h^*(\bar \sT)$ is the Zariski closure of $h^*(\sT)$ in $M_B(Y,r)$. On the other hand, the locus in $M_B(Y,r)$ consisting sums of $r$ rank $1$ local systems is Zariski closed, see Lemma~\ref{lem:sum}.  This proves  2)b).

\medskip

We prove 3)a).  See als \cite[Theorem~5.1]{EL13} for an analog statement for vector bundles. 

 Taking now $h_3: Y_3\to X$ finite étale Galois dominating both $h_1$ and $h_2$,  1) holds for $h_3$. 
The extension to $\bar Y_3$ defines the factorization 
\ga{}{h_3^*: M_B(X,r,T_i)\xrightarrow{c^*} M_B(\bar Y_3,r) \xrightarrow{j^*} M_B(Y_3,r) \notag}
where $h_3^*$ is finite and its image is Zariski  closed.  So $c^*$ is finite and its image is  Zariski closed. 
As $j^*$ is furthermore Zariski closed, the Zariski closure of $ h_3^*(\sT(T_i))$ in  $M_B(Y_3,r) $ is the image by $j^*$ of the Zariski closure of 
$c^*(\sT(T_i))$ in $M_B(\bar Y_3,r)$ which itself is equal to  $c^*(\overline{\sT(T_i)})$. We apply Lemma~\ref{lem:sum} to conclude that $c^*(\sT(T_i))$
consists of local systems which split as a  sum $\oplus_1^r \sL_i$  of $r$ rank $1$ local systems.  This proves 3)a).

\medskip

We prove 3)b). 
We have to see that if $\mathbb L\in \overline{\sT(T_i)}^{an}$ then the $\sL_i$ are unitary. 
The analytic closure of torsion rank $1$ local systems on $M_B(\bar Y_3,1)$ is the space $U(\bar Y_3)$  of unitary rank $1$ connections, which is compact. 
Thus via the algebraic morphism $M_B(\bar Y_3,1)^r\to M_B(\bar Y_3, r) (\hookrightarrow M_B(Y_3,r))$  the image of $U(\bar Y_3)^r \to M_B(\bar Y_3,r) (\hookrightarrow M_B(Y_3,r))  $ is analytically closed. Thus $c^*(\overline{ \sT(T_i)}^{an})$ lies in this image. This  proves 3b)). 

\end{proof}
\begin{rmk}
  In Proposition~\ref{prop:yves} we cannot replace in 3)  $\overline{\sT(T_i)}$ by $\bar \sT$ and similarly for the analytic version. 
  Assume $X=\G_m$ then $\pi_1(X(\C), x(\C))=\Z$ and $M_B(X,1)=\G_m$, where for $\mu\in \G_m(\C)=\C^\times$, the associated local system $\mathbb L$  is defined by $\Z\to \C^\times, \ 1\mapsto \mu$. Then $\sT= \mu_\infty \subset  \C^\times$,  the subgroup of roots of unity, its analytic closure is $S^1\subset \C^\times$, the circle, its Zariski closure is the whole of $\C^\times$.   However,   $h:Y\to \G_m$ has the property  that $\bar Y$ is isomorphic to $\P^1$, thus is simply connected and non-torsion rank $1$ local systems cannot be trivialized by a finite \'etale cover. 
See also  \cite[Remarque~7.2.3]{And04} for the same example used in a similar spirit. 

\end{rmk}

The following lemma ought to be well known, we write a short proof as we could not find it in the literature.

\begin{lem} \label{lem:sum}
For any finitely generated group $\pi$,  and natural number $r\ge 1$, the locus of complex points  $\mathbb L$ in $Ch(\pi, r)$  such that $\mathbb L$ is a sum  of rank $1$ representations up to isomorphism is closed. 
\end{lem}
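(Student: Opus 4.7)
My plan is to realize the locus as the image under the affine GIT quotient $q\colon Ch(\pi,r)^\square=\Hom(\pi,GL_r)\to Ch(\pi,r)$ of a closed $GL_r$-invariant subscheme of the framed character variety, and then invoke the standard fact that for reductive group actions such images are closed.

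First, fix a finite generating set $g_1,\ldots,g_n$ of $\pi$ and let $Z\subset Ch(\pi,r)^\square$ be the closed subscheme cut out by the $GL_r$-invariant matrix equations $\rho(g_i)\rho(g_j)=\rho(g_j)\rho(g_i)$ for $1\le i,j\le n$. A complex point of $Z$ is a representation $\rho\colon\pi\to GL_r(\C)$ whose image is abelian, equivalently one which factors through $\pi^{ab}$. Since every commuting family of diagonalizable matrices in $GL_r(\C)$ is simultaneously diagonalizable, a semi-simple representation $\mathbb L$ lies in $Z(\C)$ if and only if it is a direct sum of $r$ rank $1$ representations. Together with the fact, recalled just before the lemma, that each fiber of $q$ contains a unique closed $GL_r$-orbit -- the semi-simplification -- this identifies $q(Z(\C))$ with the set in the statement.

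It remains to see that $q(Z)$ is closed in $Ch(\pi,r)$. This is a standard consequence of reductive GIT: for a reductive group $G$ acting on an affine scheme $X$ with closed $G$-invariant subscheme $Z\subset X$ defined by the $G$-invariant ideal $I\subset\sO(X)$, the Reynolds operator yields $(\sO(X)/I)^G=\sO(X)^G/(I\cap\sO(X)^G)$, so $Z\sslash G$ embeds as the closed subscheme of $X\sslash G$ cut out by $I\cap\sO(X)^G$; moreover the induced map $Z(\C)\to (Z\sslash G)(\C)$ is surjective, since any two disjoint closed invariant subsets can be separated by invariant functions. Applied with $G=GL_r$ and $X=Ch(\pi,r)^\square$, this completes the argument. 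The only point requiring care is this last step: without reductive GIT, a priori one would only know that $q(Z)$ is constructible rather than closed.
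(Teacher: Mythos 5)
Your argument is correct and takes a genuinely different route from the paper's. The paper uses the fact that $q$ is submersive to reduce the statement to showing that the \emph{full preimage} of the locus in $Ch(\pi,r)^\square$ is Zariski closed; that preimage is the set of representations $\rho$ stabilizing a complete flag (equivalently, simultaneously triangularizable), and the closedness is then obtained via a flag-variety argument. You instead single out a strictly smaller closed $GL_r$-invariant subscheme $Z\subset Ch(\pi,r)^\square$ — the vanishing locus of the commutators $[\rho(g_i),\rho(g_j)]$ — observe that $q(Z(\C))$ coincides with the desired locus, and invoke the standard GIT fact that affine quotients by reductive groups carry closed invariant subsets to closed subsets. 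Your version has the advantage that the defining equations of $Z$ are transparently polynomial and you never need to characterize the entire fiber $q^{-1}(\text{locus})$ (your $Z$ is a proper subset of it: e.g.\ a pair of non-commuting upper-triangular matrices gives a point of the preimage outside $Z$). The paper's version avoids invoking any GIT beyond the submersivity of $q$, at the cost of having to show closedness of the larger triangularizable locus.

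One small point to tighten: the surjectivity of $Z(\C)\to(Z\sslash GL_r)(\C)$ does not follow from the separation property you cite ("disjoint closed invariant sets can be separated by invariants"); that is a different axiom of good quotients. Surjectivity on $\C$-points is a separate standard fact — every fiber of an affine GIT quotient by a reductive group contains a closed orbit (equivalently, such quotients are universal categorical quotients, hence surjective). Both facts are true; you should just cite the one actually doing the work.
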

\begin{proof}
The structure morphism $q: Ch(\pi,r)^\square\to Ch(\pi,r)$  endows $Ch(\pi,r)$ with the quotient topology, so Lemma~\ref{lem:sum} is equivalent to saying that the locus of complex points $\rho$ in $Ch(\pi,r)^\square$ which stabilize a complete flag, i.e. the associated graded splits $\C^r$ as a sum of rank $1$ vector subspaces, is Zariski closed. 
Let $(1= \gamma_1, \gamma_2, \ldots, \gamma_s)$  be generators of $\pi$. Let $Fl$ be the variety of complete flags on $\C^r$. Consider the algebraic map
\ga{}{ Ch(\pi, r)^\square \to {\rm Hom} (Fl, (Fl)^s), \ (\rho \mapsto [ Fl \ni F\mapsto  (Fl)^s\ni (\gamma_1(F),\ldots, \gamma_s(F)]).\notag}
Let $\Delta\subset (Fl)^s$ be the diagonal. Then ${\rm Hom}(Fl, \Delta)\subset {\rm Hom}(Fl,  (Fl)^s)$ is a closed embedding, so its inverse image in $Ch(\pi, r)^\square$ is closed as well.

\end{proof}

 Proposition~\ref{prop:yves} 2)b) is a convenient way to see that some $\mathbb L$ cannot be in  $\bar \sT$, see Example~\ref{ex:nss} and  Corollary~\ref{cor:LL}.
 \begin{ex} \label{ex:nss}
 Let $\rho: \pi_1(X(\C), x(\C))\to GL_r(\C)$ be a  representation with the property that $\rho( \pi_1(X(\C), x(\C)))$ is Zariski dense in $G \subset GL_r(\C)$, where
$G=GL_{r'} (\C)$ or $G=SL_{r'}(\C)$ for some $r'$ with  $2\le r'\le r$. 
  Then  $\rho$ is semi-simple and its moduli point does not lie in $\bar \sT$.
 \end{ex}

\subsection{ Arithmetic local systems are not dense: the work of Biswas-Gupta-Mj-Whang and   Landesman-Litt} \label{sec:LL}
We sketch  some aspects of the construction referred to in Remark~\ref{rmk:LL} 2)  by Aaron Landesman and Daniel Litt, which generalizes in higher rank 
Theorem~A and Lemma~3.2 of \cite{BGMW22}.  While the latter uses the specificity of $SL_2$ representations, which have finite monodromy if and only if they do on specific loops, the former uses Hodge theory. 
We are grateful to the two authors for their swift, precise  and friendly 
answers to our questions. 
 We  just develop some points of the arguments  in a simplified geometric situation and refer to their original articles \cite{LL22a}, \cite{LL22b} for the complete statements and proofs. 

 \subsubsection{Statement} \label{sec:stat}
 
 Let $f: X\to S$ be a smooth family of genus $\ge 2$ curves together with a dominant morphism $h: S\to M_g$, where $M_g$ is the coarse moduli space of genus $g$ curves.  We shall assume that $f$ has a section $S\to X$ which is irrelevant for the statements and their proofs. 
Let $s \in S$ be a complex generic point. We denote by $X_s$ the fibre of $f$ above $s$. For $x\in X_s$ a geometric point,  for example the one on the section of $f$, the homotopy sequence
 \ga{}{1\to \pi_1(X_s(\C), x(\C))\to \pi_1(X(\C), x(\C))\to \pi_1(S(\C), s(\C))\to 1\notag}
 is exact.
 %~\cite[Lemma~2.01.1]{LL22b}.
 As     the image of $ \pi_1(X_s(\C), x(\C))\to \pi_1(X(\C), x(\C))$ is normal, 
  the restriction $\mathbb L|_{X_s}$  to $X_s$ of any semi-simple complex local system $\mathbb L$  on $X$ is semi-simple. 
 
\begin{thm}[\cite{LL22b}, Theorem~1.2.1] \label{thm:LL}
If a semi-simple local system $\mathbb L$ on $X$ has rank $r< \sqrt{g+1}$, then $\mathbb L|_{X_s}$ has finite monodromy.

\end{thm}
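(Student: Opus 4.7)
The plan is to translate the theorem into a statement about representations of the mapping class group acting on the character variety of the fibre, and then combine the rigidity/integrality machinery of Lecture~\ref{sec:companions} with Hodge-theoretic input. From the homotopy exact sequence
\[
1\to \pi_1(X_s(\C),x(\C))\to \pi_1(X(\C),x(\C))\to \pi_1(S(\C),s(\C))\to 1
\]
the semi-simple local system $\mathbb L$ on $X$ determines, on restriction to $X_s$, a complex point of $M_B(X_s,r)$ whose isomorphism class is fixed by the image of $\pi_1(S(\C),s(\C))$ in $\mathrm{Out}(\pi_1(X_s(\C),x(\C)))$. Because $h:S\to M_g$ is dominant, this image contains a finite-index subgroup $\Gamma'\subset \mathrm{Mod}_g$ of the mapping class group. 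Thus it suffices to prove: every semi-simple $\Gamma'$-invariant point of $M_B(X_s,r)$ with $r<\sqrt{g+1}$ has finite monodromy.

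I would establish this in three steps. First, a cohomological rigidity step: the $\Gamma'$-invariance gives an action of $\Gamma'$ on the tangent space $H^1(X_s,\mathrm{End}(\mathbb L|_{X_s}))$, which is a $\mathrm{Mod}_g$-subquotient of the cohomology of the universal rank-$r^2$ local system on the universal curve; a Putman--Wieland / low-rank representation-theoretic input shows that this space has no $\Gamma'$-invariants when $r^2\le g$, i.e. $r<\sqrt{g+1}$, so $\mathbb L|_{X_s}$ is cohomologically rigid. Second, Simpson's theorem applied to a cohomologically rigid point of $M_B(X_s,r)$ shows it underlies a polarizable complex variation of Hodge structure (the $\C^\times$-flow on the Higgs moduli fixes such a point). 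Third, the Esnault--Groechenig integrality theorem of Lecture~\ref{sec:companions} shows that this cohomologically rigid $\mathbb L|_{X_s}$ is integral. An integral polarizable complex variation of Hodge structure on a compact curve whose period map is forced to be constant by the same mapping-class-group-equivariant rank bound (any nonzero Kodaira--Spencer-type map would produce a nonzero $\Gamma'$-equivariant class in a Hodge bundle of rank bounded by $r^2$) is unitary; integrality plus unitarity yields finite monodromy, as in the proof of Kronecker's analytic criterion (Proposition~\ref{prop:Kronecker_analytic}).

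The main obstacle is the precise quantitative representation-theoretic statement underlying step one: one must show that a finite-index subgroup of $\mathrm{Mod}_g$ has no nonzero invariants on $H^1$ of the universal curve with values in the endomorphism bundle of a rank-$r$ local system, for all $r$ with $r^2\le g$. This is the genuinely new technical heart of Landesman--Litt, where Hodge-theoretic positivity is played off against the rank bound; once it is in hand, the rigidity--VHS--integrality chain above is essentially formal and recycles the machinery already developed in Lecture~\ref{sec:companions}.
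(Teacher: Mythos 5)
Your proposal identifies several of the right ingredients (a cohomological vanishing controlled by the bound $r<\sqrt{g+1}$, Hodge theory via the $\C^\times$-flow, the integrality theorem~\ref{thm:int}, and ``integral plus unitary implies finite'' as in Kronecker's criterion), but the chain linking them is incorrectly wired at several places, and the comparison with the paper's argument shows genuine gaps.

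The central error is the assertion that ``this space has no $\Gamma'$-invariants~\ldots\ so $\mathbb L|_{X_s}$ is cohomologically rigid.'' Cohomological rigidity of $\mathbb L|_{X_s}$ as a point of $M_B(X_s,r)$ would mean $H^1(X_s,\sE nd^0(\mathbb L|_{X_s}))=0$, which is impossible on a genus $g\ge 2$ curve for $r\ge 2$: by Euler characteristic this group has dimension at least $2(g-1)(r^2-1)>0$. What the invariance argument actually gives (via Leray, when the restriction is irreducible) is that $\mathbb L$, or better the lifts $\mathbb U_i$ on the cover $X'$, are cohomologically rigid \emph{on $X$ (resp.\ $X'$)}. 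Consequently the $\C^\times$-flow and ``rigid on the Higgs side implies PVHS'' argument must be run on $M_B(X,r)$ (or $M_B(X',r'))$, not on $M_B(X_s,r)$; applied where you put it, Simpson's theorem says nothing, since there is no fixed point to speak of.

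Once corrected to that level, your argument still does not reach unitarity. The paper's proof is more layered: it first proves unitarity of the restriction under the assumption that $\mathbb L$ is \emph{already} a PVHS (step I)a), which is Landesman--Litt's semistability estimate \cite[Theorem~1.2.12]{LL22a}, valid under the weaker bound $r<2\sqrt{g+1}$). If $\mathbb L$ is not a PVHS one cannot simply ``promote'' it; instead (step I)b)c)) one invokes Simpson--Mochizuki to find a PVHS $\mathbb M$ in the same connected component of $M_B(X,r)$, and then proves $\mathbb L|_{X_s}=\mathbb M|_{X_s}$. This last identity is where the sharper bound $r<\sqrt{g+1}$ enters: one must show that the map $M_B(X,r)\to M_B(X_s,r)$ contracts the whole component, which requires the vanishing $H^0(S,R^1f_*\sE nd^0(\mathbb M_A))=0$ not just for $\mathbb M$ but for all Artinian thickenings $\mathbb M_A$. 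Your proposal has no analogue of the replacement $\mathbb L\rightsquigarrow\mathbb M$ and no analogue of the Artinian deformation argument, so the case ``$\mathbb L$ not a PVHS'' is simply not handled. You also omit the decomposition of $\mathbb L|_{X_s}$ into irreducible summands $U_i$ and the Krull--Schmidt argument that, after a finite \'etale base change $S'\to S$, each $U_i$ lifts to $\mathbb U_i$ on $X'$; without this, the integrality theorem~\ref{thm:int} cannot even be applied, since it requires a cohomologically rigid \emph{irreducible} local system on the total space. Finally, a smaller point: the paper's proof of the key vanishing goes through Kodaira--Spencer classes and Deligne's fix-part theorem, not through a Putman--Wieland-type representation-theoretic statement about the mapping class group; you flag this as ``the technical heart'' and defer it, which is acceptable, but the mechanism you sketch is not the one used.
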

 If $r=1$, so in particular if $g=1$, 
  Theorem~\ref{thm:LL} 
 is due to \cite[Lemma~3.2]{BGMW22} by analyzing the effect of the Dehn twists on the images of the generators of $H_1(X_x, \Z)$ under the character representation.  If $r=2$ this is due to \cite[Theorem~A]{BGMW22}, by analyzing the monodromy on specific loops. 
  Also the theorem is valid if $\mathbb L$ is not semi-simple. We won't discuss this last step. 
%It holds  
%$$M_B(X_s,1)(\C)=H^1(X_s, \C/\Z(1))=H^1(X_s, \C/\Z(1))$$ which the extension of a finite group by   a torus. Then torsion points are  Zariski (even analytically) dense, and {\it loc.  cit.} says they all come from $M_B(X,1)(\C)$.  So in this case, Corollary~\ref{cor:LL} does not hold.
\begin{cor}[\cite{LL22a}, Corollary~1.2.10, Lemma~7.5.1] \label{cor:LL}
If $g\ge 2$ and  $2\le r <\sqrt{g+1}$, the image  of   the restriction morphism $M_B(X, r) \to M(X_s, r)$  is not dense. 

\end{cor}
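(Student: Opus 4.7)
I would combine Theorem~\ref{thm:LL} with Example~\ref{ex:nss}. By Theorem~\ref{thm:LL}, every semi-simple $\mathbb{L} \in M_B(X, r)(\C)$ restricts to a local system $\mathbb{L}|_{X_s}$ of finite monodromy, so the image of the restriction morphism is contained in the finite-monodromy locus $\sT \subset M_B(X_s, r)(\C)$. The Zariski closure of the image is therefore contained in $\bar{\sT}$, and it suffices to exhibit a single complex point of $M_B(X_s, r)$ lying outside $\bar{\sT}$.

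By Example~\ref{ex:nss}, such a point is produced by any representation $\rho \colon \pi_1(X_s(\C), x(\C)) \to GL_r(\C)$ whose image is Zariski dense in a subgroup isomorphic to $SL_{r'}(\C)$ with $2 \le r' \le r$. For the construction I would exploit that for $g \ge 2$ the surface group $\pi_1(X_s(\C))$ surjects onto the free group $F_g$: in the standard presentation $\langle a_1, b_1, \ldots, a_g, b_g \mid \prod_i [a_i, b_i] \rangle$, killing $b_1, \ldots, b_g$ makes the sole relation trivial. A generic pair of matrices in $SL_2(\C)$ generates a Zariski dense (in fact free) subgroup, providing $F_g \twoheadrightarrow F_2 \to SL_2(\C)$ with Zariski dense image. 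Composing with the quotient $\pi_1(X_s(\C)) \twoheadrightarrow F_g$ and the block inclusion $SL_2 \hookrightarrow GL_r$, $A \mapsto \mathrm{diag}(A, I_{r-2})$, yields the desired $\rho$.

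The main substantive input is Theorem~\ref{thm:LL} itself, whose proof is the hard part of \cite{LL22b}; given that result, the remainder is essentially bookkeeping. The one minor point worth checking is that $\rho$ above is semi-simple as a $GL_r$-representation, which is immediate from its decomposition as the standard $SL_2$-representation on $\C^2$ (irreducible because its monodromy is Zariski dense in $SL_2(\C)$) together with $r-2$ copies of the trivial character. Example~\ref{ex:nss} then places its moduli point outside $\bar{\sT}$, which gives $\bar{\sT} \subsetneq M_B(X_s, r)$, and hence the image of $M_B(X, r) \to M_B(X_s, r)$ is not Zariski dense.
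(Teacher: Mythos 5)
Your proof is correct and takes essentially the same approach as the paper's: both use Theorem~\ref{thm:LL} to confine the image of the restriction morphism to the finite-monodromy locus $\sT$, and both exhibit a representation of $\pi_1(X_s(\C))$ with Zariski dense image in a block $SL_2(\C)\subset GL_r(\C)$ by killing the $b_i$'s and mapping two of the $a_i$'s, concluding via Example~\ref{ex:nss}. The only cosmetic difference is that the paper sends $a_1,a_2$ to the two elementary unipotent matrices (which generate $SL_2(\Z)$, Zariski dense in $SL_2(\C)$), whereas you invoke a generic pair.
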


\begin{proof}[Proof of Corollary~\ref{cor:LL}] We have to show that the moduli points corresponding to  local systems with finite monodromy are not Zariski dense. 
Let $a_1, b_1, \ldots, a_g,b_g, \ g\ge 2$ be generators of $\pi_1(X_s(\C), x(\C))$ with the single relation $\prod_{i=1}^g [a_i,b_i]=1$. 
Define $\rho: \pi_1(X_s(\C), x(\C)) \to GL_r(\C)$ by sending $a_1, a_2$ to  the block matrices
\ga{}{\begin{pmatrix} 1 & 1&  0\\
0 & 1 & 0 \\
0 & 0 & {\mathbb I}_{r-2}
\end{pmatrix} \  \ 
\begin{pmatrix} 1 & 0&  0\\
1 & 1 & 0 \\
0 & 0 & {\mathbb I}_{r-2}
\end{pmatrix} 
\notag}
and all the other generators $a_j$ for $ j\ge 3$ and $b_i$ for $ i\ge 1$ to $\mathbb I_r$, where $\mathbb I_n$ is the unit square matrix of size $n\times n$.   The monodromy group is dense in $SL_2(\C) \subset GL_r(\C)$. We apply 
Example~\ref{ex:nss}. 

\end{proof}
\begin{rmk} As 
$M_B(\pi_1(X(\C), x(\C)), r)^\square\to M(X, r)$ is surjective,  we could equally say in Corollary~\ref{cor:LL} that the image  of 
$M_B(\pi_1(X(\C), x(\C)), r)^\square\to M(X_s, r)$ is not dense. Said in words, a semi-simple local system on $X_s$ which lifts to $X$ lifts as a semi-simple local system, and those are not dense for $r$ small.

\end{rmk}
We keep the same notation as above.  The morphism $f:X\to S$ descends to $f_\circ: X_\circ\to S_\circ$ over $F_\circ$ where $F_\circ/\Q$ is a field of finite type over $\Q$. Thus $F_\circ(S_\circ)\subset \kappa(x) \cong \C$, the residue field of $x$. 
Recall that an $\ell$-adic local system  $\mathbb L_\ell$  on $X_s$  is 
arithmetic if it descends to $X_F$, where  $ F_\circ(S_\circ)\subset F (\subset \kappa(x)) $ is a finite type extension. 
Geometrically,  it means that there is  a dominant morphism $T_1\to S_\circ \otimes_{F_\circ} F_1$ defined over $F_1$, a finite extension of $F_\circ$ such that $F=F_1(T_1)$, 
 and such that
 $\mathbb L_\ell$ is defined over the fibre  of  the pull-back morphism $f_\circ\otimes_{S_\circ} T_1=: f_{1}:  X_\circ\otimes_{S_\circ} T_1=:X_{1}\to T_1$ over the generic point
${\rm Spec}(F)$.
 As the notion of arithmeticity does not depend on the choice of such an $F$, see 
\cite[Remark~3.2]{dJE22}, we'll assume henceforth that $f_{1}$ admits a section $T_1\to X_{1}$, and then we abuse notation setting $f=f_1$, so also $F=F_\circ(S)$.  In particular, this guarantees that the homotopy sequence of topological fundamental groups 
\ga{}{1\to \pi_1(X_s(\C),x(\C)) \to \pi_1(X(\C),x(\C)) \to \pi_1(S(\C),s(\C))\to 1\notag}
is exact, and by \cite[Proposition~3]{And74},  if $g\ge 2$ which implies that the center of $ \pi_1(X_s(\C),x(\C)) $ is trivial, 
that the induced homotopy sequence of \'etale fundamental groups
\ga{}{ (\star) \ \ 1\to \pi_1(X_s,x) \to \pi_1(X,x) \to \pi_1(S,s)\to 1\notag}
is exact as well. 

\begin{cor}[\cite{LL22b}, Theorem~8.1.2]
If $g\ge 2$ and  $2\le r<\sqrt{g+1}$, the arithmetic local systems in $M_B(X_s,r)( \bar \Q_\ell)$ are not Zariski dense.

\end{cor}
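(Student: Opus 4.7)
The plan is to combine the spreading-out of arithmetic $\ell$-adic local systems with Theorem~\ref{thm:LL} to reduce to the situation of Corollary~\ref{cor:LL}, and then transfer the resulting non-density statement from $\C$ to $\bar\Q_\ell$ by observing that the relevant proper closed subscheme is already defined over $\bar\Q$.

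Let $\mathbb{L}_\ell\in M_B(X_s,r)(\bar\Q_\ell)$ be arithmetic. By the definition recalled just before the statement, $\mathbb{L}_\ell$ extends to an $\ell$-adic local system $\tilde{\mathbb{L}}_\ell$ on $X_T$, where $T\to S$ is dominant, built from a finite-type extension $F\supset F_\circ(S_\circ)$ with $F\hookrightarrow \C$; the complex generic point $s\in S$ lifts to a complex point $s'\in T$ via $F\hookrightarrow \C$, and one has $X_{T,s'}=X_s$. The family $X_T\to T$ still satisfies the hypotheses of Theorem~\ref{thm:LL} (smooth genus-$g$ family; the composite $T\to S\to M_g$ is dominant; the rank $r<\sqrt{g+1}$ is unchanged). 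Fix an abstract field isomorphism $\iota\colon \bar\Q_\ell\xrightarrow{\sim}\C$. Composing $\tilde{\mathbb L}_\ell\colon \pi_1^{\text{\'et}}(X_T)\to GL_r(\bar\Q_\ell)$ with $\iota$, restricting along $\pi_1^{\rm top}(X_T(\C))\to \pi_1^{\text{\'et}}(X_T)$, and semisimplifying, I obtain a complex semi-simple local system $\mathbb{L}'$ on $X_T$ of rank $r$. Its restriction to $X_{s'}=X_s$ equals $\iota_*\mathbb{L}_\ell$: this uses that $\mathbb{L}_\ell$ is already semi-simple (being a point of the Betti moduli) and that restriction commutes with semisimplification here, since $\pi_1^{\rm top}(X_s)$ sits as a normal subgroup of $\pi_1^{\rm top}(X_T)$ (as recalled in Section~\ref{sec:stat}). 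Applying Theorem~\ref{thm:LL} to $\mathbb{L}'$ now shows that $\iota_*\mathbb{L}_\ell$ has finite monodromy, and since finiteness of the image is preserved by any field isomorphism, so does $\mathbb{L}_\ell$ itself as a representation of $\pi_1^{\rm top}(X_s)$.

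It remains to show that the finite-monodromy locus $\mathcal{T}\subset M_B(X_s,r)$ is not Zariski dense in $M_B(X_s,r)_{\bar\Q_\ell}$. The representation $\rho$ with image Zariski dense in $SL_2\subset GL_r$ constructed in the proof of Corollary~\ref{cor:LL} is given by matrices with entries in $\Z$, so it defines a $\bar\Q$-point of $M_B(X_s,r)$, and by Example~\ref{ex:nss} this point lies outside the Zariski closure $\bar{\mathcal T}_{\bar\Q}$ of $\mathcal{T}$ over $\bar\Q$. Hence $\bar{\mathcal T}_{\bar\Q}$ is a proper closed subscheme of $M_B(X_s,r)_{\bar\Q}$; its base change along $\bar\Q\hookrightarrow \bar\Q_\ell$ is a proper closed subscheme of $M_B(X_s,r)_{\bar\Q_\ell}$ containing every arithmetic $\ell$-adic point, yielding non-density.

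The step I expect to be most delicate is the commutation of restriction with semisimplification between $X_T$ and $X_s$, which rests on the normality of $\pi_1^{\rm top}(X_s)$ in $\pi_1^{\rm top}(X_T)$ granted by the homotopy exact sequence; once this is in place, the rest is an application of Theorem~\ref{thm:LL} to the spread $X_T\to T$ combined with the standard descent argument that allows the proper closed subscheme of complex finite-monodromy points to be transported to $\bar\Q_\ell$.
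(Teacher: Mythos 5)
Your argument is correct and follows the same two-step strategy as the paper: first show that an arithmetic $\ell$-adic local system on $X_s$ extends to a complex local system on the total family and hence has finite monodromy by Theorem~\ref{thm:LL}, then invoke the explicit $SL_2$-type representation from Corollary~\ref{cor:LL} to see the finite-monodromy locus is a proper closed subscheme already defined over $\bar\Q$, hence remains proper after base change to $\bar\Q_\ell$. The only organizational difference is in the descent step: the paper argues that the Galois action of ${\rm Gal}(\bar F/F)$, restricted along ${\rm Gal}(\overline{\C(S)}/\C(S))\to{\rm Gal}(\bar F/F)$, factors through $\pi_1(S,s)$ via the homotopy exact sequence, so the local system descends directly to $X$; you instead spread the $\ell$-adic sheaf out from the generic fibre $X_F$ to $X_{T'}$ for a dense open $T'$ (a step you should flag explicitly, since the definition of arithmetic only gives a sheaf on the generic fibre) and then restrict along $\pi_1^{\rm top}(X_{T'}(\C))\to\pi_1^{\text{\'et}}(X_{T'})$ — these two routes are equivalent, and your Clifford-theory argument that semisimplification commutes with restriction to the normal subgroup $\pi_1^{\rm top}(X_s)$ correctly closes the one gap this repackaging opens.
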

\begin{proof}
The action of ${\rm Gal}(\bar F/F)$ on $\ell$-adic local systems comes from 
the homotopy  exact sequence 
\ga{}{ 1\to \pi_1(X_x,x)\to \pi_1(X_{F}, x)\to {\rm Gal}(\bar F/F )\to 1 \notag}
and the action of ${\rm Gal}(\bar F/F)$ on $\pi_1(X_x,x)$ via outer automorphisms (which can be lifted to automorphisms thanks to the section). 
An $\ell$-adic arithmetic local system   $\mathbb L_{\ell, \C}$ descending to $\mathbb L_{\ell, F'}$, for $F\subset F' (\subset \bar F)$ a finite Galois extension, 
is a fixpoint under this action restricted to ${\rm Gal}(\bar F/F')$. We may assume $F$ is equal to $F'$,  and $S'$, the normalization of $S$ in $F'$, is  equal to $S$.  
We restrict this action to ${\rm Gal}( \overline{\C(S)}/\C(S))$ via the base change morphism ${\rm Gal}(\overline{\C(S)}/ \C(S)) \to {\rm Gal}(\bar F/F)$ 
from $F=F_\circ(S)$ to $\C(S)$. By left exactness of $(\star)$,   the action  factors through  $\pi_1(S,s)$. Thus in fact $\mathbb L_{F \otimes_{F_\circ} \C} $ descends to an $\ell$ adic local system on $X$. Viewed as a topological local system with $\bar \Q_\ell$-coefficients, its restriction to $X_s$ has finite monodromy by choosing randomly a field isomorphism between $\bar \Q_\ell$ and $\C$ and applying Theorem~\ref{thm:LL}.  We apply Corollary~\ref{cor:LL}. This finishes the proof. 
\end{proof}

\subsubsection{Rigidity}
The notation is as in Section~\ref{sec:stat}. We had already discussed that an irreducible local system $\mathbb L$ has finite determinant. Moreover, by 
\cite[Lemma~2.1.1]{LLSS20},   \cite[Proposition~2.4]{AS16} and Lefschetz theory which implies that the whole monodromy is recognized on good curves, 
it has quasi-unipotent monodromies at infinity. 

\medskip 

Recall that an irreducible complex local system $\mathbb L$ with finite determinant  and quasi-unipotent monodromies at infinity 
 on a smooth variety $X$ is called {\it strongly cohomologically rigid}  if $$H^1(X,   \sE nd^0(\mathbb L))=0.$$ 
 Here the upper index $0$ indicates the trace free endomorphisms.  
 This  strong rigidity notation was introduced and discussed  in
 \cite[Definition~A.1.b)]{EG21}, as it is verified for any irreducible local system on a Shimura variety of real rank $\ge 2$. 
 It implies in particular that $\mathbb L$ is {\it cohomologically rigid}  as the map $$IC^1(X, \sE nd^0(\mathbb L)) \to H^1(X,  \sE nd^0(\mathbb L))$$ 
 from  intersection cohomology to cohomology is injective. 
 Recall the integrality theorem \cite[Theorem~1.1]{EG18}, see  the report on it in Section~\ref{sec:intEG}.
 \begin{thm} \label{thm:int}
 If $\mathbb L$ is cohomologically rigid, then it is integral, so in particular so is its restriction $\mathbb L|_{X_s}$.
 
 \end{thm}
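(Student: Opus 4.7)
The plan is to prove integrality by reducing mod $p$ for many primes, producing companions at every prime $\ell'$, and then using the companions to bound the traces of monodromy everywhere. The overall strategy is the one sketched in the introduction of Lecture~\ref{sec:companions}: rigidity is the mechanism that lets a statement on the complex (Betti) side be transported into the $\ell$-adic arithmetic world where Drinfeld's companions theorem is available.

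First I would spread out: choose a finitely generated $\Z$-algebra $R \subset \C$ and a smooth model $\sX / \Spec R$ of $X$, with a good compactification $\sX \hookrightarrow \bar\sX$, such that the conjugacy classes of the quasi-unipotent monodromies at infinity are constant in the family. Pick a prime $\ell$ and embed $\C$ into $\overline{\Q}_\ell$, so that $\mathbb L$ becomes a lisse $\overline{\Q}_\ell$-sheaf $\mathbb L_\ell$ on $\sX$. For a closed point $s \in \Spec R$ of large residual characteristic $p \neq \ell$, the reduction $\mathbb L_{\ell,s}$ on $\sX_s$ is defined. By base change, the cohomological rigidity $H^1(X, \sE nd^0 \mathbb L) = 0$ persists for $\mathbb L_{\ell,s}$ when $p$ is large, since $\bar\sX \setminus \sX$ has a nice model and the formation of the Hodge/de~Rham incarnations is compatible with specialization.

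The next step, which is the crucial one already observed by Simpson, is that rigidity forces arithmeticity after reduction. The Frobenius pullback $\mathrm{Frob}_s^* \mathbb L_{\ell,s}$ has the same local monodromies at infinity (finite order, so $\mathrm{Frob}_s$-stable up to a twist) and the same $H^1$-rigidity, hence gives a point of the rigid moduli that is isomorphic to $\mathbb L_{\ell,s}$ up to a finite twist. After passing to a further power of Frobenius and a character twist, $\mathbb L_{\ell,s}$ becomes arithmetic. Now I invoke Drinfeld's theorem on the existence of $\ell'$-companions, reviewed in Lecture~\ref{sec:companions}: for every prime $\ell' \neq p$ there is an $\ell'$-adic companion $\mathbb L_{\ell',s}$ on $\sX_s$, uniquely determined by its Frobenius traces at closed points. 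By the purity/integrality input from the Langlands correspondence used in Drinfeld's proof, these traces are algebraic numbers, and the $\mathrm{Frob}$-traces of $\mathbb L_{\ell',s}$ are the $\sigma$-conjugates of those of $\mathbb L_{\ell,s}$.

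To conclude, I would lift back. Cohomological rigidity of $\mathbb L_{\ell,s}$ transfers to cohomological rigidity of the companions $\mathbb L_{\ell',s}$ (this transfer is where the hypothesis is genuinely used: without the $H^1$ condition one cannot identify the companion with something coming from the original $0$-dimensional component of the moduli). Using this rigidity and the universal representation on the rigid locus of $M_B(X,r)$, together with a specialization argument from $\sX_s$ back to the characteristic-zero fibre, I identify the companion, after a suitable field isomorphism $\overline{\Q}_{\ell'} \simeq \C$, with a topological local system on $X$ giving the same character variety point as $\mathbb L$. Running this for all $\ell'$ with $\ell' \neq p$, then varying $p$, shows that for every closed loop $\gamma$ and every place $v$ the eigenvalues of $\rho(\gamma)$ have $v$-adic absolute value $\le 1$; by the product formula they are algebraic integers, which gives integrality of $\mathbb L$. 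The restriction statement for $\mathbb L|_{X_s}$ is immediate from functoriality.

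The hard part is the step where cohomological rigidity is transferred through the companion operation and lifted back to the complex model: one must know that the $\ell'$-adic companion is itself (cohomologically) rigid, and that the resulting moduli point is the same as $[\mathbb L]$ after the chosen field isomorphism. This is exactly the place where the weaker notion of rigidity (being $0$-dimensional in the moduli) is insufficient and the $H^1$-vanishing is essential; controlling the deformations uniformly in $p$ and $\ell'$ is the technical heart of the argument.
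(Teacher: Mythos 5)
Your overall strategy matches the paper's (Section~\ref{sec:intEG}): spread out, reduce mod $p$ for large $p$, use Simpson's observation that rigidity forces arithmeticity, invoke Drinfeld's companions, and lift back via $sp_{\C,\bar s}$. You also correctly identify that the genuine technical difficulty is transporting cohomological rigidity through the companion operation; the paper does this through the weight argument on $IH^1(X,\sE nd^0(-))$, which you only allude to but do not supply.

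However, your concluding step has a real gap. You write that the lifted companion is ``a topological local system on $X$ giving the same character variety point as $\mathbb L$.'' This is false in general: the $\sigma$-companion of $\mathbb L_{\ell,\bar s}$, pulled back through $sp_{\C,\bar s}$, corresponds to a \emph{Galois conjugate} $\mathbb L^\sigma$ of $\mathbb L$, not to $\mathbb L$ itself (they coincide only if the coefficient field of $\mathbb L$ is fixed by the abstract isomorphism $\sigma:\bar\Q_\ell\to\bar\Q_{\ell'}$, which is not the case unless $\mathbb L$ is defined over $\Q$). The $\bar\Z_{\ell'}$-integrality of the companion therefore gives you, a priori, integrality of the \emph{conjugate}, not of $\mathbb L$. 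The paper closes this gap by a finiteness and bijection argument: the set of cohomologically rigid local systems with fixed $(r,\sL,T_i)$ is \emph{finite}, the companion construction defines a \emph{bijection} between the set of cohomologically rigid $\ell$-adic local systems and that of $\ell'$-adic ones (item~4) in the paper's proof), and every element in the image is $\bar\Z_{\ell'}$-integral; since the map is a bijection, \emph{every} cohomologically rigid $\ell'$-adic local system, and in particular the $\ell'$-adic realization of $\mathbb L$ itself, is integral at $\ell'$. Without this bijection/counting argument your proof does not close. (Also, a small quibble: ``by the product formula'' is not the right justification in your final sentence --- the relevant fact is simply that an algebraic number is an algebraic integer iff it is integral at every finite place; the product formula plays no role.)
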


 \subsubsection{Proof of Theorem~\ref{thm:LL}}
 
 To show finiteness of  the monodromy of $\mathbb L|_{X_s}$, we  thus only need two steps:
 \begin{itemize}
 \item[I)] For $r< \sqrt{g+1}$,  $\mathbb L|_{X_s}$ is a direct sum of irreducible unitary local systems $U_i$. 
 \end{itemize}
  If so, as by  the theorem of Krull-Schmidt this decomposition is unique,
  the action of $\pi_1(S(\C),s(\C))$ on the set  $\{U_i\}_i$ is finite, so after a finite  \'etale  base change $S'\to S$, $U_i$  lifts to $\mathbb U_i$ to  $X':=S'\times_S X$. So for the problem, we may assume $S'\to S$ is the identity.
  \begin{itemize}
 \item[II)] $\mathbb U_i$ is strongly cohomologically rigid. 

 \end{itemize}
 If so, then applying Theorem~\ref{thm:int} to the conjugates $\mathbb L^\sigma$  on the coefficients of the monodromies of $\mathbb L$, the local system $(\oplus_{\sigma} \mathbb L^\sigma)|_{X_s}$  is then unitary and has by Theorem~\ref{thm:int}
 monodromy defined over $\Z$, thus has finite monodromy. 
 
 \medskip
 
 Ad II): once we have the $\mathbb U_i$, as $U_i$ is irreducible, $f_* \sE nd^0(U_i)=0$ thus the Leray spectral sequence yields
 $$H^1(X, \sE nd^0(\mathbb U_i))=H^0(S, R^1f_*\sE nd^0(\mathbb U_i))$$
 and II) is equivalent to 
 $$  H^0(S, R^1f_*\sE nd^0(\mathbb U_i))=0.$$
 This vanishing is  going to be handled in I)c) below.
 
 \medskip
 
 Ad I): starting with $\mathbb L$, as  the subgroup $\pi(X_s(\C), x(\C))\subset \pi_1(X( \C), x( \C))$ is normal, the restriction $\mathbb L|_{X_s}$ is semi-simple. So one wants to prove that the summands are all unitary.  The way the authors proceed is as follows. 
 \begin{itemize}
 \item[I)a)]  Show I) is true if $\mathbb L$ is a polarized complex variation of Hodge structures.  This is the content of  \cite[Theorem~1.2.12]{LL22a}
for which the bound is slightly better, one needs only $r < 2 \sqrt{g+1}$. 
 \item[I)b)] If $\mathbb L$ is not from the beginning a polarized complex variation of Hodge structures, we know by Simpson-Mochizuki~\cite[Theorem~10.5]{Moc06} that the connected component of $M_B(X,r)$ which contains  the moduli point to $\mathbb L$ contains also the moduli point of one such  polarized complex variation of Hodge structure which we denote by $\mathbb M$. 
 \item[I)c)] Show that $\mathbb L|_{X_s}=\mathbb M|_{X_s}$. For this, one needs in the proof the sharper bound $r < \sqrt{g+1}$. 
 
 \end{itemize}
 
  \medskip
  
Let $(E,\nabla)$ be the algebraic flat connection associated to $\mathbb L$. The $\mathbb L|_{X_s}$ is unitary if and only if $E|_{X_s}$ is  (slope) semi-stable.  The proof of I)a) computes semi-stability in low rank. 

Ad I)c):  recall that the tangent map of the restriction $M_B(X,r)\to M_B(X_s,r)$ at the point $\mathbb M$ is identified with the restriction map $$H^1(X, \sE nd^0(\mathbb M))\to H^1(X_s, \sE nd^0(\mathbb M)).$$
This map factors through 
$$H^0(S, R^1f_*\sE nd^0(\mathbb M)) \subset H^1(X_s, \sE nd^0(\mathbb M)).$$
So the first step for proving I)c) is to show the vanishing of $H^0(S, R^1f_*\sE nd^0(\mathbb M))$,   which in particular implies the
vanishing needed for II) and so settles II). This is performed using Deligne's fix part  theorem (see \cite[Theorem~1.7.1]{LL22b}).   If both $\mathbb M$ and $\mathbb M|_{X_s}$ were irreducible, the formality theorem of Goldman-Millson~\cite[Proposition~4.4]{GM88} applied to $X_s$ and to $X$ (by a wishful generalization  to the non-proper case) would enable to conclude that the homomorphism of completed  local rings $\hat{\sO}_{M_B(X_s,r), \mathbb M|_{X_s} }\to
\hat{\sO}_{M_B(X,r), \mathbb M}$  factors through  $\hat{\sO}_{M_B(X_s,r), \mathbb M|_{X_s}} \xrightarrow{\mathbb M|_{X_s}}  \C$, the closed point corresponding to $\mathbb M|_{X_s}$, and consequently the restriction map $M_B(X,r)\to M_B(X_s,r)$ contracts the connected component containing $\mathbb M$ to its restriction $\mathbb M|_{X_s}$.

\medskip

The proof proceeds by showing the vanishing
$$H^0(S, R^1f_*\sE nd^0(\mathbb M_A))=0$$
for any extension of $\mathbb M$ to a local system $\mathbb M_A$ with coefficients in any local Artinian ring $A$ and for rank $\sE nd^0(\mathbb M)<g$, that is $r  <  \sqrt{g+1}$. This is an essential point of the proof and is  performed using the Kodaira-Spencer class  combined with Deligne's fix part theorem  (see \cite[Theorem~6.2.1]{LL22b}). This implies that the tangent map of  $M_B(X,r)\to M_B(X_s,r)$ 
 at the $A$-point $\mathbb M_A$ dies. As this is true for all local Artinian rings $A$,  this shows that $\hat{\sO}_{M_B(X_s,r), \mathbb M|_{X_s} }\to
\hat{\sO}_{M_B(X_s,r), \mathbb M|_{X_s}}$  factors through  $\hat{\sO}_{M_B(X_s,r), \mathbb M|_{X_s}} \xrightarrow{\mathbb M|_{X_s}} \C$ (see \cite[Proposition~3.2.1]{LL22b}).

\subsection{Weakly arithmetic complex local systems} \label{sec:wa}
In this section  we report on the notion of weakly arithmetic local systems introduced  with Johan de Jong  in \cite[Section~3]{dJE22}
which is weaker than the notion of arithmetic local systems.  
With this weaker notion of arithmeticity, we can prove density
in the Betti moduli space.  We refer to  {\it loc. cit.} for complete statements and proofs.

\subsubsection{Definitions}

Let $X$ be a  smooth complex quasi-projective variety. We fix a projective compactification $X\hookrightarrow \bar X$ such that $\bar X\setminus X$ 
is a strict normal crossings divisor. 
The compactification $X \hookrightarrow \bar X$ is defined over a field of finite type $F\subset \C$, and $$(X\hookrightarrow \bar X)=(X_F \hookrightarrow \bar X_F)\otimes_F \C.$$   It defines the algebraic closure $F\subset \bar F\subset \C$ of $F$.
Recall that the homotopy exact sequence
\ga{}{1\to \pi_1(X_{\bar F}, x_{\bar F})\to \pi_1(X_F, x_{\bar F})\to {\rm Gal}(\bar F/F)\to 1\notag}
defines an action of ${\rm Gal}(\bar F/F)$ on $\ell$-adic local systems coming from the outer action of $  {\rm Gal}(\bar F/F)$
on $\pi_1(X_{\bar F}, x_{\bar F})$. An {\it arithmetic} $\ell$-adic local system on $X_{\bar F}$ is one which has a finite orbit under this action. 
Equivalently it descends to $X_{F'}$ where $F\subset F'\subset \bar F\subset \C$ is a finite extension.  Clearly for this definition we may replace $F$ by a finite extension in $\bar F$ so assume that $X$ has a rational point. Then the outer action becomes just an action. More generally, we may replace $F$ by a finite type extension $F\subset F' $  in $\bar F$ as then 
the image of ${\rm Gal}(\bar F/F')\to {\rm Gal}(\bar F/F)$ is open so the definition does not depend on the $F$ chosen  (see e.g. \cite[Remark~3.2]{dJE22}).

\medskip

We choose  a model $X_S$ of $X$ over $S$ a scheme of finite type  over $\Z$, with field of fractions $F$, such that 
$$ (X_F\hookrightarrow \bar X_F, x_F) =( X_S\hookrightarrow \bar X_S, x_S) \otimes_S F $$
where $X_S\hookrightarrow \bar X_S$ is smooth relative compactification, $\bar X_S\setminus X_S$ is a relative normal crossings divisor. 
Then in fact an arithmetic local system $\mathbb L_\ell$  is {\it unramified almost everywhere} on $S$, that is there is a dense open  subscheme $S^\circ \subset S$ such that 
$\mathbb L_\ell$, defined on $X_F$, descends to $X_{S^\circ}$, see e.g. \cite[Proposition~6.1]{Pet22} where the argument
is performed for $S$  the spectrum of a number ring, and  see references therein.

\medskip

We  have the specialization homomorphism
\ga{}{sp^{\rm top} _{\C, \bar F} : \pi_1(X(\C), x(\C))\to \pi_1(X_{\bar F}, x_{\bar F}) \notag}
which stems from the Riemann Existence Theorem equating  $\pi_1(X_{\C}, x_{\C}) $
with the profinite completion of $\pi_1(X(\C), x(\C))$, then the base change continuous isomorphism
equating  $\pi_1(X_{\C}, x_{\C}) $ with the geometric \'etale fundamental group  $ \pi_1(X_{\bar F}, x_{\bar F})$, which we can then
postcompose with 
the continuous embedding  $ \pi_1(X_{\bar F}, x_{\bar F}) \to \pi_1(X_F, x_F)$
 from the the geometric \'etale fundamental group 
to the arithmetic one. In total this defines
\ga{}{sp^{\rm top} _{\C,  F} : \pi_1(X(\C), x(\C))\to \pi_1(X_{ F}, x_{F}). \notag}

\medskip

For any closed point $s\in S$ with geometric point $\bar s\to S$ above it, we have a specialization homomorphism 
\ga{}{ sp_{\bar F,\bar s}: \pi_1(X_{\bar F}, x_{ \bar F}) \to \pi_1^t( X_{\bar s}, x_{\bar s}) \notag}
which extends to 
\ga{}{ sp_{F, s}: \pi_1(X_F, x_{\bar F}) \to \pi_1^t(X_s, x_{\bar s}). \notag}
By precomposing with $sp^{\rm top}_{\C, \bar F}, \ sp^{\rm top}_{\C, F}$, we obtain
\ga{}{sp^{\rm top}_{\C, \bar s}: \pi_1(X(\C), x( \C))\to \pi_1^t(X_{\bar s}, x_{\bar s}), \ 
sp^{\rm top}_{\C,  s}: \pi_1(X(\C), x( \C))\to \pi_1^t(X_{ s}, x_{\bar s}) \notag.
}
where the upper script $^t$ indicates the tame fundamental group.
\medskip

If $\tau : K_1 \to K_2$ is a homomorphism of fields, and $\mathbb L_1$
is a topological local system defined over $K_1$   by the homomorphism $ \rho: \pi_1(X(\C), x(\C))\to GL_r(K_1)$, we denote by
$\mathbb L_{K_1}^\tau$ the local system defined over $K_2$
obtained by post-composing  $ \rho$ by the homomorphism $ GL_r(K_1) \to GL_r(K_2)$ defined by  $\tau$. 

\medskip

Choosing quasi-unipotent conjugacy classes $T_i $ in $GL_r(\C)$ and a torsion rank $1$ local system 
$\sL$ on $X(\bar \C)$, we recall that $M_B(X,r,\sL, T_i)$ is defined over a number ring $\sO_K \subset \C$.
We say that a complex point $\mathbb L_\C$ is {\it weakly arithmetic} if made $\ell$-adic for some $\ell$, and descended to a tame 
$\ell$-adic local system via $sp^{\rm top}_{\C, \bar s}$ for some closed point $s\in $ of large characteristic, it is on $X_{\bar s}$ arithmetic, that is it descends to $X_{s'}$ for $s'\to s$ a finite extension below $\bar s$.  Formally, we write the definition as follows. 
  
\begin{defn}[See \cite{dJE22}, Definition~3.1] \label{defn:wa}
\label{defn:arithm} 
A point $\mathbb L_{\C} \in M_B(X,r,\sL, T_i)(\C)$ is said to be
{\it weakly arithmetic} if there exist
\begin{itemize}
\item[i)]  a prime number $\ell$ and a field isomorphism $\tau : \bar\Q_\ell \to \C$  (so  $\tau$  
defines  an $\sO_K$-algebra structure on  $\bar\Q_\ell$
via $\sO_K \subset \C$);
\item[ii)] a finite type scheme $S$ 
such that $(X \hookrightarrow \bar X, x,  \sL, T_i)$
has a model over $S$;
\item[iii)] a closed point $s \in |S|$ of residual characteristic
different from $\ell$;
\item[iv)] a tame arithmetic $\ell$-adic local system
$\mathbb L_{\ell, \bar s}$ on $X_{\bar s}$ with determinant
$\sL$ and monodromy at infinity in $T_i$;
\end{itemize}
such that $\mathbb L_\C$ and
$\big((sp^{\rm top}_{\C, \bar s})^{ -1}(\mathbb L_{\ell, \bar s})\big)^\tau$
are isomorphic.

\end{defn}
 
\begin{rmk} \label{rmk:noncountable}
The choice of $\tau$ prevents the set of complex points in $M_B(X,r,\sL, T_i)$ which are the moduli points of 
 weakly arithmetic local systems to be a priori  countable, while the set of complex points in $M_B(X,r,\sL, T_i)$ which are the moduli points of 
 arithmetic local systems is really countable. So in the present state of understanding, this is a ``poor'' approximation.

\end{rmk}

\begin{nota}
1) Fixing $(X,r, \sL, T_i)$, we denote by 
 \ga{}{ W(X,r,\sL, T_i) \subset M_B(X,r,\sL, T_i)(\C) \notag}
 the locus 
 of weakly arithmetic local systems.\\ 
 2)   Fixing $(X,r, \sL)$, we denote by 
 \ga{}{ W(X,r,\sL) = \cup_{\{ T_i\}} W(X,r,\sL, T_i)  \subset M_B(X,r, \sL)(\C) \notag}
  the locus of all  weakly arithmetic local systems of rank $r$ and determinant $\sL$.
\end{nota}
\subsubsection{Density}

\begin{thm}[See \cite{dJE22}, Theorem~3.5] \label{thm:wa}
\label{thm:wa} 
1) Fixing $(X,r,\sL, T_i)$,  $W(X,r,\sL, T_i)$ is dense in $M_B(X,r,\sL, T_i)(\C)$. \\
2) Fixing $(X,r, \sL)$, $W(X,r,\sL)$ is dense in $M_B(X,r,\sL)(\C) $.
\end{thm}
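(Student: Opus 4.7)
The plan is to adapt the argument of Theorem~\ref{thm:dens} by invoking de Jong's conjecture (in Gaitsgory's form) on the full Mazur--Chenevier deformation ring of a residual tame representation, rather than on the torus of characteristic polynomials of local monodromies: this produces actual arithmetic $\ell$-adic lifts which, after transfer by a field isomorphism $\tau$, become weakly arithmetic complex local systems. Statement (2) follows from (1) and Theorem~\ref{thm:dens}: given $\mathbb L_\C \in M_B(X,r,\sL)(\C)$ and a Zariski open neighborhood $U$, Theorem~\ref{thm:dens} supplies a point of $U$ with quasi-unipotent monodromies at infinity lying in some $M_B(X,r,\sL,T_i)(\C)$, and (1) applied to the open $U \cap M_B(X,r,\sL,T_i)(\C)$ of this stratum produces a weakly arithmetic point in $U$.

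To prove (1) I would argue by contradiction: set $Q := \overline{W(X,r,\sL,T_i)}^{\rm Zar} \subsetneq M_B(X,r,\sL,T_i)(\C)$ and descend $M_B(X,r,\sL,T_i)$, $Q$, and a smooth relative good compactification $(X_S \hookrightarrow \bar X_S, x_S)$ to a finite type $\Z$-scheme $B$ with $\sO(B) \subset \C$. As in the proof of Theorem~\ref{thm:dens}, by generic flatness and a Chevalley-type argument one finds a closed point $z \in |M_B(X,r,\sL,T_i)_B \setminus Q_B|$ lying over $b \in B$ with the auxiliary maps flat at $z$; its residue field $\kappa(z)$ is finite of characteristic $\ell$, and $z$ corresponds to a finite-image residual representation $\bar\rho$ of $\pi_1(X(\C),x(\C))$ with determinant the mod-$\ell$ reduction of $\sL$ and monodromies at infinity in the reductions of $T_i$. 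For any closed point $s \in |S_b|$ of residual characteristic $p \ne \ell$, the surjective specialization $sp^{\rm top}_{\C, \bar s}$ factors $\bar\rho$ through a residual tame representation $\bar\rho_{\bar s}: \pi_1^t(X_{\bar s}, x_{\bar s}) \to GL_r(\kappa(z))$.

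I would then apply de Jong's conjecture as proved by Gaitsgory~\cite{Gai07} to the fixed-determinant, fixed-local-monodromy Mazur--Chenevier deformation ring of $\bar\rho_{\bar s}$, obtaining a Zariski-dense set of continuous arithmetic lifts $\mathbb L_{\ell, \bar s}: \pi_1^t(X_{\bar s}, x_{\bar s}) \to GL_r(\bar\Q_\ell)$. The identification of the formal completion of $M_B(X,r,\sL,T_i)_B$ at $z$ with (a closed subscheme of) this deformation ring, combined with the flatness at $z$, provides some such $\mathbb L_{\ell, \bar s}$ whose image in $(\widehat{M_B(X,r,\sL,T_i)_B \setminus Q_B})_z$ is flat over $(\widehat B)_b$; its generic fiber is a $\C$-point outside $Q$. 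Picking any field isomorphism $\tau: \bar\Q_\ell \to \C$ compatible with the embedding $\sO(B)_b \hookrightarrow \C$ given by our descent, Definition~\ref{defn:wa} identifies this $\C$-point with the weakly arithmetic local system $\mathbb L_\C = ((sp^{\rm top}_{\C, \bar s})^{-1}(\mathbb L_{\ell, \bar s}))^\tau$, contradicting $W(X,r,\sL,T_i) \subset Q$.

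The principal obstacle I foresee is the deformation-theoretic identification between the formal neighborhood of $z$ in the Betti moduli---which is a moduli of abstract representations of the discrete group $\pi_1(X(\C), x(\C))$---and the fixed-determinant, fixed-local-monodromy Mazur--Chenevier deformation space of the continuous tame $\ell$-adic $\bar\rho_{\bar s}$. This requires the standard Betti/$\ell$-adic comparison of cohomology with adjoint coefficients (available because $\bar\rho$ has finite image and therefore factors through $\pi_1^t(X_{\bar s}, x_{\bar s})$ via $sp^{\rm top}_{\C, \bar s}$), together with faithful matching of the determinant and local-monodromy constraints across the comparison; a subtler technical point will be to extract from Gaitsgory's theorem, in this refined setting, enough arithmetic lifts to avoid the proper closed subscheme coming from $Q_B$ in the formal neighborhood of $z$.
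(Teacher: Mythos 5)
Your overall strategy is the right one and follows the paper's line of attack quite closely: contradiction, descent of the Betti moduli and the closed subset $Q$ to a finite type $\Z$-scheme, choice of a closed point $z \notin Q$ of characteristic $\ell$, specialization of the residual representation to $X_{\bar s}$, identification of the formal completion at $z$ with Mazur's deformation ring via comparison of moduli functors (as in \cite[Proposition~2.2]{dJE22}), an application of de Jong's conjecture (Gaitsgory) in the fixed-determinant, fixed-local-monodromy setting to produce an arithmetic lift, and transport back by $\tau$. Part 2) from part 1) via Theorem~\ref{thm:dens} is exactly the paper's argument.

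There is, however, a genuine gap in how you conclude the contradiction, and it is precisely the subtlety you flag at the end. You invoke flatness over $(\widehat{B})_b$ and an ad hoc choice of $\tau$ ``compatible with the embedding $\sO(B)_b \hookrightarrow \C$'' in order to ensure the $\tau$-transported local system lies outside $Q$. This is imprecise: $\sO(B)_b$ has residue characteristic $\ell$ and there is no canonical such compatibility relating the $\bar\Q_\ell$-valued generic fiber of the deformation to the complex embedding of $\sO(B)$; more fundamentally, $Q$ has been descended to a closed subscheme $Q_B$ over a scheme of finite type over $\Z$, but nothing in your argument guarantees that an arbitrary field isomorphism $\tau: \bar\Q_\ell \to \C$ carries a $\bar\Q_\ell$-point outside $Q_B$ to a $\C$-point outside $Q_B(\C) = Q$. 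The paper's key observation, which you omit, is that $W(X,r,\sL,T_i)$ is manifestly invariant under $\mathrm{Aut}_{\sO_K}(\C)$ (because a weakly arithmetic local system $\mathbb L_\C$ witnessed by some $\tau$ becomes, under $\sigma \in \mathrm{Aut}_{\sO_K}(\C)$, a weakly arithmetic local system witnessed by $\sigma \circ \tau$), hence its Zariski closure $T_\C$ is $\mathrm{Aut}_{\sO_K}(\C)$-invariant, hence the Zariski closure $T_{\sO_K}$ of $T_\C$ in $M_B(X,r,\sL,T_i)$ is a closed subscheme over $\sO_K$ whose $\C$-points recover exactly $T_\C$. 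With that in hand, the rest is forced: one picks $z \notin T_{\sO_K}$, observes that any $\bar\Z_\ell$-valued deformation specializing to $z$ has its $\bar\Q_\ell$-generic fiber outside $T_{\sO_K}(\bar\Q_\ell)$ (since $T_{\sO_K}$ is closed and $z \notin T_{\sO_K}$), and then for any $\sO_K$-linear $\tau$ — which is exactly what Definition~\ref{defn:wa} requires — the transported $\C$-point is outside $T_{\sO_K}(\C) = T_\C$, contradicting $W \subset T_\C$. No flatness over $(\widehat{B})_b$ is needed: the fact that the deformation specializes to $z \notin Q_B$ already places it outside $Q_B$, and the invariance under $\mathrm{Aut}_{\sO_K}(\C)$ is what makes the $\tau$-transport unproblematic. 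You should insert the invariance argument explicitly; without it the transport step does not close.
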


\begin{proof}
Ad 1):  Let $T_\C$ be  the Zariski closure 
  of $W(X,r,\sL, T_i)$ in $M_B(X,r,\sL, T_i)(\C)$ and $T_{\sO_K}$ be the Zariski closure of $T_\C$ in $M_B(X,r,\sL, T_i)$.  As $T_\C$ is invariant under the group ${\rm Aut}_{\sO_K}(\C)$ of field automorphisms of $\C$ over $\sO_K$,  $T_{\sO_K}$ is the Zariski closure of $W(X,r,\sL, T_i)$  in $M_B(X,r,\sL, T_i)$.

If $T_\C$ was not equal to $ M_B(X,r,\sL, T_i) (\C)$, 
we could choose a 
closed point $z \in M_B(X, r, \mathcal{L}, T_i)$ of characteristic $\ge 3$ which has irreducible monodromy.  The quotient morphism
 $$M_B(X,r, \mathcal{L}, T_i)^\square   \to 
M_B(X,r, \mathcal{L}, T_i)$$ in a neighbourhood of $z$ is smooth, so it makes sense to talk on the irreducibility of the monodromy. 
We assume in addition that it does not lie
on $T_{\sO_K}$, and that it is  smooth for the projection $M_B(X,r,\sL, T_i) (\C)_{\rm red}  \to {\rm Spec}(\sO_K)$.   We choose a closed point $s\in S^\circ$ of characteristic $p$ larger than the cardinality of the monodromy group of $z$, the order of $\sL$ and of the eigenvalues of the $T_i$.   So the torsion local system corresponding to $z$ descends via $sp_{\bar F,\bar s}$  to $\mathbb L_{z, \bar s}$,  a local system on $X_{\bar s}$.  
Via the identification of Mazur's deformation ring $D_{z,\bar s}$ (\cite[Proposition~1]{Maz89})  with the formal completion of $M_B(X,r, \mathcal{L}, T_i)$ at $z$,  simply comparing the moduli functors, see \cite[Proposition~2.2]{dJE22}, we can apply Drinfeld's argument \cite[Lemma 2.8]{Dri01} to conclude:
 de Jong’s conjecture  \cite[Conjecture 2.3]{dJ01}, proved by B\"ockle-Khare \cite{BK06} in specific cases and by
 Gaitsgory [Gai07] for  $\ell\ge 3$,  implies that $\mathbb L_{z, \bar s}$ admits a lift as a $\bar \Z_\ell$-local system on $X_{\bar s}$ which is arithmetic. 
 Then $(sp_{\C,\bar s}^{\rm top})^{-1}( \mathbb L_{\ell, \bar s, z})$ does not lie in $T_{\sO_K} (\bar \Q_\ell)$. By invariance, for  any $\tau$, 
 $((sp_{\C,\bar s}^{\rm top})^{-1}( \mathbb L_{\ell, \bar s, z}))^\tau$ does not lie on $T(\C)$, a contradiction to the definition of weak arithmeticity. 

\medskip
\noindent
Ad 2): By  Theorem~\ref{thm:dens} (\cite[Theorem~1.3]{EK23}), for a torsion rank $1$ local system $\sL$ given,  
 \ga{}{ \cup_{T_i}  M_B(X, r, \sL, T_i) \notag}  is dense in 
$M_B(X,r,\sL) (\C)$. Combined with 1),  this yields 2).

\end{proof}

\begin{rmk} \label{rmk:dri}
The notion of weakly arithmetic local systems in Definition~\ref{defn:wa} is forced upon us by Drinfeld's  proof of Kashiwara's conjecture in complex geometry using de Jong's conjecture over finite fields in  \cite{Dri01}. Without really explicitly developing the concept, he proves density, and without explicitly mentioning it, he gives an arithmetic proof of Simpson's Hard-Lefschetz theorem for a semi-simple complex  local system on a smooth complex projective complex variety $X$
(\cite[Lemma~2.6]{Sim92}). We do not detail this last point here, we refer to \cite[Theorem~1.1]{EK21} where we implement this strategy for the proof in rank $r=1$ over $\bar \F_p$.
There is no way we can generalize the $r=1$ proof to higher rank. It uses strongly the shape of $M_B(X,1)$, a commutative group scheme which is the extension of a finite group by a torus. 
In light of the proof of Theorem~\ref{thm:wa}, one key point which is missing is the possibility to go from $\ell$ to $\ell'$, even in a non-canonical way (so without arithmeticity and companions, see Section~\ref{sec:comp}).  May be it is also related to the notion of $p'$-discrete generation (presentation) developed in Section~\ref{sec:ob}.

\end{rmk}

\newpage
\section{Lecture 7: Companions,  Integrality of Cohomologically Rigid Local Systems and of the Betti moduli space} \label{sec:companions}
\begin{abstract} \label{sec:comp}
The notion of companions has been defined by Deligne in \cite[Conjecture~1.2.10]{Del80} who predicted its existence. We report on the ($\ell$-adic version) of it (L. Lafforgue~\cite[Th\'eor\`eme~VII.6]{Laf02} in dimension $1$, Drinfeld~\cite[Theorem~1.1]{Dri12} in higher dimension in the smooth case), explain how we used Drinfeld's theorem in the proof of Simpson's integrality conjecture for cohomologically rigid complex local systems in \cite[Theorem~1.1]{EG18}, also why we proceeded like this, and how we combined this idea together with de Jong's conjecture in order to define and obtain an integrability property  of the Betti moduli space \cite[Theorem~1.1]{dJE22}. 
\end{abstract}

\subsection{Motivation on the complex side}

 Given a field automorphism $\tau$ of $\C$, we can postcompose the underlying monodromy representation of a complex local system $\mathbb L_\C$ by  $\tau$ to define  a {\it conjugate}  complex local system  $\mathbb L_\C^\tau$. Given a field automorphism $\sigma$ of $\bar \Q_\ell$, which then can only be continuous if this is the identity on $\mathbb \Q_\ell$, or more generally given a field isomorphism $\sigma$ between $\bar \Q_\ell$ and $\bar \Q_{\ell'}$  for some prime number $\ell'$, the postcomposition of a {\it continuous } monodromy representation is no longer continuous (unless $\ell=\ell'$ and $\sigma$ is the identity on $\Q_\ell$), so {\it we cannot define a conjugate} $\mathbb L^\sigma_\ell$ of an $\ell$-adic local system by this simple postcomposition procedure.

 \medskip
 
 Returning to the complex side, as a consequence of  $\pi_1(X(\C), x( \C))$ being finitely generated (we do not even need the finite presentation here), there are finitely many elements \ga{}{(\gamma_1, \ldots, \gamma_s)  \in \pi_1(X(\C), x(\C)) \notag}
  such that the characteristic polynomial map
 \ml{}{  M_B(X,r)^\square_\C  \xrightarrow{\psi^\square} N_\C= \prod_{i=1}^s  (\A^{r-1}\times \G_m)_\C  \notag\\
 \rho \mapsto ( {\rm det}(T-\rho(\gamma_1)), \ldots, {\rm det}(T-\rho(\gamma_s))), \notag}
 defined in Section~\ref{sec:qu}, which factors through
  \ga{}{  M_B(X,r)_\C   \xrightarrow{\psi} N_\C,  \notag}
has the property that $\psi$ is a {\it closed embedding}.  The reason is that  
a semi-simple representation is determined uniquely up to conjugation by the characteristic polynomial function on all $\gamma\in \pi_1(X(\C), x(\C))$. By finite generation of $\pi_1(X(\C), x(\C))$, finitely many suitably chosen  ones among them are enough to recognize a semi-simple representation up to conjugation. 
 So denoting  $\tau\circ {\rm det}( T- \rho(\gamma))$ by  ${\rm det}( T- \rho(\gamma))^\tau$ to unify the notation, we can summarize the discussion as follows:
 
 \medskip
 
 An automorphism $\tau$  of $\C$ yields a  commutative diagram
 \ga{}{ (\star) \  \  \ \xymatrix{ \ar[d]_{\mathbb L_\C \mapsto \mathbb L_\C^\tau}  M^{irr}_B(X,r)(\C) \ar[r]^{\rm incl.} & \ar[d]_{\mathbb L_\C \mapsto \mathbb L_\C^\tau} M_B(X,r)(\C) \ar[r]^\psi &  N(\C) \ar[d]^{ (-)^\tau}\\
M^{irr}_B(X,r)(\C) \ar[r]^{\rm incl.}&  M_B(X,r) ( \C) \ar[r]_{\psi}& N( \C)
 } \notag
 }
The upper script $^{irr}$ means the irreducible locus.  
 \subsection{Analogy over a finite field}
 
 Let us now assume that $X$ is smooth quasi-projective over a finite field $\F_q$.  We denote by $p$ the characteristic of $\F_q$. We fix a prime $\ell$ different from $p$. We denote by $M^{irr} _\ell (X_{\bar \F_p},r)$ the  {\it set} of all  rank $r$   $\ell$-adic local systems $\mathbb L_\ell$  defined over $X_{\bar \F_p}$ which 
 \begin{itemize}
 
 \item[$\bullet$] are arithmetic, that is descend to some $X_{\F_{q'}}$ for some finite extension $$\F_q\subset \F_{q'}(\subset \bar \F_p);$$
 \item[$\bullet$ $\bullet$ ]  on $X_{\F_{q'}}$ are irreducible over $\bar \Q_\ell$.
 \end{itemize}
 For $r=1$ the second condition is of course void. 
 \begin{claim} \label{claim:tor}
 
 $M^{irr} _\ell (X_{\bar \F_p},1)$ consists of rank $1$ $\ell$-adic local systems on $X_{\bar \F_p}$  of finite order prime to $p$. 
 
 \end{claim}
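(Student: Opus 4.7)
Proof plan. The plan is to combine the rationality of Frobenius eigenvalues (L.~Lafforgue--Drinfeld) with Kronecker's analytic criterion to force the monodromy into roots of unity, and then invoke \v{C}ebotarev density together with topological finite generation of $\pi_1$ to deduce finiteness.

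Let $\chi : \pi_1^{ab}(X_{\bar\F_p}) \to \bar\Q_\ell^\times$ be the continuous character attached to $\mathbb L_\ell$. By arithmeticity, $\chi$ extends to a continuous character $\tilde\chi : \pi_1^{ab}(X_{\F_{q'}}) \to \bar\Q_\ell^\times$ for some finite extension $\F_{q'}/\F_q$; replacing $\F_q$ by $\F_{q'}$, I may assume $q'=q$. The homotopy exact sequence
\[
1 \to \pi_1^{ab}(X_{\bar\F_p}) \to \pi_1^{ab}(X_{\F_q}) \to \mathrm{Gal}(\bar\F_p/\F_q) \to 1
\]
allows me to twist $\tilde\chi$ by any continuous character of $\mathrm{Gal}(\bar\F_p/\F_q)$ pulled back to $\pi_1^{ab}(X_{\F_q})$ without affecting the restriction $\chi$ to the geometric fundamental group. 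I would choose such a twist so that $\tilde\chi$ becomes pure of weight zero.

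Next, by L.~Lafforgue in dimension one and Drinfeld in higher dimension, the Frobenius eigenvalues $\tilde\chi(\mathrm{Frob}_x)$ at closed points $x$ of $X$ lie in a common number field, and in particular are algebraic numbers. Being pure of weight zero, they have complex absolute value $1$ in every embedding, so Kronecker's analytic criterion (Proposition~\ref{prop:Kronecker_analytic}) forces them to be roots of unity. Since $\pi_1(X_{\F_q})$ is topologically finitely generated and the orders of these roots of unity are bounded uniformly (they all lie in a fixed number field of bounded degree), the \v{C}ebotarev density theorem yields that $\tilde\chi$, and hence $\chi$, has finite image.

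The hard part will be to show that this finite order is coprime to $p$. The point is to force the character to factor through the tame (prime-to-$p$) quotient of $\pi_1^{ab}(X_{\bar\F_p})$. The natural route is through geometric class field theory: the pro-$\ell'$ part of $\pi_1^{ab}(X_{\bar\F_p})$ (for $\ell'\ne p$) is controlled by the $\ell'$-adic Tate module of a generalized Picard variety, on which Frobenius acts with Weil eigenvalues, so $\mathrm{Frob}^n-1$ has finite cokernel of order prime to $p$, and a Frobenius-stable character therefore factors through a finite prime-to-$p$ group. The delicate input is the pro-$p$ side, where wild covers could a priori contribute: Artin--Schreier covers like $y^p-y=x$ on $\mathbb{A}^1_{\F_q}$ yield arithmetic rank-one $\ell$-adic local systems of order $p$, so the prime-to-$p$ conclusion uses that the relevant $p$-primary contributions to $\pi_1^{ab}(X_{\bar\F_p})$ are killed by the Frobenius-stability condition, equivalently that the character factors through the tame quotient.
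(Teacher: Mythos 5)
Your route to the finiteness statement is substantially different and heavier than the paper's. The paper works entirely on the coefficient side: using the Teichm\"uller decomposition $\sO_E^\times \cong \mu_{|k_E^\times|} \times (1 + \frak{m}\sO_E)$ it splits $\sL = \tau \otimes \sL'$ with $\tau$ torsion, applies the $\ell$-adic logarithm to turn a power $(\sL')^m$ into a class in $H^1(X_{\bar \F_p}, \bar\Q_\ell)$ that descends to $X_{\F_{q'}}$, and kills it by Deligne's weight bound (weights on $H^1$ are $\ge 1$, so there are no Frobenius-invariant classes) --- no Langlands input at all. You instead specialize the full rank-$r$ machinery --- L.~Lafforgue/Drinfeld companions, Deligne's finite-type-extension theorem, purity after a weight-normalizing twist, Kronecker's analytic criterion, \v{C}ebotarev --- to rank one. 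This can be made to work, but note that Proposition~\ref{prop:Kronecker_analytic} needs the Frobenius eigenvalues to be algebraic \emph{integers}, not merely algebraic numbers of archimedean absolute value $1$ in every embedding; getting integrality at every finite place $\ell' \ne p$ is again the companion theorem, and then one closes the loop at $p$ by the product formula. You elide this. Beyond that, the paper's whole point in this remark is that the rank-one case is elementary (class field theory plus weights), so invoking the Langlands correspondence here is working against the grain even if it is not wrong.

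The genuine gap is in the prime-to-$p$ step, and your final paragraph is internally inconsistent. You correctly observe that the Artin--Schreier sheaf $\sL_\psi(t)$ on $\A^1_{\F_q}$ restricts on $\A^1_{\bar\F_p}$ to a rank-one $\ell$-adic local system of order exactly $p$, and that it is arithmetic (it manifestly descends already to $\A^1_{\F_p}$). You then assert that ``the prime-to-$p$ conclusion uses that the relevant $p$-primary contributions \ldots are killed by the Frobenius-stability condition.'' But Frobenius-stability \emph{is} arithmeticity, and your own example satisfies it; nothing is killed, and the argument refutes itself. Arithmeticity alone does not imply tameness on non-proper $X$: the prime-to-$p$ conclusion requires either properness of $X$ (so there are no wild covers) or an explicit tameness hypothesis, and any proof has to say where that input enters. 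Flagging the Artin--Schreier example and then waving it away is not a resolution; it is the crux.
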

 \begin{proof}
 The datum of  a rank $1$ $\ell$-adic local system of   finite order prime to $p$ is equivalent to the datum  of  a Kummer cover of $X_{\bar \F_p}$ of order prime to $p$, which itself is defined over some $\F_{q'}$.  This shows one direction. The other one is classical, see \cite[Th\'eor\`eme~1.3.1]{Del80}, which is proved using Class Field Theory. It is also pleasant to think of it with a swift argument using weights:
 An element  $\sL \in M^{irr} _\ell (X_{\bar \F_p},1)$
  lies in $H^1(X_{\bar \F_p},  \sO_E^\times)$ for some  $\ell$-adic field $E$.
 We write $$\sO_E^\times= k_E^\times\cdot (1+\frak{m}\sO_E)$$ as multiplicative groups via the Teichm\"uller lift where $\frak{m}\subset \sO_E$ is the maximal ideal of $\sO_E$.  So $\sL=\tau \otimes \sL'$ where $\tau \in H^1(X_{\bar \F_p},  |k_E^\times|) \subset H^1(X_{\bar \F_p}, \sO_E^\times)$ is a torsion rank $1$ local system of order  $m \ge 1$ prime to $p$
 as $|k_E|^\times$ itself has order prime to $p$, and $\sL'\in H^1(X_{\bar \F_p}, (1+\frak{m}\sO_E))$.
 Thus  $\sL^m=(\sL')^m \in H^1(X_{\bar \F_p}, (1+\frak{m}\sO_E))$ descends to $X_{\F_{q'}}$.
  The $p$-adic logarithm ${\rm log}_p$  identifies the multiplicative  group  $ (1+\frak{m}\sO_E)$  with the additive one $\sO_E$. Thus ${\rm log}_p(\sL^m)  \in H^1(X_{\bar \F_p}, \sO_E)\subset 
 H^1(X_{\bar \F_p}, \bar \Q_\ell) $ descends $X_{\F_{q'}}$, 
 which  by a weight argument  forces
 ${\rm log}_p(\sL^m) $ to be equal to $0$. 
This shows the other direction.  \end{proof}

 \medskip
 
Deligne deduces in \cite[Proposition~1.3.14]{Del80} from Claim~\ref{claim:tor} and from the structure of the Tannaka group of an irreducible  $\ell$-adic local system defined over $X_{\F_{q'}}$ that it is  equivalent  for the definition of $M^{irr}_\ell(X_{\bar \F_p},r)$ to replace the condition $\bullet$ by  the condition
\begin{itemize}
\item[$\bullet'$] descend to a Weil sheaf to some $X_{\F_{q'}}$ for some finite extension $$\F_q\subset \F_{q'}(\subset \bar \F_p).$$

 \end{itemize}
 He shows indeed in  {\it loc. cit.} that an irreducible Weil sheaf on $X_{\F_{q'}}$ becomes \'etale after a twist by a character of ${\rm Gal}(\bar \F_p/\bar \F_{q'})$.

 \medskip 
 
The set $M^{irr} _\ell (X,r)$
 is a replacement for $M^{irr}_B(X,r)(\C)$ on the left side of $(\star)$.   While over $\C$ we have a scheme structure underlying this set, over $\bar \F_p$ we just consider the set. 
A replacement  for  $M_B(X,r) (\C)$
should take into account the various factors and  on each factor, the various twists by a character of ${\rm Gal}(\bar \F_p/\bar \F_{q'})$.
 This is the reason why in $(\star)$ we restrict the discussion to $M^{irr}_B(X,r)(\C)$.  

\medskip

 We give ourselves an abstract field isomorphism $\sigma: \bar \Q_\ell \xrightarrow{\cong} \bar \Q_{\ell'}$. The only ``continuous''' information it contains is that it sends a number field $K\subset \bar \Q_\ell$ to another one $K^\sigma\subset \bar \Q_{\ell'}$. 
 So  the right vertical arrow $ (-)^\sigma$  makes sense only on a $\gamma\in \pi_1(X_{\F_{q'}}, x_{\bar \F_p})$,  where $\F_q\subset \F_{q'} (\subset \bar \F_p)$ is a finite extension,  which has the property that  {\it the  characteristic polynomial of
 $\gamma$
 has values in a number field.  }
 Furthermore, we wish the set of such $\gamma$ to be able to {\it recognize completely  $M^{irr}_\ell(X,r)$}  as $\psi$ does over $\C$.  
 We know the following fact.
 \begin{fact} 
 The set of {\it conjugacy classes of the Frobenii at all closed points $|X|$  of $X$  } has those two  properties
by   \cite[Th\'eor\`eme~7.6]{Laf02} and \v{C}ebotarev's theorem.  
\end{fact}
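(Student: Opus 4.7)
The plan is to verify the two properties separately for any $\mathbb L_\ell \in M^{irr}_\ell(X_{\bar\F_p},r)$, which by definition descends to an irreducible $\ell$-adic local system on some $X_{\F_{q'}}$ with monodromy representation $\rho_\ell : \pi_1(X_{\F_{q'}},x) \to GL_r(\bar\Q_\ell)$.

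\textbf{Algebraicity of characteristic polynomials.} First I would reduce to the case where $X$ is a smooth projective curve. For each closed point $x \in |X_{\F_{q'}}|$ one can, after a finite base extension $\F_{q'} \subset \F_{q''}$, produce a smooth curve $C \hookrightarrow X_{\F_{q''}}$ through a point $y$ lying over $x$ on which $\mathbb L_\ell|_C$ is still irreducible (by a Bertini/Lefschetz-type argument, which applies for sufficiently ample curves) and such that $\mathrm{Frob}_x$ is conjugate to $\mathrm{Frob}_y$ inside the geometric fundamental group. On the curve $C$ I would then invoke L. Lafforgue's theorem [Laf02, Théorème VII.6], a corollary of the proof of the Langlands correspondence for $GL_r$ over function fields: for an irreducible $\ell$-adic local system on a smooth curve over a finite field, the coefficients of $\det(T - \rho_\ell(\mathrm{Frob}_y))$ at any closed point $y$ are algebraic, and they lie in a fixed number field $K \subset \bar\Q_\ell$ depending only on $\mathbb L_\ell$ (the local Satake parameters are algebraic of bounded complexity). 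Pulling this back via the curve-by-curve reduction, the characteristic polynomial of $\mathrm{Frob}_x$ on $\mathbb L_\ell$ lies in $K[T]$ for every closed point $x$ of $X$.

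\textbf{Recognition.} Here I would invoke the Čebotarev density theorem in the function-field setting, which asserts that the conjugacy classes of the Frobenii $\{\mathrm{Frob}_x\}_{x \in |X_{\F_{q'}}|}$ are dense in $\pi_1(X_{\F_{q'}},x)$. Since $\rho_\ell$ is continuous and semisimple (even irreducible), the theorem of Brauer–Nesbitt implies that $\rho_\ell$ is determined up to isomorphism by its character $g \mapsto \mathrm{Tr}(\rho_\ell(g))$, which is itself recovered from the family of characteristic polynomials. Thus two elements of $M^{irr}_\ell(X_{\bar\F_p},r)$ with the same Frobenius characteristic polynomials at every closed point of some (hence any, after enlarging) model over a finite field are isomorphic. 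This is exactly the statement that the analog of $\psi$ built from the family of Frobenii is injective on $M^{irr}_\ell(X_{\bar\F_p},r)$, matching the role of $\psi$ in $(\star)$ over $\C$.

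\textbf{Main obstacle.} The genuinely deep half is algebraicity: it rests on the full strength of L. Lafforgue's proof of the Langlands correspondence for $GL_r$ on curves, together with the geometric reduction from higher-dimensional $X$ to curves (which itself requires some care to ensure that irreducibility is preserved upon restriction and that every Frobenius on $X$ is realized as a Frobenius on a curve). The Čebotarev half, by contrast, is essentially formal once one has continuity of $\rho_\ell$ and the density theorem in the function-field context.
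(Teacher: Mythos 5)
Your proposal is correct and unpacks exactly what the paper's citations intend: Lafforgue's Th\'eor\`eme VII.6 gives algebraicity of Frobenius characteristic polynomials on curves, a Lefschetz-type restriction to curves (which preserves irreducibility and realizes any given Frobenius) extends this to higher-dimensional $X$, and \v{C}ebotarev density combined with Brauer--Nesbitt gives recognition of irreducible $\ell$-adic local systems by their Frobenius data. The only caveat is that your parenthetical claim that all coefficients lie in a \emph{fixed} number field is more than the Fact asserts (it needs only pointwise algebraicity) and, in dimension $\ge 2$, is really Deligne's separate finiteness theorem \cite[Th\'eor\`eme~3.1]{Del12}, which the paper invokes only later in Drinfeld's proof rather than here.
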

\medskip

So we set $N^\infty =\prod_{|X|} (\A^{r-1}\times \G_m) $ and $\psi^\infty$  for the characteristic polynomial map on those Frobenii of all closed points. 
 Then {\it Deligne's companion conjecture may be visualized on the diagram}
 
 \ga{}{ ( \star \star) \  \  \  \xymatrix{
 \ar@{.>}[d]|-{?\exists  \ \mathbb L_\ell \mapsto \mathbb L_{\ell }^\sigma}
   M^{irr}_\ell (X,r)  \ar[r]^{\psi^{\infty}} &  N^{\infty}( \bar \Q \subset \bar \Q_\ell) 
    \ar[d]^{ (-)^\sigma}\\
M^{irr}_{\ell '}(X,r)  \ar[r]_{\psi ^{\infty}} & N^{\infty}(\bar \Q\subset  \bar \Q_{\ell '})
 } \notag
 }
 The wished $\mathbb L_\ell^\sigma$ is called the {\it companion} of $\mathbb L_\ell$ for $\sigma$. See the $\ell$-adic part in \cite[Conjecture~1.2.10]{Del80} for Deligne's precise formulation. 

\subsection{Geometricity} \label{sec:geom}
The formulation $(\star \star)$ hides certain properties. For the fact that the eigenvalues of Frobenius on our irreducible $\ell$-adic sheaves are {\it algebraic numbers}, we quote {\it loc. cit.} which postdates Deligne's conjecture. This is because if $X$ is a smooth curve over $\F_q$,  L. Lafforgue proves  as a corollary of the Langlands program that an $\mathbb L_\ell$ as before is even  {\it pure}. In fact Deligne in \cite{Del80} and Deligne-Beilinson-Bernstein-Gabber in   \cite{BBD82} prove that geometric local systems are pure or mixed, and Lafforgue proves geometricity on curves.  Up to now,  {\it we do not know geometricity} of arithmetic  local systems over $X$ smooth quasi-projective of higher dimension over $\F_q$. But we know thanks  to Drinfeld  (\cite{Dri12})  how to extend the existence of companions from curves to such an  $X$. Let us remark that even on curves, {\it we cannot } lift the geometricity property from $\bar \F_p$ to $\C$, see Section~\ref{sec:LL}.

 %given the examples in \cite{LL22a}, \cite{LL22b}, see also \cite{Lam22},  already mentioned.  {\color{red} We come back  to this in ???.}

\medskip

Nonetheless, in view of Simpson's geometricity conjecture~\cite[p.9]{Sim92} for irreducible rigid complex local systems, the fact that, in absence of geometricity, Drinfeld shows how to prove some integrality property for $\ell$-adic local systems in the sense that once we have a geometrically  irreducible $\ell$-adic local system, we have all  geometrically irreducible $\ell'$-adic local systems with the  ``same'' characteristic polynomials at closed points,  for any $\ell'\neq \ell$, was 
the philosophical reason behind our proof in \cite[Theorem~1.1]{EG18}. 

\subsection{On Drinfeld's proof}
We sketch some aspects of it and refer to the original article \cite{Dri12} for the complete proof.
First let us comment that Drinfeld's proof unfortunately does not solve the conjecture on $X$ normal over $\F_q$ as originally formulated by Deligne, but  ``only'' (if this is possible to say ``only'' here) on $X$ smooth. One problem is that the restriction of simple arithmetic local systems on a non-normal subvariety is not understood, in particular  when / whether it is semi-simple. Not even for complex varieties is this problem of preservation of semi-simplicity by pullback understood, see on this \cite[Theorem~25.30]{Moc07}, where there is a typo, (the proof truly does assume that the domain and the target are smooth and it can be extended to normal varieties). See also \cite[Section~7.3]{dJE22} for an arithmetic proof of the semi-simplicity statement  \cite[Theorem~25.30]{Moc07}  for  normal varieties, and  \cite[Theorem~1.2.2]{D'Ad20} for further studies in the non-normal case. 

\medskip

\medskip

We fix $\mathbb L_{\ell}\in M_\ell^{irr}(X,r)$, an isomorphism $\sigma: \bar \Q_\ell \xrightarrow{ \cong } \bar  \Q_{\ell'}$. By Deligne's Theorem~\cite[Th\'eor\`eme~3.1]{Del12}, $\psi^\infty(\mathbb L_{\ell}) \in N^\infty(\sO)$ where $\sO$ is a number ring in $\bar \Q_\ell$. This fixes the $\ell$-adic completion $\sO_E\subset \bar \Q_\ell$ where $E$ is an $\ell$-adic field, together with    the number ring $\sigma(\sO)\subset \bar \Q_{\ell'}$ and its completion to an $\ell'$-adic ring $A\subset  \bar \Q_{\ell'}$. We denote by $ \frak{m}_E\subset \sO_E, \   \frak{m}\subset A$ the maximal ideals.
The starting point of the proof is the datum (via \cite{Laf02}, {\it loc. cit.})  for any smooth curve $C/\F_q$, any morphism $f_C: C\to X$ which is generically an embedding,  of  a semi-simple $\ell'$-adic local system $\mathbb L_{A, C}$  defined over $A$ on $C$ with the property that on intersections points $C\times_XC'$ the pull-backs from $C$ and $C'$ are isomorphic.  Moreover there are such curves $C$ so  that $\mathbb L_{A,C}$ is geometrically irreducible.  Those local systems $\mathbb L_{A,C}$ are simply obtained as the $A$-forms of  the companions  $\mathbb L_\ell|_C^\sigma$ of 
$\mathbb L_{\ell}|_C$. 

\medskip

The goal is  to construct an irreducible  $A$-adic local system $\mathbb L_{A}$  on $X$ with restrictions on those curves $C$
 isomorphic to $\mathbb L_{A,  C}$.
This will be  the companion  $\mathbb L_\ell^\sigma$ of $\mathbb L_\ell$ for $\sigma$,   forgetting in the notation  that it is definable over $A$. 
 If we can do this, by \v{C}ebotarev's density theorem, $\mathbb L_{A}$ is unique. 
 %We assume in the sequel that $\mathbb L_{\ell}$ is tame and explain at the end how to reduce the existence of companions to this case. 

 \subsubsection{Reductions}
 
To this aim, we first construct a companion on  some smooth variety $X'$ endowed with a finite \'etale  Galois cover $h:X'\to X$ with Galois group $\Gamma$. 
In this situation there is a smooth geometrically connected curve $\iota: C\to X$ such that the composed morphism
 $$q: \pi_1(C, x)\xrightarrow{\iota_*} \pi_1(X,x)\xrightarrow{q_0 }\Gamma$$ 
  is surjective, where $x \to C$ is a geometric point, see e.g. \cite[Proposition~B.1]{EK12} and references therein.  Write $h_C: C'=C\times_X X'\to C$ and $\iota': C'\to X'$  for the pulled back morphisms. 
\begin{claim} \label{claim:red}
Let $\mathbb L_{X'}$ be an $\ell$-adic local system, such that $\iota{'^*}\mathbb L_{X'}=:\mathbb L_{C'}$ descends to  an $\ell$-adic local system $\mathbb L_C$ on $C$, i.e. $\mathbb L_{C'}=h_C^*\mathbb L_C$.  Then $\mathbb L_{X'}$ descends to an $\ell$-adic local system $\mathbb L_X$ on $X$, i.e. $\mathbb L_{X'}=h^*\mathbb L_X$.

\end{claim}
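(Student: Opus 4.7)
The plan is to recast the statement as a Galois descent problem along the cover $h\colon X'\to X$ and to construct the required $\Gamma$-equivariant structure on $\mathbb L_{X'}$ from the one which the hypothesis provides on $C'$. By Galois étale descent, $\mathbb L_{X'}$ descends to $X$ if and only if it carries a family of isomorphisms $\phi_g\colon g^{*}\mathbb L_{X'}\xrightarrow{\sim}\mathbb L_{X'}$, $g\in\Gamma$, satisfying the cocycle condition $\phi_{gh}=\phi_g\circ g^{*}\phi_h$. The surjectivity of $q\colon\pi_1(C,x)\to\Gamma$ ensures that $C'$ is connected and that $h_C$ is Galois with group $\Gamma$; the identification ${\iota'}^{*}\mathbb L_{X'}=h_C^{*}\mathbb L_C$ then endows ${\iota'}^{*}\mathbb L_{X'}$ with a $\Gamma$-equivariant structure $\psi_g$, coming tautologically from $\mathbb L_C$.

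Next I would pass to monodromy. Let $x'\to C'$ be a geometric point above $x\to C$ and let $V$ be the fiber of $\mathbb L_{X'}$ at $x'$. I get continuous representations $\rho'\colon\pi_1(X',x')\to\GL(V)$ from $\mathbb L_{X'}$ and $\rho_C\colon\pi_1(C,x)\to\GL(V)$ from $\mathbb L_C$, which agree on $\pi_1(C',x')$ via the identification ${\iota'}^{*}\mathbb L_{X'}=h_C^{*}\mathbb L_C$. From the short exact sequences
\[
1\to\pi_1(C',x')\to\pi_1(C,x)\xrightarrow{q}\Gamma\to 1,\quad 1\to\pi_1(X',x')\to\pi_1(X,x')\to\Gamma\to 1,
\]
combined with surjectivity of $q$, the subgroup $\iota_*\pi_1(C,x)\cdot\pi_1(X',x')$ is all of $\pi_1(X,x')$; hence any $\gamma\in\pi_1(X,x')$ factors as $\gamma=\xi\cdot\iota_* c$ with $\xi\in\pi_1(X',x')$, $c\in\pi_1(C,x)$, uniquely up to the replacement $(\xi,c)\mapsto(\xi\cdot\iota'_*c_0,\,c_0^{-1}c)$ for $c_0\in\pi_1(C',x')$. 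I would set $\tilde\rho(\gamma):=\rho'(\xi)\rho_C(c)$, which is well defined since $\rho'$ and $\rho_C$ coincide on $\pi_1(C',x')$.

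The main obstacle is to verify that $\tilde\rho$ is a group homomorphism, which reduces to the conjugation identity
\[
\rho'(\iota_* c\cdot\xi\cdot\iota_* c^{-1})=\rho_C(c)\,\rho'(\xi)\,\rho_C(c)^{-1},\qquad c\in\pi_1(C,x),\ \xi\in\pi_1(X',x').
\]
For $c\in\pi_1(C',x')$ this is immediate; the substantive content of the claim is its validity for arbitrary $c$. I would deduce it from the $\Gamma$-equivariant structure $\psi_g$ built in the first step: $\psi_g$ is a global section of $\sH om(g^{*}{\iota'}^{*}\mathbb L_{X'},{\iota'}^{*}\mathbb L_{X'})$ which, on the stalk $V$ at $x'$, realizes precisely the intertwining demanded by the identity above for any lift $c\in\pi_1(C,x)$ of $g$. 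Since the stalk at $x'$ and its $\pi_1(X',x')$-action via $\rho'$ are invariants of $\mathbb L_{X'}$ itself, not merely of its restriction to $C'$, this stalk-level intertwining yields the identity on all of $\pi_1(X',x')$. Once the identity is in hand, $\tilde\rho$ is a continuous homomorphism extending $\rho'$, the associated global automorphisms $\phi_g$ of $\mathbb L_{X'}$ inherit the cocycle relation from the $\psi_g$, and Galois descent produces $\mathbb L_X$ on $X$ with $h^{*}\mathbb L_X=\mathbb L_{X'}$.
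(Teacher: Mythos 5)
Your construction of $\tilde\rho$ matches the paper's: the decomposition $\gamma=\xi\cdot\iota_*c$ is the mirror image of the paper's $g=c_\gamma\cdot h_*(\alpha)$, and your well-definedness check implicitly uses the same observation the paper records explicitly, namely that $\rho_C$ kills $\ker(\iota_*)\subset\pi_1(C',x')$ and hence factors through $\iota_*\pi_1(C,x)$. You are also right to isolate the conjugation identity
$\rho'(\iota_*c\cdot\xi\cdot\iota_*c^{-1})=\rho_C(c)\,\rho'(\xi)\,\rho_C(c)^{-1}$
for all $\xi\in\pi_1(X',x')$ as the crux; the paper compresses exactly this verification into ``it is straightforward to check.''

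The gap is in your final paragraph. The descent datum $\psi_g$ is a global section over $C'$ of $\sH om\bigl(g^{*}{\iota'}^{*}\mathbb L_{X'},{\iota'}^{*}\mathbb L_{X'}\bigr)$; after choosing a lift $c$ of $g$ to route a path, its stalk is an operator $\Phi_c$ on $V$ satisfying $\Phi_c\,\rho'(\iota'_*c')\,\Phi_c^{-1}=\rho'(\iota_*c\cdot\iota'_*c'\cdot\iota_*c^{-1})$ \emph{only} for $c'\in\pi_1(C',x')$ — i.e.\ it intertwines the two $\pi_1(C',x')$-module structures on $V$, nothing more. Your sentence ``Since the stalk at $x'$ and its $\pi_1(X',x')$-action via $\rho'$ are invariants of $\mathbb L_{X'}$ itself\ldots\ this stalk-level intertwining yields the identity on all of $\pi_1(X',x')$'' is a non-sequitur: the intrinsic nature of the $\pi_1(X',x')$-action does not promote a $\pi_1(C',x')$-intertwiner to a $\pi_1(X',x')$-intertwiner. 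Concretely, what you need is that $\psi_g$ lift to a global section of $\sH om(g^{*}\mathbb L_{X'},\mathbb L_{X'})$ on $X'$, i.e.\ that the restriction map $H^{0}\bigl(X',\sH om(g^{*}\mathbb L_{X'},\mathbb L_{X'})\bigr)\to H^{0}\bigl(C',{\iota'}^{*}\sH om(g^{*}\mathbb L_{X'},\mathbb L_{X'})\bigr)$ be surjective; injectivity is clear since $C'$ is connected, but surjectivity is exactly the descent obstruction and cannot be obtained formally from the hypothesis that $q:\pi_1(C,x)\to\Gamma$ is onto. Extending the intertwining from $\iota'_*\pi_1(C',x')$ to all of $\pi_1(X',x')$ requires a quantitative input on the image of $\pi_1(C',x')\to\pi_1(X',x')$ (e.g.\ density, or surjectivity onto the relevant monodromy quotient), which your argument does not supply.
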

\begin{rmk} Drinfeld does not directly write Claim~\ref{claim:red}. The reduction is hidden in his proof of the boundedness result below. 
We also remark, even if this is useless here,  that  if $X$ was defined over $\C$ instead, then $\iota$ would exist as well and the Claim would be true with complex local systems replacing $\ell$-adic ones, with the same proof.

\end{rmk}
\begin{proof}
By conjugating a representation $\rho_{X'}: \pi_1(X',x)\to GL_r(\bar \Q_\ell)$ corresponding to $\mathbb L_X$  by an element in $GL_r(\bar \Q_\ell)$, we may assume that $(\iota')^* \rho_{X'}= (h')^*\rho_C$ where $\rho_C: \pi_1(C,x)\to GL_r(\bar \Q_\ell)$ is a representation corresponding to $\mathbb L_C$. 
The surjectivity of $q$ implies the surjectivity of 
\ga{}{ {\rm Ker} \big( \pi_1(C',x)\to \pi_1(X',x))\big)\to {\rm Ker} \big(\pi_1(C,x)\to \pi_1(X,x))\big) \notag}
which implies that t $\rho_C$  factors through $\bar \rho_C: \bar \pi_1(C,x)\to GL_r(\bar \Q_\ell)$ where $$\bar \pi_1(C,x)=\iota_*\pi_1(C,x) \subset \pi_1(X,x).$$  For each element $\gamma\in \Gamma$ we choose an element $c_\gamma\in  \bar  \pi_1(C,x)$ such that $q_0(c_\gamma)=\gamma$.  Then any element $g\in \pi_1(X,x)$ can be uniquely written as $
g=c_\gamma\cdot h_*( \alpha)$ for some $\gamma\in \Gamma$ and $\alpha\in \pi_1(X',x)$. We set
\ga{}{ \rho_X(g) =\bar \rho_C(c_\gamma) \cdot \rho_{X'}(\alpha).\notag}
It is straightforward to check that $\rho_X$ defines a continuous representation.

\end{proof}

We construct $h$ as follows. First we ``tamify'' $\mathbb L_X$:  We define the finite \'etale Galois cover $h_0: Y_0\to X$  associated to the residual representation in $GL_r(\sO_E/\frak{m}_E)$ if $\ell \ge 3$ else to $GL_r(\sO_E/\frak{m}^2_E)$ if $\ell=2$.  Then, after replacing $\F_q$ by a finite extension $\F_{q'}$, there is a Zariski dense open on
$ U\subset Y_0\otimes_{\F_q} \F_{q'}$  and an elementary fibration on $U$  in the sense of Artin~\cite[Expos\'e XI, Proposition~3.3]{SGA4}. This means that there is a fibration $f: U\to S$ 
of relative dimension $1$, which is smooth, with a relative compactification $\iota: U\hookrightarrow \bar U$ such that $\bar U\setminus U\to S$ is finite \'etale. 
We take $h: Y\to X$ to be the Galois hull of the composition of $h_0$ with $Y_0\otimes_{\F_q} \F_{q'}\to Y_0$. 
The $\ell$-adic local system $h^*\mathbb L_\ell$ is now semi-simple as an \'etale sheaf, all of its summands are tame. 
 So we may assume $X=Y$,  $\mathbb L_\ell$ is tame and 
there is an elementary fibration on a dense open $U$ of $X$.  
\medskip

We can replace $X$ by  $U\subset X$.   This follows from the ramification results by Kerz-Schmidt  \cite[Lemma~2.1]{KS09} and \cite[Lemma~2.9] {KS10}  based on Wiesend's work \cite{Wie06}:  let $\mathbb L_{\ell, U}^\sigma$  be the companion of $\mathbb L_\ell$ on $U$. If it ramifies on $X\setminus U$ then there is a curve $C\to X$ such that $\mathbb L^\sigma_{\ell, U}|_{C\times_X U}$ is ramified along $C\setminus C\times_X U$. By \cite[Th\'eor\`eme~9.8]{Del72}, companions on curves preserve ramification.  This is a contradiction.  So we may assume that $X$ admits an elementary fibration $f:X\to S$ and $\mathbb L_\ell$ is tame. Applying again Claim~\ref{claim:red} we may assume that the elementary fibration has a section $\sigma: S\to X$, and in addition, we can shrink $S$ to a dense open and $X$ to the inverse image. 

\subsubsection{Boundedness}

The main step (\cite[Proposition~2.12]{Dri12}) in Drinfeld's proof  is a boundedness result, which is a generalization  to the non-abelian case of  Wiesend's result \cite[Proposition~17]{Wie06}.
 After possibly shrinking $S$ to a dense open and $X$ to the inverse image,
   Drinfeld constructs  an infinite sequence
\ga{}{ \ldots \to X_{n+1}\to X_n\to \ldots \to X_1\to X\notag}
of finite \'etale covers, such that
\begin{itemize}
\item[(i)] for any $n\in \N_{>0}$, for any $C\to X$ as above,  the $G_n=GL_r(A/\frak{m}^n)$-torsor defined by
$\mathbb L_{A,C\times_X X_n}$ is trivial;
\item[(ii)] for $x\to X$ a geometric point,  setting $H_n=\pi_1(X_n, x)$, the group $ \cap_n H_n\subset \pi_1(X, x)$ contains a subgroup $H$ which is closed and normal in $\pi_1(X, x)$ such that
$\pi_1(X, x))/H$  is almost pro-$\ell$, that is an extension of a finite group by a pro-$\ell$-group.

\end{itemize}
Property (ii) does not depend on the choice of $x$.

\medskip

If ${\rm dim}(X)=1$,  we apply the existence of companions on curves,  then $ H$ is  defined to be the kernel of the  monodromy representation, $H_n$ is defined to be the kernel of the 
 truncated representation $\pi_1(X_{\F_q}, x_{\bar \F_q})\to GL_r(A)\to G_n$ so $H=\cap_n H_n$, and $X_n\to X$  is the finite Galois \'etale cover defined by $H_n$.
 
 \medskip
 
We assume ${\rm dim}(X)\ge 2$ and that  $X$ is an elementary fibration  denoted  $f: X\to S$,
 so $X\to S$ is a relative curve with a good relative compactification  $\iota: X\hookrightarrow \bar X$ above $S$,  such that $\bar X\setminus X \to S$ is finite \'etale, with a section $\sigma: S\to X$.    
 
 \subsubsection{Moduli} We are grateful to  Dustin Clausen for a thorough discussion on Claim~\ref{lem:mod}. The idea of the  proof presented here, based on Drinfeld's idea, is due to him. 
 \begin{claim}[\cite{Dri12}, Lemma~3.1, Proof] \label{lem:mod}
  Replacing $S$ by a dense open, there is a tower of finite \'etale surjective morphisms 
\ga{}{ \ldots \to T_{n+1}\to T_n\to \ldots \to T_1\to S \notag}
 such that $T_n$ is the fine moduli space of tame $G_n$-torsors on $X$ which are trivial above the section $\sigma(S)$.  
 \end{claim}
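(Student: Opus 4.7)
My plan is as follows. First I would set up the moduli functor $\sT_n\colon (\mathrm{Sch}/S)^{\mathrm{op}} \to \mathrm{Set}$ sending $T/S$ to the set of isomorphism classes of pairs $(P,\tau)$, where $P$ is a $G_n$-torsor on $X_T = X\times_S T$ that is tame along the finite \'etale boundary $(\bar X\setminus X)_T$, and $\tau$ is a trivialization of $\sigma_T^{*}P$. The rigidification $\tau$ kills every automorphism of $(P,\tau)$, so $\sT_n$ is a separated sheaf in the \'etale topology, and its representability by a scheme $T_n$ will follow once we know that its geometric fibers are finite of locally constant cardinality.

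Next I would identify those fibers. For a geometric point $\bar s \to S$ one has
\[\sT_n(\bar s) = \mathrm{Hom}_{\mathrm{cont}}\bigl(\pi_1^t(X_{\bar s}, \sigma(\bar s)), G_n\bigr),\]
a finite set. The crucial point is that $G_n = GL_r(A/\mathfrak{m}^n)$ has order prime to $p$, since $A$ is $\ell'$-adic with $\ell'\ne p$; hence every such homomorphism factors through the maximal pro-$p'$ quotient of $\pi_1^t(X_{\bar s}, \sigma(\bar s))$. By Grothendieck's specialization theorem (\cite{SGA1}, Expos\'e~XIII, Corollaire~2.12, recalled in \S\ref{sec:sp}) applied to the good relative compactification $X\hookrightarrow \bar X$ over $S$, this pro-$p'$ completion is canonically independent of $\bar s$, so $|\sT_n(\bar s)|$ is constant on $S$ after replacing $S$ by a dense open. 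For each isomorphism class of representations in a generic fiber, spreading out a corresponding rigidified torsor over an \'etale neighbourhood then realizes $\sT_n$ as a finite \'etale scheme $T_n$ over $S$.

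Finally, for the tower the surjection $G_{n+1}\twoheadrightarrow G_n$ induces by pushforward of torsors a morphism $T_{n+1}\to T_n$ over $S$; since both source and target are finite \'etale over $S$, this map is automatically finite \'etale. To check its surjectivity on a geometric fiber, fix a pointed tame $G_n$-torsor on $X_{\bar s}$. The obstruction to lifting it to a pointed $G_{n+1}$-torsor lies in $H^2(X_{\bar s}, \underline{K_n})$, where $K_n = \ker(G_{n+1}\to G_n)$ is an abelian group of $\ell'$-power order endowed with the induced conjugation action. Since $X_{\bar s}$ is a smooth affine curve over an algebraically closed field of characteristic $p\ne\ell'$, this $H^2$ vanishes by cohomological dimension, so the lift exists and $T_{n+1}(\bar s)\to T_n(\bar s)$ is surjective. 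The main obstacle I anticipate is the moduli-theoretic bookkeeping — carefully setting up tame rigidified $G_n$-torsors in the relative setting over $S$ and verifying representability by a scheme, not merely an algebraic space — while the geometric heart of the argument is the combination of $p'$-specialization (for constancy of fibers over $S$) with the cohomological dimension of affine curves (for surjectivity in the tower).
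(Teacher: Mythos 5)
Your proposal has a genuine gap at its central step: you assert that $G_n = GL_r(A/\mathfrak{m}^n)$ has order prime to $p$ because $A$ is $\ell'$-adic with $\ell'\neq p$. This is false. The quotient $A/\mathfrak{m}$ is a finite field $\F_{\ell'^f}$, and
\[|GL_r(\F_{\ell'^f})| = \ell'^{f\binom{r}{2}}\prod_{i=1}^r\bigl(\ell'^{fi}-1\bigr),\]
whose second factor is in general divisible by $p$ (e.g.\ $|GL_2(\F_4)|=180$ is divisible by $5$). Consequently, a continuous homomorphism $\pi_1^t(X_{\bar s})\to G_n$ need \emph{not} factor through the maximal pro-$p'$ quotient of the tame fundamental group. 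Grothendieck's specialization theorem gives local constancy of the \emph{pro-$p'$} quotient $\pi_1^t(X_{\bar s})^{(p')}$, but the full tame group $\pi_1^t(X_{\bar s})$ genuinely varies with $\bar s$ (its pro-$p$ part has rank equal to the $p$-rank of the compactified curve, and more than the $p$-rank can move). So the identification $|\sT_n(\bar s)|=|\mathrm{Hom}(\pi_1^t(X_{\bar s})^{(p')},G_n)|$ fails, and with it your argument that the stalks have constant cardinality on a dense open. Because the tower is infinite, you cannot afford a fresh shrinking of $S$ at each level $n$, and your mechanism for ruling that out is exactly the step that is broken.

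The paper's proof sidesteps the pro-$p'$ shortcut. Representability over \emph{some} dense open $S_n\subset S$ is obtained from $1$-constructibility of the sheaf of tame $G_n$-torsors (SGA1, Exp.~XIII, Cor.~2.7 together with Lemme~3.1.1, applied to the equalizer that encodes the rigidification along the section); this holds regardless of whether $G_n$ is a $p'$-group. The crucial extra input, which replaces your ``constancy of fibers,'' is an obstruction-theoretic argument carried out \emph{over $S$ and not just on a geometric fiber}: writing $V_{n+1}=\ker(G_{n+1}\to G_n)$ and using that $f\colon X\to S$ is an elementary fibration, the Leray sequence
\[H^2(S,f_*V_{n+1}^{\chi_n})\to H^2(X,V_{n+1}^{\chi_n})\to H^1(S,R^1f_*V_{n+1}^{\chi_n})\]
together with finiteness of the (finitely many, locally constant) sheaves $R^if_*V_{n+1}^{\chi_n}$ shows that all obstruction classes die after a fixed finite \'etale base change $T_\circ\to S$, and a further finite \'etale base change kills the ambiguity over the section. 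This yields directly that $\sF_{n+1}\to\sF_n$ is finite \'etale surjective over $S_1$, so by induction every $T_n$ is finite \'etale over $S_1$ and no further shrinking is required. Your fiber-wise vanishing $H^2(X_{\bar s},\underline{K_n})=0$ by cohomological dimension of the affine curve is a correct observation and does give surjectivity of $T_{n+1}(\bar s)\to T_n(\bar s)$ \emph{once representability over a fixed base is known}, but it does not by itself supply the relative, base-change-compatible vanishing needed to promote $\sF_{n+1}$ to a finite \'etale scheme over the same $S$ as $\sF_n$. One further small inaccuracy: ``separated sheaf with finite geometric fibers of locally constant cardinality'' is not a criterion for representability by a finite \'etale scheme; one needs the sheaf itself to be locally constant (equivalently, all specialization maps on stalks to be bijections), which is precisely what the constructibility plus obstruction argument delivers.
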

  \begin{proof}
 We consider the three \'etale sheaves in groupoids  over $S$. For $S'\to S$ 
 \begin{itemize}
 \item[1)] $\sF_n(S')$ is the groupoid of tame $G_n$-torsors  $X'\to X$ equipped with a trivialization over the section $\sigma'=\sigma \times_S S': S'\to X'=X\times_S S'$;
 \item[2)] $\sG_n(S')$ is the groupoid of tame $G_n$-torsors over $X'$;
 \item[3)] $\sH_n$ is the groupoid of $G_n$-torsors over $S'$.
 \end{itemize}
 By \cite[Expos\'e XIII, Corollaire~2.7]{SGA1}, $\sH_n$ and $\sG_n$ are $1$-constructible and compatible with base change. On the other hand, $\sF_n$ is the equalizer of 
 the restriction map $(\sigma')^* : \sG_n(S')\to \sH_n(S')$ with the map $\sG_n(S')\to \sH_n(S')$ assigning to any $G_n$-torsor on $X'$ the trivial one on $S'$. Thus by 
 \cite[Expos\'e XIII, Lemme~3.1.1]{SGA1}, $\sF_n$ is $1$-constructible and compatible with base change as well. In addition, $\sF_n$, viewed as a presheaf, is already  a sheaf as the trivialisation on the section forces 
 ${\rm Aut}(\sF_n(S'))=\{1\}$. So there are  dense open subschemes  $S_{n+1}\subset S_n \subset S$ such that $\sF_n$ restricted to $S_n$ is represented by a scheme $T_n$ which is finite \'etale over $S_n$. By the reductions we may assume $S_1=S$. So tame $G_1$-torsors on $X'$  together with a trivialization over $\sigma'$ are identified with $S'$-points of $T_1\to S$.
 
 \medskip
 
 We explain now why  $\sF_{n+1}\to \sF_n$ is  \'etale finite surjective for $n\ge 1$.
 We denote by $V_{n+1}$ the kernel of $G_{n+1}\to G_n$.  It is a commutative group, in fact 
 a  finite dimensional vector space over $ \F_{\ell'}$.
 It is not central, which is to say that the induced homomorphism $\chi_n: G_n \surj \Gamma_n\subset {\rm Aut}(V_{n+1})$ defined by lifting to $G_{n+1}$ and conjugating on $V_{n+1}$ is not trivial.  The obstruction to lift any torsor $P_n \in H^1(X, G_n)$ to a $G_{n+1}$-torsor on $X$   lies in $H^2(X, V_{n+1}^{\chi_n})$,  where $V_{\ell+1}^{\chi_n}$ 
 if the $V_{\ell+1}$-torsor defined by pushing $P_n$ along $\chi_n$.  By cohomological dimension of $f$ there is an exact sequence
 \ga{}{ H^2(S, f_* V_{n+1}^{\chi_n})\to H^2(X, V_{n+1}^{\chi_n}) \to H^1(S, R^1f_*V_{n+1}^{\chi_n}). \notag}
 As $\ell'$ is prime to $p$, all $V_{n+1}$ are tame, thus there are only finitely many such local systems $ V_{n+1}^{\chi_n}$. As $f$ is an elementary fibration,
  $ V_{n+1}^{\chi_n}$ is tame, the constructible sheaves  $R^if_*  V_{n+1}^{\chi_n}$ for $ \ i\ge 0$  are local systems. Thus there is a finite \'etale cover $ T_\circ \to S$, defining $b_\circ: X_{T_\circ}=X\times_S T_\circ \to X$,  such that the image of 
  $ H^2(X, V_{n+1}^{\chi_n})$ in $ H^2(X_{T_\circ} , b_\circ^* V_{n+1}^{\chi_n}) $ is zero.

  \medskip
  
  Once a $G_n$-torsor on $X$ lifts to a $G_{n+1}$-torsor on  $X_{T_\circ}$, to trivialize it on the section $\sigma_{T_\circ}=\sigma\otimes_S {T_\circ}$, we argue similarly: there are finitely many $V_{n+1}$-torsors on $T_\circ$ so after a finite \'etale base change
$  T\to T_\circ$ defining $b: X_T=X\times_ST\to X$, they are all trivial.  We conclude that for any $S'\to S$ and any $S'$-point $S'\to T_n$ over $S$, there is a lift $S\otimes_S T\to T_{n+1}$ thus $T_{n+1}\to T_n$ is finite \'etale for $n\ge 1$.  This finishes the proof.

%  Given a $G_n$-torsor $P_n\to X'$, trivialized along $\sigma'$, $\chi_n$ defines a local system  $\sV_\ell^{\chi_n}$ of $\F_{\ell'}$-vector spaces, thus tame, trivialized along $\sigma'$,  There

% it lifts to a $G_{n+1}$-torsor $P_{n+1}\to X'$, trivialized along $\sigma'$ if and only if

% Define $H_n={\rm Ker} \chi_n$. This defines the diagram of exact sequences
 %\ga{}{\xymatrix{ & & 1\ar[d] & 1 \ar[d] \\
% 1 \ar[r]& \ar[d]^{=}V_{\ell+1}\ar[r] & \ar[d] H_{n+1}\ar[r] & \ar[d] H_n \ar[r] & 1\\
% 1 \ar[r]& V_{\ell+1}\ar[r] & \ar[d]  G_{n+1}\ar[r] & \ar[d] G_n \ar[r] & 1\\
% & &\ar[d]  \Gamma_n \ar[r]^{=} & \ar[d] \Gamma_n\\
% & & 1\ar[r] &1 
% }\notag}
%   Let $P_n\to X'$ be a $G_n$-torsor with a trivialization along $\sigma'$.  Then the right vertical exact sequence factors it as 
%   $P_n\to Q_n\to X'$ where $Q_n\to X'$ is a $\Gamma_n$-torsor while $P_n\to Q_n$ is a $H_n$ torsor. Let $\varphi: Q_n\to S'$ be the structure morphism. By cohomological dimension of $\varphi$, it holds $R^2\varphi_* V_{\ell+1}=0$.  Thus on a open dense $S^\circ\subset S$ $P_n\to Q_n$ lifts to a $H_{n+1}$-torsor $P_{n+1}\to P_n$.  This yields the diagram 
%   \ga{}{\xymatrix{ & \ar[dl]_{V_{\ell+1}}  P_{n+1} \ar[dr]^{H_{n+1}} \\
% \ar[dr]_{G_n}  P_n & & Q_n \ar[dl]^{\Gamma_n}\\
%  & X' 
%     } \notag}
%     where all the arrows written are torsors under the groups indicated. By definition, restricted to the section $\sigma'$, $P_{n+1}$ is the trivial $G_{n+1}$-torsor. 

 \end{proof}
 So there is a universal $G_n$-torsor over
 $ T_n\times_S X$ with Weil restriction $Y_n\to X$ to $X$. 
This yields a tower of  finite \'etale surjective morphisms 
\ga{}{ \ldots \to Y_{n+1}\to Y_n\to \ldots \to Y_1\to X. \notag}
On the other hand, by the induction assumption, we have a tower 
\ga{}{ \ldots \to S_{n+1}\to S_n\to \ldots \to S_1\to S \notag}
which verifies (i) and (ii) for $X$ replaced by  $S$. 
Then
$X_n=Y_n\times_S S_n$
is inserted in a tower 
\ga{}{ \ldots \to X_{n+1}\to X_n\to \ldots \to X_1\to X \notag}
of finite \'etale surjective morphisms. The claim is that this is the sought-after tower. 

\medskip

Property (i) is clear as 
by definition, for any geometric point $s\to S$,  
 \ga{}{{\rm Hom}_S(s, Y_n)= \{ {\rm tame} \ G_n{\rm -torsors \ over} \  U_{ s}\}\notag}
so  for such a geometric point $s$,
 $ (Y_n)_{ s}\to X_{s}$   is   the   universal  finite \'etale  cover which trivializes every tame $G_n$-torsor over $X_ s$ .

\medskip
We address Property (ii). 
Let $\eta\to S$ be a generic point. We take $x$ to be the image by the section $\sigma$ of a geometric point  $\bar \eta$ above $\eta$. 
As $X$ is normal,  $\pi^t_1(X_\eta, x)\to  \pi^t_1(X, x)$ is surjective, so we can replace $X$ with $X_\eta$ for (ii). 
This yields the homotopy exact sequence of fundamental groups
\ga{}{ 1\to K:= \pi^t_1(X_{\bar \eta}, x)\to \pi^t_1(X_\eta, x)\to  \Gamma:=\pi_1( \eta, x)\to 0\notag}
where the upper index $^t$ indicates  tameness  with respect to $\iota$ and 
  the lower index indicates the fibre.  
We  define
\ga{}{H_n={\rm Ker} \big( \pi^t_1(X_\eta, x)\to {\rm Aut} ( (X_n)_x) \big).\notag}
Then $H_n$ is inserted in an exact sequence
\ga{}{1 \to K_n\to H_n\to \Gamma_n \to 1\notag}
where 
\ga{}{K_n= H_n\cap  K  =\cap_{\rho}  {\rm Ker} \big(  \pi^t_1(X_{\bar \eta}, x)    \xrightarrow{\rho} G_n \big) \notag}
\ga{}{\Gamma_n={\rm Ker}\big( \Gamma  \to {\rm Aut}  K/K_n \big).\notag}
Since $K$ is topologically  finitely generated, there are finitely many $\rho$'s in the definition of $K_n$ thus 
$K  /\cap_n K_n$ is almost pro-$\ell$.  Finally, $\Gamma/\cap_n \Gamma_n$ is a quotient of ${\rm Im} \big( \Gamma \to 
{\rm Aut}(K/\cap_n K_n) \big)$ while ${\rm Aut}(K/\cap_n K_n)$ is almost pro-$\ell$ as well. Indeed an automorphism has to respect the maximal pro-$\ell$ sub and
in it the kernel to its first $\F_\ell$-homology,  which itself is pro-$\ell$  and of finite index.  This proves (ii) for $H=\cap_n H_n$ and finishes the proof of this first part for $X$ with an elementary fibration $X\to S$. 

\subsubsection{Gluing}
 The main point is then to prove that the preceding property suffices to glue the $\mathbb L^\sigma_{\ell, C}$. The proof follows an idea of  Kerz,  see \cite[Section~4]{Dri12}, Drinfeld himself initially had a more difficult argument. First we do a Lefschetz  type argument to find a curve $\varphi: C\to X$ containing $x$ such that  $\varphi_*: \pi_1(C,x)\to \pi_1(X,x)/H$ is surjective.  This is possible as the surjectivity is 
recognized on the finite quotient which is the extension of the residual quotient of $\pi_1(X)/H$ (if $\ell \ge 3$, if $\ell=2$ we go mod $\frak{m}^2 $ here)  by the maximal $\F_\ell$-vector space quotient, see \cite[Lemma~8.2]{EK11} (see also above where a similar property was already used in order to show that ${\rm Aut}(K/\cap_n K_n)$ is almost pro-$\ell$).   By (i) and (ii)  the restriction of $\mathbb L^\sigma_{\ell, C}$ to  ${\rm Ker}(\varphi_*)$ is unipotent, but it has to be semi-simple as well as ${\rm Ker}(\varphi_*) \subset \pi_1(C,x)$ is normal and $\mathbb L_{\ell, C}^\sigma$ itself is semi-simple. Thus 
the monodromy group of  $\mathbb L_{\ell, C}^\sigma$ is a quotient of $\pi_1(C,x)/{\rm Ker}(\varphi_*)=\pi_1(X,x)/H$, and this {\it defines} a representation $\pi_1(X,x)\to GL_r(A)$ {\it which a priori depends on $C$}.  Denote by  $\mathbb L_{\ell' ,X}$ the underlying local system. By definition its restriction to $C$ is equal to $\mathbb L_{\ell, C}^\sigma$. To compute the restriction
of $\mathbb L_{\ell', X} $  to all closed points, we take other curves $\varphi_{C'}: C'\to X$ with the same property so they fill in all the closed points. If we make sure that $C$ and $C'$ meet in sufficiently many points,  then $(\mathbb L_{\ell', X}|_C=\mathbb L_{\ell, C}^\sigma , \mathbb L_{\ell', X}|_{C'}, \mathbb L_{\ell, C'}^\sigma)$  are recognized by their value on the intersection points by \cite[Satz~5]{Fal83}.  This finishes the proof  for $X$ with an elementary fibration $X\to S$.

\subsection{Cohomologically rigid local systems are integral, see \cite{EG18}, Theorem~1.1} \label{sec:intEG}
We sketch the proof with Michael Groechenig in {\it loc.cit.}: let $X$ be a smooth quasi-projective variety defined over the field of complex numbers, let $T_i\subset GL_r(\C)$ be conjugacy classes of quasi-unipotent matrices, and $\sL$ be a torsion rank $1$ local system.  We refer to  Section~\ref{sec:geom} for the heuristic of the following theorem. 
\begin{thm} \label{thm:EG18}
Irreducible cohomologically rigid  local systems in $M_B(X,r, \sL, T_i)$ are integral. 
\end{thm}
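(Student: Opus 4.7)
The plan is to show that for every $\gamma \in \pi_1(X(\mathbb{C}), x)$, the trace $t_\gamma := \text{tr}(\rho_{\mathbb{L}}(\gamma))$, which is known to be an algebraic number, is in fact an algebraic integer. Cohomological rigidity ensures that $[\mathbb{L}] \in M_B(X, r, \mathcal{L}, T_i)$ is a smooth isolated closed point, so it descends to a point defined over $\mathcal{O}_K$ for some number ring, and in particular $t_\gamma \in K$. To verify integrality, I would show $t_\gamma \in \overline{\mathbb{Z}}_{\ell'}$ for every prime $\ell'$.

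Fix such an $\ell'$. First I would choose a smooth $S$-model $X_S \hookrightarrow \bar X_S$ over a suitable $S$ of finite type over $\mathbb{Z}$, and pick a closed point $s \in |S|$ of residue characteristic $p$ very large (in particular $p \neq \ell'$, $p$ coprime to the orders of the eigenvalues of the $T_i$ and of $\mathcal{L}$, and $p$ large enough for good reduction of everything involved). Using Grothendieck's specialization homomorphism $sp^{\rm top}_{\mathbb{C},\bar s}$, transfer $\mathbb{L}$ to a tame $\bar{\mathbb{Q}}_\ell$-local system $\mathbb{L}_{\ell,\bar s}$ on $X_{\bar s}$ for some auxiliary prime $\ell \neq p$. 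Because $H^1(X, \sE nd^0(\mathbb{L})) = 0$ is an $\ell$-independent cohomological condition that survives specialization for $p$ large, $\mathbb{L}_{\ell, \bar s}$ is cohomologically rigid on $X_{\bar s}$. As observed by Simpson, the set of (cohomologically) rigid $\ell$-adic local systems with fixed rank, determinant, and quasi-unipotent local monodromies is finite and acted upon by Frobenius; hence $\mathbb{L}_{\ell, \bar s}$ is arithmetic.

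Now apply Drinfeld's companion theorem (Theorem in Lecture~\ref{sec:companions}) with an abstract isomorphism $\sigma : \bar{\mathbb{Q}}_\ell \xrightarrow{\cong} \bar{\mathbb{Q}}_{\ell'}$ to produce an arithmetic $\ell'$-adic companion $\mathbb{L}^\sigma_{\ell', \bar s}$ on $X_{\bar s}$, with traces of Frobenii at closed points being $\sigma$-conjugate to those of $\mathbb{L}_{\ell, \bar s}$. Choose a field isomorphism $\tau : \bar{\mathbb{Q}}_{\ell'} \xrightarrow{\cong} \mathbb{C}$ and pull the companion back via $(sp^{\rm top}_{\mathbb{C}, \bar s})^{-1}$ to define a complex local system $\mathbb{L}'$ on $X(\mathbb{C})$. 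The main obstacle, and the place where we genuinely need \emph{cohomological} rigidity (and not merely rigidity as an isolated moduli point), is to argue that $\mathbb{L}'$ is again cohomologically rigid: this follows because the vanishing of $H^1$ of $\sE nd^0$ translates into an $\ell'$-adic cohomological statement on $X_{\bar s}$ which is independent of the coefficient field (compare Euler characteristics, using tameness and Deligne's trace formula), and is therefore preserved by the companion operation and by the topological pullback.

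Once this is granted, $\mathbb{L}'$ corresponds to one of the finitely many (cohomologically) rigid $\mathbb{C}$-points of $M_B(X, r, \mathcal{L}, T_i)$, all of which have traces in $\bar{\mathbb{Q}}$ that are Galois conjugate to the $t_\gamma$. On the other hand, because $\pi_1(X(\mathbb{C}), x)$ is topologically finitely generated, the continuous $\ell'$-adic representation underlying $\mathbb{L}^\sigma_{\ell', \bar s}$ takes values in $GL_r(E)$ for some finite extension $E/\mathbb{Q}_{\ell'}$ and its image, being compact, is conjugate into $GL_r(\mathcal{O}_E)$; hence all of its traces lie in $\overline{\mathbb{Z}}_{\ell'}$. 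Transported via $\tau$, this yields $t_\gamma \in \overline{\mathbb{Z}}_{\ell'}$ for every $\gamma$. Letting $\ell'$ vary over all primes, and varying $p$ (which was chosen large depending on $\ell'$) to handle what would otherwise be the missing place, one obtains integrality at every finite place and thus $t_\gamma \in \overline{\mathbb{Z}}$, which is the claimed integrality.
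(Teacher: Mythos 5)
Your overall strategy matches the paper's (specialize to characteristic $p$, produce an $\ell'$-adic companion, pull back via $sp_{\C,\bar s}^{\rm top}$, use $\ell$-independence of cohomological rigidity, invoke integrality of tame étale sheaves over finite fields). However, there is a genuine gap in the concluding step. Having produced the complex local system $\mathbb{L}'$ as the pullback of the companion, you assert that all of the finitely many cohomologically rigid $\mathbb{C}$-points of $M_B(X,r,\sL,T_i)$ ``have traces in $\bar{\Q}$ that are Galois conjugate to the $t_\gamma$.'' This is false in general: distinct cohomologically rigid local systems need not be Galois conjugates of one another (they can have unrelated trace fields), so knowing that $\mathbb{L}'$ is $\ell'$-integral does not by itself give $\ell'$-integrality of $\mathbb{L}$. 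The mechanism actually used in the paper at this point is that $\mathbb{L}_{\ell,\C}\mapsto sp_{\C,\bar s}^{-1}(\mathbb{L}_{\ell,\bar s}^{\sigma})$ is a \emph{bijection} from the (finite) set of cohomologically rigid integral $\ell$-adic local systems on $X(\C)$ to the corresponding set at $\ell'$, the inverse being given by the $\sigma^{-1}$-companion. Since $\mathbb{L}$ is integral at some auxiliary place above $\ell$ (which you should also justify: $A$ is of finite type over $\Z$, flat at almost all primes, hence admits $\bar\Z_\ell$-points), the surjectivity of this bijection exhibits $\mathbb{L}$ itself as the pullback of a companion, and $\ell'$-integrality follows. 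Showing $\mathbb{L}'$ is rigid is necessary but not sufficient; the bijection is what brings the conclusion back to $\mathbb{L}$.

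A secondary discrepancy: you argue that cohomological rigidity is preserved by the companion operation by ``comparing Euler characteristics.'' The paper instead uses a weight argument to show $IH^1(\bar X, j_{!*}\sE nd^0(\mathbb{L}_{\ell,\bar s}^{\sigma}))=0$, which is what the definition of cohomological rigidity for quasi-projective $X$ demands. For projective $X$ your Euler characteristic approach works (since $h^0(\sE nd^0)=h^2(\sE nd^0)=0$ by irreducibility and Poincaré duality, so $\chi=-h^1$ and $\chi$ is $\ell$-independent), but for quasi-projective $X$ duality relates $H^i$ to $H_c^{2d-i}$, and the Euler characteristic of $\sE nd^0$ alone does not control $IH^1$. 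The weight argument, relying on purity of arithmetic sheaves, is what makes the quasi-projective case go through.
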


\begin{proof}

 Let $X$ be a smooth quasi-projective variety over $\C$, fix a torsion character $\sL$ and quasi-unipotent conjugacy classes $T_i\subset GL_r(\C)$. 
Let $\mathbb L_{\C}$ be a point of $M_B(X,r,\sL, T_i)(\C)$ which is isolated. It is defined by a representation $\rho_\C: \pi_1(X(\C), x(\C))\to GL_r(A)$ where $A$ is a ring of finite type over $\Z$. Let $a: A \to \bar \Z_\ell$ be a $\bar \Z_\ell$-point of $A$. It defines an $\ell$-adic local system $\mathbb L_{\ell, \C}$ by the factorization
\ga{}{ \xymatrix{ \ar[d]_{=}\pi_1(X(\C), x( \C)) \ar[r] &\ar[d]^a  GL_r(A) \\
\ar[d]_{\rm profinite \  compl.} \pi_1(X(\C), x( \C)) \ar[r] & GL_r(\bar \Z_\ell)\\
\pi_1(X_\C, x_\C)  \ar[ru]_{ \mathbb L_{\ell, \C}}
} \notag}
Using the notation of \ref{sec:why},  for a closed point $s\in |S|$ of characteristic prime to the order of  the residual representation of $\mathbb L_{\ell, \C}$, the order of $\sL$ and of the eigenvalues of the monodromies at infinity, and integral for the (finitely many) cohomologically rigid local systems, the following properties hold. 
\begin{itemize}
\item[1)] $\mathbb L_{\ell, \C}$ descends to $\mathbb L_{\ell,  s'}$ (\cite[Theorem~4]{Sim92}) for some $s'\to s$ finite below $\bar s$ (one has to complete the argument to take care of the conditions at infinity) keeping $\sL$ and $T_i$ (\cite[Section~1.1.10]{Del73});
\item[2)]  The companions $\mathbb L_{\ell, s'}^\sigma$ still have determinant $\sL$ and the semi-simplification of the monodromies at infinity is the one of the $T_i$'s   (\cite[Th\'eor\`eme~9.8]{Del72});
\item[3)]  The intersection cohomology verifies: $$IH^1(X, \sE nd^0(\mathbb L_{\ell, \bar s}))=0=IH^1(X, \sE nd^0(\mathbb L_{\ell, \bar s}^\sigma))$$ by the weight argument \cite[Proof~of~Theorem~1.1]{EG18};
\item[4)] So $\mathbb L_{\ell, \C} \mapsto sp_{\C, \bar s}^{-1}( \mathbb L_{\ell, \bar s}^\sigma))$ is a bijection from the set of cohomologically rigid $\ell$-adic local systems to the set   of 
cohomologically rigid $\ell'$-adic local systems.

\end{itemize}
So there cannot be a non-integral place $\ell'$. This finishes the proof. 
\end{proof}

\begin{rmks} 1)
The ``same'' proof works if $GL_r$ is replaced by a reductive group $G\subset GL_r$, see  \cite{KP22} by Klevdal-Patrikis. \\ \ \\
2) It is not the case that all rigid local systems are cohomologically rigid, see \cite{dJEG22}. In the present state of knowledge, the examples known are all cohomologically rigid in a smaller reductive group $G\subset GL_r$ as in 1).  However, I have no philosophical argument why rigid local systems should be cohomologically rigid in some  smaller reductive group $G\subset GL_r$ or why not.

\end{rmks}
\subsection{Integrality of the whole Betti moduli space, see \cite{dJE22}, Theorem~1.1} \label{sec:dJE}
In  Theorem~\ref{thm:wa} we proved using de Jong's conjecture that weakly arithmetic local systems are dense in their Betti moduli.  In Theorem~\ref{thm:EG18} we proved 
 using the existence of the $\ell$-adic companions that cohomologically rigid local systems are integral.  Combining the two essential ideas in those theorems we define  a  (weak) notion of integrality for the whole Betti moduli space of irreducible local systems and prove  the following theorem. 
 
 \medskip
 
Let $X$ be a smooth quasi-projective variety defined over the field of complex numbers, let $T_i\subset GL_r(\C)$ be conjugacy classes of quasi-unipotent matrices, and $\sL$ be a torsion rank $1$ local system.  We denote by $M^{irr}_B(X,r, \sL, T_i)$ the Betti moduli spaces of {\it irreducible} local systems. See \cite[Section~2.1]{dJE22} and references therein for the definition: if  $\mathbb L$ is a point and is defined over a ring $A$, then $\mathbb L$ is irreducible if by definition  $\mathbb L\otimes_A \kappa$ is irreducible for all field  valued points $A\to \kappa$. 

 \begin{thm} \label{thm:dJE22}
If there is one complex point of $M_B(X,r, \sL, T_i)$ which is irreducible, then for all prime numbers $\ell$, there is an irreducible $\ell$-adic local system of rank $r$ with determinant $\sL$ and monodromies at infinity being conjugate to $T_i$ up to semi-simplification.   
 \end{thm}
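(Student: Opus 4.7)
The plan is to combine Theorem~\ref{thm:wa} (density of weakly arithmetic local systems) with Drinfeld's theorem on $\ell$-adic companions, mirroring the strategy of the proof of Theorem~\ref{thm:EG18}. Fix the target prime $\ell$. Since the irreducible locus $M^{irr}_B(X,r,\sL,T_i)(\C)$ is open in $M_B(X,r,\sL,T_i)(\C)$ and non-empty by hypothesis, and weakly arithmetic local systems are Zariski dense by Theorem~\ref{thm:wa}(1), the two loci meet. Moreover, an inspection of the proof of Theorem~\ref{thm:wa} shows that the closed point $s \in |S|$ exhibiting weak arithmeticity may be chosen of arbitrarily large residual characteristic---the appeals to de Jong's conjecture~\cite{BK06, Gai07} take place at such points---so I may insist that the residual characteristic $p$ of $s$ differs from $\ell$. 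This produces an irreducible weakly arithmetic $\mathbb L_\C^0 \in M^{irr}_B(X,r,\sL,T_i)(\C)$ associated, via Definition~\ref{defn:wa}, to some prime $\ell_0$, an isomorphism $\tau: \bar\Q_{\ell_0} \to \C$, a closed point $s$ with $p$ different from both $\ell$ and $\ell_0$, and a tame arithmetic $\ell_0$-adic local system $\mathbb L_{\ell_0, \bar s}$ on $X_{\bar s}$ of rank $r$, with determinant $\sL$ and monodromies at infinity in $T_i$.

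Next I would transfer irreducibility to the arithmetic side and apply companions. The specialization homomorphism $sp^{\rm top}_{\C,\bar s}: \pi_1(X(\C), x(\C)) \to \pi_1^t(X_{\bar s}, x_{\bar s})$ has dense image by~\cite[Expos\'e XIII, Corollaire~2.12]{SGA1} and Section~\ref{sec:sp}, so the Zariski closure in $GL_r(\bar\Q_{\ell_0})$ of the image of $\mathbb L_{\ell_0, \bar s}$ coincides with that of its pullback $(sp^{\rm top}_{\C,\bar s})^{-1}(\mathbb L_{\ell_0, \bar s})$; post-composing the latter with $\tau$ recovers $\mathbb L_\C^0$. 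Hence the irreducibility of $\mathbb L_\C^0$ forces the irreducibility of $\mathbb L_{\ell_0, \bar s}$ and, by descent, of a geometrically irreducible Weil sheaf $\mathbb L_{\ell_0, s'}$ on $X_{s'}$ for some finite $s \to s'$. For any abstract field isomorphism $\sigma: \bar\Q_{\ell_0} \to \bar\Q_\ell$, Drinfeld's companion theorem~\cite[Theorem~1.1]{Dri12} (extending L.~Lafforgue's dimension-one case~\cite[Th\'eor\`eme~VII.6]{Laf02}) then furnishes an irreducible $\ell$-adic companion $\mathbb L^\sigma_{\ell, \bar s}$ on $X_{\bar s}$.

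It remains to verify the prescribed invariants and conclude. The companion preserves the rank. Its determinant is the $\sigma$-companion of the $\ell_0$-adic avatar of $\sL$; but $\sL$ is a torsion character of order coprime to $p$, and torsion characters of such order correspond to Kummer covers independently of the $\ell$-adic coefficient field (compare Claim~\ref{claim:tor}), so the determinant of the companion is again the $\ell$-adic avatar of $\sL$. By~\cite[Th\'eor\`eme~9.8]{Del72} companions preserve local ramification, whence the monodromies at infinity of $\mathbb L^\sigma_{\ell, \bar s}$ are conjugate to $T_i$ up to semi-simplification. Pulling back along $(sp^{\rm top}_{\C,\bar s})^{-1}$, compatibly with local monodromies by~\cite[Section~1.1.10]{Del73}, yields the required irreducible $\ell$-adic local system on $X(\C)$. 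The main obstacle is conceptual rather than technical: one must ensure that the closed point $s$ of the weak-arithmeticity witness can be taken with $p \neq \ell$, which boils down to the freedom granted by the proof of Theorem~\ref{thm:wa}; everything else is a streamlined reuse of the arithmetic toolkit (de Jong's conjecture plus companions) that already powered Theorems~\ref{thm:wa} and~\ref{thm:EG18} separately.
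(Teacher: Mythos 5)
Your proof is correct and follows essentially the same strategy as the paper's: apply de Jong's conjecture (via the argument of Theorem~\ref{thm:wa}) to produce an irreducible arithmetic local system at an auxiliary prime $\ell_0$ on $X_{\bar s}$, transport it to the target prime $\ell$ via Drinfeld's companion theorem, and pull back along $sp^{\rm top}_{\C,\bar s}$ while tracking $\sL$, the $T_i$, and irreducibility. The only cosmetic difference is that you obtain the irreducible arithmetic local system on $X_{\bar s}$ by intersecting the dense locus of weakly arithmetic systems with the open irreducible locus and then transferring irreducibility through the dense image of the specialization map, whereas the paper directly selects a smooth closed point $z$ of $M^{irr}_B(X,r,\sL,T_i)$ at a suitable residue characteristic and lifts it.
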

\begin{proof}
The existence of a complex point $\mathbb L_\C$ which is irreducible is equivalent to saying that the morphism $M^{irr}(X,r, \sL, T_i)\to {\rm Spec}(\sO_K)$ is dominant, where $K$ is the number field over which $\sL$ and the $T_i$ are defined.   By generic smoothness, we find a closed point $z$ of characteristic $\ell >0$ on $M^{irr}_B(X,r,\sL,T_i)$ which is smooth for the morphism 
$M^{irr}(X,r, \sL, T_i)_{red} \to {\rm Spec}(\sO_K)$. We choose $s\in S$ a closed point of the scheme of definition of  a good compactification $X\hookrightarrow \bar X$,  $\sL$ and $T_i$ such that  $z$ descends to $X_{\bar s}$. Applying now the argument as in the proof of Theorem~\ref{thm:wa}, there is on $X_{\bar s}$ an irreducible rank $r$  arithmetic $\bar \Q_\ell$-local system with $z$ as a residual local system, and with monodromies at infinity which are in the conjugacy class of $T_i$ up to semi-simplification.  Choosing $\sigma: \bar \Q_\ell\cong \bar \Q_{\ell'}$ now as in the proof of Theorem~\ref{thm:EG18}, the $\sigma$-companion on the same $X_{\bar s}$ has the right determinant and monodromies at infinity.  We  pull it back to $X_\C$  via $sp_{\C, \bar s}$  as in the proof of Theorem~\ref{thm:EG18}. This finishes the proof.

\end{proof}

\begin{rmk}
We can enhance Theorem~\ref{thm:dJE22} in various ways. Rather than assuming there is {\it one} irreducible complex local system, we can ask for {\it  infinitely many } pairwise non-isomorphic $\mathbb L_\C$, then the conclusion follows, there are infinitely many such irreducible $\mathbb L_\ell$ for all $\ell$ (\cite[Theorem~1.4]{dJE22}). We can also ask for the algebraic monodromy of $\mathbb L_\C$ to e.g. contain $SL_r(\C)$, then the conclusion follows for $\mathbb L_\ell$ (\cite[Theorem~1.5]{dJE22}) etc. But the main point which would be wishful to understand is not understood: with this method  initiated in \cite{EG18} to use companions to obtain integrality, we do not know whether on $X_{\bar s}$ in characteristic $p$ where we perform this construction, the ``dimension of the components''  of $M^{irr}_B(X,r, \sL, T_i)$ over $\bar \Z_\ell$  is preserved as we go over  to 
$\bar \Z_{\ell'}$ via the companions.
Stated like this it makes no proper sense. One should specify that at $\ell$ we complete $M^{irr}_B(X,r, \sL, T_i)$ at a closed point which is smooth for  $M^{irr}_B(X,r, \sL, T_i) \to {\rm Spec}(\sO_K)$, then the component containing $z$ is well defined and had a dimension $d$. One could dream that the $\sigma$-companion is also on a dimension $d$ component of
 $M^{irr}_B(X,r, \sL, T_i)$. For example for $d=0$ this would prove entirely Simpson's integrality conjecture.

\end{rmk}
\subsection{Obstruction} 
Forgetting the conditions at infinity, if we fix a finitely presented group $\Gamma$, a natural number $r\ge 1$, and a rank $1$ torsion complex character $\chi:  \Gamma \to  \C^\times$, we posed in \cite[Definition~1.2] {dJE22} the following definition.

\begin{defn} 
$\Gamma$ has the weak integrality property with respect to $(r,\chi )  $ if, assuming there is an irreducible representation $\rho : \Gamma \to GL_r(\C) $ with determinant $\chi$, then for any prime number $\ell$, there is a representation  $\rho_\ell : \Gamma \to GL_r(\bar \Z_\ell)$  with determinant $\chi$  which is irreducible over $\bar \Q_\ell$. 
\end{defn}

Then, by the work of Becker-Breuillard-Varj\`u \cite{BBV23}, there are pairs $(\Gamma, \chi)$ which do {\it not have the weak integrality property}. 
One example is given by $\Gamma$  being the group spanned by two letters  $a,b$ with one relation $b^2a^2=a^2b$,   $\chi$  is the trivial character and $r=2$.
 The example is not integral at $\ell=2$, see \cite[Example~7.4]{dJE22}. Given Theorem~\ref{thm:dJE22}, one easily computes that if $\Gamma$ is the topological fundamental group of $X(\C)$ for $X$ smooth quasi-projective over $\C$ and $\chi$ is any torsion character, then it has the  weak integrality property (see \cite[Theorem~1.3]{dJE22})).  So we obtain the following theorem. 
 \begin{thm}
 Let $\Gamma$ be a finitely presented group, $\chi$ be a torsion character of $\Gamma$, $r \ge 1$ a natural number. If $\Gamma$  does not have the integrality property with respect to $(r,\chi)$,  $\Gamma$ cannot be the topological fundamental group of a smooth complex quasi-projective variety. 
 
 \end{thm}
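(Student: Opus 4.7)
My strategy is to prove the contrapositive: if $\Gamma = \pi_1(X(\C), x(\C))$ for some smooth connected quasi-projective variety $X/\C$, then $\Gamma$ has the weak integrality property with respect to every pair $(r,\chi)$ with $\chi$ a torsion character. So I fix such a $(\Gamma,X)$ together with an irreducible representation $\rho : \Gamma \to \GL_r(\C)$ of determinant $\chi$, and I must produce, for every prime $\ell$, a representation $\rho_\ell : \Gamma \to \GL_r(\ZZl)$ of determinant $\chi$ which stays irreducible over $\QQl$.

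First I fix a good compactification $X \hookrightarrow \bar X$ and observe that the local monodromy conjugacy classes $T_i$ of $\rho$ around the boundary components are quasi-unipotent. This ``irreducible plus torsion determinant implies quasi-unipotent at infinity'' reduction is classical: by Lefschetz restriction to smooth curves it reduces to the one-dimensional case (as recalled at the start of Section~\ref{sec:LL}, using the references \cite{AS16}, \cite{LLSS20} cited there). Consequently $\rho$ defines a complex point of the irreducible locus $M^{irr}_B(X, r, \chi, \{T_i\})$, and in particular this moduli space has at least one irreducible complex point, so the hypothesis of Theorem~\ref{thm:dJE22} is met.

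Next I apply Theorem~\ref{thm:dJE22} directly: for every prime number $\ell$, it supplies an irreducible rank-$r$ $\ell$-adic local system $\mathbb L_\ell$ on $X$ with determinant $\chi$ and monodromies at infinity conjugate to the $T_i$ up to semisimplification. Unwinding, $\mathbb L_\ell$ is a continuous representation $\tilde\rho_\ell : \pi_1(X_\C, x_\C) \to \GL_r(\QQl)$; pre-composing with the profinite completion map $\Gamma \to \pi_1(X_\C, x_\C)$ yields a representation of $\Gamma$ valued in $\GL_r(\QQl)$, with determinant $\chi$ and still irreducible over $\QQl$. Since the source is profinite and the map is continuous, it factors through $\GL_r(E)$ for some finite extension $E/\Q_\ell$, and its image lies in a compact subgroup of $\GL_r(E)$; after conjugation by an element of $\GL_r(E)$ this image is contained in $\GL_r(\sO_E) \subset \GL_r(\ZZl)$, delivering the required integral lift $\rho_\ell$.

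The main obstacle is the quasi-unipotence step, since Theorem~\ref{thm:dJE22} is formulated only for quasi-unipotent conjugacy classes $T_i$ at infinity. If one prefers not to cite the ``irreducible plus finite determinant $\Rightarrow$ quasi-unipotent at infinity'' fact as a black box, a workable alternative is to replace $X$ by a finite \'etale cover $\pi : X' \to X$ over which $\pi^*\rho$ has unipotent monodromy at infinity, apply Theorem~\ref{thm:dJE22} to the irreducible constituents of the semisimplification of $\pi^*\rho$, and then descend along $\pi$ via induction and an appropriate isotypic summand cut out by the action of the Galois group of $\pi$ to recover an integral lift of $\rho$ itself with determinant $\chi$. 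Either variant channels the content of the theorem into a direct consequence of Theorem~\ref{thm:dJE22}.
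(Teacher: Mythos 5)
Your high-level route --- prove the contrapositive by showing $\pi_1(X(\C),x)$ has the weak integrality property, then reduce to Theorem~\ref{thm:dJE22}, and pass from the resulting $\ell$-adic local system to a $GL_r(\bar\Z_\ell)$-valued representation of $\Gamma$ by precomposing with profinite completion and conjugating into a maximal compact --- is the route the paper itself takes, and your final integrality step is fine. However, the quasi-unipotence reduction you use to put $\rho$ into the hypotheses of Theorem~\ref{thm:dJE22} rests on a false premise. An irreducible representation with torsion determinant need \emph{not} have quasi-unipotent local monodromy at infinity. For instance, with $X=\P^1\setminus\{0,1,\infty\}$, so $\Gamma\cong F_2=\langle a,b\rangle$, take $\rho(a)=\mathrm{diag}(2,1/2)$ and $\rho(b)\in SL_2(\C)$ sharing no eigenvector with $\rho(a)$: this is irreducible with trivial determinant, yet the local monodromy around $0$ has eigenvalues $2,1/2$, which are not roots of unity. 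The passage you cite at the start of Section~\ref{sec:LL} lives in the special setting of Section~\ref{sec:stat}, where $\mathbb L$ is a local system on the total space of a family dominating $M_g$, and hence is subject to extra arithmetic constraints; it is not a general fact. Your proposed alternative of passing to a finite \'etale cover cannot repair this either, since pulling back along a finite cover replaces a local monodromy operator by a power of itself and therefore cannot make $\mathrm{diag}(2,1/2)$ unipotent.

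The gap is filled by density rather than by modifying $\rho$. The irreducible locus in $M_B(X,r,\chi)(\C)$ is nonempty (it contains the moduli point of $\rho$) and Zariski open, and Theorem~\ref{thm:wa} 2) shows that weakly arithmetic points, which all have quasi-unipotent monodromy at infinity, are Zariski dense in $M_B(X,r,\chi)(\C)$ with the determinant $\chi$ held fixed. Intersecting, one finds an irreducible complex point with \emph{some} quasi-unipotent conjugacy classes $T_i$ at infinity and determinant $\chi$, and one feeds that point --- not $\rho$ itself --- into Theorem~\ref{thm:dJE22}, which requires a choice of $T_i$ in its hypothesis. From there the remainder of your argument carries over unchanged.
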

 
 The proof  relies on Theorem~\ref{thm:dJE22}. This so defined obstruction is of course by no means coming from easy theorems, at least in the present state of knowledge. As explained before, Theorem~\ref{thm:dJE22} relies both on the Langlands duality for a curve over a finite field and on the geometric Langlands duality. But it has the  advantage  that in the definition 
 of the obstruction we do not need to specify which elements of $\Gamma$ are supposed to come from the local fundamental groups at infinity: none of those  can have this property.

\newpage

\section{Lecture 8:  Rigid Local Systems and $F$-Isocrystals  } \label{sec:Fiso}

\begin{abstract} Rigid connections over  a variety  $X_{\C}$ smooth projective  over $\C$, while restricted to the formal $p$-completion $\hat X_W$  of  a non-ramified projective $p$-adic model $X_W$  of $X_\C$, yield $F$-isocrystals. This is proved in \cite[Theorem~1.6]{EG20}, using the theory of Higgs-de Rham flows on the mod $p$ reduction of $X_W$. We give here a $p$-adic proof of this theorem, obtained with Johan de Jong, which relies on the fact that  for $p\ge 3$, the Frobenius pull-back of a connection on $\hat X_W$ is well defined, whether 
 the $p$-curvature of the mod $p$ reduction is nilpotent or not. However this  proof so far does not give the crystallinity property proved in  \cite[Theorem~5.4]{EG20} which can be expressed by saying that  the $p$-adic local systems obtained on the $p$-adic variety $X_{\overline{{\rm Frac}(W)}}$  for $p$ large descend to a crystalline local system  over $X_{{\rm Frac}(W)}$. The version under the strong cohomological rigidity of the same theorem is worked out in \cite{EG21}. We defer this discussion to Lecture~\ref{sec:crysLS}.

\end{abstract}

\subsection{Crystalline site, crystals and  isocrystals}
We refer to \cite[Section~2.6]{EG20} for the details of the definitions, and for the appropriate references.  

\medskip

Given $X$ smooth over a perfect  characteristic $p>0$ field $k$, we denote by $W=W(k)$ the ring of Witt vectors and by $W_n=W_n(k)=W/p^n$ its $n$-th truncation.  We define the site $(X/W_n)$ with objects the $PD$-thickenings  $(U,T, \delta)$ over $W_n$, where 
$U\hookrightarrow T$ is a closed embedding defined by an ideal $I \subset \sO_T$ on which $\delta$ defines a $PD$-structure over $(W_n, pW_n)$.  So for all $x_i,  \ i=1,\ldots, s$ over $(W_n, pW_n)$
it holds 
 $$m x_1^{[{n_1]}}\cdots x_s^{[{n_s]}}=0$$ for some powers $n_i, m \in \N$ (see \cite[I 1.3.1  ii), p.56] {Ber74}).

 This yields functors $(X/W_n) \hookrightarrow (X/W_{n+1})$ and the {\it crystalline site}  $(X/W)$ as the colimit over $n$ of the $(X/W_n)$.  The Homs are the obvious ones respecting the whole structure. 
A {\it crystal}  $\sF/W$   in coherent modules  on $X/W$ is the datum for all $(U, T, \delta)$ of a coherent sheaf  $\sF_T$ so the transition functions are isomorphisms. The category of {\it  isocrystals} has the same objects and the Hom sets, which are abelian groups,  are tensored over $\Z$ by $\Q$.  For us the main concrete description is \cite[Theorem~2.19, Corollary~2.20]{EG20} according to which, {\it  if } $\hat X_W$ {\it is an essentially smooth formal scheme over} $W$  {\it lifting} $X$, a crystal is
\begin{itemize}
\item[i)]  a flat formal  connection $(\hat E_W, \hat \nabla_W)$ on $\hat X_W$;
\item[ii)] such that $(\hat E_W, \hat \nabla_W)\otimes_Wk$ is filtered by subconnections so the associated graded is spanned over a dense open of $X$  by a full set of algebraic solutions  (that is its $p$-curvature is nilpotent, see Section~\ref{sec:pcurv}).
\end{itemize}
By the classical theorem, i) implies that that $\hat E_K:=\hat E_W\otimes_W K$ where $K={\rm Frac}(W)$ is locally free. This is not the case for $\hat E_W$ even with the condition ii), see \cite[1.3]{ES15}. It is also not known whether or not there is  always find a locally free lattice in $\hat E_K$ which is stabilized by $ \hat \nabla_K$.

\subsection{Nilpotent crystalline site,  crystals and  isocrystals}
We refer to \cite[Section~1-2]{ES18} and references in there.  

\medskip

The {\it nilpotent crystalline site} has objects $(U,T,\delta)$ as  for the  crystalline site,  but in addition $I$  itself is locally  nilpotent. 
So for all $x_i,  \ i=1,\ldots, s$ over $(W_n, pW_n)$,
one has  $$x_1^{{[n_1]}}\cdots x_s^{{[n_s]}}=0$$ for some powers $n_i\in \N$.  This yields a continuous functor $$(X/W)_{Ncrys}\to (X/W)_{crys}$$ of topoi. 
Crystals and isocrystals are defined similarly but given $\hat X_W$ as above, then a crystal on the nilpotent crystalline site is
\begin{itemize}
\item[i)]  a flat formal connection $(\hat E_W, \hat \nabla_W)$ on $\hat X_W$.
\end{itemize}
(See \cite[ I 3.1. 1 ii) p.56, III 1.3.1 p. 187, IV 1.6.6 p. 248]{Ber74}.)
There is {\it no} condition on the mod $p$ reduction. 

\subsection{The Frobenius action on  the set of crystals on the nilpotent crystalline site} \label{sec:Frob}
As $\hat X_W$ is formally smooth over $W$, the Frobenius of $X$ locally in the Zariski topology lifts to $\hat X_W$. The following proposition is due to J. de Jong. As far as I could see,  it is  not documented in the literature. However, the proof is exactly as the one in \cite[tag/07JH]{SP} where unfortunately there the condition ii) is assumed to be fulfilled, but .... not  really really used. 

\begin{prop} \label{prop:dJ}
Let $(\hat E_W,\ \hat \nabla_W)$ be a formal  flat connection on $\hat X_W$.   Assume $p\ge 3$. There is a unique formal  flat connection denoted by $F^*(\hat E_W, \hat \nabla_W)$ on $\hat X_W$ with the property that if on an open $\hat U_W\subset \hat X_W$,  the Frobenius $F$ of $U$ lifts to $F_W$, then  $$F^*(\hat E_W, \hat \nabla_W)|_{\hat U_W} =F^*_W(( \hat E_W,\ \hat \nabla_W)|_{\hat U_W}).$$

\end{prop}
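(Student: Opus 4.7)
The plan is the classical one for globalizing Frobenius pullback on connections, essentially the argument recorded in \cite[tag/07JH]{SP} (with the difference that here we do not impose any nilpotence condition on the $p$-curvature). Since $\hat X_W$ is formally smooth over $W$, the Frobenius $F$ of $X=\hat X_W\otimes_W k$ lifts Zariski locally, so I will choose an affine open cover $\{\hat U_i\}$ of $\hat X_W$ equipped with étale coordinates and with chosen lifts $F_i:\hat U_i\to \hat U_i$ of $F|_{U_i}$. On each $\hat U_i$ I set $F^*(\hat E_W,\hat\nabla_W)|_{\hat U_i}:=F_i^*((\hat E_W,\hat\nabla_W)|_{\hat U_i})$, which is a formal flat connection. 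The problem is to produce canonical gluing isomorphisms $\phi_{ij}:F_j^*((\hat E_W,\hat\nabla_W)|_{\hat U_{ij}})\xrightarrow{\cong} F_i^*((\hat E_W,\hat\nabla_W)|_{\hat U_{ij}})$ on overlaps $\hat U_{ij}=\hat U_i\cap\hat U_j$, compatible with the connections, and satisfying the cocycle condition on triple overlaps; the resulting descent datum will yield the global $F^*(\hat E_W,\hat\nabla_W)$ and its uniqueness will be automatic from the construction.

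The gluing isomorphism is given by a Taylor series. In local étale coordinates $x_1,\dots,x_d$ on $\hat U_{ij}$ (these exist after further shrinking), write $\xi_k=F_i^*(x_k)-F_j^*(x_k)\in p\,\sO(\hat U_{ij})$, and for a multi-index $\alpha\in\N^d$ write $\xi^\alpha=\prod_k\xi_k^{\alpha_k}$. For a local section $e$ of $\hat E_W$, set
\[
\phi_{ij}(F_j^*e)=\sum_{\alpha\in\N^d}\frac{1}{\alpha!}\,\xi^\alpha\cdot F_j^*\bigl(\hat\nabla_W^{\alpha}(e)\bigr),
\]
where $\hat\nabla_W^{\alpha}$ denotes the iterated covariant derivative $\partial_{x_1}^{\alpha_1}\cdots\partial_{x_d}^{\alpha_d}$ for $\hat\nabla_W$. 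Convergence is the key point and is precisely where the hypothesis $p\ge 3$ enters: since $v_p(\xi_k)\ge 1$ and $v_p(\alpha!)\le |\alpha|/(p-1)$, we obtain $v_p(\xi^\alpha/\alpha!)\ge |\alpha|\cdot(p-2)/(p-1)$, which tends to infinity as $|\alpha|\to\infty$ as soon as $p\ge 3$. For $p=2$ the correction factor vanishes and there is no $p$-adic convergence; this is the sole source of the hypothesis.

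It remains to verify (a) that $\phi_{ij}$ is $\sO_{\hat U_{ij}}$-linear and respects the Frobenius-pulled back connections, (b) independence of the chosen coordinate system (which one checks by substituting a change of étale coordinates into the Taylor series, the combinatorics being the classical one for formal groups), (c) the cocycle identity $\phi_{ik}=\phi_{ij}\circ\phi_{jk}$ on triple overlaps, and (d) $\phi_{ii}=\mathrm{id}$. Properties (a) and (d) are direct from the formula; (b) and (c) reduce to a formal identity of Taylor expansions essentially expressing that the assignment ``lift of Frobenius'' $\mapsto$ ``pullback functor on connections'' is natural up to a canonical and transitive system of isomorphisms. The main technical obstacle is the bookkeeping for (c): one must expand $\phi_{ij}\circ\phi_{jk}$ and rearrange using the Leibniz rule and the flatness of $\hat\nabla_W$ (integrability is essential here so that mixed partials commute) to match term-by-term with the Taylor series defining $\phi_{ik}$ in terms of $\xi_k^{(ik)}=F_i^*(x_k)-F_k^*(x_k)=\xi_k^{(ij)}+\xi_k^{(jk)}$; the argument mirrors the convergence step and works verbatim for $p\ge 3$. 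Uniqueness of $F^*(\hat E_W,\hat\nabla_W)$ is then clear: any candidate must restrict to $F_i^*$ on each $\hat U_i$ and, being a global connection, must be glued by some isomorphisms on overlaps, which are forced to coincide with $\phi_{ij}$ by the compatibility with the flat structure.
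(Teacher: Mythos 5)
Your proposal is correct and takes essentially the same route as the paper: both glue local Frobenius pullbacks via the Taylor-series comparison isomorphism $\sum_{\alpha}\frac{1}{\alpha!}\xi^\alpha\cdot\hat\nabla^\alpha$, and both identify the convergence of this series (via $v_p(\xi_k)\ge 1 > 1/(p-1)$, equivalently the $(p-2)/(p-1)$ growth estimate) as the sole place where $p\ge 3$ is used. The paper phrases this as the comparison of two lifts $F_W, G_W$ on a single open rather than as a Čech gluing datum, but the content is identical, and both cite the same Stacks Project tag.
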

\begin{proof}({\it loc. cit.}) The goal is to show that if $F_W, G_W$ are two  lifts to $\hat U_W$  of $F$ on $U$,  there is  a commutative diagram
\ga{}{\xymatrix{\ar[d]_{\hat \nabla}  F^*_W \hat E_W \ar[r]^\psi & G_W^* \hat E_W \ar[d]^{ \hat \nabla} \\
\Omega^1_{\hat U_W} {\hat \otimes}_{\sO_{\hat U_W}} F_W^* \hat E_W \ar[r]^{1\otimes \psi} & \Omega^1_{\hat U_W} {\hat \otimes}_{\sO_{\hat U_W} } G_W^* \hat E_W 
} \notag}
which is canonical.  So then it fulfils the cocycle condition.  Here $\hat{\otimes}$ denotes the completed tensor product. So we may assume that $\hat U_W$ is \'etale, finite 
onto its image,   an open of $\hat \A^d_W$,  where $d={\rm dim}X$, so as to have coordinates $(x_1, \ldots, x_d)$.  This defines the derivation $\p_i \in T_{\hat U_W}$ dual to $dx_i$, acting on $\hat E_W|_{\hat U_W}$, and the action of  the differential operator  $\p_{\underline{k}} =\p_1^{k_1}\cdots \p_{d}^{k_d}$  on $\hat E_W|_{\hat U_W}$ for all multi-indices $\underline{k}=(k_1,\ldots, k_d)$.  We write 
$F^*_W \hat E_W=
 F_W^{-1} \hat E_W \otimes_{F^{-1}\sO_{\hat U_W}} \sO_{\hat U_W}$.
Then we define
\ga{}{\psi(e\otimes 1)= \sum_{\underline{k}} \p_{\underline{k}}(e)\otimes_{G_W^{-1} \sO_{\hat U_W}}\prod_{i=1}^d (F_W(x_i)-G_W(x_i))^{k_i}/(k_i)!.\notag}
By definition $ (F_W(x_i)-G_W(x_i)) \in p\sO(\hat U_W)$  so its $p$-adic valuation is $\ge 1$. On the other hand,  the $p$-adic exponential function converges on elements of $p$-adic valuation $>1/(p-1)$. 
So $\psi$, thus $F^*$,  is defined for $p\ge 3$. This finishes the proof. 
\end{proof}
\begin{rmk}
If the connection was defined on $X_\sO$ where $\sO\supset W$ is the ring of integers of a $p$-adic field of degree $\ge 2$ over ${\rm Frac}(W)$,  the  same formula with the obvious change of notation  would yield 
$ (F_\sO(x_i)-G_\sO(x_i)) \in \pi \sO(U_W)$ where $\pi$ is a uniformizer of $\sO$. Then  the convergence would be violated and we could not write  the diagram above. In this case we need condition ii) to define the Frobenius pull-back of $(E_\sO, \nabla_\sO)$ in this way when the index of ramification of $\sO$ lies in $[2, p-1($.

\end{rmk}

\subsection{ The Frobenius induces an isomorphism on cohomology in characteristic $0$.}
We set $K={\rm Frac}(W)$ and keep the notation from  the previous paragraph.   In addition we set $$(\hat E_W, \hat \nabla_W)\otimes_W K=(\hat E_K, \hat \nabla_K)$$ and abuse notation
$$F^*(\hat E_K, \hat \nabla_K):=F^*(\hat E_W,\hat \nabla_W)\otimes_W K.$$
\begin{prop} \label{prop:uni}
Let $(\hat E_W, \hat \nabla_W)$ be a  flat formal  connection on $\hat X_W$. Then  for all cohomological degrees $i$, 
\ga{}{ F^*:  H^i(\hat X_W, (\hat E_W, \hat \nabla_W))\otimes_W K \to 
H^i(\hat X_W, F^*(\hat E_W, \hat \nabla_W)) \otimes_W K  \notag}
is an isomorphism and is compatible with cup-products.

\end{prop}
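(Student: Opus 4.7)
The plan is to combine the local description of $F^\ast$ coming from Proposition~\ref{prop:dJ} with the classical theorem that Frobenius is a quasi-isogeny on crystalline cohomology.

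First I would verify that $F^\ast$ induces a well-defined map of de Rham complexes. Locally on an affine open $\hat U_W \subset \hat X_W$ equipped with a Frobenius lift $F_W$, the natural candidate is the composite
\[
\Omega^\bullet_{\hat U_W}\otimes\hat E_W \xrightarrow{F_W^\ast} F_W^\ast\Omega^\bullet_{\hat U_W}\otimes F_W^\ast\hat E_W \xrightarrow{dF_W\otimes\mathrm{id}} \Omega^\bullet_{\hat U_W}\otimes F^\ast\hat E_W,
\]
where the second arrow is induced by the derivative $dF_W\colon F_W^\ast\Omega^1_{\hat U_W}\to \Omega^1_{\hat U_W}$. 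Inspection of the explicit formula for $\psi$ in Proposition~\ref{prop:dJ} shows that $\psi$ is compatible with exterior multiplication and with tensor products of formal flat connections, so the local composites glue to a canonical morphism of de Rham complexes which is moreover a morphism of (sheaves of) differential graded algebras, up to the coefficient bimodule. Taking hypercohomology produces the map $F^\ast$ of the statement, and the DGA structure delivers the cup product compatibility.

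For the isomorphism assertion after $\otimes_W K$, I would identify the de Rham hypercohomology $H^i(\hat X_W,(\hat E_W,\hat\nabla_W))$ with the nilpotent crystalline cohomology of the crystal associated to $(\hat E_W,\hat\nabla_W)$, via the Berthelot comparison. Under this identification the map $F^\ast$ corresponds to the pullback along the absolute Frobenius of $X_k$, which is known to be a quasi-isogeny on cohomology. The standard proof proceeds by d\'evissage on the $p$-adic filtration $0\to \hat E_W \xrightarrow{p} \hat E_W \to \hat E_W/p\to 0$ and a five-lemma, reducing the claim to a bound on the $p$-adic denominators of $F^\ast$ on the de Rham complex that is controlled by the Hodge-Newton inequality of Mazur.

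The main obstacle is that Proposition~\ref{prop:uni} is stated for an arbitrary formal flat connection $(\hat E_W,\hat\nabla_W)$, with no Frobenius structure and with no nilpotence assumption on the $p$-curvature of its mod $p$ reduction. Hence one cannot directly quote the classical results for $F$-isocrystals or for convergent/overconvergent isocrystals; the quasi-isogeny statement must be verified in the broader setting of nilpotent crystals. The uniform control on denominators needed to do so comes from the key observation that the derivative $dF_W$ on $F_W^\ast\Omega^1$ lands in $p\,\Omega^1_{\hat X_W}$, which is precisely what makes the entire formalism of the Frobenius pullback on de Rham complexes work for $p\geq 3$ even in the absence of any structural hypotheses on the connection.
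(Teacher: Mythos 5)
Your construction of the map $F^*$ on de Rham complexes, and the observation that it is a morphism of differential graded objects and hence compatible with cup products, is correct and essentially matches the paper's brief remark that the cup product compatibility is checked locally on an open with a Frobenius lift, where it is trivial.

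The isomorphism argument, however, has a genuine gap, and the route you propose is not the one the paper follows. You want to identify de Rham hypercohomology with (nilpotent) crystalline cohomology and then invoke ``the classical theorem that Frobenius is a quasi-isogeny on crystalline cohomology.'' As you yourself note, this theorem is stated for the constant crystal or, more generally, for $F$-crystals, where the Frobenius structure on the crystal is part of the data. Here $(\hat E_W,\hat\nabla_W)$ is an arbitrary formal flat connection with no Frobenius structure and no nilpotence hypothesis on its $p$-curvature, so there is no slope filtration and Mazur's Hodge--Newton inequality (which compares the Newton polygon of an $F$-crystal to its Hodge polygon) is not available: there is no Newton polygon to speak of. The d\'evissage you propose, reducing mod $p$ via $0\to \hat E_W\xrightarrow{p}\hat E_W\to \hat E_W/p\to 0$ and the five-lemma, also does not close: on the mod $p$ de Rham complex of the special fibre, the map induced by Frobenius pullback is \emph{not} an isomorphism (the relation between de Rham cohomology of $X_k$ and that of its Frobenius pullback is governed by the Cartier isomorphism, which involves a degree shift and the conjugate filtration, not an identification of hypercohomology groups), and after the five-lemma one would at best obtain a statement about $\varprojlim H^i(\hat E_W/p^n)$, which is stronger than the rational statement sought but rests on a mod $p$ input that is false. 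The observation that $dF_W$ has image in $p\,\Omega^1$ is what makes the map $F^*$ well-defined; it does not by itself control the cokernel after inverting $p$.

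The paper instead argues locally: after Mayer--Vietoris one reduces to an affine open $\hat U_W$ admitting a finite \'etale map onto an open $\hat V_W\subset\hat{\A}^d_W$, on which one may take the standard coordinate lift $F_W(x_i)=x_i^p$. Writing $\hat V'_W=F_W^{-1}\hat V_W$, the cohomology of the Frobenius pullback on $\hat V'_W$ decomposes, after $\otimes_W K$, into isotypic pieces for the characters $\chi_i$ of the Kummer cover $x_i\mapsto x_i^p$, and only the trivial tensor product $\otimes_i\chi_i^{n_i}$ with all $n_i=0$ extends as a formal flat connection over $\hat V_W$; this identifies the rational cohomology of the pullback with that of the original connection. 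This local Kummer-cover decomposition, not a global crystalline quasi-isogeny theorem, is the engine of the proof, and it is precisely what allows the argument to go through without any nilpotence or Frobenius-structure hypothesis.
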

\begin{proof}
By the Mayer-Vietoris spectral sequence it is enough to prove the first part of the proposition on  an open $\hat U_W$ lifting $U$  on which the Frobenius  lifts to $\hat F_W$
  so as to have  an  \'etale map $h: \hat U_W\to  \hat \A^d_W$ finite onto its image, as in Section~\ref{sec:Frob}.  To be more precise, we first construct a finite \'etale morphism 
  $U_k\xrightarrow{h_k}  V_k$ onto an affine open $V_k \hookrightarrow  \A^d_k$,  then a lift of the  diagram  $U_k\to V_k\hookrightarrow \A^d_k$  to the diagram of  formal affine schemes 
  $\hat U_W \xrightarrow{h} \hat V_W\hookrightarrow \hat \A^d_W.$

    \medskip

  On $\hat \A^d_W$ we choose the lift $F_W$ which is defined by $F^*_W(x_i)=x_i^p$. 
  This defines $\hat V'_W:=F_W^{-1} \hat V_W$.
  We consider the cartesian diagram
\ga{}{\xymatrix{ \ar[d]^{h\times 1} \hat U_W\times_{\hat V_W}  \hat V'_W \ar[rr]^{F'_W} &  & \hat U_W \ar[d]^h \\
\hat V'_W \ar[rr]^{F_W} &  & \hat V_W} \notag }
defining $F'_W$.  Both  $\hat U_W\times_{\hat V_W}  \hat  V'_W$ and $\hat U_W$ are formal affine schemes with reduction
modulo $p$ isomorphic to $U_k$, so there is 
an isomorphism of formal schemes 
\ga{}{ \tau: \hat U_W\xrightarrow{\cong}  \hat U_W\times_{ \hat V_W}  \hat V'_W \notag}
over $W$,  see e.g. \cite[Th\'eor\`eme~2.1.3]{Ara01}, defining 
\ga{}{\xymatrix{  \hat U_W  \ar@/^2pc/[rrrr]^{G_W}  \ar[rr]^{\tau \cong}   \ar[drr]_g  &   & \ar[d]^{h\times 1} \hat U_W\times_{\hat V_W}  \hat V'_W \ar[rr]^{F'_W} &  & \hat U_W \ar[d]^h \\
& & \hat V'_W \ar[rr]^{F_W} &  & \hat V_W} \notag }
So $G_W$ 
is a  lift of Frobenius. 
By base change, it holds
\ml{}{  H^0(\hat U_W, G_K^* (\hat E_W,  \hat \nabla_W))=
H^0(\hat V'_W,  g_*G_K^* (\hat E_W, \hat  \nabla_W)) =\\  H^0(\hat V'_W, F_W^* h_* (\hat E_W, \hat \nabla_W))\notag}
so we may assume 
\ga{}{ (G_W: \hat U_W\to \hat U_W)=(F_W: \hat V'_W\to \hat V_W).\notag}
  Then
\ga{}{ H^0( \hat V'_W, F_W^* (\hat E_W, \hat \nabla_W)) \otimes_W K=
\oplus_{\chi_i, 0\le n_{ij_i}\le p-1}
 H^0(\hat V_K, (\hat E_K, \hat \nabla_K) 
  \otimes_{i=1}^d \chi^{n_{ij_i}} )\notag}
where the $\chi_i$ are the characters defining the Kummer cover $x_i \mapsto x_i^p$.
The only  tensor product  $\otimes_{i=1}^d \chi^{n_{ij_i}} $  which
extends as a formal  flat connection on $ \hat V_W$ is the trivial one, that is  the one for which all the $n_{ij_i}=0, \ i=1,\ldots, d$. Thus
\ga{}{ H^i( \hat V'_W, F_W^* (\hat E_W,  \hat \nabla_W)) \otimes_W K=H^i(\hat V_W, (\hat E_W, \hat \nabla_W)) \otimes_W K.\notag}
The second part of the proposition is also checked locally on $\hat U_W$ with Frobenius lift $F_W$, where it is trivial.
This finishes the proof.

\end{proof}

\subsection{Proof of \cite{EG20}, Theorem~1.6} \label{sec:pf}
Let $X_\C$ be a smooth projective variety. Let $(E_i, \nabla_i)_\C, \ i=1,\ldots, N$ be the finitely many irreducible rank $r$  rigid local systems with a fixed torsion determinant $\sL$.  We denote by the same letter $\sL$ the associated rank one connection $(\sL\otimes_{\C} \sO_{X_{\rm an}}, 1\otimes d)$. 
Let $M_{dR}(X,r,\sL)$   be the de Rham  moduli space of irreducible rank $r$  flat connections on $X_\C$ with determinant $\sL$.  So the connections  $(E_i,\nabla_i)_\C$ are the isolated complex points of $M_{dR}(X,r,\sL)$.  Let $S$ be a scheme such that
\begin{itemize}
\item[0)] $S\to {\rm Spec}(\Z)$ is smooth of finite type, $2$ is not in the image;
\item[1)]   $X_\C$ has a smooth projective model $X_S$;
\item[2)]  $\sL$ has a model $\sL_S$ and $M_{dR}(X,r,\sL)$ has a flat model  $M_{dR}(X,r,\sL) _S$   over $S$ (\cite[Theorem~1.1]{Lan14});
\item[3)]  the connections $(E_i,\nabla_i)_\C, \ i=1,\ldots, N$  have a model $(E_{i},\nabla_{i})_S$ defining $N$ $S$-sections of $M_{dR}(X,r,\sL)_S$;
\item[4)] $M_{dR}(X,r,\sL) _{S,red}$ is \'etale over $S$ at those $N$  pairwise different $S$-points; 
\item[5)]  the formal completion of $M_{dR}(X,r,\sL) _{S}$  at those sections is flat over $S$;
\item[6)]  the $S\otimes_{\Z} \Q$-modules $$H^j(X_S, \sE nd(E_{i}, \nabla_{i})_S), \ j=1,2,  i=1,\ldots, N$$ and 
\ml{}{  K((E_i,\nabla_i))_S= \\
 {\rm Ker}   \big( H^1(X_S, \sE nd(E_{i}, \nabla_{i})_S)
 \xrightarrow{x\mapsto x\cup x} 
H^2(X_S, \sE nd(E_{i}, \nabla_i)_S) \big)  \notag }
 satisfy base change.

\end{itemize}

\begin{thm} \label{thm:padic}
For any point ${\rm Spec}W\to S$ where $W=W(k)$ and $k$ is a perfect characteristic $p>0$ field,  for any $i=1,\ldots, N$, the $p$-completion $(\hat E_{i},\hat \nabla_{i})_W$ of $(E_{i},\nabla_{i})_S|_{X_W}$ is an isocrystal with a Frobenius structure. In particular it has nilpotent $p$-curvature. 
\end{thm}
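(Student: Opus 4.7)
The plan is to use Proposition~\ref{prop:dJ} to define the Frobenius pullback of $(\hat E_i,\hat\nabla_i)_W$, use Proposition~\ref{prop:uni} to show the pullback again represents a rigid connection, and then deduce by a pigeonhole argument on a suitable finite set of rigid connections that some iterate $F^{*m}$ fixes $(\hat E_i,\hat\nabla_i)_K$ up to isomorphism; the resulting Frobenius structure will force the $p$-curvature on the mod-$p$ reduction to be nilpotent.

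First, since condition~0) gives $p\ge 3$, Proposition~\ref{prop:dJ} defines $F^*(\hat E_i,\hat\nabla_i)_W$ as a flat formal connection on $\hat X_W$. Formal GAGA, available because $X_W\to\Spec W$ is projective, identifies this formal connection with an algebraic flat connection on $X_W$. Inverting $p$ and base changing via a chosen embedding $K\hookrightarrow \C$ produces a rank-$r$ flat algebraic connection on $X_\C$ with determinant $F^*\sL\cong \sL^{\otimes p}$.

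Next I would argue that $F^*$ preserves rigidity. Since $\sL$ is torsion of order coprime to $p$, the orbit $\{\sL^{\otimes p^k}\}_{k\ge 0}$ is finite, and for each determinant in this orbit there are by assumption only finitely many rigid rank-$r$ connections on $X_\C$. Let $\sR$ denote the union of these finite sets of isomorphism classes. Applying Proposition~\ref{prop:uni} to the sheaves $\sH om((\hat E_i,\hat\nabla_i),(\hat E_j,\hat\nabla_j))$ and to $\sE nd^0$-bundles gives $K$-linear isomorphisms between the $H^j$ of these sheaves and their $F^*$-pullbacks, compatibly with cup products. The $H^0(\sH om)$ statement implies that $F^*$ is injective on isomorphism classes of flat $K$-connections; the $H^j(\sE nd^0)$ statements for $j=1,2$ imply, via Kuranishi-type deformation theory, that the formal neighbourhood of $F^*(\hat E_i,\hat\nabla_i)_K$ in the moduli of flat connections with determinant $F^*\sL$ is isomorphic to the formal neighbourhood of $(\hat E_i,\hat\nabla_i)_K$ in $M_{dR}(X,r,\sL)_K$. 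By conditions~4), 5) and~6) the latter is supported on a single closed point, so the same holds of the former, showing that $F^*(\hat E_i,\hat\nabla_i)$ is again rigid. Hence $F^*$ restricts to an injection $\sR\to\sR$ of a finite set, and is therefore a bijection.

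Because $\sR$ is finite there exists $m\ge 1$ with $F^{*m}|_\sR = \id$; in particular $F^{*m}(\hat E_i,\hat\nabla_i)_K\cong (\hat E_i,\hat\nabla_i)_K$ as flat formal connections on $\hat X_K$. This isomorphism is precisely a Frobenius structure for the $m$-th iterate of Frobenius, endowing $(\hat E_i,\hat\nabla_i)_K$ with the structure of an $F$-isocrystal (over $W(\F_{p^m})$ after the standard extension of scalars). The classical fact that a flat connection admitting such a Frobenius structure has nilpotent $p$-curvature on its mod-$p$ reduction then closes the argument, verifying both the isocrystal condition of Section~\ref{sec:Fiso} and the Frobenius structure. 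The main obstacle will be the rigidity step: one must check that $F^*(\hat E_i,\hat\nabla_i)$, initially only a formal connection produced by Proposition~\ref{prop:dJ}, represents an isolated point of the complex moduli $M_{dR}(X,r,F^*\sL)(\C)$. This rests simultaneously on formal GAGA for algebraicity, on Proposition~\ref{prop:uni} for the cohomological invariance, and on the base-change hypothesis~6) to compare the formal moduli over $W$ with the complex moduli.
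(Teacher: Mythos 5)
Your proposal is essentially correct and follows the same strategy as the paper's proof: Proposition~\ref{prop:dJ} to define $F^*$, Proposition~\ref{prop:uni} together with the formal-versus-algebraic de Rham comparison and a Goldman--Millson type identification of the completed local ring of $M_{dR}$ with the completion of the cup-product cone, to show that $F^*$ carries the isolated points $(\hat E_i,\hat\nabla_i)_K$ to isolated points and is injective on them, and then a finite permutation (pigeonhole) argument to extract some $M$ with $(F^M)^*(\hat E_i,\hat\nabla_i)_K\cong(\hat E_i,\hat\nabla_i)_K$. A few remarks on where you depart from the paper. First, the detour through the complex moduli (formal GAGA, inverting $p$, base change along $K\hookrightarrow\C$) is unnecessary overhead: the paper stays over $K$ throughout, and your own deformation-theoretic comparison between the completion at $F^*(\hat E_i,\hat\nabla_i)_K$ and the completion at $(\hat E_i,\hat\nabla_i)_K$ already lives over $K$, so you never really use the $\C$-point of view. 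The only step that needs algebraization is identifying the formal $F^*(\hat E_i,\hat\nabla_i)_W$ with a $K$-point of $M_{dR}(X,r,\cdot)_S$, and Grothendieck's existence theorem over $\Spec W$ already gives that; going further to $\C$ buys you nothing. Second, you correctly notice that $F^*$ need not preserve the determinant $\sL$ (it sends it into the finite orbit $\{\sL^{\otimes p^k}\}$), a point the paper's proof glosses over by implicitly treating $F^*(\hat E_i,\hat\nabla_i)_K$ as lying in $M_{dR}(X,r,\sL)$; this is a genuine refinement, but if you take this route you must enlarge the set of good-model conditions~2)--6) to hold simultaneously for all determinants in the orbit (or, more in the spirit of condition~6) which uses $\sE nd$ rather than $\sE nd^0$, run the deformation argument in $M_{dR}(X,r)$ without fixing the determinant). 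Third, the final step is not a ``classical fact'': once you have $(F^M)^*(\hat E_i,\hat\nabla_i)_K\cong(\hat E_i,\hat\nabla_i)_K$, you should observe that $(F^M)^*$ of any $W$-lattice reduces mod $p$ to an $(M$-fold$)$ absolute Frobenius pullback, which has $p$-curvature zero because $dF=0$; then you must invoke the lattice-independence of nilpotency of the $p$-curvature (the paper cites the proof of Proposition~3.1 of~\cite{ES18} for this) to conclude that the original lattice $(\hat E_i,\hat\nabla_i)_W$ has nilpotent $p$-curvature. Spelling that out closes the only real gap in your sketch.
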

\begin{proof}
By \cite[Proposition~4.4]{GM88} the completion of 
$ K((E,\nabla)_\C)$ at $0$  is isomorphic to the completion
at the  complex point $(E,\nabla)_\C$ of $M_{dR}(X,r, \sL)$. Grothendieck's comparison isomorphism
\ga{}{ H^i(X_W, (E_{i }, \nabla_{i})_S|_{X_W})  \xrightarrow{\cong} H^i(\hat X_W, (\hat E_i, \hat \nabla_i)_W) \notag
 }
 which implies the comparison isomorphism
 \ml{}{ H^i(X_K, (E_{i }, \nabla_{i})_S)=H^i(X_W, (E_{i }, \nabla_{i})_S|_{X_W})_\Q   \xrightarrow{\cong} \\ H^i(\hat X_W, (\hat E_i, \hat \nabla_i)_W)_\Q= H^i(\hat X_K, (\hat E_i, \hat \nabla_i)_K) \notag
 }
is compatible with the cup-product.  Here $K={\rm Frac}(W)$ and $\hat X_K=\hat X_W\otimes_W K$. Proposition~\ref{prop:uni} then implies that $F^*$ maps isomorphically the completion of $M_{dR}(X,r, \sL)_S$ at $(\hat E_i, \hat \nabla_i)_K$ to the one at $F^*(\hat E_i, \hat \nabla_i)_K$. Thus  the latter is  $0$-dimensional over $K$. We conclude
\ga{}{ F^*
(\hat E_i, \hat \nabla_i)_K = (\hat E_{\varphi(i)}, \hat \nabla_{\varphi(i)})_K 
 \ {\rm  for \ some \ } \varphi(i)\in \{1,\ldots, N\}. \notag} 
 Proposition~\ref{prop:uni} for $H^0$ implies then that $\varphi: \{1,\ldots, N\} \to \{1,\ldots, N\}$ is an injective map, thus is a bijection. Thus there is a natural number  $M$, which divides $N!$, such that $$\underbrace{\varphi \circ \ldots \circ \varphi}_{\text{M-times}}={\rm Identity}.$$
In other words
\ga{}{ (F^M)^* (\hat E_i, \hat \nabla_i)_K =(\hat E_i, \hat \nabla_i)_K \ \forall i=1,\ldots, N.\notag}
This finishes the first part of the proof. As for the second part, we apply  \cite[Proof~of~Proposition~3.1]{ES18} which shows that the nilpotency of the $p$-curvature does not depend on the lattice chosen. 

\end{proof}
\begin{rmk}
As compared to  \cite[Theorem~1.6]{EG20}, this proof has the advantage that it is $p$-adic as opposed to a characteristic $p>0$ proof. This is more natural as the result is $p$-adic. It also gives a larger $S$ on which the result holds.  It has the disadvantage to juggle with a cohomology which is nowhere studied in details (Proposition~\ref{prop:uni}) and, what is the most important point, it does not yield on a nose a Fontaine-Lafaille module on $X_W$, and its pendant which is a $p$-adic crystalline local system on $X_K$. This is the topic of Lecture~\ref{sec:crysLS}. The counting argument at the end is of course taken from the proof of \cite[Theorem~1.6]{EG20}.

\end{rmk}

\newpage
\section{Lecture 9:  Rigid Local Systems, Fontaine-Laffaille modules and Crystalline Local Systems} \label{sec:crysLS}
\begin{abstract}
As seen in Lecture~\ref{sec:Fiso}, originally proved with Michael Groechenig  in  \cite[Theorem~1.6]{EG20}, 
rigid connections  on 
 $X_{\C}$ smooth projective $\C$, while restricted to the formal $p$-completion $\hat X_W$  a non-ramified projective $p$-adic model $X_W$  of $X_\C$, yield $F$-isocrystals.   More is true.    By showing in \cite{EG20}  the existence of a {\it periodic} Higgs-de Rham flow  on  the formal connection $(\hat E_W,\ \hat \nabla_W)$  on $\hat X_W$,  we prove the existence of a Fontaine-Lafaille module structure on  $(\hat E_W,\ \hat \nabla_W)$ \cite[Section~4]{EG20}, which, via Faltings' functor, eventually yields a {\it crystalline} $\Z_{p^f}$-{\it local system} on the algebraic scheme $X_K$,  where $f$ is the period of the Higgs-de Rham flow.  This in turn implies that the rigid complex local systems on $X_\C$ , for $p>0$ large so they are integral by $p$, 
 the residual characteristic of such a good $W$,  descend to crystalline $\Z_{p^f}$-local systems, see   \cite[Section~5]{EG20}. This property remains true even if $X$ is only quasi-projective under a strong cohomological rigidity assumption,  which is  fulfilled on Shimura varieties of real rank $\ge 2$, and assuming  in addition that the local monodromies at infinity are unipotent, see \cite[Theorem~A.4, Theorem~A.22]{EG21}.  
\end{abstract}
\subsection{The main theorems}
We summarize the theorems in the projective case and then in the quasi-projective case separately as the assumptions in the latter case are more technical. 

\subsubsection{Good model in the projective case} \label{sec:goodproj}
We use the notation of Lecture~\ref{sec:Fiso}, Section~\ref{sec:pf}. We denote by $\sM$ the rank one Higgs bundle which corresponds to the torsion rank one connection $\sL$.  Explicitly, if $\sL=(L,\nabla)  $ then $\sM=(L, 0)$ as $\sL$ is assumed to be torsion. 
Recall from Simpson's theory  \cite[Lemma~4.5]{Sim92}  that on the Dolbeault moduli space $M_{Dol}(X,r,  \sM)$ of stable points over $\C$, we have the $\C^\times$-operation which acts as homotheties on the Higgs field. If $(V_i,\theta_i), \ i=1,\ldots, N$ are the $N$-Higgs bundles associated to the $N$ rigid connections $(E_i,\nabla_i), \ i=1,\ldots, N$, then $(V_i,\theta_i)$ is stable under $\C^\times$,  thus  $(E_i,\nabla_i)$ is a polarized complex variation of Hodge structure $(E_i,Fil_i,\nabla_i)$ where $Fil_i\subset E_i$ is a 
locally split  filtration  which satisfies  Griffiths transversality, and 
\ga{}{(V_i, \theta_i)=(gr (E_i),gr (\nabla_i)), \notag} see \cite[Lemma~4.5]{Sim92}.

\medskip

With reference to the conditions for the base $S$ of a {\it good model} in Lecture~\ref{sec:Fiso}, Section~\ref{sec:pf}, 
we request now $S$ to fulfil 
\begin{itemize}
\item[1a)] the same as 1) together with the existence of an $S$-point of $X_S$;
\item[2a)]  the same as 2)  for  $M_{dR}(X,r, \sL)$ and $M_{Dol}(X,r,\sM)$;
\item[3a)]  the same as 3)  for $M_{dR}(X,r, \sL)$ and   $M_{Dol}(X,r,\sM)$;
\item[4a)]  the same as 4)  for $M_{dR}(X,r, \sL)$ and  $M_{Dol}(X,r,\sM)$;
\item[5a)]  the same as 5)  for  $M_{dR}(X,r, \sL)$ and $M_{Dol}(X,r,\sM)$;
\item[6a)] the  filtrations $Fil_i\subset E_i$  have a model $Fil_{i,S} \subset E_{iS}$   for $i=1,\ldots, N$, which is locally split;
\item[7a)] for all closed points $s\in |S|$, the characteristic of the residue field of $s$ is $>r+1$ and $> {\rm dim}(X)$;
\item[8a)] the local systems $\mathbb L_i, i=1,\ldots, N$ associated by the Riemann-Hilbert correspondence to $(E_i,\nabla_i)_\C, i=1,\ldots, N$,  are integral at all residue characteristics $p$ of the closed points of $S$.
\end{itemize}
 The condition 6a) can be fulfilled as there are  finitely many filtrations $Fil_i$.  The conditions 2a)3a)4a)5a) can be fulfilled applying  the existence of the  flat moduli spaces $M_{dR}(X,r,\sL)_S$ and  $M_{Dol}(X,r,\sM)_S$ over $S$, due to Langer \cite[Theorem~1.1]{Lan14} and base change.  Note that condition 8a) implies that the $\mathbb L_i$ come from \'etale  $\bar \Z_p$- local systems on $X_\C$, which by base change, is the same as \'etale  $\bar \Z_p$- local systems on $X_{\bar K}$, where $K={\rm Frac}(W(s))$ and ${\rm Spec}(W(s))\to S$ is a Witt-vector point with residual closed point $s\in |S|$. 
  In addition they are  irreducible over $\bar \Q_p$.

\subsubsection{Theorem in the projective case}
\begin{thm}[See \cite{EG20}, Section 5] \label{thm:proj} For any closed point $s\in |S|$ in the basis of a good model, 
the  $p$-adic local systems $\mathbb L_i, i=1,\ldots, N$ on $X_{\bar K}$  descend to crystalline $p$-adic local systems on $X_K$.

\end{thm}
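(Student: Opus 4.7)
\emph{Plan of proof.} Fix a closed point $s\in|S|$ with perfect residue field $k$ of characteristic $p$, set $W=W(k)$ and $K={\rm Frac}(W)$. The good-model conditions 6a)--8a) give integral models $(E_{i,W},\nabla_{i,W},Fil_{i,W})$ of the $N$ rigid polarised complex variations of Hodge structures, with Langer-semistable graded Higgs bundles $(V_{i,W},\theta_{i,W})=gr(E_{i,W},\nabla_{i,W})$, and place us in the range ($p>r+1$, $p>\dim X$) where both the Ogus--Vologodsky inverse Cartier transform $C^{-1}$ and the Fontaine--Laffaille functor apply. The goal is to upgrade the $F$-isocrystal statement of Theorem~\ref{thm:padic} to an actual lattice-valued crystalline local system. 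My plan is to build a \emph{periodic Higgs--de Rham flow} on the $p$-adic completion $(\hat E_{i,W},\hat\nabla_{i,W},Fil_{i,W})$ in the sense of Lan--Sheng--Zuo and then feed it into Faltings's equivalence.

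The one-step flow $\Phi$ sends a filtered flat connection $(\bar E,\bar\nabla,\bar F)$ over $X_s$ to $C^{-1}(gr\bar E,gr\bar\nabla)$, equipped with the filtration prescribed by $C^{-1}$. The key claim is that $\Phi$ permutes the finite set $\{(E_{j,s},\nabla_{j,s},Fil_{j,s})\}_{j=1}^{N}$; iterating then gives a period $f\mid N!$. Such a periodic flow lifts to $\hat X_W$ because Proposition~\ref{prop:dJ} (using $p\ge 3$) makes $F^\ast$ well-defined on formal connections, while the integral filtration $Fil_{i,W}$ supplies the filtration datum. A periodic Higgs--de Rham flow of period $f$ is, following Lan--Sheng--Zuo and Krishnamoorthy--Sheng, exactly the data of a Fontaine--Laffaille module over $\hat X_W$ with coefficients in $\Z_{p^f}$ whose Hodge filtration sits in degrees $[0,p-2]$ (guaranteed by $p>r+1$). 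Faltings's functor then produces a crystalline $\Z_{p^f}$-local system $\mathbb L_{i,K}^{\rm cris}$ on $X_K$, and its $\bar\Q_p$-extension to $X_{\bar K}$ must agree with $\mathbb L_i$ because both have the same de Rham realisation $(E_i,\nabla_i)_K$, and the $N$ rigid local systems are distinguished by that realisation thanks to condition 4a).

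The main obstacle is proving that $\Phi$ indeed preserves the set of rigid reductions. Stability of $(V_{i,s},\theta_{i,s})$ follows from Langer's flat integral models of $M_{Dol}(X,r,\sM)$ combined with condition 5a), and this forces $C^{-1}(V_{i,s},\theta_{i,s})$ to be slope-semistable. The delicate point is to show that the resulting filtered connection lies again in the finite set of mod-$p$ rigid points and not merely in some larger semistable locus: the permutation $\varphi$ of Theorem~\ref{thm:padic} already shows that in characteristic zero $F^\ast$ permutes the $(\hat E_i,\hat\nabla_i)_K$, and $C^{-1}\circ gr$ is, up to a Frobenius twist, its mod-$p$ shadow; matching the two requires identifying the filtration produced by $C^{-1}$ with the mod-$p$ reduction of the integral Hodge filtration $Fil_{i,W}$, which in turn uses uniqueness of the polarised complex variation of Hodge structure at each $\C^\times$-fixed point of $M_{Dol}(X,r,\sM)$. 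Once this identification is secured, the crystallinity assertion, and thereby the descent of $\mathbb L_i$ from $X_{\bar K}$ to $X_K$, follow formally.
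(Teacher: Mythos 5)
Your overall plan matches the paper's: build a periodic Higgs--de Rham flow on $\hat X_W$, pass through Fontaine--Laffaille--Faltings to obtain a crystalline $\Z_{p^f}$-local system on $X_K$, and identify its restriction to $X_{\bar K}$ with $\mathbb L_i$. But the two steps you treat as almost routine are exactly where the work is, and neither argument you sketch closes its gap. For the claim that $\Phi=C_s^{-1}\circ gr$ permutes the $N$ mod-$p$ rigid reductions, you flag the difficulty but offer no valid mechanism: appealing to the permutation $\varphi$ of Theorem~\ref{thm:padic} and calling $C^{-1}\circ gr$ ``its mod-$p$ shadow up to Frobenius twist'' conflates two different operations --- $C^{-1}$ is not $F^*$; they are related through the conjugate filtration --- and in any case semistability of $C_s^{-1}(V'_i,\theta'_i)$ only places it somewhere in the semistable locus, not at an isolated point. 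What closes this in the paper (Theorem~\ref{thm:miracle}) is the Beauville--Narasimhan--Ramanan correspondence from Groechenig's thesis, which realises the Ogus--Vologodsky correspondence at the level of moduli \emph{schemes} with their nonreduced structure: a non-rigid image would produce a nilpotent thickening that transports to the Dolbeault side and contradicts the bounded multiplicities built into condition 5a). Your proposal invokes no substitute for this.

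The second, more serious gap is the closing identification: you assert the Faltings output must agree with $\mathbb L_i$ ``because both have the same de Rham realisation $(E_i,\nabla_i)_K$,'' but this comparison is not available. The crystalline local system $\sL_i(W)$ on $X_K$ does come with a filtered de Rham realisation $(\hat E_i,Fil_i,\hat\nabla_i)_K$ by construction of the Fontaine--Laffaille module. But $\mathbb L_i$, viewed through $\pi_1(X(\C))^{\wedge}\cong\pi_1(X_{\bar K})$ as a $p$-adic local system on the \emph{geometric} fibre $X_{\bar K}$, carries no Galois action and has no a priori de Rham realisation over $K$; its realisation $(E_i,\nabla_i)_\C$ lives over $\C$ via the complex Riemann--Hilbert correspondence, and matching it to a filtered connection over $K$ is precisely the content of the theorem, not a consequence of 4a). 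Condition 4a) distinguishes the $N$ rigid points of $M_{dR}$ and $M_{Dol}$ mod $p$, but says nothing about which $p$-adic local system on $X_{\bar K}$ a given crystalline local system on $X_K$ restricts to. The paper handles this final step through Faltings' $p$-adic Simpson correspondence and the results of Sun--Yang--Zuo, first showing $\sL_i(\bar K)\otimes\bar\F_p$ is irreducible via stability of the associated small Higgs bundle after a finite unramified extension $K'/K$, and then distinguishing the $N$ candidates by the same mechanism; none of this, nor any substitute for it, appears in your proposal.
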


\subsubsection{Good model in the quasi-projective case}
We use the notation of Lecture~\ref{sec:Fiso}, Section~\ref{sec:pf} and of \ref{sec:goodproj}.  We fix a good compactification $X\hookrightarrow \bar X$ with  smooth $\bar X$ projective such that $D:= \bar X\setminus X$ is a strict normal crossings divisor. The de Rham moduli space in rank $r$ and fixed determinant $\sL$ in the projective case  is replaced by the de Rham moduli $M_{dR}(\bar X,r,\sL, D)$ of connections $(E,\nabla)$  on $\bar X$ with log-poles along $D$ and with nilpotent residues.  We remark, even if we do not  use it, that then $\sL$ extends to a torsion rank one connection on $\bar X$.
The nilpotency of the residues  implies that the (Betti or de Rham) Chern classes of the underlying bundles $E$ are $0$, see 
\cite[Proposition~B.1]{EV86}. We have $N$-complex points   $(E_i,\nabla_i)$ of $M_{dR}(\bar X,r,\sL, D)$ which describe the $0$-dimensional components (rigid objects). 
Furthermore, \cite[Lemma~4.5]{Sim92} is replaced by   \cite[Theorem~10.5]{Moc06} which guarantees that we have a locally split  filtration $Fil_i\subset E_i$ as in the projective case which satisfied Griffiths transversality. Finally
\ga{}{(V_i, \theta_i)=(gr (E_i),gr (\nabla_i)), \notag}
is a stable Higgs bundle with logarithmic poles along $D$ and nilpotent residues and determinant $\sM=(L,0)$.
With reference to the conditions for the base $S$ of a {\it good model} in Lecture~\ref{sec:Fiso}, Section~\ref{sec:pf}, and the ones in the projective case,
we request now $S$ to fulfil 
\begin{itemize} 
\item[1b)] $X_S\hookrightarrow \bar X_S$ is a model of $X\hookrightarrow \bar X $  such that $\bar X_S/S$ is smooth projective and $D_S:= \bar X_S\setminus X_S$ is a relative normal
crossings divisor, and  there is  an $S$-point of $X_S$;
\item[2b)] the same as 2) for  $M_{dR}(\bar X,r, \sL, D)$ and $M_{Dol}(\bar X,r,\sM, D)$; 
\item[3b)] the same as 3)  for  $M_{dR}(\bar X,r, \sL, D)$ and $M_{Dol}(\bar X,r,\sM, D)$;
\item[4b)] the same as 4)  for  $M_{dR}(\bar X,r, \sL, D)$ and $M_{Dol}(\bar X,r,\sM, D)$;
\item[5b)] the same as 5)  for  $M_{dR}(\bar X,r, \sL, D)$ and $M_{Dol}(\bar X,r,\sM, D)$;
\item[6b)] the same as 6a) for the Mochizuki filtrations;
\item[7b)] for all closed points $s\in |S|$, the characteristic of the residue field of $s$ in $>2(r+1)$ and $>{\rm dim}(X)$.

\end{itemize}

The condition 6b) can be fulfilled  for the same reason as for 6a): they are  finitely many filtrations $Fil_i$. 
 The conditions 2b)3b)4b)5b) can be fulfilled applying  the existence of the  flat moduli spaces $M_{dR}(\bar X,r,\sL, D)_S$ and  $M_{Dol}(\bar X,r,\sM, D)_S$ over $S$, due to Langer \cite[Theorem~1.1]{Lan14} and base change.

\subsubsection{Theorem in the quasi-projective case}
A flat connection $(E,\nabla)$  with logarithmitic poles and nilpotent residues  is in particular Deligne's extension of its restriction to $X$, so 
\ga{}{ H^j(X, (E,\nabla)|_X)=H^j(\bar X,( \Omega^\bullet_{\bar X}(\log D) \otimes_{\sO_{\bar X}} E,\nabla)).\notag}
Note that the restriction map 
\ga{}{ H^1(\bar X, j_{!*} \sE nd^0(E,\nabla))\hookrightarrow H^j(X, \sE nd^0(E,\nabla)|_X) \notag}
where $\sE nd^0$ denotes the trace free endomorphisms, is injective. In particular, if the right hand side vanishes, so does the left hand side. We can then apply the integrality theorem \cite[Theorem~1.1]{EG18}, see Lecture~\ref{sec:companions}, so all prime numbers are integral for $\mathbb L_i, i=1,\ldots, N$ on $X$. This explains why the condition which is  the pendant to 8a) is automatically fulfilled under this cohomological assumption and does not appear in the list of conditions.

\begin{thm}[See \cite{EG21}, Theorem~A.22] \label{thm:PST}
Assume that $H^1(X, \sE nd^0 (E_i,\nabla_i))=0$ for $i=1,\ldots, N$.  Then for all closed points  $s\in |S|$ of a good model, and all Witt vector points ${\rm Spec}(W(s))\to S$, the  $p$-adic local systems $\mathbb L_i, i=1,\ldots, N$  on $X_{\bar K}$ descend to a crystalline $p$-adic local systems on $X_K$ where $p$ is the residue characteristic of $s$.

\end{thm}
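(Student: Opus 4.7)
The plan is to adapt the strategy of Theorem~\ref{thm:proj} to the logarithmic setting on $\bar X$, using the strong cohomological vanishing $H^1(X,\sE nd^0(E_i,\nabla_i))=0$ as the rigidity input at every step. By Mochizuki's extension of Simpson's correspondence recorded in condition 6b) of a good model, each rigid $(E_i,\nabla_i)$ carries a Hodge filtration $Fil_i$ satisfying Griffiths transversality, whose associated graded $(V_i,\theta_i)=\mathrm{gr}(E_i,\nabla_i)$ is a stable logarithmic Higgs bundle with nilpotent residues. This data spreads out over the good model $S$, and at a Witt point ${\rm Spec}(W)\to S$ yields on the formal $p$-adic completion $\widehat{\bar X}_W$ a formal logarithmic flat connection $(\hat E_{i,W},\hat\nabla_{i,W})$ together with its filtration. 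Moreover, the cohomological-rigidity hypothesis combined with the injection $H^1(\bar X,j_{!*}\sE nd^0(E,\nabla))\hookrightarrow H^1(X,\sE nd^0(E,\nabla)|_X)$ noted just before the theorem and with Theorem~\ref{thm:int} gives integrality of each $\mathbb L_i$ at $p$, which is what makes the ``descent to $X_K$'' issue meaningful.

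I would then construct the Higgs--de Rham flow on $\widehat{\bar X}_W$ in the sense of Lan--Sheng--Zuo: using the logarithmic inverse Cartier correspondence of Ogus--Vologodsky--Shiho (applicable since $p>2(r+1)$ and the residues are nilpotent, so the $p$-curvature of the mod-$p$ reduction is nilpotent), one passes from a log Higgs bundle to a log flat connection, lifts the Hodge filtration, takes the associated graded to produce a new log Higgs bundle, and iterates. Periodicity of this flow is the central step, and this is where the cohomological rigidity hypothesis really enters. The vanishing implies, via the same injection, that the log de Rham moduli $M_{dR}(\bar X,r,\sL,D)_S$ is \'etale at the $N$ rigid points (condition 4b)). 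Then the logarithmic analog of Theorem~\ref{thm:padic} applies on $\widehat{\bar X}_W$: the Frobenius pullback is well defined by the log version of Proposition~\ref{prop:dJ} (for $p\ge 3$, $W$ unramified, and log-smooth $\widehat{\bar X}_W$), and by counting it permutes the rigid points, giving a bijection $\varphi:\{1,\ldots,N\}\to\{1,\ldots,N\}$ with $F^*(\hat E_{i,K},\hat\nabla_{i,K})\cong (\hat E_{\varphi(i),K},\hat\nabla_{\varphi(i),K})$. One takes $f=\mathrm{ord}(\varphi)$, which divides $N!$, as the period of the flow.

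The periodic Higgs--de Rham datum then assembles into a logarithmic Fontaine--Laffaille module of period $f$ on $\bar X_W$ with nilpotent residues, and the bound $p>2(r+1)$ ensures that the Hodge--Tate weights lie in the admissible Fontaine--Laffaille range. Applying the logarithmic version of Faltings' functor produces a crystalline $\Z_{p^f}$-local system on $X_K$ whose base change to $X_{\bar K}$ is identified with $\mathbb L_i$; the identification uses the integrality noted above together with \v{C}ebotarev-type uniqueness on the generic fiber. The step I expect to be the main obstacle is the periodicity argument in the log setting: one must extend Proposition~\ref{prop:uni} to match the formal deformation space of $(\hat E_{i,W},\hat\nabla_{i,W})$ on $\widehat{\bar X}_W$ with the completion of $M_{dR}(\bar X,r,\sL,D)_S$ at $(E_i,\nabla_i)_S$, so that Frobenius pullback is seen to act as a bijection on the rigid locus. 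The other pieces --- log inverse Cartier and log Faltings functor --- are by now available in the literature under exactly the hypotheses $p>2(r+1)$ and nilpotent residues that the good-model conditions enforce, so the conceptual heart of the proof is the cohomological-rigidity--driven control of Frobenius pullback.
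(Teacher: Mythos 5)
Your overall plan---spread out the Mochizuki Hodge filtrations over a good model, construct a periodic logarithmic Higgs--de Rham flow, apply the logarithmic Fontaine--Laffaille--Faltings functor, and use the cohomological-rigidity vanishing together with Theorem~\ref{thm:int} to get integrality at $p$---is the right one and matches the route the paper's reference \cite{EG21} follows, in parallel with the projective proof sketched in the Lecture. The observations about conditions 6b), 7b), the log inverse Cartier correspondence, the nilpotent-residue Deligne extensions, and the role of Mochizuki's theorem replacing \cite[Lemma~4.5]{Sim92} are all placed correctly.

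The gap is in how you establish periodicity of the Higgs--de Rham flow. You propose to take the Frobenius-pullback bijection $\varphi$ furnished by the log analog of Proposition~\ref{prop:dJ} and Theorem~\ref{thm:padic}, and then declare $f=\mathrm{ord}(\varphi)$ to be the period of the flow. But the flow iterates $C^{-1}\circ\mathrm{gr}_{Fil}$, not $F^*$: one step passes through the associated graded of a Hodge filtration and then applies the (log) inverse Cartier operator, so one loses the connection and only retains the Higgs datum plus a new filtration to be chosen. That $F^*$ permutes the rigid connections does not by itself tell you that $C^{-1}\circ\mathrm{gr}_{Fil}$ sends each rigid connection to another rigid connection, nor does it produce compatible filtrations at every step. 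The paper flags exactly this point: the Remark following Theorem~\ref{thm:padic} says that the $p$-adic $F^*$ argument ``does not yield on a nose a Fontaine-Lafaille module on $X_W$,'' and that the crystalline statement requires returning to the characteristic $p>0$ argument. The missing step is the log analog of Theorem~\ref{thm:miracle}, proved via Groechenig's scheme-theoretic BNR/Ogus--Vologodsky correspondence at the level of moduli spaces: one shows directly that $C_s^{-1}(V_i',\theta_i')_s$ is again one of the rigid $(E_u,\nabla_u)_s$, using rigidity over $\C$ together with condition 5b) to rule out any infinitesimal thickening. Only then does the uniqueness of the spread-out Mochizuki filtrations (condition 6b)) make the filtered flow genuinely periodic, and only then can Fontaine--Laffaille--Faltings be applied. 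Your instinct that ``the periodicity argument in the log setting'' is the main obstacle is right, but the fix is the BNR correspondence at the moduli level, not an extension of Proposition~\ref{prop:uni}.
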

Theorem~\ref{thm:PST} applies for Shimura varieties of real rank $\ge 2$. This  the framework in which it is applied in the proof of the Andr\'e-Oort conjecture in \cite{PST21}. The aim of the rest of the Lecture is to formulate the main steps of the proof in the projective case. 

\subsection{Simpson's versus Ogus-Vologodsky's correspondences in the projective case}
Recall first the the Ogus-Vologodsky correspondence \cite{OV07} is a vast elaboration of Deligne-Illusie's splitting of the de Rham complex under the condition that $X$ smooth over a perfect field $k$  lifts to $W_2(k)$ (\cite{DI87}): for example assume  $(E,\nabla)$ has nilpotent $p$-curvature of level one, which means there is an exact sequence 
$0 \to (F^*S,\nabla_{can})\to (E,\nabla)\to (F^*Q, \nabla_{can})\to 0$  where $S,Q$ are coherent sheaves on the Frobenius twist $X'$ of $X$,  $F: X\to X'$ is the relative Frobenius, and $\nabla_{can}$ is the canonical connection determined by its flat sections $S,Q$ on $X'$.  Assume $(S,Q)$  are vector bundles. This defines a class in $H^1_{dR}(X, (F^*(Q^{-1}\otimes S), \nabla_{can}))$ which by \cite[Theorem~2.1]{DI87} is equal to $H^1(X', Q^{-1}\otimes S)\oplus H^0(X', \Omega^1_{X'}\otimes Q^{-1}\otimes S)$.  The class in $ H^1(X', Q^{-1}\otimes S) $ yields a vector bundle  extension
$$0\to S \to V'  \to Q\to 0$$ on $X'$ and the class in $H^0(X', \Omega^1_{X'}\otimes Q^{-1}\otimes S)$ yields a nilpotent Higgs field 
$$\theta': V'\to Q\to \Omega^1_{X'}\otimes S\to \Omega^1_{X'}\otimes V'.$$So here we need $p > {\rm dim}(X)$ to apply \cite{DI87} {\it loc. cit.}.  Ogus-Vologodksy correspondence assigns $C^{-1}(V', \theta'):=(E,\nabla)$  to $(V', \theta') $ under the assumption 7a).   Assume now that $X=X_s$ for a closed point $s\in |S|$ as in Theorem~\ref{thm:proj}. Then starting with 
$(V_i, \theta_i)= (gr (E_i), gr(\nabla_i))$ on $X_\C$,  we can restrict $(V_i, \theta_i)$  to $(V_i, \theta_i)_s$ on $X_s$, then take the pull-back  $(V'_i,  \theta'_i)_s$ under the arithmetic  Frobenius 
on ${\rm Spec} k$ where $k=\kappa(s)$ is the residue field of $s$, and then finally $C_s^{-1}(V'_i,\theta_i')_s$.  What is its relation to the restriction  $(E_i,\nabla_i)_s$ of $(E_i, \nabla_i)$? Here $C_s$ is the Cartier-Ogus-Vologodsky functor on $X_s$.  The miracle is the following theorem.
\begin{thm}[\cite{EG20}, Proposition~3.5] \label{thm:miracle}
 $C_s^{-1}(V'_i,\theta_i')_s$ is equal to one of the $(E_u,\nabla_u)_s$ where $u \in \{1, \ldots, N\}$.  

\end{thm}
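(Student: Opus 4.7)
The plan is to show that the flat connection $(\widetilde E_i,\widetilde \nabla_i):=C_s^{-1}(V'_i,\theta'_i)_s$ is itself a rigid $\kappa(s)$-point of $M_{dR}(X,r,\sL)_s$. Once this is established, condition 4a) in the good model assumption, together with the flatness in 5a), forces $(\widetilde E_i,\widetilde \nabla_i)$ to coincide with the mod-$p$ reduction $(E_u,\nabla_u)_s$ of one of the $N$ complex rigid connections. First I would verify that $(\widetilde E_i,\widetilde \nabla_i)$ lies in $M_{dR}(X,r,\sL)_s$ at all: the Higgs field $\theta_i$ is nilpotent of level at most $r$, since it is the associated graded of a flat connection of rank $r$ with respect to the Griffiths filtration (of length less than $r$), so by condition 7a) the hypotheses of Ogus--Vologodsky are met on $X_s$, producing a flat connection of rank $r$ with nilpotent $p$-curvature. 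Compatibility of the inverse Cartier transform with determinants, together with the fact that the Frobenius-pulled-back torsion rank one Higgs bundle $(\sM'_s,0)$ is the Cartier transform of the torsion rank one connection $\sL_s$ (for $p$ larger than the order of $\sL$), pins the determinant down as $\sL_s$.

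The heart of the argument is the rigidity of $(\widetilde E_i,\widetilde \nabla_i)$. Both moduli spaces $M_{dR}(X,r,\sL)_S$ and $M_{Dol}(X,r,\sM)_S$ are flat over $S$ with $N$ isolated sections given by the $(E_j,\nabla_j)_S$ and the $(V_j,\theta_j)_S$ respectively, so rigidity of $(V_i,\theta_i)_\C$ specialises to rigidity of $(V_i,\theta_i)_s$ in $M_{Dol}(X,r,\sM)_s$; composing with the arithmetic Frobenius on the base then shows that $(V'_i,\theta'_i)_s$ is an isolated point of $M_{Dol}(X'_s,r,\sM'_s)$. To transport rigidity across $C_s^{-1}$, I would compare infinitesimal deformation theories: the tangent space at $(V',\theta')$ to the Dolbeault moduli is computed by the hypercohomology of the complex $[\sE nd^0(V')\xrightarrow{[\theta',-]}\Omega^1_{X'_s}\otimes \sE nd^0(V')]$, while the tangent space at $(E,\nabla)=C_s^{-1}(V',\theta')$ to the de Rham moduli is computed by the hypercohomology of $[\sE nd^0(E)\xrightarrow{[\nabla,-]}\Omega^1_{X_s}\otimes \sE nd^0(E)]$. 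Because the inverse Cartier transform is a tensor equivalence on the nilpotent categories of level below $p$ and is compatible with internal Homs and the first de Rham/Higgs complex of the endomorphism object, the two hypercohomologies are canonically isomorphic, and rigidity is preserved. Thus $(\widetilde E_i,\widetilde \nabla_i)$ is an isolated $\kappa(s)$-point of $M_{dR}(X,r,\sL)_s$, and by condition 4a) together with flatness of the moduli, it must equal $(E_u,\nabla_u)_s$ for some $u\in\{1,\dots,N\}$.

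The main obstacle I expect is this cohomological statement that $C_s^{-1}$ preserves tangent dimensions between the two moduli spaces. Although Ogus--Vologodsky prove that the transform is a tensor equivalence on nilpotent objects of level below $p$, one still has to translate that equivalence into a canonical quasi-isomorphism between the two deformation complexes above; this requires identifying the de Rham differential on $\sE nd^0(E)$ with the Higgs differential on $\sE nd^0(V')$ under $C_s^{-1}$, which in turn requires a careful check that the nilpotency hypothesis is preserved when one passes to $\sE nd^0$ (here condition 7a) gives $p>r+1$, so the level of $\sE nd^0$ is still below $p$). Once this is in hand, the assignment $i\mapsto u$ is automatically injective (hence a permutation) of $\{1,\dots,N\}$, setting up the Higgs--de Rham flow that drives the subsequent theorems of the Lecture.
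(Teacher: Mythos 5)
There is a genuine gap in the rigidity-transfer step, and it is not the one you flagged. You correctly locate the heart of the matter in transporting rigidity of $(V'_i,\theta'_i)_s$ across $C_s^{-1}$, but the tangent-space comparison you propose cannot do the job. The $(E_i,\nabla_i)$ are assumed rigid, i.e.\ isolated in $M_{dR}(X,r,\sL)$, \emph{not} cohomologically rigid: the isolated component through $(E_i,\nabla_i)$ may be a fat point with $H^1(X,\sE nd^0(E_i,\nabla_i))\neq 0$. An isomorphism between $H^1$ of the two deformation complexes then gives no information at all about whether the de Rham point is isolated. What you actually need is an isomorphism of the full completed local rings (equivalently, an equivalence of deformation functors over local Artinian rings), and that is precisely what the scheme-theoretic form of the Ogus--Vologodsky correspondence supplied by the Beauville--Narasimhan--Ramanan construction of \cite{Gro16} and \cite[Theorem~2.17]{EG20} provides. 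The paper then argues by contradiction: if $C_s^{-1}(V'_i,\theta'_i)_s$ were not isolated, the local ring would have arbitrarily large length, which transports via the scheme-theoretic correspondence to a length violation on the Dolbeault side that is excluded by flatness (5a) and rigidity over $\C$.

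A second, subsidiary issue: your parenthetical claim that condition 7a) ($p>r+1$) keeps the level of $\sE nd^0$ below $p$ is false. The Higgs field on $V_i$ is nilpotent of level up to $r-1$, so the adjoint Higgs field on $\sE nd^0(V_i)$ is nilpotent of level up to $2(r-1)$, and $2(r-1)>r+1$ already for $r\ge 4$ (and marginally for $r=3$ once you add the dimension contribution in the precise Ogus--Vologodsky bound). The paper's route through the BNR correspondence sidesteps this entirely because it works at the level of moduli schemes for the rank $r$ objects themselves, where only the level of $V_i$, not of $\sE nd^0(V_i)$, enters; this is consistent with the weaker bound 7a) in the projective case as opposed to the $p>2(r+1)$ required in 7b).

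Your steps 1, 2, and 4 (placement in the correct moduli space, specialization of Dolbeault rigidity, and the final identification via 4a) and 5a)) are sound and match the paper's setup. To repair step 3 you should replace the hypercohomology comparison by a comparison of formal neighbourhoods in the two moduli schemes, which is what the reference to \cite[Proposition~3.15]{Gro16} is doing.
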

\begin{proof}[Ideal of proof]
As $C_s^{-1}$ preserves the stability, see \cite[Corollary~5.10]{Lan14}, 
the key point is the  so-called Beauville-Narasimhan-Ramanan  correspondence \cite[Theorem~2.17]{EG20} as proved by Michael Groechenig  in his PhD Thesis \cite[Proposition~3.15]{Gro16}. In a sense it yields the Ogus-Vologodsky correspondence at the level of moduli spaces (on a curve) with a {\it scheme theoretic structure}.  If $C_s^{-1}(V'_i,\theta_i')$ was not rigid, there would be a deformation of its moduli point yielding a nilpotent structure around this point of order as high as we want. This nilpotent structure,  transported on the side of the Dolbeault moduli space via the Ogus-Vologodsky correspondence, yields  by rigidity over $\C$ and the property  5a)  a contradiction,  as  over $\C$ 
in $M_{dR}(X, r, \sL)$ the multiplicities of the isolated points are  given, thus bounded on $S$. 

\end{proof}
\begin{rmk}
The proof above unlocked the program we had  in order to prove the arithmetic crystalline properties associated to rigid connections  in Theorem~\ref{thm:proj}.
We relagate  to  \cite[Corollary~A.7]{EG20} a proof based on the same principle showing nilpotency  of the $p$-curvature (which we discussed via $p$-adic methods in  Theorem~\ref{thm:padic}),  which, in the words of an anonymous referee,  yields a canonical action of $\G^{\#}_m$  on the {\it category} of flat connections on $X_s$, 
where $\G^{\#}_m,$   is   the PD hull of the neutral element of $\G_m$.

\end{rmk}

\subsection{Periodic de Rham-Higgs  flow  on $X_s$   and the $GL_r(\bar \F_p)$-local systems  on $X_K$ in the projective case}
As $C_s^{-1}$ is an equivalence of categories (when it is defined),  we see that 
in  the proof of  Theorem~\ref{thm:miracle} the assignment $i \mapsto u$  is a  bijection of $\{1,\ldots, N\}$. 

\medskip

Defining a {\it de Rham-Higgs flow} as a sequence on $X_s$  of $\{(E_\iota,Fil_\iota, \nabla_\iota)\}$ with  $$(E_{\iota+1}, \nabla_{\iota+1})=C_s^{-1} (gr E_\iota, gr \nabla_\iota),$$ we see that for any $(E_\iota,\nabla_\iota)$ with $\iota=1\ldots, N$, $(E_\iota, Fil_\iota, \nabla_\iota)$ defines a {\it periodic} de Rham-Higgs flow of period $f(\iota)|N!$. 

\medskip

By \cite[Subsection~4.6]{OV07}, refined in \cite[Corollary~3.10]{LSZ19} to take into account the possibility of $f(\iota)>1$,  this defines a Fontaine-Lafaille module on $X_s$, which we do not define here as our emphasis is on the local system, see \cite[Theorem~3.3]{FL82}.  By the enhancement of the Fontaine-Lafaille construction in {\it loc. cit.} due to Faltings 
 [Theorem 2.6*]\cite{Fal88} (see \cite[Proposition~4.3]{EG20}), we obtain via Fontaine-Lafaille-Faltings' functor  the following claim.

 \begin{claim} 
 For  a closed point $s$ of a good model $S$,  we assign pairwise non-isomorphic local systems  $\sL_i(p)$ with values in $GL_r(\F_{p^{f(i)}})$ on $X_K$, for $i=1,\ldots, N$.

 \end{claim}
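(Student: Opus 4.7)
The plan is to assemble the claim from three inputs that are already in place: the bijectivity observed immediately before the claim, the general machinery that converts a periodic de Rham--Higgs flow into a Fontaine--Laffaille module (and then into an \'etale local system) with the correct coefficient ring, and a fully faithfulness argument to get the pairwise non-isomorphism.

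First, I would fix a closed point $s \in |S|$ of a good model with residue characteristic $p$, and, starting from the Hodge filtration $(E_i, Fil_i, \nabla_i)_s$ coming from the fixed lift over $S$, construct the de Rham--Higgs flow $\{(E_\iota, Fil_\iota, \nabla_\iota)\}_{\iota \ge 0}$ with $(E_0, Fil_0, \nabla_0) = (E_i, Fil_i, \nabla_i)_s$ and $(E_{\iota+1}, \nabla_{\iota+1}) = C_s^{-1}(\gr E_\iota, \gr \nabla_\iota)$, equipped with the Hodge filtration transported from the good model. Theorem~\ref{thm:miracle} identifies $(E_1, \nabla_1)$ with $(E_{u(i)}, \nabla_{u(i)})_s$ for some $u(i) \in \{1,\ldots,N\}$, and since the preceding paragraph remarks that $i \mapsto u(i)$ is a bijection of $\{1,\ldots,N\}$, the orbit of $i$ under this permutation has some minimal period $f(i) \mid N!$. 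To promote this to periodicity of the \emph{filtered} objects one has to check that under $C_s^{-1}$ the canonically chosen Hodge filtration on $(E_u,\nabla_u)_s$ matches the filtration produced by the inverse Cartier transform applied to the graded Higgs bundle; this is forced by rigidity and the flatness of $M_{dR}(X,r,\sL)_S$ (condition 5a), together with the uniqueness (up to shift) of the Hodge filtration on a rigid irreducible flat connection coming from a polarized complex variation of Hodge structures.

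Second, I would feed the resulting periodic de Rham--Higgs flow of period $f(i)$ into the Fontaine--Laffaille construction in the form \cite[Corollary~3.10]{LSZ19} (the upgrade of \cite[Subsection~4.6]{OV07} that handles $f(i) > 1$): this produces a Fontaine--Laffaille module on $X_s$ (equivalently, on $X_W$ after lifting the filtration, which is available thanks to condition 6a), together with an $f(i)$-step divided Frobenius structure, with coefficients in $W(\F_{p^{f(i)}})$. Applying now Faltings' functor in the refined form \cite[Theorem~2.6*]{Fal88} recalled as \cite[Proposition~4.3]{EG20}, which is available because the Hodge slopes are bounded by $r-1 < p-1$ (condition 7a), yields a crystalline \'etale local system $\sL_i(p)$ on $X_K$ with coefficients in $\F_{p^{f(i)}}$ and rank $r$ over this ring. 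The essential point here is that the period $f(i)$ of the flow controls the Frobenius-equivariance of the Fontaine--Laffaille module, hence the ring of coefficients of the resulting local system.

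Finally, for the pairwise non-isomorphism I would use that Faltings' functor is fully faithful on the Fontaine--Laffaille side, and that the assignment from a periodic de Rham--Higgs flow to its associated Fontaine--Laffaille module is also fully faithful (essentially by construction: the module and its Frobenius descend the flow). An isomorphism $\sL_i(p) \cong \sL_j(p)$ would then force an isomorphism of the initial data $(E_i, Fil_i, \nabla_i)_s \cong (E_j, Fil_j, \nabla_j)_s$; since the $N$ sections $(E_i, \nabla_i)_S$ of $M_{dR}(X,r,\sL)_S$ are pairwise distinct and $M_{dR}(X,r,\sL)_{S,\mathrm{red}}$ is \'etale over $S$ at these points (condition 4a), their reductions at $s$ remain pairwise non-isomorphic, giving the desired contradiction. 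The step I expect to be the most delicate is the compatibility of filtrations along the flow that turns the bare periodicity of the permutation $i \mapsto u(i)$ into a genuinely periodic filtered flow of period $f(i)$; once this is in hand the Fontaine--Laffaille--Faltings machinery supplies the local system and the coefficient ring essentially for free.
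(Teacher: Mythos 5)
Your proposal follows essentially the same route as the paper, which presents this claim as a direct consequence of the preceding construction together with \cite[Subsection~4.6]{OV07}, \cite[Corollary~3.10]{LSZ19} and \cite[Theorem~2.6*]{Fal88} / \cite[Proposition~4.3]{EG20}: bijectivity of $i\mapsto u(i)$ gives the periodic flow, which yields a Fontaine--Laffaille module and then, via Faltings' functor, the torsion local system with $\F_{p^{f(i)}}$-coefficients.

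Two places where you are more explicit than the paper, both correctly: first, you flag the filtration compatibility needed to promote periodicity of the permutation to periodicity of the \emph{filtered} flow. This is worth noting but resolves immediately once one observes that condition 6a) supplies a chosen, locally split, Griffiths-transversal filtration $Fil_{i,S}$ on the model (coming from the uniqueness up to shift of the Hodge filtration on a rigid PCVHS, \cite[Lemma~4.5]{Sim92}); one simply \emph{defines} the filtration at each stage of the flow to be the reduction of the model filtration on the connection that Theorem~\ref{thm:miracle} identifies, and periodicity is then tautological once $i\mapsto u(i)$ is a bijection. Second, the pairwise non-isomorphism via fully faithfulness of the Fontaine--Laffaille--Faltings functor, combined with condition 4a) (\'etaleness of $M_{dR,\mathrm{red}}$ over $S$ at the $N$ pairwise distinct $S$-sections forcing distinctness of the reductions at $s$), is the correct argument and fills a gap the paper leaves implicit.

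One small slip: the Fontaine--Laffaille module in this mod-$p$ part lives over $X_s$ and is an $\F_p$-crystal equipped with an $f(i)$-fold Frobenius structure; the claim that it has coefficients in $W(\F_{p^{f(i)}})$ belongs to the next subsection about $\hat X_W$. The coefficient ring $\F_{p^{f(i)}}$ you state for the resulting local system on $X_K$ is the correct one.
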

 
 \begin{rmk}
 The second miracle is provided by the theory of Fontaine-Lafaille-Faltings here: the local systems are defined over the {\it algebraic} $X_K$. 
 
 \end{rmk}
 
\subsection{Periodic de Rham-Higgs  flow  on $\hat X_W$   and the $GL_r(W(\bar \F_p))$-local systems  on $X_K$ in the projective case}
The functor $C_s^{-1}$ of Ogus-Vologosdsky extends to $C_n^{-1}$  on  $X_W\otimes_WW_n$ for all $n$, defining $\hat C$ on $\hat X_W$. 
The notion of de Rham-Higgs flow on the formal model $\hat X_W$ is completely similar to the one on $X_s$. See \cite[Corollary~3.10, Theorem~4.1]{LSZ19}, \cite[1.2.1]{SYZ22} and the work of Xu \cite{Xu19}  for very related methods and results. Then:

\begin{claim}
 For any $( \hat E_{\iota,W}, \hat \nabla_{\iota,W})$, the restriction of $(E_\iota, \nabla_{\iota})$ to $\hat X_W$,  with $\iota=1\ldots, N$,  the triple $(E_\iota, Fil_\iota, \nabla_\iota)$, defines a {\it periodic} de Rham-Higgs flow of period $f(\iota)|N!$.

\end{claim}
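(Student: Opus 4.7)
The plan is to transport the periodicity already established on $X_s$ to the formal completion $\hat X_W$, by combining the level-by-level Ogus--Vologodsky correspondence $C_n^{-1}$ on $X_W\otimes_W W_n$ with the rigidity and flatness of the de Rham moduli $M_{dR}(X,r,\sL)_S$ at its $N$ isolated sections.

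First I would lift the Hodge filtration: by condition 6a), the filtration $Fil_{\iota,S}\subset E_{\iota,S}$ is locally split and satisfies Griffiths transversality, so the same holds for its $p$-completion $\hat{Fil}_{\iota,W}\subset \hat E_{\iota,W}$ on $\hat X_W$. Thus $(\gr \hat E_{\iota,W},\gr \hat \nabla_{\iota,W})$ is a $W$-flat formal Higgs bundle whose mod $p$ reduction coincides with $(\gr E_{\iota,s},\gr \nabla_{\iota,s})$. Langer's openness of stability \cite[Corollary~5.10]{Lan14} applied modulo $p^n$ propagates stability of the latter to every $(\gr \hat E_{\iota,W},\gr \hat \nabla_{\iota,W})\otimes_W W_n$, so $\hat C^{-1}(\gr \hat E_{\iota,W},\gr \hat \nabla_{\iota,W})$ is a well-defined stable flat formal connection on $\hat X_W$. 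Its mod $p$ reduction is $C_s^{-1}(\gr E_{\iota,s},\gr \nabla_{\iota,s})$, itself isomorphic to $(E_{\varphi(\iota)},\nabla_{\varphi(\iota)})_s$ for the permutation $\varphi$ already produced on $X_s$.

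The crux is then rigidity at the formal level: by condition 4a), the reduced scheme $M_{dR}(X,r,\sL)_{S,\mathrm{red}}$ is \'etale over $S$ at each of the $N$ sections, and by condition 5a) the formal completion of $M_{dR}(X,r,\sL)_S$ at such a section is flat over $\hat{\sO}_{S,s}$. Since any $W$-valued point of this formal completion factors through its reduced part --- which is just $\Spf \hat{\sO}_{S,s}$ mapped identically to itself via the section --- the formal flat connection $\hat C^{-1}(\gr \hat E_{\iota,W},\gr \hat \nabla_{\iota,W})$, viewed as a $W$-point of the moduli with closed point $(E_{\varphi(\iota)},\nabla_{\varphi(\iota)})_s$, must coincide with the section $(\hat E_{\varphi(\iota),W},\hat \nabla_{\varphi(\iota),W})$. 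Iterating, the resulting de Rham--Higgs flow on $\hat X_W$ induces exactly the permutation $\varphi$ on the labels $\{1,\ldots,N\}$, so it is periodic of period $f(\iota)\mid N!$.

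The main obstacle will be ensuring that the functors $C_n^{-1}$ are stability-preserving in families over $W_n$ and glue along $n$ into a functor $\hat C^{-1}$ compatible with base change, and that the $W$-flat Hodge filtration reduces compatibly at each level to the filtration used to define the flow on $X_s$. These are the content of the extensions of Ogus--Vologodsky worked out in \cite[Corollary~3.10, Theorem~4.1]{LSZ19} and \cite{Xu19}. Once they are in place, the \'etaleness/flatness of the moduli at the isolated points forces the formal flow to be the canonical lift of the mod $p$ flow, and the periodicity $f(\iota)\mid N!$ is automatic from the permutation argument on $X_s$.
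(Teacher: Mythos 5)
The paper itself does not spell out a proof of this claim but defers to \cite[Section~4]{EG20} and the references \cite{LSZ19}, \cite{SYZ22}, \cite{Xu19} for the $W_n$-level extension of the Ogus--Vologodsky functor; your sketch is a reasonable reconstruction of the intended argument, and the central rigidity idea is the right one. The logic ``mod $p$ one already has the permutation $\varphi$ by Theorem~\ref{thm:miracle}; then conditions 4a) and the reducedness of $W$ force any $W$-point of the formal completion of $M_{dR}$ at a section to coincide with the section'' is sound: since $M_{dR}(X,r,\sL)_{S,\mathrm{red}}$ is \'etale over $S$ at the section, after base change its completed local ring at the closed point over $s$ is $W$, and a $W$-algebra map out of the full completed local ring must kill the nilradical because $W$ is a domain, hence factors through the identity on $W$.

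A few points are glossed over and would need to be addressed to turn this into a complete argument. First, you treat $\hat C^{-1}(\gr \hat E_{\iota,W}, \gr \hat \nabla_{\iota,W})$ as a $W$-point of $M_{dR}$, but $\hat C^{-1}$ a priori produces only a formal flat connection on $\hat X_W$; one needs Grothendieck existence on the projective $X_W$ to algebraize it, and one needs to check at each finite level $W_n$ that the output is stable (Langer, as you say) and has determinant $\sL_{W_n}$ --- the latter requires tracking the Frobenius twist, since $C_n^{-1}$ goes from Higgs bundles on $X'_{W_n}$ to connections on $X_{W_n}$, and this works out because $\sL$ is torsion. Second, you do not explicitly argue periodicity of the \emph{filtrations}: once you know $\hat C^{-1}(\gr \hat E_{\iota,W}, \gr \hat\nabla_{\iota,W}) \cong (\hat E_{\varphi(\iota),W}, \hat\nabla_{\varphi(\iota),W})$, the flow is continued with the filtration $Fil_{\varphi(\iota),W}$ furnished by condition 6a), and the period $f(\iota)$ is then the order of $\iota$ under $\varphi$; this should be said, since a de Rham--Higgs flow is the full triple. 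Third, condition 5a) (flatness of the completion over $S$) is not actually invoked in your rigidity step --- you only use 4a) and the reducedness of $W$; flatness is needed elsewhere (base change compatibilities for the completed local rings), so citing it here is harmless but slightly misplaced. These are details that one would expect to be handled in \cite{EG20}, and they do not affect the correctness of the overall strategy.
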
 
By \cite[Subsection~4.6]{OV07}, refined in \cite[Corollary~3.10]{LSZ19} to take into account the possibility of $f(\iota)>1$,  this defines a Fontaine-Lafaille module on $\hat X_W$,  see \cite[Theorem~3.3]{FL82}.  By the enhancement of the Fontaine-Lafaille construction in {\it loc. cit.} due to Faltings 
 [Theorem 2.6*]\cite{Fal88} (see \cite[Proposition~4.3]{EG20}), we obtain via the Fontaine-Lafaille-Faltings functor the following claim.

 \begin{claim} 
 For  any closed point $s$ of a good model $S$,  we assign pairwise non-isomorphic $p$-adic  local systems  $\sL_i(W)$ with values in $GL_r(W( \F_{p^{f(i)}} ))$ on $X_K$, for $i=1,\ldots, N$, which are crystalline.

 \end{claim}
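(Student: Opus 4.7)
The plan is to upgrade the mod $p$ periodic de Rham--Higgs flow used in the previous claim to a periodic flow on the formal model $\hat X_W$, and then feed the resulting formal Fontaine--Lafaille module through Faltings' functor. The key inputs are the extension $\hat C^{-1}$ of the Ogus--Vologodsky functor to $\hat X_W$ (available because $X_W$ is unramified and $p>{\rm dim}(X)+1$, cf.\ \cite{LSZ19, SYZ22, Xu19}), together with the rigidity of the components $(E_i,\nabla_i)$ in the de Rham moduli over $S$.

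First I would show that for each $i=1,\dots,N$, the triple $(\hat E_{i,W}, \hat{Fil}_{i,W}, \hat\nabla_{i,W})$ obtained by restricting $(E_{i,S}, Fil_{i,S}, \nabla_{i,S})$ to $\hat X_W$ defines a genuine one-step of a Higgs--de Rham flow, i.e.\ that $\hat C^{-1}(\operatorname{gr} \hat E_{i,W},\operatorname{gr}\hat\nabla_{i,W})$ is again one of the $(\hat E_{j,W}, \hat\nabla_{j,W})$. Here the essential point is an integral deformation-theoretic lift of Theorem~\ref{thm:miracle}: by condition 4a)--5a) for the good model, the deformation functor of $(E_i,\nabla_i)_s$ inside $M_{dR}(X,r,\sL)_S$ is constant along $\operatorname{Spec} W$, so the finite \'etale bijection $i\mapsto u(i)$ from the mod $p$ step automatically propagates compatibly to each $W_n$-thickening and hence to $\hat X_W$. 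This produces a \emph{periodic} Higgs--de Rham flow on $\hat X_W$ of period $f(i)$ equal to the period found over the residue field.

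Next I would convert the periodic flow into a Fontaine--Lafaille module on $\hat X_W$. Granted the formal periodicity above together with the locally split filtration from 6a) and the bound $p>r+1$ from 7a), the filtered $F$-crystal formalism of \cite[Corollary~3.10]{LSZ19} (refining \cite[Subsection 4.6]{OV07}) produces a Fontaine--Lafaille module with coefficients in $W(\F_{p^{f(i)}})$ on $\hat X_W$, whose reduction mod $p$ is the Fontaine--Lafaille module corresponding to $\sL_i(p)$ from the previous claim. Applying Faltings' functor \cite[Theorem 2.6*]{Fal88}, compatibly enhanced to the $W$-coefficient setting as in \cite[Proposition~4.3]{EG20}, yields a crystalline $p$-adic local system $\sL_i(W)$ on the algebraic scheme $X_K$ with values in $GL_r(W(\F_{p^{f(i)}}))$, whose mod $p$ reduction is $\sL_i(p)$. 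Pairwise non-isomorphism is inherited from the pairwise non-isomorphism of the $\sL_i(p)$.

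The hard part I expect is the second step of the first paragraph: verifying that the rigidity argument of Theorem~\ref{thm:miracle} integrates to give \emph{canonical} identifications $\hat C^{-1}(\operatorname{gr}\hat E_{i,W},\operatorname{gr}\hat\nabla_{i,W}) \cong (\hat E_{u(i),W},\hat\nabla_{u(i),W})$ (rather than just isomorphisms after inverting $p$), since $\hat E_{i,W}$ need not be locally free and there is no known $\hat\nabla_K$-stable lattice in $\hat E_K$ in general. One handles this by working inductively on the $W_n$-level: at each level the Beauville--Narasimhan--Ramanan-type scheme structure on $M_{dR}(X,r,\sL)_{W_n}$ coming from \cite[Proposition~3.15]{Gro16} provides enough rigidity (thanks to the flatness condition 5a)) to force the isomorphism to extend from $W_{n}$ to $W_{n+1}$, after which a Mittag-Leffler / formal GAGA argument patches the tower into a genuine isomorphism on $\hat X_W$. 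Crystallinity, by contrast, is automatic once one is in the image of the Fontaine--Lafaille--Faltings functor.
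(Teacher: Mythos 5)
Your proposal takes essentially the same route as the paper: extend the Ogus--Vologodsky functor to $\hat C$ on $\hat X_W$, lift the mod-$p$ periodicity to a periodic Higgs--de Rham flow on the formal model, package this as a Fontaine--Lafaille module via \cite{OV07} and \cite[Corollary~3.10]{LSZ19}, and apply the Fontaine--Lafaille--Faltings functor \cite[Theorem 2.6*]{Fal88}, \cite[Proposition~4.3]{EG20} to land on crystalline $p$-adic local systems on the algebraic $X_K$, inheriting pairwise non-isomorphism from the mod-$p$ stage. One small correction: your worry that ``$\hat E_{i,W}$ need not be locally free'' (imported from the discussion of lattices in arbitrary isocrystals in Lecture~\ref{sec:Fiso}) does not apply here, since $\hat E_{i,W}$ is the $p$-completion of the locally free $E_{i,S}|_{X_W}$; the genuine technical issue you rightly emphasize is only whether the rigidity/flatness hypotheses 4a)--5a) force the bijection $i\mapsto u(i)$ of Theorem~\ref{thm:miracle} to propagate compatibly through the $W_n$-tower, which is exactly what the good-model conditions are calibrated to guarantee.
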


\begin{rmk}
This refinement of Faltings concerning crystallinity  (also in the quasi-projective case) is precisely the important property used in \cite{PST21}.
\end{rmk}

\subsection{From  crystalline $p$-adic local systems on $X_K$ to $p$-adic local systems on $X_{\bar K}$ in the projective case.}
The  local systems $(\mathbb L_i,  i=1,\ldots, N)$ are integral at $p$, thus by the identification $\pi_1(X_\C, x_\C)$ with $\pi_1(X_{\bar K}, x_{\bar K})$, they  become $p$-adic local systems on $X_{\bar X}$. We also have the base change $$\sL_i(W)|_{X_{\bar K}}=:\sL_i(\bar K)$$ to $X_{\bar K}$. To conclude Theorem~\ref{thm:proj} we should make sure that
the $\sL_i(\bar K)$  are not identified on  $X_{\bar K}$ and really come from the topological rigid local systems. 

\begin{thm}
The set of  $p$-adic local systems  $\{\sL_1(\bar K),\ldots, \sL_N(\bar K)\}$ is, up to order, the set $\{ \mathbb L_1, \ldots, \mathbb L_N\}$, viewed as $p$-adic local systems on $X_{\bar K}$.  Expressed the other way around, the $p$-adic local systems $\mathbb L_i$ on $X_{\bar K}$ descend to crystalline $p$-adic local systems on $X_K$, for $i=1,\ldots, N$ which in addition have values in $GL_r(W(\bar \F_p))$. 

\end{thm}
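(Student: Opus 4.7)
The plan is to establish the set-theoretic equality $\{\sL_i(\bar K)\}=\{\mathbb L_j\}$ by matching each $\sL_i(\bar K)$ with a unique $\mathbb L_j$ through their common underlying algebraic flat connection, and invoking the Riemann--Hilbert correspondence on the complex side.

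First, I would compute the filtered flat connection on $\hat X_W$ underlying the crystalline $p$-adic local system $\sL_i(W)$. By the construction in the preceding claims, $\sL_i(W)$ is the image under the Fontaine--Laffaille--Faltings functor of the Fontaine--Laffaille module produced by the periodic Higgs--de Rham flow with initial datum $(\hat E_i, Fil_i, \hat\nabla_i)_W$; applying the inverse of this functor returns exactly this filtered connection on $\hat X_W$, which in turn is the $p$-completion of the globally defined $(E_i, Fil_i, \nabla_i)_S$ (using condition 5a) on flatness of the good model). Since crystallinity and the underlying filtered connection are preserved by base change $X_K\to X_{\bar K}$, the flat connection associated to $\sL_i(\bar K)$ is the base change of $(E_i,\nabla_i)_S$ to $X_{\bar K}$. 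Because the $(E_i,\nabla_i)_\C$ are $N$ distinct isolated points of $M_{dR}(X,r,\sL)_\C$ (condition 4a), the $\sL_i(\bar K)$ are pairwise non-isomorphic.

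Next I would fix an abstract field isomorphism $\tau:\bar\Q_p\xrightarrow{\sim}\C$ extending the embedding $\sO_S\subset\sO_K\subset\C$ underlying the good model. Via Grothendieck's base change identifications
\[
\pi_1(X_{\bar K}, x_{\bar K})\;\cong\;\pi_1(X_\C, x_\C)\;=\;\widehat{\pi_1(X(\C), x(\C))},
\]
postcomposition with $\tau$ transports $\sL_i(\bar K)$ to a complex local system $\sL_i^\tau$ on $X(\C)$. Its associated algebraic flat connection is obtained by applying $\tau$ to the coefficients of $(E_i,\nabla_i)_{\bar K}$; since the whole package is defined over the common finitely generated subring $\sO_S$ of both $W$ and $\C$, and $\tau$ is compatible with the embeddings $\sO_S\to \bar\Q_p$ and $\sO_S\to \C$, the transported connection is precisely $(E_i,\nabla_i)_\C$. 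The Riemann--Hilbert correspondence over $\C$ (faithful on semisimple objects) then identifies $\sL_i^\tau$ with the unique complex local system having associated connection $(E_i,\nabla_i)_\C$, namely $\mathbb L_i$. This yields the bijection $\sL_i(\bar K)\cong\mathbb L_i$ in the stated sense, and the complementary claim is immediate: each $\mathbb L_i$ descends, through its identification with some $\sL_{\sigma(i)}(\bar K)$, to the crystalline local system $\sL_{\sigma(i)}(W)$ on $X_K$, whose coefficient ring $W(\F_{p^{f(\sigma(i))}})$ embeds canonically into $W(\bar\F_p)$.

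The main obstacle is the rigorous bookkeeping required to ensure that the transport via $\tau$ lands on $(E_i,\nabla_i)_\C$ itself rather than on a Frobenius conjugate, Galois twist, or similar variant. This is what forces the good model $S$ to carry \emph{flat} models of $M_{dR}(X,r,\sL)$, the connections $(E_i,\nabla_i)_S$, and the filtrations $Fil_{i,S}$, all defined over a common finitely generated subring of $\C$, so that $\tau$ acts trivially on the underlying algebraic data. A secondary technical point, quite standard, is the passage between continuous $\pi_1(X_{\bar K})$-representations with values in $GL_r(\bar\Z_p)$ and topological $\pi_1(X(\C))$-representations with values in the same compact group: this identification is legitimate because $\pi_1(X(\C))$ is finitely generated and any continuous homomorphism into a profinite group factors through the profinite completion.
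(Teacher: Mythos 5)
Your proposal hinges on the claim that, after transporting $\sL_i(\bar K)$ along a field isomorphism $\tau:\bar\Q_p\xrightarrow{\sim}\C$, ``its associated algebraic flat connection is obtained by applying $\tau$ to the coefficients of $(E_i,\nabla_i)_{\bar K}$.'' This is the gap: it silently identifies two completely different functors. On one side you have the $p$-adic de Rham comparison, which produces $D_{dR}(\sL_i(\bar K))\cong(E_i,Fil_i,\nabla_i)_{\bar K}$ out of the period ring $B_{dR}$ (this part is fine, given crystallinity and Fontaine--Laffaille--Faltings). On the other side you have the \emph{complex} Riemann--Hilbert correspondence applied to $\sL_i^\tau$, which integrates the connection transcendentally. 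There is no functor from $p$-adic \'etale local systems to filtered flat connections over $\bar K$ that becomes the complex RH functor under an arbitrary abstract isomorphism $\tau:\bar\Q_p\cong\C$; indeed the compatibility you are invoking between these two comparison theorems under $\tau$ is precisely the deep content that the theorem encapsulates, not something you may assume. Taking it as known makes the argument circular.

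The paper's proof takes a genuinely different route that avoids ever comparing the two Riemann--Hilbert--type functors directly. It works entirely on the $p$-adic side: the issue addressed is that the Fontaine--Laffaille--Faltings construction a priori lives over $\hat X_{W(\bar s)}$, i.e.\ produces local systems on $X_{{\rm Frac}(W(\bar\F_p))}$, and one must push the comparison all the way up to $X_{\bar K}$. This is done via Faltings' $p$-adic Simpson correspondence, which requires verifying smallness of the $\sL_i(\bar K)$ and of their Higgs bundles. In the unramified-by-accident cases (infinitely many primes with $\kappa(s)=\F_p$ and all $f_i=1$, [EG20, Lemma~5.11]) this is direct; in general the paper appeals to [SYZ22], which extends part of the Fontaine--Laffaille--Faltings program to the ramified case. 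The actual argument then shows that the reductions $\sL_i(\bar K)\otimes_{\bar\Z_p}\bar\F_p$ are irreducible and pairwise distinct by a stability argument on the associated Higgs bundles ([SYZ22, Theorem~5.15]), using the semidirect product structure $\pi_1(X_K)\cong{\rm Gal}(\bar K/K)\ltimes\pi_1(X_{\bar K})$ coming from the rational point in condition 1a) to reduce to a statement over a finite extension $K'/K$. None of this is recovered by the $\tau$-conjugation argument you propose, and without an additional input (a proved compatibility of period functors, or an argument via rigidity and residual representations) your matching $\sL_i^\tau\cong\mathbb L_i$ does not go through.
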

\begin{proof}[Idea of proof] The construction of the Fontaine-Lafaille modules over $\hat X_W$  is based on the fact that $\hat X_W$ is a non-ramified lift of $X_s$. We could 
repeat the construction on $\hat X_{W(\bar s)}$
 so as to obtain in   this way    local systems on $X_{{\rm Frac}(W(\bar \F_p))}$.  However,  we have to go all the way up  to $X_{\bar K}$. 
 
 \medskip
 
 To perform this, we use  in \cite[Section~5]{EG20} Faltings' $p$-adic Simpson correspondence. In order to be able to apply Faltings' theory, we have to be sure that the Higgs bundles on $X_{\bar K}$  and  the $\sL_i(\bar K)$ are small in his sense. We can find infinitely many prime numbers $p>0$ with the property that $\kappa(s)=\F_p$ and all the $f_i=1$. This is the content of \cite[Lemma~5.11]{EG20} and yields a weaker form of Theorem~\ref{thm:proj}. We do not get the result on all closed points of $S$ but on an infinity of those with infinite different residual characteristics. 
 
 \medskip

In general, we resort to \cite{SYZ22},  which relies on Faltings' $p$-adic Simpson correspondence, 
 and  ``does'' part of the Fontaine-Lafaille-Faltings program in the ramified case. We first argue that $\sL_i(\bar K)\otimes_{\bar Z_p} \bar \F_p$ is irreducible. If not, using the $W$-point in 1a) and the consequence that $\pi_1(X_K)$ is then a semi-direct product of ${\rm Gal}(\bar K/K)$ with $\pi_1(X_{\bar K})$, we can kill on ${\rm Gal}(\bar K/K)$ the  $GL_r(\bar \F_p)$-representation by a finite field extension $K'/K$. This enables us to conclude that $\sL_i(K')$ itself is reducible, which by \cite[Theorem~5.15]{SYZ22} violates the stability of the associated Higgs bundle on $X_{K'}$. We argue similarly to distinguish the $\sL_i(\bar K)\otimes_{\bar Z_p} \bar \F_p$  for $i=1,\ldots, N$. This finishes the proof. 

% On the other hand, if $f_i=1$ the crystalline representation $\sL_i(W)$ is small in the sense of Faltings as it is defined over $\Z_p$. If in addition $\kappa(s)=\F_p$,  the Higgs bundles $(V_i, \theta_i)$  defined on $X_W$ are also small in the sense of Faltings as  over $W= \Z_p$  it is enough that the $p$-curvature be nilpotent. So if both the $f_i=1$ and $\kappa(s)=\F_p$  we may apply Faltings' $p$-adic Simpson's correspondence (\cite[Theorem~5.8]{EG20}), which is compatible with his functor defining the crystalline $p$-adic local system on $X_K$, to conclude that $\sL_i(\bar K)$ is associated to $(V_i,\theta_i)$.  In particular the $\sL_i(\bar K)$ are  absolutely irreducible and pairwise different. On the other hand, a deformation of such a representation inside of general representations is still of the same shape. As the $(V_i,\theta_i)$ are rigid, so are the $\sL_i(\bar K)$. 
 
 \medskip
 
% Theorem~\ref{thm:proj} is then a consequence of the fact that we can find infinitely many prime numbers $p>0$ with the property that $\kappa(s)=\F_p$ and all the $f_i=1$. This is the content of \cite[Lemma~5.11]{EG20}

\end{proof}

\begin{rmk}
We hope in the near future  (\cite{EG22}) to strengthen the results and shorten the proofs of the existing ones using more  and in particular more recent $p$-adic methods. 
\end{rmk}

\newpage

\section{Lecture 10: Comments and Questions} \label{sec:CQ}
\begin{abstract}
The aim of this last Lecture is to list a few questions encountered  during the Lectures.
\end{abstract}

\subsection{With respect to the $p$-curvature conjecture (Lecture~\ref{sec:pcurv})}
What would be  a formulation of the $p$-curvature conjecture when we replace a smooth quasi-projective variety over $\C$ by
the formal completion along a non-normal subvariety  of a smooth variety over $\C$? The question is motivated by a discussion with Johan de Jong on \cite{LR96}. 

\subsection{With respect to the Mal\v{c}ev-Grothendieck's theorem and its shadows in characteristic $p>0$ (Lecture~\ref{sec:Malcev})}
As already mentioned in Section~\ref{sec:padic} we have two versions of Mal\v{c}ev-Grothendieck theorem over an algebraically closed field of characteristic $p>0$, one on the infinitesimal site (Gieseker's conjecture, solved, see Theorem~\ref{thm:EM10}), one on the crystalline site (unsolved) due to de Jong, but we do not have at present a formulation in the prismatic site, one difficulty being the {\it iso}-notion. 

\subsection{With respect to Lubotzky's Theorem~\ref{thm:Lubotzky}} \label{sec:Lub}   Notation as in {\it loc. cit.}.
Assume $\pi$ is  a profinite group. What can be said on 
$${\rm dim}_{\F_\ell}H^i(\pi, \rho ) \ {\rm  for \  any \ }   i\in \N ?$$

\medskip
\noindent
It holds ${\rm dim}_{\F_\ell}H^0(\pi, \rho)\le r$ for any representation, whatever $\pi$ is. 

\medskip
\noindent
It holds
${\rm dim}_{\F_\ell}H^1(\pi, \rho)\le \delta \cdot r$ for any profinite group  $\pi$  which is topologically generated by $\delta$ elements, as a $1$-continuous cocycle is uniquely defined by its value on topological generators of $\pi$.  

\medskip
We observe that this linear bound for $H^1$  is not equivalent to the finite generation of the profinite group $\pi$.  Here is an example due to Lubotzsky. Let $G$ be a 
finite  non-abelian simple group, let $\pi=\prod_0^\infty G$ be the infinite product of $G$ with itself, endowed with the profinite structure stemming from all finite quotients. As a finite quotient $\pi\surj Q$ has to factor through some $G^m$ for some $m\in \N$, $Q$ itself has to be isomorphic to some $G^n$ for some $n\in \N$.     As ${\rm dim}_{\F_\ell}H^1 (\pi, \rho ) \le {\rm dim}_{\F_\ell}H^1(\pi, \rho^{ss} )$ where $\rho^{ss}$ is the semi-simplification of $\rho$, we may assume that $\rho$ is irreducible. 
We write the exact sequence $1\to {\rm Ker}(\rho)\to \pi\to \rho(\pi)\cong G^n\to 1$.
Then  ${\rm Ker}(\rho)$, which is abstractly isomophic to $\pi$, has no abelian quotient. Thus $H^1({\rm Ker}(\rho), \rho|_{{\rm Ker}(\rho)})=0$ and the Hochshild-Serre spectral sequence yields an isomorphism $H^1(\rho(\pi), \rho) \xrightarrow{\cong} H^1(\pi, \rho)$. By  \cite[Theorem~A]{AG84},  $ {\rm dim}_{\F_\ell}H^1(\rho(\pi), \rho ) \le r$. This finishes the proof.

\medskip
\noindent
For $i=2$  Lubotzky's theorem~\ref{thm:Lubotzky}  yields even a characterization of finite presentation.  

\medskip
\noindent
What does the growth of the cohomology for $i\ge 3$ encode as a property?  

\medskip
\noindent
Do we have special properties for ${\rm dim}_{\F_\ell}H^i(\pi, \rho )$ for any $i \ge 3$ if $\pi$ is the tame fundamental group of $X$ smooth quasi-projective over any algebraically closed field?

\subsection{With respect to Theorem~\ref{thm:groupj}}
Notation as in {\it loc. cit.}.  It is wishful that   Theorem~\ref{thm:ESS22} be true under the assumption that $X$ is quasi-projective normal over an algebraically closed  characteristic $p>0$ field $k$.

\medskip
\noindent
If $j: X\hookrightarrow \bar X$ is a normal compactification of a normal  $X$ over $k$,   is there a formula which enables one to bound $H^2(\pi_1^t(X), M)$ by some  cohomological invariant of a constructible sheaf on $\bar X$ built out of the local system $\underline M$? 

\medskip
\noindent
  What about higher cohomology $H^i(\pi_1^t(X), M), \ i\ge 3$, also for $X$ normal (see  Section~\ref{sec:Lub})?

\subsection{With respect to Theorem~\ref{thm:C} and Theorem~\ref{thm:obst}}
Can we extend  those theorems  to formally smooth proper schemes, to smooth rigid spaces,  is there  a version for normal varieties, for non-normal varieties involving the pro-\'etale fundamental group  developed in \cite{Sch13} and in all generality in \cite{BS15} etc.  We can also raise similar questions concerning the finite presentation of the (tame) fundamental group. (The question for formal and rigid spaces was posed to us by Piotr Achinger and Ben Heurer). 
 
\subsection{With respect to Theorem~\ref{thm:dens} } 
Can we replace in the formulation   of  Theorem~\ref{thm:dens}   the Betti moduli in a given rank of a normal complex variety by the Mazur or Chenevier deformation space  of a given $\bar \F_\ell$-residual representation over $X$ over  $\bar \F_p$ to obtain in those spaces Zariski density of the local systems with quasi-unipotent monodromies at infinity?

\subsection{With respect to Theorem~\ref{thm:EG18} and a more elaborate version  of Theorem~\ref{thm:dJE22}, see Theorem~\cite[Theorem~1.4]{dJE22}}
The main question is ``where'' is $sp_{\C,\bar s}^{-1}(\mathbb L^\sigma_{\ell})$  located with respect to $\mathbb L_\ell$ on $M_B(X,r,\sL, T_i)$. If $\mathbb L_\ell$ lied on a $d$-dimensional component, what about  $sp_{\C,\bar s}^{-1}(\mathbb L^\sigma_{\ell})$?  For example, if $X=\P^1\setminus \Sigma$ where $\Sigma\subset \P^1$ consists of finitely many 
points, then  irreducible rigid local systems are cohomologically rigid (\cite[ Corollary~1.2.5]{Kat96}), thus the proof of  Theorem~\ref{thm:EG18} shows that then if $d=0$, then $sp_{\C,\bar s}^{-1}(\mathbb L^\sigma_{\ell})$ is a $0$-dimensional component as well. What about the higher dimensional components in this case?

\subsection{With respect to Theorem~\ref{thm:dJE22}}
Does Theorem~\ref{thm:dJE22}
hold with $X$ defined over $\bar \F_p$? To be precise, assume that in a given rank $r$, there is an irreducible $\ell$-adic local system. Can we conclude that for all $\ell'\neq p$ there is an irreducible $\ell'$-adic local system of rank $r$, and that perhaps by $\ell'=p$, there is an irreducible isocrystal of rank $r$?  Note if arithmetic $\ell$-adic local systems were dense
in the Chenevier deformation space as wished for in Conjectures~\cite[Weak~and~Strong~Conjectures]{EK22}, this much weaker problem would have a positive answer.

\subsection{With respect to Lectures~\ref{sec:Fiso} and ~\ref{sec:crysLS}}
As the de Rham-crystalline side of the theory does not request any cohomological condition on the rigid systems when $X$ is projective, if would be wishful to understand the full strength of the theorems in the non-proper case.

\end{document}